\pgfplotsset{compat=1.14}
\newtheorem{thm}{Theorem}[section]
\newtheorem{lemma}[thm]{Lemma}
\newtheorem{prop}[thm]{Proposition}
\newtheorem{definition}[thm]{Definition}
\newtheorem{cor}[thm]{Corollary}
\newtheorem{claim}[thm]{Claim}
\theoremstyle{definition}
\newtheorem{remark}[thm]{Remark}
\def\E{\mathbb{E}}
\def\N{\mathbb{N}}
\def\R{\mathbb{R}}
\def\S{\mathcal{S}}
\def\Z{\mathbb{Z}}
\def\C{\mathbb{C}}
\DeclareMathOperator\real{Re}
\DeclareMathOperator\imag{Im}
\DeclareMathOperator\var{Var}
\DeclareMathOperator\cov{Cov}
\DeclareMathOperator\gama{gamma}
\DeclareMathOperator\supp{Supp}
\DeclareMathOperator\vol{Vol}
\DeclareMathOperator\poisson{Poisson}
\DeclareMathOperator\PD{PD}
\DeclareMathOperator\betadist{beta}
\newenvironment{enumerate-math}
{\begin{enumerate} \addtolength{\itemsep}{5pt}
}
{\end{enumerate}}
\title{Limit distributions for Euclidean random permutations}
\date{\today}
\thanks{Research of D.E. and R.P. supported by ISF grant~861/15 and by ERC starting grant 678520 (LocalOrder). School of Mathematical Sciences, Tel Aviv
University, Tel Aviv 69978, Israel.\\Emails: dorelbom@mail.tau.ac.il,
peledron@post.tau.ac.il}
\author{Dor Elboim}
\address{Dor Elboim\hfill\break
    Tel Aviv University\\
    School of Mathematical Sciences}
\email{dorelbom@mail.tau.ac.il}
\author{Ron Peled}
\address{Ron Peled\hfill\break
    Tel Aviv University\\
    School of Mathematical Sciences\\
    Tel Aviv, 69978, Israel.}
\email{peledron@post.tau.ac.il}
\urladdr{http://www.math.tau.ac.il/~peledron}
\begin{document}
\begin{abstract}
We study the length of cycles in the model of spatial random permutations in Euclidean space. In this model, for given length $L$, density $\rho$, dimension $d$ and jump density $\varphi$, one samples $\rho L^d$ particles in a $d$-dimensional torus of side length $L$, and a permutation $\pi$ of the particles, with probability density proportional to the product of values of $\varphi$ at the differences between a particle and its image under $\pi$. The distribution may be further weighted by a factor of $\theta$ to the number of cycles in $\pi$. Following Matsubara and Feynman, the emergence of macroscopic cycles in $\pi$ at high density $\rho$ has been related to the phenomenon of Bose-Einstein condensation. For each dimension $d\ge 1$, we identify sub-critical, critical and super-critical regimes for $\rho$ and find the limiting distribution of cycle lengths in these regimes. The results extend the work of Betz and Ueltschi. Our main technical tools are saddle-point and singularity analysis of suitable generating functions following the analysis by Bogachev and Zeindler of a related surrogate-spatial model.
\end{abstract}
\maketitle

\section{Introduction}\label{sec:introduction}
We study the model of spatial random permutations in Euclidean space, initially proposed by Matsubara \cite{matsubara1951quantum} and Feynman \cite{feynman1953atomic} in relation to the phenomenon of Bose-Einstein condensation. The model's background and its relations with other models of statistical physics are discussed in Section~\ref{sec:background} below. We proceed here to define the model and describe our main results.

Let $d\ge 1, N\ge 1$ be integers and $L>0, \theta>0$ be
real numbers. Let $X$ be a random variable taking values in $\R^d$.
We assume that $X$ is absolutely continuous with respect to Lebesgue
measure with a density which we denote by~$\varphi$. Throughout the
paper we make the following assumptions on $X$ and $\varphi$,
\begin{equation}\label{eq:assumptions_on_phi}
  \mathbb{E}(X) = 0\quad\text{and}\quad\text{$\varphi$ is a Schwartz
  function},
\end{equation}
where we recall that a Schwartz function is a smooth function whose
derivatives (of any order) decay faster than any polynomial (see Section~\ref{sec:preliminaries}).

Define the domain $\Lambda$ by
\begin{equation*}
  \Lambda:=\left\{x = (x_1,\ldots, x_d)\colon 0\le x_j < L\right\} = \left[0,L\right)^d.
\end{equation*}
We `wrap' $X$ in $\Lambda$ to obtain a periodic density function
$\varphi_\Lambda$ defined by

\begin{equation}\label{eq:varphi_Lambda_def}
\varphi_\Lambda (x):=\sum_{k\in\Z^d}\varphi(x + L\cdot k),\quad
x\in\R^d.
\end{equation}
The spatial random permutation model in $\Lambda$ (with periodic boundary conditions) is a joint probability distribution on $N$ particles in $\Lambda$ and a permutation in $\S_N$, the permutation group on $\{1,2,\ldots,N\}$, in which the density of a
pair $(\mathbf{x}, \pi)\in \Lambda^N\times\S_N$, with respect to the Lebesgue
measure on $\Lambda^N$ times the counting measure on $\S_N$, is
proportional to
\begin{equation}\label{eq:model_density}
  \theta^{C(\pi)}\cdot\prod_{i=1}^N \varphi_\Lambda \! \left( x_{\pi(i)}-x_i\right),
\end{equation}
where $\mathbf{x} = (x_1,\ldots, x_N)$ and where we write $C(\pi)$ for the total number of cycles in $\pi$. The model may be ill-defined for small values of $N$, as the expression \eqref{eq:model_density} may equal zero for all pairs $(\mathbf{x}, \pi)\in \Lambda^N\times\S_N$, but our proofs imply that this can only happen for a finite number, depending on $\varphi$, of values of $N$ (e.g., the model is ill-defined for $N=1$ when $\varphi_\Lambda(0)=0$ and ill-defined for $N=2$ when $\varphi_{\Lambda}(x)\varphi_{\Lambda}(-x)\equiv 0$; see also Proposition~\ref{prop:product of weights}). As we shall see, many properties of the model are governed by the \emph{density} of particles in $\Lambda$, given by
\begin{equation}\label{eq:def of rho}
  \rho:=\frac{N}{L^d}.
\end{equation}

Our main object of study is the limiting distribution of the cycle lengths in
$\pi$, when the pair $(\mathbf{x},\pi)$ is sampled from the density \eqref{eq:model_density} and $N$ and $L$ are taken to infinity. We will see that several asymptotic regimes arise according to the dimension $d$ and the limiting behavior of the density $\rho$. We consider separately the asymptotic behavior when $d=1$, $d=2$ and $d\ge 3$. In dimensions $d=1,2$ we allow $\rho$ to change with $N$ whereas in
dimensions $d\ge 3$ we keep it fixed as $N$ increases.

We proceed to introduce the notation required for stating our main theorems.
We order the cycles of $\pi$ according to the minimal index appearing in the cycle, so that the first cycle is the cycle containing $1$, the second cycle (when $C(\pi)\ge 2$) is the cycle containing the minimal $2\le i\le N$ which is not in the first cycle, and so on. Let $L_j(\pi)$, $1\le j\le C(\pi)$, be the length of the $j$'th cycle in this order and set $L_j(\pi)=0$ for $j>C(\pi)$. We let $\left(\ell_j(\pi )\right)_{j=1}^\infty$ be the sequence of cycle lengths $(L_j(\pi))_{j=1}^\infty$ arranged in non-increasing order. We often write $L_j$ and $\ell_j$ instead of $L_j(\pi)$ and $\ell_j(\pi)$ when $\pi$ is clear from the context.

We consider the limit $N\to\infty$, where we allow the density $\rho$ (and hence $L$) to vary with $N$ in a way to be prescribed, and define the limiting fraction of points in macroscopic cycles,
\begin{equation}\label{eq:def of nu}
\nu := \lim _{\epsilon \rightarrow 0} \liminf
_{N\rightarrow \infty } \mathbb E \bigg(\frac{1}{N}\sum _{j=1}^{C(\pi)} L_j(\pi) \cdot \mathds{1}_{\{L_j(\pi) \ge \epsilon N\}}\bigg) = \lim _{\epsilon \rightarrow 0} \liminf
_{N\rightarrow \infty } \mathbb P(L_1(\pi)\ge \epsilon N),
\end{equation}
where the pair $(\mathbf{x},\pi)$ is sampled from the density \eqref{eq:model_density}, we write $\mathds{1}_A$ for the indicator random variable of the event $A$ and where the second equality follows by symmetry (as the distribution of $\pi$ is invariant under relabeling of $\{1,\ldots, N\}$, see Proposition~\ref{prop:product of weights}).

Denote by $\PD(\theta)$ the Poisson-Dirichlet distribution with parameter $\theta$. This distribution is the limiting distribution of $\frac{1}{N}(\ell_1(\pi),\ell_2(\pi),...)$ when $\pi$ has the Ewens distribution with parameter $\theta$ (that is, when $\mathbb{P}(\pi = \pi_0)$ is proportional to $\theta^{C(\pi_0)}$ for $\pi_0\in \S_N$. The case $\theta=1$ corresponds to a uniform permutation); see \cite{feng2010poisson} for further background.

Denote by $\gama(\alpha ,\beta )$ the $\gama $ distribution with shape parameter $\alpha >0$ and rate parameter $\beta >0$, which is supported on $[0,\infty)$ with density ($\Gamma$ stands for the gamma function)
\[\frac{\beta ^\alpha }{\Gamma (\alpha)}r^{\alpha -1}e^{-\beta r},\quad r>0.\]

We write $\var(X)$ for the variance of $X$ in dimension $d=1$, $\cov(X)$
for the covariance matrix of $X$ in dimensions $d\ge 1$ and $\det(A)$ for the determinant of a matrix $A$. Finally, we write $\varphi^{*j}$ to denote the
$j$-fold convolution $\varphi * \varphi * \cdots * \varphi$ (see Section~\ref{sec:preliminaries}).

\begin{figure}[!tbp]
  \centering
  \begin{minipage}[b]{0.3283\textwidth}
    \includegraphics[width=\textwidth]{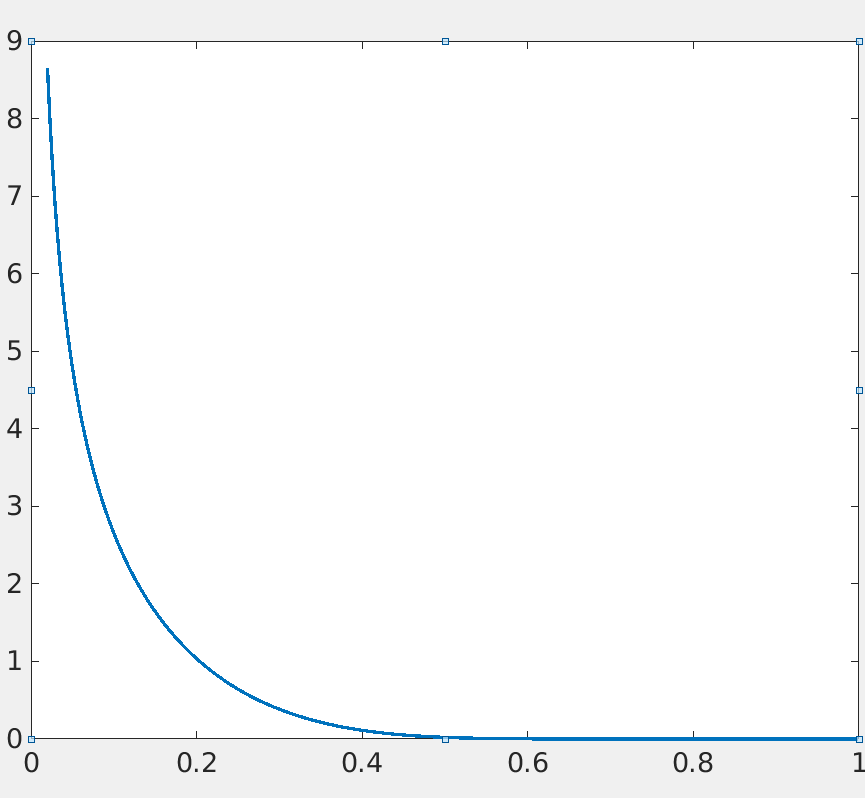}
  \end{minipage}
  \hfill
  \begin{minipage}[b]{0.3283\textwidth}
    \includegraphics[width=\textwidth]{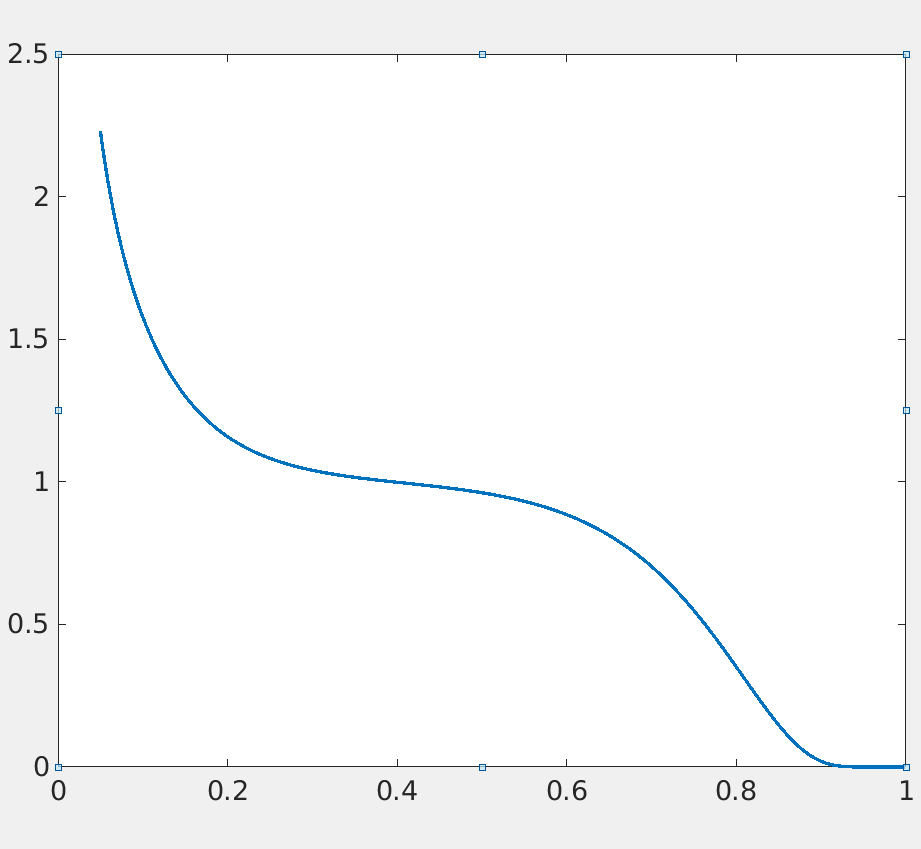}
  \end{minipage}
  \hfill
  \begin{minipage}[b]{0.3283\textwidth}
    \includegraphics[width=\textwidth]{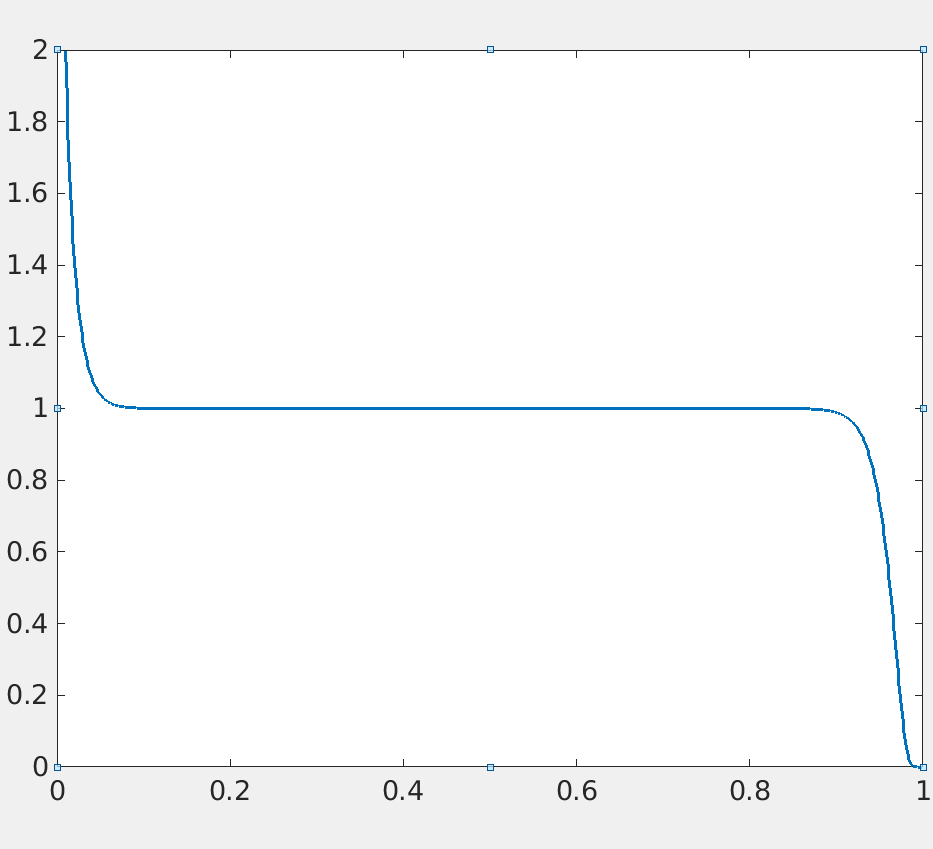}
  \end{minipage}
      \caption{The density function in the 1D critical regime: the limiting density of $L_1/N$ with $\theta =1$ and $\rho /\sqrt{N}\to \alpha $ for $\alpha =0.3$, $\alpha =0.8$ and $\alpha =2$ respectively. By~\ref{item:sub-critical in d=1} and~\ref{item:super-critical in d=1} of Theorem~\ref{thm:d=1}, when $\rho /\sqrt{N}\to 0$ then $L_1 /N \to 0$ and when $\rho / \sqrt{N} \to \infty$ then $L_1/N$ converges in distribution to $U[0,1]$ (see Remark~\ref{remark:connection}). }
\label{critical1Dfigure}
\end{figure}

\begin{thm}[One dimension]\label{thm:d=1}
Let $(\mathbf{x},\pi)$ be randomly sampled from the density
\eqref{eq:model_density} with $d=1$.
Assume that $X$ satisfies the assumptions in \eqref{eq:assumptions_on_phi}. Then, as $N\to \infty$:
\begin{enumerate}[label=(\roman{*})]
\item(Sub-critical I)\label{item:constant in d=1}
Suppose the density $\rho >0$ is fixed as $N\to \infty$. Then $\nu =0$ and
\[L_1\overset{d}{\longrightarrow }Y,\]
where $Y$ is the integer-valued random variable defined by
\begin{equation}\label{eq:Y_thm_def}
  \mathbb P \left(Y=j\right)=\theta \rho ^{-1}\varphi ^{*j}(0) r_*^j,\quad j\ge 1
\end{equation}
and
\begin{equation}\label{eq:r_*_thm_def}
  \text{$r_*$ is the unique number satisfying $0<r_*<1$ and $\sum_{j=1}^\infty \varphi ^{*j}(0) r_*^j=\rho \theta^{-1}$.}
\end{equation}
\item(Sub-critical II)\label{item:sub-critical in d=1}
Suppose the density $\rho$ satisfies $\rho \to \infty$ and $\rho =o(\sqrt{N})$. Then $\nu=0$ and
\[\frac{\theta ^2L_1}{2\var (X)\rho ^2} \overset{d}{\longrightarrow } \gama \left(\frac{1}{2},1\right).\]
\item(Critical)\label{item:critical in d=1}
Suppose that the density $\rho$ satisfies $\frac{\rho}{\sqrt{N}} \to \alpha\in(0,\infty)$. Then $\nu=1$ and
\[\frac{L_1}{N}\overset{d}{\longrightarrow}\mu,\]
where $\mu$ is the probability measure on $\left(0,1\right)$ whose density function is
\begin{equation}\label{eq:complicated distribution}
\frac{1}{Z}\left( \sum _{m\in \Z} e^{-2\pi ^2 \sigma ^2 \alpha ^2 m^2 x}  \right)\left(1-x\right)^{-\frac{3}{2}} \sum _{n=0}^{\infty } (-1)^n {{-2\theta }\choose{n}} (\theta +n)\exp \left(\frac{-(\theta +n)^2}{2 \alpha ^2\sigma ^2 \left(1-x\right)}\right),
\end{equation}
for $x\in (0,1)$, where
\begin{equation*}
Z=\frac{1}{\theta } \sum _{n=0}^{\infty } (-1)^n {{-2\theta }\choose{n}} (\theta +n)\exp \left(\frac{-(\theta +n)^2}{2 \alpha ^2\sigma ^2 }\right), \quad \sigma ^2:=\var (X),
\end{equation*}
and where we set ${t\choose{n}}:=\frac{1}{n!}t(t-1)\cdots(t-n+1)$. See~Figure~\ref{critical1Dfigure}.
\item(Super-critical)\label{item:super-critical in d=1}
Suppose the density $\rho $ satisfies $\rho =\omega (\sqrt{N})$ and $\rho\le N$. Then $\nu =1$ and
\[\frac{1}{N}(\ell_1,\ell_2,...)\overset{d}{\longrightarrow } \PD(\theta ).\]
\end{enumerate}
\end{thm}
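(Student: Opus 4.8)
The plan is to reduce the spatial model to a combinatorial one via the structure result announced as Proposition~\ref{prop:product of weights}: integrating out the particle positions $\mathbf{x}$, the distribution of the cycle structure of $\pi$ becomes that of a weighted random permutation where a cycle of length $j$ carries weight proportional to $\theta\,\varphi_\Lambda^{*j}(0)$, with $\varphi_\Lambda^{*j}$ the $j$-fold periodized convolution. Thus one is led to study a random partition of $N$ with the generating function $\sum_N z^N \sum_{\pi\in\S_N}\theta^{C(\pi)}\prod_{\text{cycles }c}\varphi_\Lambda^{*|c|}(0)/|c|$, whose exponential formula gives $\exp\bigl(\theta\sum_{j\ge1}\varphi_\Lambda^{*j}(0)z^j/j\bigr)$. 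The first step is therefore to make this reduction precise and to record the key asymptotic input: since $\varphi$ is Schwartz with $\E(X)=0$, a local central limit theorem gives $\varphi^{*j}(0)\sim (2\pi j\,\sigma^2)^{-1/2}$ as $j\to\infty$ in $d=1$, and in the relevant regime $\rho=\omega(\sqrt N)$ the periodization correction $\varphi_\Lambda^{*j}(0)-\varphi^{*j}(0)$ is negligible for the cycle lengths of interest (which will turn out to be of order $N$), because $L=(N/\rho)^{?}$—more precisely $L^d=N/\rho$, so $L\to\infty$ and Poisson-summation tail terms are super-polynomially small.

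Next I would turn the super-critical regime $\rho=\omega(\sqrt N)$, $\rho\le N$ into a singularity-analysis problem. The generating function $S(z)=\sum_j \varphi_\Lambda^{*j}(0)z^j$ has radius of convergence $r_c$ with $r_c\to 1$ as $L\to\infty$ (since $\varphi^{*j}(0)\to 0$ polynomially, $\sum\varphi^{*j}(0)z^j$ has radius of convergence $1$ and a $(1-z)^{-1/2}$-type singularity at $z=1$ coming from the $j^{-1/2}$ asymptotics). The saddle-point equation $z\,S'(z)=N/\theta$ that fixes the fugacity $z=z_N$ then forces $z_N\to r_c$: because $N/\rho=L\to\infty$ but $N\to\infty$ much faster (as $\rho=\omega(\sqrt N)$ means $N/\rho = o(\sqrt N)$, hence the ``demand'' $N$ vastly exceeds what the subcritical part $\sum j\varphi^{*j}(0)z^j$ can supply for $z$ bounded away from $1$), a macroscopic fraction of the mass must be absorbed into long cycles — this is exactly the Bose–Einstein condensation mechanism, and it yields $\nu=1$. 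The decisive step is to show that conditionally on the condensate carrying $(1-o(1))N$ of the mass, the \emph{remaining} $o(N)$ points form cycles whose joint law, rescaled by $N$, is asymptotically negligible, while the condensate itself splits into cycles according to $\PD(\theta)$.

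For the $\PD(\theta)$ conclusion I would invoke the standard characterization: the ordered normalized cycle lengths of a $\theta$-weighted permutation converge to $\PD(\theta)$ whenever the ``small'' cycles are negligible, equivalently whenever $\nu=1$, and the size-biased pick / Chinese-restaurant structure of the $\theta^{C(\pi)}$ weighting is preserved in the limit. Concretely, one computes, for fixed $k$ and a test function, the joint factorial-moment-type quantity $\E\bigl[\binom{\ell_1}{\cdot}\cdots\bigr]$ via the generating function $\exp(\theta\sum \varphi_\Lambda^{*j}(0)z^j/j)$: inserting marks on $k$ distinguished long cycles produces a factor $\theta^k$ times $k$ copies of the singular part $\sum_{j\le N}\varphi_\Lambda^{*j}(0) z_N^j/j \approx \theta^{-1}\cdot(\text{const})$, and singularity analysis of the $(1-z)^{-1/2}$ singularity converts this into the Dirichlet$(\,\underbrace{1,\dots,1}_{},\theta\,)$-type densities that define $\PD(\theta)$ — this is the computation carried out by Bogachev–Zeindler for the surrogate-spatial model, and the point is that our $\varphi_\Lambda^{*j}(0)$ has the \emph{same} $c\,j^{-1/2}$ asymptotics, so their analysis transfers verbatim once the reduction and the periodization estimate are in place. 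I expect the main obstacle to be the uniform control of the periodized convolutions $\varphi_\Lambda^{*j}(0)$ across the whole range $\rho=\omega(\sqrt N)$ up to $\rho\le N$ (equivalently $L$ anywhere from just above $1$ up to order $N$): one needs the local CLT with explicit error terms \emph{and} the Poisson-summation tail bound to hold simultaneously and uniformly in $L$ and $j$, and to check that the saddle point $z_N$ really does approach the singularity fast enough that the subcritical cycle mass is $o(N)$ — this last estimate is what rigorously pins down $\nu=1$ and must be done with care at the lower end $\rho$ just above $\sqrt N$.
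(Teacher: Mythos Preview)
Your proposal addresses only part~(iv) and leaves (i)--(iii) untouched; those require separate arguments (saddle-point analysis for (i)--(ii), and a delicate contour computation for (iii) yielding the theta-function density).

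More seriously, for part~(iv) your treatment of the periodization is backwards. In the super-critical regime $\rho=\omega(\sqrt N)$ one has $L=N/\rho=o(\sqrt N)$, so $L^2=o(N)$: macroscopic cycles have length $j\asymp N\gg L^2$, and a random walk of that length has wrapped around the torus many times. Hence $\varphi_\Lambda^{*j}(0)\approx L^{-1}$ (the uniform density on the torus), while the unwrapped $\varphi^{*j}(0)\asymp j^{-1/2}\ll L^{-1}$. The Poisson-summation ``tail terms'' you dismiss are in fact the dominant contribution; the correct statement (the paper's Corollary~\ref{cor:corollary on W}) is that the cycle weight $W_{L,j}=\theta L\,\varphi_\Lambda^{*j}(0)\to\theta$ for $j\gg L^2$. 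It is precisely this flatness, $W_{L,j}\approx\theta$ for all macroscopic $j$, that makes the macroscopic cycles behave like a Ewens$(\theta)$ sample and produces $\PD(\theta)$. (Relatedly, $L\to\infty$ is not guaranteed: $\rho\le N$ allows $L$ down to $1$.)

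This also means the singularity picture is not the one you describe. The object to analyze is $e^{G_L(z)}$ with $G_L(z)=\sum_j W_{L,j}z^j/j$, and because $W_{L,j}\to\theta$ for large $j$ one has $G_L(z)=-\theta\log(1-z)+F_L(z)$ with $F_L$ bounded near $z=1$ (this is exactly the paper's decomposition~\eqref{eq:def of F}). The relevant singularity of $e^{G_L}$ is therefore $(1-z)^{-\theta}$, not $(1-z)^{-1/2}$; it is the $\theta$ in the exponent that feeds directly into the $\mathrm{beta}(1,\theta)$ stick-breaking and hence $\PD(\theta)$. The $(1-z)^{-1/2}$ behavior of $g'(z)$ you cite governs the \emph{sub-critical} scaling (and the gamma limit in (ii)), not the super-critical one. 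So while the high-level plan (reduce via Proposition~\ref{prop:product of weights}, then generating-function asymptotics) matches the paper, the specific mechanism you propose for (iv) would not go through; you need the $W_{L,j}\to\theta$ input and the $(1-z)^{-\theta}$ singularity, after which the paper extracts the $\PD$ limit by proving convergence of $(L_1/N,L_2/N,\dots)$ to an explicit modified stick-breaking process.
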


\begin{thm}[Two dimensions]\label{thm:d=2}
Let $(\mathbf{x},\pi)$ be randomly sampled from the density
\eqref{eq:model_density} with $d=2$. Let
\begin{equation}\label{eq:def of alpha _c }
\alpha _c:=\frac{\theta }{2\pi \sqrt{\det \left( \cov(X)\right)}}.
\end{equation}
Assume that $X$ satisfies the assumptions in \eqref{eq:assumptions_on_phi}. Then as $N\to \infty$:
\begin{enumerate}[label=(\roman{*})]
\item(Sub-critical I)\label{item:constant in d=2}
Suppose the density $\rho >0$ is fixed as $N\to \infty$. Then $\nu =0$ and
\[L_1\overset{d}{\longrightarrow }Y,\]
where $Y$ is the integer-valued random variable given by \eqref{eq:Y_thm_def} and \eqref{eq:r_*_thm_def}.
\item(Sub-critical II, critical)\label{item:sub-critical and critical in d=2}
Suppose the density $\rho $ satisfies $\rho \rightarrow \infty$ and $\frac{\rho }{\log N}\rightarrow \alpha \in \left[0,\alpha _c\right]$. Then $\nu =0$ and
\[\frac{\alpha _c\log L_1}{\rho }\overset{d}{\longrightarrow } U[0,1].\]
\item(Super-critical)\label{item:super-critical in d=2}
Suppose the density $\rho $ satisfies $\frac{\rho }{\log N}\rightarrow \alpha\in(\alpha_c,\infty]$ and $\rho\le N$. Then $\nu =1-\frac{\alpha _c}{\alpha}$,
\begin{equation*}
  \frac{1}{\nu N}(\ell_1,\ell_2,...)\overset{d}{\longrightarrow } \PD(\theta)\quad\text{and}\quad \frac{\log L_1}{\log N }\overset{d}{\longrightarrow } \mu,
\end{equation*}
where $\mu $ is a probability measure on $[0,1]$ with an atom of mass $1-\frac{\alpha_c}{\alpha}$ at the point $1$ and constant density on $(0,1)$, and we set $\frac{\alpha_c}{\alpha}:=0$ when $\alpha=\infty$.
\end{enumerate}
\end{thm}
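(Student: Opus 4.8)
The plan is to follow the route used for Theorem~\ref{thm:d=1}: integrate out the particle positions, reduce to $\theta$-weighted random permutations with explicit cycle weights, and carry out a saddle-point / singularity analysis of the associated generating function, now adapted to the logarithmic features special to $d=2$. By Proposition~\ref{prop:product of weights}, the marginal law of $\pi$ is $\mathbb P(\pi=\pi_0)\propto\theta^{C(\pi_0)}\prod_{\text{cycles }c}w_{|c|}$ with $w_j:=L^2\varphi_\Lambda^{*j}(0)$. Setting $Z_N:=\sum_{\pi_0}\theta^{C(\pi_0)}\prod_c w_{|c|}$ and $\Phi(z)=\Phi_L(z):=\theta\sum_{j\ge1}\frac{w_j}{j}z^j$, the exponential formula gives $\sum_{N\ge0}\frac{Z_N}{N!}z^N=e^{\Phi(z)}$, and a direct count of weighted permutations with $1$ in a cycle of length $\ell$ yields
\[
  \mathbb P(L_1=\ell)=\frac{\theta w_\ell}{N}\cdot\frac{[z^{N-\ell}]e^{\Phi(z)}}{[z^N]e^{\Phi(z)}},\qquad 1\le\ell\le N,
\]
together with its multivariate analogue for the joint law of $(\ell_1,\dots,\ell_k)$ and for the cycle counts, which will feed the Poisson--Dirichlet statement. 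Everything thus reduces to asymptotics of $w_j$ and of the coefficients of $e^{\Phi_L}$ as $N,L\to\infty$ with $N=\rho L^2$.

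First I would establish, via Poisson summation / a local central limit theorem for the walk with step law $X$ wrapped onto $\Lambda$, the basic estimates for $w_j=\sum_{k\in\Z^2}\widehat\varphi(2\pi k/L)^j$ (using $\widehat\varphi(0)=1$, $\nabla\widehat\varphi(0)=0$, $-D^2\widehat\varphi(0)=\cov(X)$, and the Schwartz hypothesis to bound the Fourier tail and the modes with $|\widehat\varphi|$ bounded away from $1$): for fixed $j$, $w_j=L^2\varphi^{*j}(0)+O(L^{-\delta})$; uniformly for $1\ll j\le L^{2-\delta}$, $w_j=\dfrac{L^2}{2\pi j\sqrt{\det\cov(X)}}\,(1+o(1))$, where the coefficient $\tfrac{1}{2\pi\sqrt{\det\cov(X)}}=\alpha_c/\theta$ produces $\alpha_c$; a theta-function interpolation $w_j\approx\sum_{k\in\Z^2}e^{-2\pi^2 j\,k^{\mathsf T}\cov(X)k/L^2}$ for $j\asymp L^2$; and $w_j=1+o(1)$ for $j\ge L^{2+\delta}$. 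Consequently $\Phi_L(z)=-\theta\log(1-z)+\Psi_L(z)$ near $z=1$, where $\Psi_L$ collects the $j\lesssim L^2$ contributions: it is analytic in the unit disk but $\Psi_L(1)\asymp L^2$ and $\Psi_L'(z)$ grows like $-\alpha_cL^2\log(1-z)$ as $z\uparrow1$ (saturating near the scale $1-z\asymp L^{-2}$), which is precisely the feature distinguishing the problem from the classical logarithmic combinatorial structures. The saddle $z_*=z_*(N)\in(0,1)$ of $e^{\Phi(z)}z^{-N}$ solves $\theta\sum_j w_j z_*^j=N$, i.e. roughly $-\alpha_c L^2\log(1-z_*)\approx N-(\text{regime-(iii) tail})$.

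Then I would treat the four regimes. For $\rho$ fixed (part~\ref{item:constant in d=2}): the exponential weight $r_*^j$ kills the $j\gtrsim L^2$ terms, so the saddle equation becomes $\sum_j\varphi^{*j}(0)r_*^j=\rho/\theta$ with $0<r_*<1$ as in~\eqref{eq:r_*_thm_def}, whence $\mathbb P(L_1=\ell)\to\theta\rho^{-1}\varphi^{*\ell}(0)r_*^\ell$, so $L_1\Rightarrow Y$ and $\nu=0$ exactly as in Theorem~\ref{thm:d=1}\ref{item:constant in d=1}. For $\rho\to\infty$ with $\rho/\log N\to\alpha$, the relation $N=\rho L^2$ forces $\rho\sim\alpha\log N\sim2\alpha\log L$, and the saddle satisfies $1-z_*\sim e^{-\rho/\alpha_c}\sim N^{-\alpha/\alpha_c}$. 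If $\alpha\le\alpha_c$ (part~\ref{item:sub-critical and critical in d=2}): then $1-z_*\gg1/N$ (borderline when $\alpha=\alpha_c$), the regime-(iii) tail is negligible, $\nu=0$, and $\mathbb P(L_1=\ell)\approx\tfrac{\alpha_c}{\rho\ell}e^{-\ell(1-z_*)}$; summing over $\ell\le e^{s\rho/\alpha_c}$ gives $\mathbb P(L_1\le e^{s\rho/\alpha_c})\to\frac{\alpha_c}{\rho}\log\!\big(e^{s\rho/\alpha_c}\big)=s$ for $s\in(0,1)$, i.e. $\alpha_c\log L_1/\rho\Rightarrow U[0,1]$. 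If $\alpha>\alpha_c$ (part~\ref{item:super-critical in d=2}): the saddle degenerates, $z_*\to1$ with $1-z_*\asymp1/N$; the residual $(1-z)^{-\theta}$ singularity governs the microscopic cycles, so that the small-cycle counts converge (via the multivariate formula) to those of an Ewens($\theta$) structure and, by the usual size-biased argument, the macroscopic cycles carry the Poisson--Dirichlet law after rescaling, while the mass deficit is absorbed by a condensate. Counting points in cycles of length $\le M$ gives $\theta\sum_{j\le M}w_j\sim\alpha_cL^2\log M$, so a fraction $\sim\frac{\alpha_cL^2\log N}{N}=\frac{\alpha_c}{\alpha}$ of the points sits in microscopic cycles, yielding $\nu=1-\alpha_c/\alpha$ and $\frac1{\nu N}(\ell_1,\ell_2,\dots)\Rightarrow\PD(\theta)$; combining the two mechanisms, $\mathbb P(L_1\le N^\beta)\to\frac{\alpha_c}{\alpha}\beta$ for $\beta\in(0,1)$ with an atom $1-\frac{\alpha_c}{\alpha}$ at $\beta=1$, which is the stated $\mu$ (and $\alpha=\infty$ is the limit $\alpha_c/\alpha\to0$).

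The main obstacle is the saddle-point analysis of $e^{\Phi_L}$ in the regimes $\rho\to\infty$: $\Phi_L$ carries two competing scales — coefficients $\asymp L^2/j$ for $j\ll L^2$ versus $\asymp1/j$ for $j\gg L^2$ — so $\Psi_L$ fails to be analytically tame at the scale $1-z\asymp1-z_*$ and a naive Flajolet--Odlyzko transfer does not apply; one must locate $z_*$ and control the Gaussian correction around it with errors uniform in $L$, and cleanly decouple the ``fluid'' part (responsible for the $U[0,1]$ law) from the $(1-z)^{-\theta}$ part (responsible for $\PD(\theta)$ and the condensate). The borderline case $\alpha=\alpha_c$, where $1-z_*$ and $1/N$ become comparable, and the passage to the joint law of several macroscopic cycles are the most delicate points; both can be pushed through along the lines of Bogachev--Zeindler's treatment of the surrogate-spatial model once the estimates for $w_j$ above are in place. (One should also record that $w_j$ may vanish for finitely many small $j$ when $\varphi$ has small support, so that a few coefficients of $e^{\Phi_L}$ vanish; this affects only finitely many $N$, as already noted after~\eqref{eq:model_density}.)
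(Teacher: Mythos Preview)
Your overall architecture matches the paper's: reduce to cycle weights via Proposition~\ref{prop:product of weights}, express $\mathbb P(L_1=\ell)$ through ratios of coefficients of $e^{G_L}$, obtain the asymptotics of the weights via Poisson summation and a local CLT (Lemma~\ref{lem:asymptotics of kappa _j}, Lemma~\ref{lem:bound with fourier}, Corollary~\ref{cor:corollary on W}), and then treat each regime by the appropriate coefficient extraction. For fixed $\rho$ and for the strictly sub-critical range $\alpha<\alpha_c$ the saddle-point analysis you outline is essentially what Section~\ref{sec:sub-critical} carries out; for $\alpha>\alpha_c$ the paper does the singularity analysis on a Pacman contour (Section~\ref{sec:super-sritical}), which requires first the analytic continuation of $G_L$ past $|z|=1$ (Lemma~\ref{lem:analytic continuation}) --- a nontrivial step you do not mention --- and then the modified stick-breaking argument of Section~\ref{sec:poisson dirichlet} that fleshes out your ``size-biased'' remark.

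The genuine gap is at the critical endpoint $\alpha=\alpha_c$. You assert it ``can be pushed through along the lines of Bogachev--Zeindler's treatment'', but the paper explicitly abandons that route here. At $\alpha=\alpha_c$ the saddle obeys $1-z_*=N^{-1+o(1)}$ while $L^2\sim N/\log N$, so the coefficient extraction must contend with the crossover zone $j\approx L^2$ where $W_{L,j}$ changes behaviour and $g''$ diverges; the paper remarks that the required estimates ``appear quite involved'' in $d=2$ and instead develops a seemingly novel monotonicity/recursion argument (Section~\ref{sec:critical in dimension 5}). Concretely, Claim~\ref{claim:claim on W} gives $W_{L,j}\le(1+\epsilon)W_{L,j-i}$ for $i\le j/2$, this is fed into the identity $H_K(L)=K^{-1}\sum_i W_{L,K-i}H_i(L)$ coming from Lemma~\ref{lem:distribution_of_ell_1,ell_2}, and an induction in $K$ (Lemma~\ref{lem:bound on partition function}) bootstraps the sub-critical bound $H_{N-j}(L)/H_N(L)\ge 1-\epsilon$ from a density with $\alpha_1<\alpha_c$ all the way up to $\alpha_c$. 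That lower bound alone suffices: it yields $\liminf_N\mathbb P(a\le\log L_1/\log N\le b)\ge b-a$ directly, and since $U[0,1]$ is a probability measure the limit follows. Your sketch supplies neither this idea nor any concrete substitute, so the critical case is not actually covered by your plan.
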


\begin{thm}[Dimension $d\ge 3$]\label{thm:d=3}
Let $(\mathbf{x},\pi)$ be randomly sampled from the density
\eqref{eq:model_density} with $d\ge 3$. Suppose that the density $\rho >0$ is fixed as $N\to \infty$. Let
\begin{equation}\label{eq:rho_c_def}
\rho _c:=\theta \sum _{j=1}^\infty  \varphi^{*j}(0).
\end{equation}
Assume that $X$ satisfies the assumptions in \eqref{eq:assumptions_on_phi}. Then as $N\to \infty$:
\begin{enumerate}[label=(\roman{*})]
\item(Sub-critical)\label{item:sub-critical in d=3}
If $\rho <\rho _c $ then $\nu =0$ and
\[L_1\overset{d}{\longrightarrow }Y,\]
where $Y$ is the integer-valued random variable given by \eqref{eq:Y_thm_def} and \eqref{eq:r_*_thm_def}.
\item(Critical)\label{item:critical in d=3}
If $\rho =\rho_c $ then $\nu =0$ and
\[L_1\overset{d}{\longrightarrow }Y,\]
where $Y$ is the integer-valued random variable given by \eqref{eq:Y_thm_def} with $\rho=\rho_c$ and $r_*=1$. Thus,
\begin{equation}\label{eq:asymptotic of Y}
\mathbb P\left(Y=j\right)\sim \frac{\theta}{\rho _c\left(2\pi \right)^{\frac{d}{2}}\sqrt{\det (\cov (X))}}\cdot j^{-\frac{d}{2}},\quad j\to \infty,
\end{equation}
where $\sim$ denotes that the left-hand side is asymptotic to the right-hand side as $j\to\infty$.
\item(Super-critical)\label{item:super-critical in d=3}
If $\rho >\rho _c $ then $\nu =1-\frac{\rho _c}{\rho }$,
\[\frac{1}{\nu N}(\ell_1,\ell_2,...)\overset{d}{\longrightarrow } \PD(\theta )\]
and
\begin{equation}\label{eq:small cycles in supr critical}
\mathbb P (L_1=j)\to \theta \rho ^{-1}\varphi ^{*j}(0),\quad j\ge 1.
\end{equation}
\end{enumerate}
\end{thm}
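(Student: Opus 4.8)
The first step is to integrate out the particle positions $\mathbf{x}$ in \eqref{eq:model_density}. As computed in general in Proposition~\ref{prop:product of weights}, the marginal law of $\pi$ is then proportional to $\theta^{C(\pi)}\prod_{c}w_{|c|}$, the product over the cycles $c$ of $\pi$, with cycle weights $w_k=L^d\varphi_\Lambda^{*k}(0)$. I shall use two analytic facts about these weights. First, $\varphi\ge0$ gives $w_k\ge0$, and the local central limit theorem for the mean-zero Schwartz density $\varphi$ gives $\varphi^{*k}(0)\sim(2\pi k)^{-d/2}(\det\cov(X))^{-1/2}$ as $k\to\infty$; hence $w_k>0$ for all large $k$ (so the model is well defined for all but finitely many $N$) and, crucially for $d\ge3$, $\sum_{k\ge1}\varphi^{*k}(0)=\theta^{-1}\rho_c<\infty$. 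Second, Poisson summation and the Schwartz decay of $\varphi$ give $L^d\varphi_\Lambda^{*k}(0)\to\varphi^{*k}(0)$ as $L\to\infty$ for each fixed $k$, whereas for fixed $L$ one has $\varphi_\Lambda^{*k}(0)\to L^{-d}$, i.e.\ $w_k\to1$, as $k\to\infty$; both will matter below.

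The engine of the proof is generating-function analysis, following Bogachev--Zeindler. Put $F(z):=\exp\!\big(\theta L^d\sum_{k\ge1}k^{-1}\varphi_\Lambda^{*k}(0)z^k\big)$, so that the exponential formula gives $Z_N/N!=[z^N]F(z)$ for the partition function $Z_N$, and the standard cycle-counting identity yields, for fixed $j\ge1$,
\[
\mathbb{P}(L_1=j)=\frac{\theta}{\rho}\,\varphi_\Lambda^{*j}(0)\cdot\frac{[z^{N-j}]F(z)}{[z^N]F(z)},
\]
with analogous formulas for the joint law of all cycles. The saddle of $[z^N]F=\tfrac1{2\pi i}\oint F(z)z^{-N-1}dz$ is the unique root $r_*=r_*(L)\in(0,1)$ of $\theta\sum_{k\ge1}\varphi_\Lambda^{*k}(0)z^k=\rho$. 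As $L\to\infty$ this root converges to the root in $(0,1)$ of $\sum_{k\ge1}\varphi^{*k}(0)z^k=\rho/\theta$ when $\rho<\rho_c$; it converges to $1$ (at a sub-polynomial rate) when $\rho=\rho_c$; and it converges to $1$ with $1-r_*(L)\asymp N^{-1}$ when $\rho>\rho_c$, so that $r_*(L)^N$ stays bounded away from $0$ --- the hallmark of condensation. This trichotomy is the sub-critical / critical / super-critical dichotomy of the theorem.

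In the sub-critical and critical cases the plan is to carry out a saddle-point, respectively singularity, analysis of $[z^N]F$. When $\rho<\rho_c$ the limiting saddle $r_*$ is interior and classical steepest descent gives $[z^{N-j}]F/[z^N]F\to r_*^{\,j}$; when $\rho=\rho_c$ the saddle approaches the boundary singularity of $\sum_k\varphi^{*k}(0)z^k$ at $z=1$ --- whose leading singular exponent is $(d-2)/2$, coming from $\varphi^{*k}(0)\sim c\,k^{-d/2}$ --- and one combines saddle bounds with the transfer-theorem circle of ideas to obtain the same limiting ratio with $r_*=1$. In either regime $L_1\overset{d}{\longrightarrow}Y$ with $\mathbb{P}(Y=j)=\theta\rho^{-1}\varphi^{*j}(0)r_*^{\,j}$, a genuine probability law because $\sum_j\theta\rho^{-1}\varphi^{*j}(0)r_*^{\,j}=1$ by the defining property \eqref{eq:r_*_thm_def} of $r_*$ (with $r_*=1$ permitted at criticality); inserting the local-CLT asymptotics of $\varphi^{*j}(0)$ with $r_*=1$ gives \eqref{eq:asymptotic of Y}. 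To conclude $\nu=0$ I would use the error terms in the saddle-point estimates to show $\sum_{j\le\epsilon N}\mathbb{P}(L_1=j)\to1$ for every $\epsilon>0$, which makes \eqref{eq:def of nu} vanish.

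The super-critical case is the condensation regime and is the main obstacle. Since $r_*(L)\to1$, the coefficient ratio $[z^{N-j}]F/[z^N]F\to1$, which with $\varphi_\Lambda^{*j}(0)\to\varphi^{*j}(0)$ gives $\mathbb{P}(L_1=j)\to\theta\rho^{-1}\varphi^{*j}(0)$, i.e.\ \eqref{eq:small cycles in supr critical}; summing this over $j\le M$ and letting first $N\to\infty$ and then $M\to\infty$ shows the expected fraction of points in cycles of bounded length tends to $\theta\rho^{-1}\sum_j\varphi^{*j}(0)=\rho_c/\rho$, whence $\nu=1-\rho_c/\rho$ once cycles of intermediate length are ruled out. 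The remaining and hardest step is to identify the law of the macroscopic cycles; for this I would exploit that $w_k=L^d\varphi_\Lambda^{*k}(0)\to1$ on macroscopic scales, so that conditionally on the set $S$ of points lying in cycles longer than a fixed threshold and on $|S|$, the induced permutation on $S$ is, up to errors vanishing with $N$, distributed proportionally to $\theta^{\#\text{cycles}}$ --- i.e.\ Ewens$(\theta)$. Combining this with the concentration $|S|/N\to\nu$ and with the convergence of the ordered cycle lengths of an Ewens$(\theta)$ permutation, divided by its size, to $\PD(\theta)$ yields $\frac{1}{\nu N}(\ell_1,\ell_2,\dots)\overset{d}{\longrightarrow}\PD(\theta)$. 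Making this decoupling rigorous --- the slowly varying weights, the concentration of $|S|$, and the \emph{joint} convergence of all long cycles rather than merely their first moments --- is where the bulk of the technical work lies; I expect to carry it out by re-running the generating-function analysis with an auxiliary variable marking the total length of the long cycles.
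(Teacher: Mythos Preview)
Your sub-critical argument matches the paper's (Section~\ref{sec:sub-critical}): interior saddle $r_*\in(0,1)$, standard steepest descent, $[z^{N-j}]F/[z^N]F\to r_*^{\,j}$, done.

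The critical and super-critical routes differ from the paper's.

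\textbf{Critical.} You propose singularity/transfer-theorem analysis at $z=1$. The paper carries this out only for $d\ge5$ (Theorem~\ref{thm:critical integral}, where $g''(1)<\infty$) and explicitly avoids it for $d\in\{3,4\}$, calling the required estimates ``quite involved'' there. Its actual proof (Lemma~\ref{lem:bound on partition function} and Corollary~\ref{cor:limiting distrubution in critical d>1}) is a short bootstrap: using the recursion $H_K=K^{-1}\sum_i W_{L,K-i}H_i$ together with an approximate monotonicity of the weights (Claim~\ref{claim:claim on W}), it inducts on $K$ from a sub-critical base point $N_1<N$ (where the saddle estimates already apply) to show $H_{N-1}(L)/H_N(L)\ge1-\epsilon$. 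This gives $\liminf\mathbb{P}(L_1=j)\ge\theta\rho_c^{-1}\varphi^{*j}(0)$ for each fixed $j$, and since the right side sums to $1$ this forces convergence in distribution. Your approach may well succeed for $d\ge5$, but it does not obviously cover $d=3,4$; the paper's bootstrap is dimension-uniform and reuses the sub-critical work rather than requiring new contour estimates.

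\textbf{Super-critical.} The paper does not run a saddle argument here; it performs singularity analysis on a contour passing \emph{outside} the unit disc near $z=1$ (Theorem~\ref{thm:supercritical integral}), obtaining the explicit uniform asymptotic
\[
H_{N-j}(L)\sim \frac{e^{F_L(1)}}{\Gamma(\theta)}\,N^{\theta-1}\Big(1-\tau-\tfrac{j}{N}\Big)^{\theta-1},\qquad \tau=\frac{\rho_c}{\rho},
\]
uniformly in $j\le(1-\tau-\epsilon)N$. Plugging this into Lemma~\ref{lem:distribution_of_ell_1,ell_2} directly exhibits the $\betadist(1,\theta)$ stick-breaking structure: the paper shows that $(L_1,\ldots,L_m)/N$ converges to the first $m$ coordinates of a modified stick-breaking process (Definition~\ref{def:def of X_i}, Theorem~\ref{thm:convergence to Poisson-Dirichlet}) whose decreasing rearrangement is $(1-\tau)\cdot\PD(\theta)$. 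Your conditioning route---restrict to the set $S$ of points in long cycles, argue the induced permutation is approximately Ewens$(\theta)$---is heuristically sound, but making it rigorous needs separate proofs of the concentration of $|S|/N$ and a quantitative ``approximately Ewens'' statement strong enough to transfer the $\PD$ limit. The paper bypasses both: the factor $(1-\tau-j/N)^{\theta-1}$ in the $H_{N-j}$ asymptotic already \emph{is} the beta density, so the stick-breaking (and hence $\PD$) limit falls out of the partition-function asymptotics directly, with no conditioning or concentration step.
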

\subsection{Extensions and remarks} The above theorems identify sub-critical, critical and super-critical regimes of density governing the asymptotic distribution of cycles lengths in $\pi$ in each dimension $d\ge 1$. The limiting distribution of the length of the cycle containing $1$ is determined in all regimes and a Poisson-Dirichlet limit law is proved for the joint distribution of cycle lengths in the super-critical regimes. We make several remarks concerning these statements and additional results which may be deduced with the techniques of this paper:
\begin{itemize}[wide=0pt, leftmargin=*]
  \item Integrability: Our proofs hinge on an integrability property of the model, that the marginal probability
of the permutation $\pi$ has a representation as a product of
weights depending only on the length of the cycles in $\pi$. See Proposition~\ref{prop:product of weights} below for the exact statement. This
fact was also central in previous works on the model (see, e.g.,
\cite[Proposition 3.1]{betz2011spatial}).
  \item Joint distribution of cycle lengths: In the sub-critical cases in all dimensions $d\ge 1$ and the critical cases in dimensions $d\ge 2$, one may extend the above limit laws to apply to the joint distribution of any fixed number of $L_1, L_2, \ldots$, yielding that the cycle lengths become asymptotically independent and identically distributed (see Remark~\ref{remark:asymptotically_IID} and Remark~\ref{remark:asymptotically_IID_critical_d_ge_5}). More precisely, when $\rho$ is fixed as $N\to\infty$, with $\rho\le \rho_c$ if $d\ge 3$, one obtains that for any fixed $k$,
      \begin{equation*}
        (L_1, L_2,\ldots, L_k) \overset{d}{\longrightarrow }(Y_1, Y_2,\ldots, Y_k)
      \end{equation*}
      where the $(Y_j)$ are independent, each having the distribution of the corresponding $Y$ variable in the above theorems. Similarly, in the sub-critical II regime in one dimension it holds that
      \begin{equation*}
        \left(\frac{\theta ^2L_1}{2\var (X)\rho ^2}, \frac{\theta ^2L_2}{2\var (X)\rho ^2},\ldots, \frac{\theta ^2L_k}{2\var (X)\rho ^2}\right) \overset{d}{\longrightarrow } \left(Y_1,Y_2,\ldots, Y_k\right)
      \end{equation*}
      where the $(Y_j)$ are independent, each having the $\gama \left(\frac{1}{2},1\right)$ distribution. The analogous statement holds in the sub-critical II and critical regimes in two dimensions.
  \item Joint distribution of $(\mathbf{x}, \pi)$: Examination of the joint density \eqref{eq:model_density} of $(\mathbf{x}, \pi)$ reveals that, after conditioning on $\pi$, the distribution of $\mathbf{x}$ becomes a collection of independent random walk bridges on the torus $\Lambda$, with jump density $\varphi$ and uniform starting points, whose lengths are the lengths of the cycles in $\pi$. Thus, the marginal distribution of $\pi$ determines the joint distribution of $(\mathbf{x}, \pi)$ in a simple manner.
  \item Number of cycles of given length: Denote by $C_j$ the number of cycles of length $j$ in the random permutation $\pi$. One can relate the moments of $C_j$ and the joint distribution of the $(L_j)$, as discussed in Remark~\ref{remark:connection}. For instance, $\mathbb E\left(C_j/N\right) = \frac{1}{j}\mathbb P(L_1=j)$. Consequently $\mathbb{E}(C(\pi))=N\cdot \E(\frac{1}{L_1})$. Another simple consequence of these relations, deduced by bounding the variance of $C_j$, is that in the regime where $\rho$ and $j$ are fixed as $N\to\infty$, with $\rho\le \rho_c$ if $d\ge 3$, then $C_j/N$ converges in probability to $\mathbb E\left(C_j/N\right)$. We do not study the distribution of the $(C_j)$ further in this work.
  \item Number of cycles: Our techniques provide further information on the distribution of the number of cycles $C(\pi)$. Specifically, they provide access to the asymptotics of $\mathbb{E}(t^{C(\pi)})$ as $N\to\infty$ in the above regimes, for most values of $t>0$. This is further explained in Remark~\ref{remark:generating_function_of_number_of_cycles} but is not developed in this work.
  \item Density in critical regime in one dimension: Probability laws involving the Jacobi theta function have appeared in several works; see \cite{biane2001probability} for a survey. The limiting density \eqref{eq:complicated distribution} obtained in the critical regime in one dimension is of this kind, though we have not seen its exact expression in previous works (a similar expression is in \cite[equation (3.11)]{biane2001probability}).
  \item Schwartz assumption: Our theorems are proved under the assumptions in~\eqref{eq:assumptions_on_phi}. While the assumption that $\mathbb E(X)=0$ is essential to the results, the assumption that $\varphi$ is a Schwartz function may be weakened, requiring that $\varphi$ has sufficiently many derivatives and these decay sufficiently fast. We have not attempted to keep track of the minimal assumptions used in the proof of each result.
  \item The requirement $\rho\le N$: This assumption is imposed in the super-critical parts of Theorem~\ref{thm:d=1} and Theorem~\ref{thm:d=2} but should not be necessary for the results to hold. The assumption is equivalent to requiring that $L\ge 1$ and is made in order to simplify certain technical points in our argument, bearing in mind that most interest is in the case that $L$ tends to infinity with $N$.
\end{itemize}

\subsection{Physics background, previous results and related models}\label{sec:background}
\subsubsection{Background and previous results}
One of the main motivations for studying the spatial random permutation model \eqref{eq:model_density} comes from physics, where it was proposed by Matsubara \cite{matsubara1951quantum} and Feynman \cite{feynman1953atomic} to express quantities arising in the interacting Bose gas as expectation values in a model of random permutations. It was observed that the ideal (non-interacting) Bose gas gives rise to the spatial random permutation model studied in this work with $\theta=1$ and Gaussian $\varphi$ (this is sometimes called the Feynman-Kac representation of the ideal Bose gas). With this link, the phenomenon of Bose-Einstein condensation was related to the appearance of macroscopic cycles in the random permutation. S{\"u}t{\H o} \cite{suto1993percolation, suto2002percolation} further elucidated this link, combining exact calculations and certain approximations, by showing that Bose-Einstein condensation in the ideal Bose gas occurs exactly when macroscopic cycles arise in the corresponding spatial random permutation model and by deriving the limiting distribution of $L_1$ in both the sub-critical and super-critical regimes in dimensions $d\ge 3$.

A mathematical investigation of the spatial random permutation model \eqref{eq:model_density} was conducted by Betz and Ueltschi \cite{betz2009spatial, betz2011spatial}. They studied the model for the class of jump densities $\varphi$ having a non-negative Fourier transform, in particular, having $\varphi(x)=\varphi(-x)$ for all $x$, and satisfying certain additional technical assumptions. Their results include the formula \eqref{eq:rho_c_def} for $\rho_c$, a proof that the fraction of points in macroscopic cycles (equivalently, of `super-constant' size) equals $\min(0,1-\frac{\rho_c}{\rho})$ at each fixed density $\rho$ and the Poisson-Dirichlet limit law for the length of macroscopic cycles in the super-critical regime. Their results apply in dimensions $d\ge 3$ and also in dimensions $d\in\{1,2\}$ when the jump density $\varphi$ is such that $\rho_c$ is finite there (this can occur when $\varphi$ has heavy tails). Their analysis includes the parameter $\theta$ and, in fact, allows more general cycle weights converging to $\theta$.

Our results compare with those of Betz and Ueltschi as follows. Our assumption~\eqref{eq:assumptions_on_phi} yields a class of jump densities $\varphi$ which is neither wider nor narrower than that of \cite{betz2009spatial, betz2011spatial}, as there exist slowly-decaying functions with non-negative Fourier transform (e.g., $(1+|x|)^{-\gamma}$, $1<\gamma<2$, in dimension $d=1$) and there exist Schwartz functions with zero center of mass having complex, or real but sometimes negative, Fourier transform (such as $\exp(-x^4)$ or $\exp(-(x-2)^2) + 2\exp(-(x+1)^2)$ in dimension $d=1$). In the intersection of the two classes, the results of \cite{betz2009spatial, betz2011spatial} apply only in dimensions $d\ge 3$ and yield there the fraction of points in cycles of macroscopic size and the Poisson-Dirichlet limit law. Our work extends the analysis in dimensions $d\ge 3$ by further providing the limiting distribution of cycle lengths for all values of $\rho$. We further analyze the distribution of cycle lengths in dimensions $1$ and $2$.

\subsubsection{Surrogate-spatial model}
Bogachev and Zeindler \cite{bogachev2015asymptotic} studied a related model for random permutations which they term the \emph{surrogate-spatial model}. In this model the probability measure on permutations is given by cycle weights, with the formula
\begin{equation}\label{eq:surrogate_spatial_model}
  \mathbb{P}(\pi = \sigma) = \frac{1}{Z}\prod_{j=1}^N (\theta_j + N\kappa_j)^{C_j(\sigma)},\quad \sigma\in \S_N,
\end{equation}
where $C_j(\sigma)$ is the number of cycles of length $j$ in $\sigma$ and $(\theta_j)$, $(\kappa_j)$ are sequences independent of $N$ for which various behaviors are allowed. The spatial random permutations model \eqref{eq:model_density} is also of this form with the sequence $(\theta_j + N\kappa_j)$ replaced by a weight sequence $(W_{L,j})$ given by~\eqref{eq:W_L_j_def} which takes into account the density $\varphi$ and the geometry; see Proposition~\ref{prop:product of weights}. Very roughly, we may say that the spatial random permutations model corresponds to the choice
\begin{equation}\label{eq:theta_j_and_kappa_j}
\theta_j = \theta\quad\text{and}\quad \kappa_j=\frac{1}{\rho j^{d/2}},
\end{equation}
see Corollary~\ref{cor:corollary on W} and Lemma~\ref{lem:asymptotics of kappa _j}. Indeed, the spatial random permutation model served as the inspiration for the surrogate-spatial model \cite{bogachev2015asymptotic} and the two models are close relatives of each other with the choice \eqref{eq:theta_j_and_kappa_j} when $\rho$ is fixed. The impressive work of Bogachev and Zeindler provides a very detailed analysis of the surrogate-spatial permutation model including identifying sub-critical, critical and super-critical regimes, finding the limiting distributions of cycle lengths and establishing a Poisson-Dirichlet limit law in the super-critical regime. Their proofs rely on saddle point and singularity analysis of suitable generating functions. Our approach is inspired by their analysis, adapting the techniques to the model here and augmenting them with additional tools as necessary.

Let us discuss the main differences between the analysis of~\cite{bogachev2015asymptotic} and the analysis here:
\begin{itemize}[wide=0pt, leftmargin=*]
  \item The generating function: The analysis of~\cite{bogachev2015asymptotic} proceeds via singularity analysis of a generating function $G(z):=\sum_{j=1}^\infty \frac{\theta_j + N\kappa_j}{j}z^j$. This idea also forms the basis of our approach with $(\theta_j + N\kappa_j)$ replaced by the sequence $(W_{L,j})$. The analysis in~\cite{bogachev2015asymptotic} makes several assmptions on the analytic properties of $G$. These include having a singularity on the positive half-line at its radius of convergence, having an analytic continuation to a larger domain and input on the derivatives of $G$ near the singularity. In the model discussed here these properties need to be derived for the generating function defined via the sequence $(W_{L,j})$ and a thorough investigation of $(W_{L,j})$ itself is required. This analysis is performed in Section~\ref{sec:properties} and has no counterpart in~\cite{bogachev2015asymptotic}.
  \item Dimensions $d=1,2$: The analysis of~\cite{bogachev2015asymptotic} is well-suited to the case of a fixed sequence $(\kappa_j)$ as $N$ tends to infinity. In our analysis of dimensions $d=1, 2$ we focus on the cases where $\rho$ tends to infinity with $N$, whence the analogous factor to $\kappa_j$ depends significantly on $N$ (in fact, the analogous factor depends on $N$ also when $\rho$ is fixed, but this dependence is milder leading mostly to technically more complicated arguments). Our analysis in these dimensions is thus more subtle, relying on precise control of the generating function near its singularity at $z=1$. The resulting scaling of the variable $L_1$ and limiting distributions differ from the ones in~\cite{bogachev2015asymptotic}.
  \item Critical cases: The case that $\rho=\rho_c$, the critical point, is analyzed in~\cite{bogachev2015asymptotic} for several choices of $(\theta_j + N\kappa_j)$ via singularity analysis. This method is in principle applicable to the critical spatial random permutations model in dimensions $d\ge 3$. However, as the sequence $(W_{L,j})$ seems to have a more complicated behavior around $j\approx L^2$, which is the region relevant to the critical case, the required estimates appear quite involved in dimensions $d=3,4$. We have thus implemented this approach only in dimensions $d\ge 5$ (which is simplified by having a finite second derivative at $z=1$ for a relevant generating function). For the critical cases in dimensions $d=2,3,4$ we use a seemingly novel approach, making use of recursion relations for the partition function and bootstrapping the estimates proved for the sub-critical cases. The analysis in dimension $d=1$ proceeds via singularity analysis but leads to different expressions than those discussed in~\cite{bogachev2015asymptotic}.
\end{itemize}

\subsubsection{Random permutations with cycle weights}
There has been significant activity \cite{betz2011random,betz2011spatial2,cipriani2015limit,benaych2007cycles,yakymiv2007random,timashov2008random,maples2012number,nikeghbali2013generalized,ercolani2014cycle,lugo2009profiles,barbour2005random,dereich2015cycle} in recent years in studying random permutations in which the probability of a permutation is proportional to a product over its cycles of a weight depending on the length of the cycle. The spatial random permutation model has this general form, see Proposition~\ref{prop:product of weights}, but differs from the models studied in this literature in that the weight assigned to a cycle depends both on the length of the cycle and on the size of the permutation.

\subsubsection{Spatial random permutations}
The model studied in this work, of spatial random permutations in Euclidean space, may be thought of as a particular case of a wider class. Informally, one may think of a spatial random permutation as a random permutation which is biased towards the
identity in some underlying geometry. This broad idea covers
many specific cases including the well-studied interchange model \cite{diaconis1981generating,toth1993improved,angel2003random,schramm2005compositions,berestycki2011emergence,alon2013probability,hammond2013infinite,hammond2015sharp,biskup2015gibbs,kotecky2016random,bjornberg2015large}  and the Mallows model (defined in dimension 1) \cite{mallows1957non,starr2009thermodynamic,borodin2010adding,gnedin2010q, gnedin2012two, mueller2013length,bhatnagar2015lengths,mukherjee2016fixed,gladkich2018cycle, angel2018mallows}. The study of the cycle structure is of great interest in such models as well. In particular, the famous T{\'o}th conjecture~\cite{toth1993improved} states that for $d\ge 3$, infinite orbits arise in the interchange process on $\Z^d$ run for a sufficiently long time, whereas for $d\in\{1,2\}$ no infinite orbits arise after any fixed amount of time. This is easy to establish in dimension $1$ (where further quantitative results are announced by Kozma and Sidoravicius) but remains wide open in dimensions $d\ge 2$ (the $d\ge3$ case is closely related to the long-standing open question of establishing a phase transition for the quantum Heisenberg ferromagnet \cite{toth1993improved}; see also Section~\ref{sec:quantum_Heisenberg_model} below). In this regard, the possibility of \emph{universality} is especially intriguing, that the results of this paper, such as the limiting distributions obtained in dimensions $1$ and $2$ when $\rho$ grows with $N$ or the general features of the sub-critical, critical and super-critical behavior in dimensions $d\ge 3$, are valid also for the other spatial permutation models. We illustrate this possibility with specific analog statements on the interchange and Mallows models, after describing these models briefly.

Let $G = (V,E)$ be a finite or infinite bounded-degree graph. The
\emph{interchange model} \cite{toth1993improved} (also called the \emph{stirring model}
in some of the literature) gives a dynamics on permutations in
$\S_V$, one-to-one and onto functions $\pi\colon V\to V$, which is
associated to the structure of the graph. Each edge of the graph is
endowed with an independent Poisson process of rate $1$. An edge is
said to \emph{ring} at time $t$ if an event of its Poisson process
occurs at that time. Starting from the identity permutation
$\pi^0\in \S_V$, the interchange process, introduced by T\'oth
\cite{toth1993improved}, is the permutation-valued stochastic process $(\pi^t)$
obtained by performing a transposition along each edge at each time
that it rings. The interchange model on the subset $\{0,\ldots, M-1\}^d\subset\Z^d$ equipped with periodic boundary conditions bears formal similarity with the model of Euclidean random permutations studied in this work, under a suitable translation of the parameters. In the interchange model, the location of a particle at time $t\ll M^2$ has roughly a centered Gaussian distribution with covariance matrix $2t$ times the identity. The two models are thus similar after rescaling space by a factor of $\sqrt{t}$, taking $L = \frac{M}{\sqrt{t}}$ and $N = M^d$, so that $\rho=\frac{N}{L^d}=t^{d/2}$.

The Mallows distribution \cite{mallows1957non} on $\S_n$ with parameter $q > 0$ gives probability proportional to $q^{\mathop{inv}(\pi)}$ to each permutation $\pi$, where $\mathop{inv}(\pi) := |{(s, t)\,\colon\,s < t\text{ and }\pi_s > \pi_t}|$ counts the number of inversions in $\pi$. For $0<q<1$, the model may also be defined on the set of all integers~$\Z$~\cite{gnedin2012two}. Focusing on the case $0<q<1$, an important feature of the Mallows model is the typical displacement of points in the permutation, with $|\pi(s) - s|$ typically being of order $\min(\frac{q}{1-q}, n)$ \cite{braverman2009sorting, gnedin2012two, bhatnagar2015lengths}. Similarly to the interchange model, this suggests a similarity of the Mallows model with the Euclidean random permutation model in one dimension, when the parameters are chosen as $L=n/\min\left(\frac{q}{1-q}, n\right)$ and $N = n$, so that $\rho=\min(\frac{q}{1-q}, n)$. The threshold for the emergence of macroscopic cycles, and the Poisson-Dirichlet distribution, in the Mallows model was found in \cite{gladkich2018cycle} and is in qualitative correspondence with the results of Theorem~\ref{thm:d=1}.

The following statements are analogous to the sub-critical results of Theorem~\ref{thm:d=1} and Theorem~\ref{thm:d=2} (in the infinite-volume limit):
\begin{itemize}
  \item Let $L_1(t)$ be the length of the cycle containing the origin in the interchange process on $\Z ^d$ at time $t$. Then, there are constants $c_1,c_2>0$ such that when $d=1$,
\begin{equation}\label{eq:Interchange_universality_1d}
\frac{L_1(t)}{c_1\cdot t}\overset{d}{\longrightarrow } \gama \left(\frac{1}{2},1\right),\quad t\to \infty,
\end{equation}
and when $d=2$,
\begin{equation}\label{eq:Interchange_universality_2d}
\frac{\log L_1(t)}{c_2 \cdot t}\overset{d}{\longrightarrow } U[0,1],\quad t\to \infty.
\end{equation}
  \item Let $0<q<1$ and consider the Mallows model on $\Z$ \cite{gnedin2012two}. Let $L_1(q)$ be the length of the cycle containing $0$. Then there is a constant $c>0$ such that
\begin{equation}\label{eq:Mallows_universality}
c\cdot L_1(q) (1-q)^2\overset{d}{\longrightarrow } \gama \left(\frac{1}{2},1\right)\quad \text{as }q\uparrow 1.
\end{equation}
\end{itemize}
However, analytical calculations of Angel and Hutchcroft~\cite{Angel2018privatecommunication} imply that the statement~\eqref{eq:Mallows_universality} does not hold. This is also apparent in simulations, see~Figure~\ref{mallowsfigure}. On the positive side, the simulations seem to indicate that the limit in \eqref{eq:Mallows_universality} should at least exist and share some general features with the gamma distribution. Given that the Mallows statement~\eqref{eq:Mallows_universality} does not hold, we also do not expect the corresponding statement for the 1D interchange model to hold exactly as in~\eqref{eq:Interchange_universality_1d}, though, again, the limit there is still likely to exist and bear similarities with the gamma distribution. The situation for the 2D statement~\eqref{eq:Interchange_universality_2d} is less clear. One may be more hopeful that it holds as written, as the presence of the logarithm on the left-hand side renders the statement more robust to small deviations in the distribution of $L_1(t)$ itself.

\begin{figure}[!tbp]
  \centering
  \begin{minipage}[b]{0.49635\textwidth}
    \includegraphics[width=\textwidth]{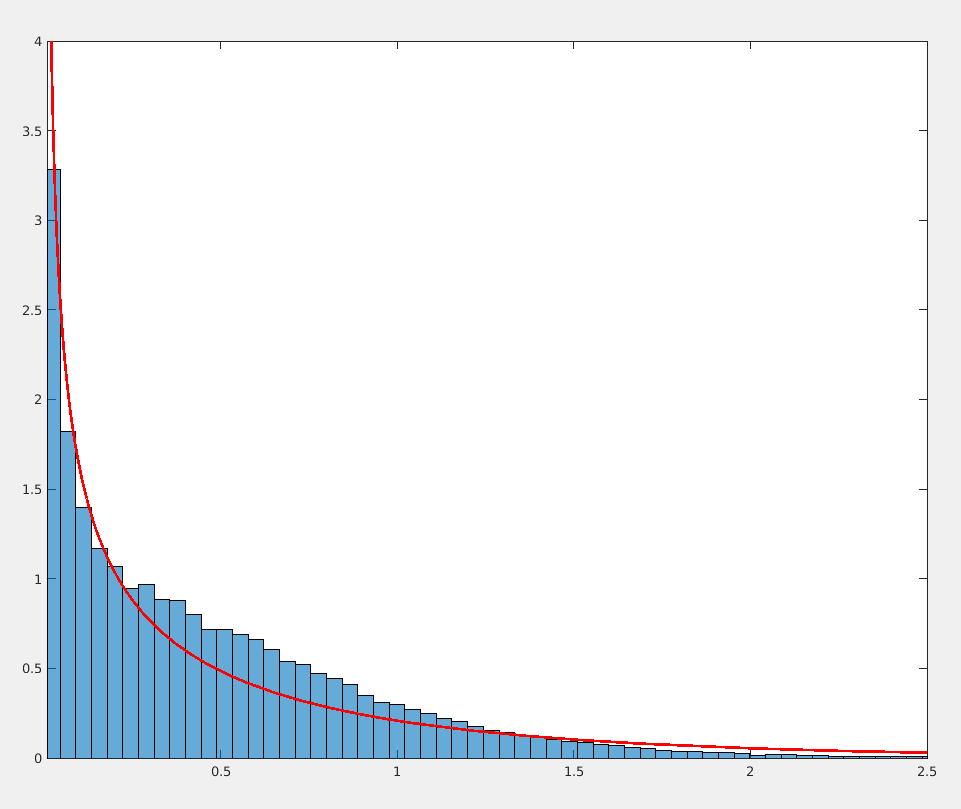}
  \end{minipage}
  \hfill
  \begin{minipage}[b]{0.49635\textwidth}
    \includegraphics[width=\textwidth]{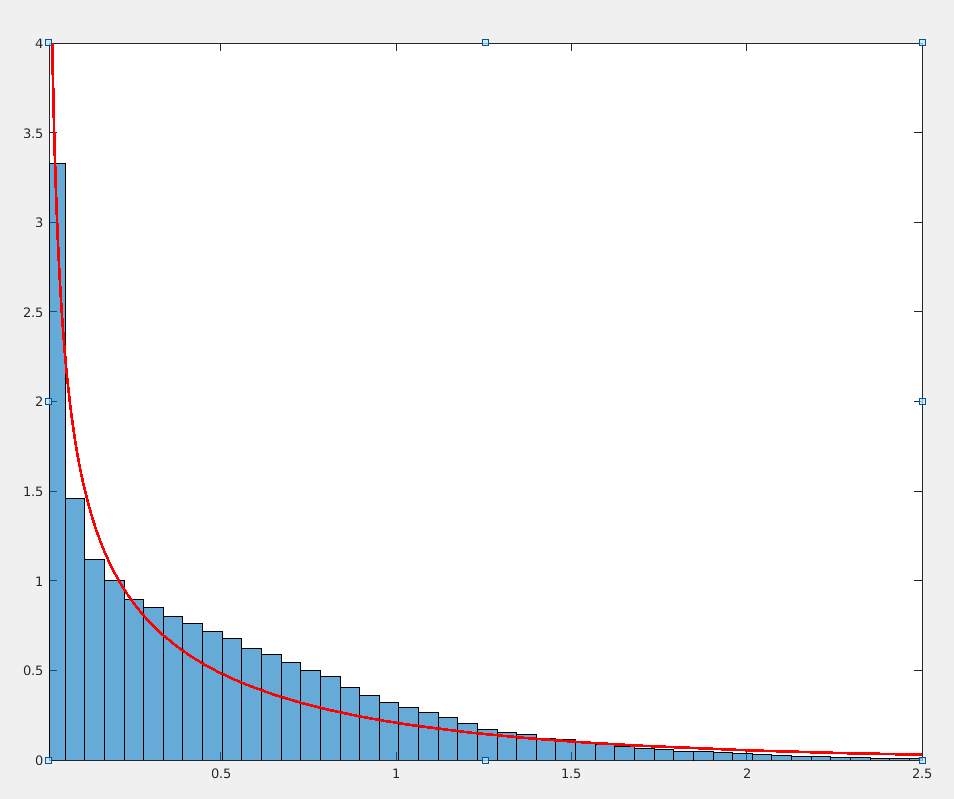}
  \end{minipage}
      \caption{The empirical density of $0.3\cdot L_1(q)(1-q)^2$ as obtained from the simulation (in blue) for $q=0.9$ and $q=0.975$ respectively, and the density of $\gama(\frac{1}{2},1)$ distribution (in red). The simulations were sampled using the algorithm described in Section~3.5 of \cite{gladkich2018cycle} with 100000 iterations.}
\label{mallowsfigure}
\end{figure}

\subsubsection{Quantum Heisenberg model in two dimensions}\label{sec:quantum_Heisenberg_model}
Continuing further with the analogy to the interchange process, one may also speculate that the relation~\eqref{eq:Interchange_universality_2d} in $d=2$ continues to hold also for the interchange model which is tilted by a factor of $2$ to the power of the number of cycles (in analogy with setting $\theta=2$ in for the model studied in this work). It is known from the work of T{\'o}th \cite{toth1993improved} that this tilted model is in direct correspondence with the quantum Heisenberg model, at the temperature $T$ which is the inverse of the time $t$ to which the interchange model is run. Precisely, the spin-spin correlation between sites $x$ and $y$ in the quantum Heisenberg model equals a constant times the probability that $x$ and $y$ are in the same cycle \cite{toth1993improved} (see also \cite{ueltschi2013random}). Thus, the length of the cycle containing a given site $x$ equals the sum of spin-spin correlations between $x$ and the other sites. A relation of the form~\eqref{eq:Interchange_universality_2d} (accompanied with appropriate integrability conditions) would then imply that the sum of spin-spin correlations is of the order $\exp(\frac{c}{T})$ for some constant~$c>0$. This is in accordance with the predicted behavior of the Heisenberg ferromagnet, which states that spin-spin correlations decay exponentially at every positive temperature, with a correlation length which is exponential in the inverse temperature. Such predictions, going back to Polyakov~\cite{Pol75}, remain wide open in the mathematical literature.

\subsubsection{Band matrices}
We mention that the results regarding the emergence of macroscopic cycles in one dimension bear formal similarity with a conjectured localization transition for random band matrices. This similarity is detailed in \cite[Section 1.2.2]{gladkich2018cycle} in the context of the Mallows measure on permutations. One may also define random band matrices in dimensions $d\ge 2$ where they are rather poorly understood. Indeed, even taking the bandwidth to be one (when the band matrix corresponds to a random Schr\"odinger operator) leads to many unsolved problems around the famous Anderson localization phenomenon (foremost among these is the question of delocalization of random Schr\"odinger operators at low disorder in dimensions $d\ge 3$. Also conjectured but unresolved is the fact that two-dimensional random band matrices exhibit localization for any fixed bandwidth). Does the similarity between the emergence of macroscopic cycles and the localization properties of random band matrices extend to dimensions $d\ge 2$? If true, such similarity would predict that in two dimensions, the critical bandwidth for delocalization in a random band matrix on the box $[-L,L]^2$ is of order $\sqrt{\log L}$.

\subsection{Acknowledgements}
We thank Daniel Ueltschi for helpful advice and for referring us to the paper of Bogachev and Zeindler \cite{bogachev2015asymptotic}. We thank Volker Betz, Gady Kozma, Mikhail Sodin and Elad Zelingher for useful discussions. We thank Xiaolin Zeng for helpful comments on an earlier version of this work. We thank Omer Angel and Tom Hutchcroft for considering the validity of the statement~\eqref{eq:Mallows_universality} on the Mallows model and letting us know the conclusion of their calculations. We are grateful to two anonymous referees whose thoughtful comments helped to elucidate the presentation of the results.

\section{Preliminaries}\label{sec:preliminaries}

{\bf Constant policy:} Throughout the paper we regard the dimension $d\ge 1$, the density function $\varphi$ (satisfying the assumptions~\eqref{eq:assumptions_on_phi}) and the real $\theta>0$ as fixed and our emphasis is on the behavior of the various quantities of interest as the parameters $N$ and $L$ (or the density $\rho=\frac{N}{L^d}$) change. Constants such as $C, c, \epsilon, \delta$ denote positive numerical values which may depend on $d, \varphi$ and $\theta$ but are independent of all other parameters (in particular, of $N$ and $L$). When the constant depends on an additional parameter this will be noted explicitly, writing, for instance, $C_n$ for a value which depends on $d, \varphi,\theta$ and $n$. The constants $C,c$, or their counterparts depending on additional parameters, are regarded as generic constants in the sense that their value may change from one appearance to the next, with the value of $C$ increasing and the value of $c$ decreasing. However, constants labeled with a fixed number, such as $C_0, c_0$, have a fixed value throughout the section that they appear in.

{\bf Oh notation:} For two functions $f,g$, possibly depending on many parameters, we write $f = O(g)$ to denote that $\frac{|f|}{|g|}\le C$ where $C$ is as above, that is, independent of all parameters besides $d, \varphi$ and $\theta$. We use a similar notation when the constant may depend on additional parameters writing, for instance, $f = O_n(g)$ to denote that $\frac{|f|}{|g|}\le C_n$. We write $f = o(g)$ as $k\to k_0$ (where $k_0$ may be infinity) to denote that $\lim_{k\to k_0} \frac{|f|}{|g|}=0$. If we write, in addition, that the little Oh is uniform in $m\in I_k$ (where $I_k$ may or may not depend on $k$) we mean that $\lim_{k\to k_0} \sup_{m\in I_k} \frac{|f|}{|g|}=0$. The notation $f\sim g$ as $k\to k_0$ means that $f = (1+o(1))g$ as $k\to k_0$ and we may again add a uniformity requirement.

{\bf Notation:} We write $\mathbb{N}:=\{1,2,3,\ldots\}$ for the set of positive integers.

We use the Pochhammer symbol $(x)_n$ defined by
\[(x)_n:=x\left(x-1\right)\cdots \left(x-n+1\right),\quad x\in \R,\, n\in \mathbb N\]
and $(x)_0:=1$.

A smooth function $f:\R^d \to \mathbb C$ is called
\emph{Schwartz} if
\[\underset{x\in \R ^d}{\sup }\left|x_1^{\alpha _1}\cdots x_d^{\alpha _d} \cdot \frac{\partial ^{\beta _1+\dots +\beta _d}f}{\partial x_1^{\beta _1}\cdots \partial x_d^{\beta _d}}\right|<\infty\quad\text{for any integers $\alpha _1,\dots ,\alpha
_d,\beta _1,\dots \beta _d\ge 0$}.\]
We use the shorthand $f^{(n)}$ to denote the $n$-th derivative of a function $f$ and write $\left[z^n\right]f(z)$ to denote the coefficient of $z^n$ in the power series of $f(z)$. The convolution $f * g$ of integrable $f,g:\R^d\to\mathbb C$ is defined, as usual, by $(f * g)(x):=\int f(y)g(x-y)dy$. We write $f^{*j}$ to denote the $j$-fold convolution $f * f * \cdots * f$.

We denote the closure of a set $\Omega \subseteq \C$ by $\overline{\Omega }$ and let $\mathbb {D}$ denote the open unit disc,
\[\mathbb{D}:=\{z\in \C \ :\ |z|< 1 \}.\]

We use the standard branches of the argument, logarithm and power functions on the complex plane. That is, we take $\arg(z)\in (-\pi ,\pi ]$ for $z\in\mathbb C$ and consider $\log z:=\log|z| + i\arg(z)$ and $z^\alpha:=e^{\alpha\log z}$ for $z\in\mathbb C\setminus (-\infty,0]$.

We write $\|v\|$ for the Euclidean norm of a vector $v\in\R^d$. Such vectors are thought of as column vectors for purposes of matrix multiplication and we write $v^T$ for the row vector obtained after transposition. Similarly, for a matrix $B$ we write $B^T$ for the transposed matrix.

We write $\mathds{1}_A$ for the indicator random variable of an event $A$.

We denote by $N\left(\mu, \Sigma \right) $ the multivariate normal distribution with mean vector $\mu$ and covariance matrix $\Sigma $.

In several places in the paper it is convenient to discuss the `square root' matrix of the covariance matrix $\cov(X)$ of $X$. To this end we point out that $\cov(X)$ is (symmetric) positive definite as $X$ has a density. In the sequel we
\begin{equation}\label{eq:def of A}
  \text{fix $A$ to be a (symmetric) positive definite matrix satisfying $A^2 = \cov(X)$}.
\end{equation}

{\bf Hierarchy:} The environments Theorem, Proposition, Lemma and Claim all have the same formal meaning in our paper. Informally, we have tried to use the title to indicate a level in the hierarchy - propositions are used to prove theorems, lemmas are used to prove propositions, etc.

\section{Exact expression for the distribution of $\pi$}\label{sec:combinatorics}

In this section, following Betz and Ueltschi \cite{betz2011spatial} and Bogachev and Zeindler \cite{bogachev2015asymptotic}, we start by proving that the marginal probability of the permutation in the spatial random permutation model has a representation as a product of cycle weights. We then find a convenient generating function for the partition functions arising in this representation and express the main statistics of interest to us (such as the distribution of $L_1$) in terms of the partition functions.

We remind that the integer $d\ge 1$, density function $\varphi$ (satisfying the assumptions~\eqref{eq:assumptions_on_phi}) and real $\theta>0$ are fixed. Throughout the section we fix also the integer $N\ge 1$ and real $L>0$ and let $(\mathbf{x},\pi )$ be randomly sampled from the density \eqref{eq:model_density}.

\subsection{Marginal distribution of $\pi $ and generating function}\label{sec:convenient generating function}
We again denote by $C_j(\sigma )$ the number of cycles of length $j$ of a permutation $\sigma$.
\begin{prop}\label{prop:product of weights}
The marginal distribution of the permutation $\pi$ is given by
\begin{equation}\label{eq:probability of a permutation}
\mathbb P(\pi=\sigma )=\frac{1}{N!H_N(L)}\prod_{C}W_{L,|C|}=\frac{1}{N!H_N(L)}\prod _{j=1}^N\left(W_{L,j}\right)^{C_j(\sigma
)},\quad \sigma \in \S_N
\end{equation}
where the first product runs over all cycles $C$ of $\sigma$, $|C|$ denotes the length of the cycle $C$,
\begin{equation}\label{eq:W_L_j_def}
W_{L,j}:=\theta L^d\sum _{k\in \Z^d}\varphi^{*j}(Lk)
\end{equation}
and
\begin{equation}\label{eq:def of partition function}
H_N(L):=\frac{1}{N!}\sum _{\sigma  \in \S_N} \prod _{j=1}^N\left(W_{L,j}\right)^{C_j(\sigma )}
\end{equation}
is the partition function.
\end{prop}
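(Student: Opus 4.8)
The plan is to integrate out the particle positions $\mathbf{x}$ from the joint density \eqref{eq:model_density} to obtain the marginal law of $\pi$, and to observe that the resulting integral factorizes over the cycles of $\pi$. First I would fix $\sigma\in\S_N$ and write its disjoint-cycle decomposition. The unnormalized density of the pair $(\mathbf{x},\sigma)$ is $\theta^{C(\sigma)}\prod_{i=1}^N\varphi_\Lambda(x_{\sigma(i)}-x_i)$, and since $i\mapsto(i,\sigma(i))$ partitions $\{1,\dots,N\}$ according to which cycle $i$ belongs to, the product $\prod_{i=1}^N\varphi_\Lambda(x_{\sigma(i)}-x_i)$ splits as a product over the cycles $C$ of $\sigma$ of the factor $\prod_{i\in C}\varphi_\Lambda(x_{\sigma(i)}-x_i)$, which involves only the variables $\{x_i : i\in C\}$. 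Likewise $\theta^{C(\sigma)}=\prod_C \theta$. Hence, integrating over $\mathbf{x}\in\Lambda^N$ with respect to Lebesgue measure, the integral factorizes:
\[
  \int_{\Lambda^N}\theta^{C(\sigma)}\prod_{i=1}^N\varphi_\Lambda(x_{\sigma(i)}-x_i)\,d\mathbf{x}
  \;=\;\prod_{C}\;\theta\int_{\Lambda^{|C|}}\prod_{i\in C}\varphi_\Lambda(x_{\sigma(i)}-x_i)\,\prod_{i\in C}dx_i.
\]

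The second step is to evaluate the single-cycle integral and show it equals $W_{L,|C|}$ as defined in \eqref{eq:W_L_j_def}. Relabel the indices of a cycle $C$ of length $j$ as $i_1\to i_2\to\cdots\to i_j\to i_1$, and write $y_k=x_{i_k}$. The integral becomes $\theta\int_{\Lambda^j}\varphi_\Lambda(y_2-y_1)\varphi_\Lambda(y_3-y_2)\cdots\varphi_\Lambda(y_1-y_j)\,dy_1\cdots dy_j$. Changing variables to $y_1$ and the successive differences $z_k=y_{k+1}-y_k$ (for $k=1,\dots,j-1$), the last argument is $y_1-y_j=-(z_1+\cdots+z_{j-1})$, so the integrand no longer depends on $y_1$; integrating $y_1$ over $\Lambda$ gives a factor $L^d$. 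What remains is $\theta L^d\int_{\Lambda^{j-1}}\varphi_\Lambda(z_1)\cdots\varphi_\Lambda(z_{j-1})\varphi_\Lambda\!\big(-(z_1+\cdots+z_{j-1})\big)\,dz_1\cdots dz_{j-1}$. The key identity is that the convolution of periodized functions, restricted to one period, is the periodization of the convolution: $(\varphi_\Lambda * \varphi_\Lambda)|_\Lambda = (\varphi*\varphi)_\Lambda$ on $\Lambda$, where for a single periodization variable one unfolds one of the two $\Lambda$-integrals into an $\R^d$-integral using $\varphi_\Lambda(x)=\sum_k\varphi(x+Lk)$ and the periodicity of the other factor. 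Iterating this $j-1$ times collapses the $(j-1)$-fold integral over $\Lambda^{j-1}$ into $\sum_{k\in\Z^d}\varphi^{*j}(Lk)$ (here the Schwartz assumption \eqref{eq:assumptions_on_phi} guarantees absolute convergence of all the sums and the legitimacy of interchanging sum and integral, so that $\varphi^{*j}$ is again well-defined and rapidly decaying, and $\varphi_\Lambda$ is a bona fide density on $\Lambda$). This yields exactly $\theta L^d\sum_{k\in\Z^d}\varphi^{*j}(Lk)=W_{L,j}$.

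Combining the two steps, the unnormalized marginal weight of $\sigma$ is $\prod_C W_{L,|C|}=\prod_{j=1}^N (W_{L,j})^{C_j(\sigma)}$, which depends on $\sigma$ only through its cycle-type. Normalizing over $\S_N$ and writing the normalization as $N!\,H_N(L)$ with $H_N(L)$ given by \eqref{eq:def of partition function} gives \eqref{eq:probability of a permutation}; the extra $N!$ is included purely to match the definition of $H_N(L)$ and cancels in the ratio. (Along the way one sees that the model is well-defined precisely when $H_N(L)>0$, i.e.\ when $W_{L,j}>0$ for at least one cycle-type summing to $N$; handling the degenerate small-$N$ cases is a side remark, not part of the identity.)

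The main obstacle is the convolution/periodization identity: carefully justifying that the iterated integral over $\Lambda^{j-1}$ of a product of periodized densities, with the closing factor $\varphi_\Lambda(-(z_1+\cdots+z_{j-1}))$, equals $\sum_{k\in\Z^d}\varphi^{*j}(Lk)$. The cleanest route is an induction on $j$: unfold exactly one $\Lambda$-integral at a time into an $\R^d$-integral by absorbing the lattice sum from one $\varphi_\Lambda$ factor and using translation-invariance of Lebesgue measure together with the $L\Z^d$-periodicity of the remaining factors, reducing the number of periodized factors by one while producing a genuine convolution; the Schwartz decay makes every interchange of summation and integration (via dominated convergence / Tonelli) legitimate. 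The combinatorial bookkeeping — that $\prod_i\varphi_\Lambda(x_{\sigma(i)}-x_i)$ really does split along cycles and that the change of variables on each cycle is measure-preserving with the claimed Jacobian — is routine once set up.
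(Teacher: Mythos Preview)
Your proposal is correct and follows essentially the same approach as the paper: factorize the integral over cycles, use translation invariance on the torus to extract the volume factor $L^d$, and then invoke the periodization/convolution identity $(\varphi_\Lambda)^{\circledast j}=(\varphi^{*j})_\Lambda$ (which the paper states and proves separately as Claim~\ref{claim: torus convolution}, by exactly the inductive unfolding you describe). The only cosmetic difference is that the paper shifts all cycle coordinates by $y_1$ rather than passing to successive differences $z_k$, but the two changes of variables are equivalent.
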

We include the factor $N!$ in \eqref{eq:probability of a
permutation} for consistency with the notation in \cite{betz2011spatial}
and as it simplifies some of the resulting generating function
formulas below (e.g., equality \eqref{eq:generating_h(v)}).

In the proof, we write $x\pmod{L\Z^d}$, where $x\in\R^d$, to denote
the unique point $y\in\Lambda$ satisfying that $x-y \in L\Z^d$.  We also require the notion of the $j$-fold convolution of $\varphi_\Lambda$ with itself \emph{on the torus $\Lambda$}. Define $\varphi_\Lambda^{\circledast 1}:=\varphi_\Lambda $ and set, inductively, for $j\ge 1$,
\begin{equation}\label{eq:def of convolution on the torus}
\varphi_\Lambda^{\circledast (j+1)}(y):=\intop _\Lambda \varphi_\Lambda^{\circledast j}(x)\varphi_\Lambda(y-x)dx,\quad y\in \R ^d.
\end{equation}

\begin{claim}\label{claim: torus convolution}
For any $j\ge 1$ we have
\begin{equation}\label{eq:convolution_equality}
\varphi_\Lambda ^{\circledast j}(y)=\sum_{k\in \Z
^d}\varphi^{*j}(y+Lk), \quad y\in\R ^d.
\end{equation}
Equivalently, using the notation $(\cdot )_ \Lambda $ as in \eqref{eq:varphi_Lambda_def},
\begin{equation}\label{eq:convolution and projection}
(\varphi_\Lambda ) ^{\circledast j}= (\varphi ^{*j})_{\Lambda}.
\end{equation}
\end{claim}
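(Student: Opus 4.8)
The plan is to prove the identity \eqref{eq:convolution_equality} by induction on $j$, with the base case $j=1$ being exactly the definition \eqref{eq:varphi_Lambda_def} of $\varphi_\Lambda$. For the inductive step, I would assume that $\varphi_\Lambda^{\circledast j}(x)=\sum_{k\in\Z^d}\varphi^{*j}(x+Lk)$ and substitute this into the recursive definition \eqref{eq:def of convolution on the torus} of $\varphi_\Lambda^{\circledast(j+1)}$, together with $\varphi_\Lambda(y-x)=\sum_{m\in\Z^d}\varphi(y-x+Lm)$. The resulting expression is
\[
\varphi_\Lambda^{\circledast(j+1)}(y)=\intop_\Lambda\Bigl(\sum_{k\in\Z^d}\varphi^{*j}(x+Lk)\Bigr)\Bigl(\sum_{m\in\Z^d}\varphi(y-x+Lm)\Bigr)dx.
\]
The key manipulation is to interchange the sum over $k$ with the integral over $\Lambda=[0,L)^d$, and to absorb the shift $Lk$ into a change of variables $x\mapsto x-Lk$, which turns $\sum_{k\in\Z^d}\intop_\Lambda (\,\cdot\,)\,dx$ into a single integral over all of $\R^d$: explicitly, $\intop_{\R^d}\varphi^{*j}(x)\sum_{m\in\Z^d}\varphi(y-x+Lm)\,dx$. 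Then one more interchange (of the sum over $m$ with the $\R^d$-integral) gives $\sum_{m\in\Z^d}(\varphi^{*j}*\varphi)(y+Lm)=\sum_{m\in\Z^d}\varphi^{*(j+1)}(y+Lm)$, which is the claim for $j+1$. The equivalent formulation \eqref{eq:convolution and projection} is then just a restatement in the $(\cdot)_\Lambda$ notation.

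The only genuine issue is justifying the interchanges of summation and integration, i.e.\ applying Fubini/Tonelli and the translation-unfolding step. Here the Schwartz assumption \eqref{eq:assumptions_on_phi} on $\varphi$ does the work: since $\varphi$ is Schwartz, so is every convolution power $\varphi^{*j}$ (convolution of Schwartz functions is Schwartz), and in particular $\varphi^{*j}$ and $\varphi$ decay faster than any polynomial, so the double series $\sum_{k,m}\intop_\Lambda|\varphi^{*j}(x+Lk)|\,|\varphi(y-x+Lm)|\,dx$ converges (it is dominated by $\|\varphi^{*j}\|_{L^1(\R^d)}\cdot\sup_x\sum_m|\varphi(y-x+Lm)|$, and the periodized sum of $|\varphi|$ is bounded uniformly in its argument). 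This absolute convergence licenses all the rearrangements at once. I would also remark that the same decay ensures $\varphi_\Lambda^{\circledast j}$ is well-defined and finite, so the recursion \eqref{eq:def of convolution on the torus} makes sense to begin with.

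I expect the main obstacle — such as it is — to be purely bookkeeping: keeping the lattice shifts $Lk$ and $Lm$ straight through the unfolding step and making sure the change of variables $x\mapsto x-Lk$ on $\Lambda$, summed over $k\in\Z^d$, correctly reconstitutes the integral over $\R^d$ (this uses that $\{Lk+\Lambda : k\in\Z^d\}$ tiles $\R^d$). This is routine but worth writing out once carefully. No deep idea is needed; the claim is a standard ``periodization commutes with convolution'' fact, included here because it underlies the weight formula \eqref{eq:W_L_j_def} in Proposition~\ref{prop:product of weights}.
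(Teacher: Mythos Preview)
Your proposal is correct and follows essentially the same approach as the paper's proof: induction on $j$, with the inductive step carried out by substituting the hypothesis and the definition of $\varphi_\Lambda$, then unfolding $\sum_{k\in\Z^d}\int_\Lambda$ into $\int_{\R^d}$ via the tiling $\R^d=\bigcup_k(Lk+\Lambda)$. The paper's version reindexes the second lattice sum before unfolding and omits the Fubini justification you supply, but the argument is otherwise identical.
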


\begin{proof}
Intuitively, the identity \eqref{eq:convolution and projection} can be understood as follows, the left-hand side is the density of
the sum on the torus $\Lambda$ of $j$ independent copies of
$X\pmod{L\Z ^d}$, whereas the right-hand side is the density of
the projection to the torus of the sum in $\R^d$ of $j$ independent copies of $X$. We turn to prove \eqref{eq:convolution_equality} formally by induction. The case $j=1$ is exactly the definition of $\varphi_\Lambda$. Suppose \eqref{eq:convolution_equality} holds for some $j\ge 1$. Then
\begin{equation*}
\begin{split}
\varphi_\Lambda^{\circledast (j+1)}(y)&=\intop _\Lambda \varphi_\Lambda^{\circledast j}(x)\varphi_\Lambda(y-x)dx\\
&=\intop _\Lambda \sum_{k_1,k_2\in \Z^d}\varphi^{*j}(x+Lk_1)\varphi(y-x+Lk_2)dx\\
&=\sum_{k,k_1\in \Z^d}\intop _\Lambda \varphi^{*j}(x+Lk_1)\varphi(y-x+Lk-Lk_1)dx\\
&=\sum_{k\in \Z^d}\intop _{\R ^d}\varphi^{*j}(x)\varphi(y+Lk-x)dx=\sum_{k\in \Z^d}\varphi^{*(j+1)}(y+Lk). \qedhere
\end{split}
\end{equation*}
\end{proof}

\begin{proof}[Proof of Proposition~\ref{prop:product of weights}]
The marginal probability on permutations is
\begin{equation}
\mathbb P (\pi =\sigma )=\frac{\theta^{C(\sigma )}}{Z_N}\intop_{\Lambda
^N}\prod_{i=1}^N \varphi_\Lambda \left(
x_{\sigma (i)}-x_i\right)dx_1\cdots dx_N,\quad \sigma \in \S_N,
\end{equation}
where $Z_N$ is the appropriate normalization factor. It is
straightforward that the integral factorizes according to the cycles
in the permutation $\pi$. The contribution of each fixed point equals
\[\theta \intop _\Lambda \varphi_\Lambda (0)dy=\theta |\Lambda |\sum _{k\in \Z^d}\varphi (Lk)=W_{L,1}.\]
The contribution of each cycle $\left(y_1,\dots ,y_j\right)$ of length
$j\ge 2$ is (with the convention that $y_{j+1}$ is $y_1$)
\begin{equation}\label{eq:contribution_of_length_j_cycle}
\theta \intop _{\Lambda ^j}\prod_{i=1}^j \varphi_\Lambda \left(
y_{i+1}-y_i\right)dy_1\cdots dy_j=\theta \intop _{\Lambda }
\left(\;\,\intop _{\Lambda ^{j-1}}\prod_{i=1}^j \varphi_\Lambda
\left( y_{i+1}-y_i\right)dy_2\cdots dy_j\right)dy_1.
\end{equation}
By considering the change of variables $y_1\mapsto y_1$ and $y_i\mapsto y_i+y_1
\pmod{L\Z^d}$ for $2\le i\le j$ and using the the fact that $\varphi_\Lambda $ is periodic, we see that the inner integral does
not depend on the value of $y_1$.
Thus, the expression \eqref{eq:contribution_of_length_j_cycle}
becomes
\begin{equation}\label{eq:convolution}
\begin{split}
 \theta\left|\Lambda \right|\intop _{\Lambda ^{j-1}}\varphi_\Lambda (y_2) \varphi_\Lambda \left( y_3-y_2\right)\cdots\varphi_\Lambda &\left( y_j-y_{j-1}\right)\varphi_\Lambda (-y_j)\,dy_2\cdots dy_j\\
&=\theta \left|\Lambda \right|\varphi_\Lambda^{\circledast j}(0)=\theta L^d \sum_{k\in \Z ^d}\varphi^{*j}(Lk)=W_{L,j},
\end{split}
\end{equation}
where in the first equality we used \eqref{eq:def of convolution on the torus} and in the second equality we appealed to Claim~\ref{claim: torus convolution}.
\end{proof}

The next well-known identity
is a special case of the enumeration theorem of \emph{P\'olya}.
\begin{lemma}\label{lem:cycle_index_theorem}
Let $(a_j)_{j\in \mathbb N}$ be a sequence of complex numbers. Then the following formal power series expansion holds
\begin{equation}\label{eq:symm_fkt}
\exp\left(\sum_{j=1}^{\infty}\frac{a_jz^j}{j}\right)
=\sum_{n=0}^\infty\frac{z^n}{n!}\sum_{\sigma \in\S _n}\prod_{j=1}^{n}
a_{j}^{C_j(\sigma )}.
\end{equation}
\end{lemma}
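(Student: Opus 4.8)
The identity \eqref{eq:symm_fkt} will be verified at the level of formal power series in $z$ (the coefficients may be regarded as elements of $\C$, or, if one prefers, the $a_j$ may be treated as independent indeterminates, which makes the statement a purely combinatorial one). Set $S(z):=\sum_{j\ge1}a_jz^j/j$ and $F(z):=\exp S(z)$ — a well-defined formal power series since $S$ has no constant term — and, for $n\ge0$, put $h_n:=\sum_{\sigma\in\S_n}\prod_{j\ge1}a_j^{C_j(\sigma)}$ (so $h_0=1$), and let $G(z):=\sum_{n\ge0}h_nz^n/n!$ denote the right-hand side of \eqref{eq:symm_fkt}. The plan is to show that both $F$ and $G$ satisfy the linear differential relation $H'=S'\cdot H$ with $H(0)=1$, and that this relation pins down a formal power series $H$ uniquely.

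For $G$ this comes from a recursion for the $h_n$: for $n\ge1$, split $\S_n$ according to the length $j\in\{1,\ldots,n\}$ of the cycle containing the element $1$; choosing this cycle together with a cyclic arrangement of its elements can be done in $\binom{n-1}{j-1}(j-1)!=(n-1)!/(n-j)!$ ways, it contributes a factor $a_j$, and the remaining $n-j$ elements carry an arbitrary permutation. Hence $h_n=\sum_{j=1}^n \frac{(n-1)!}{(n-j)!}a_j h_{n-j}$; dividing by $(n-1)!$ and summing against $z^{n-1}$ yields $G'(z)=\bigl(\sum_{j\ge1}a_jz^{j-1}\bigr)G(z)=S'(z)G(z)$, with $G(0)=h_0=1$. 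For $F$ the relation is immediate: $F'=S'\exp S=S'F$ and $F(0)=1$. Finally, comparing coefficients of $z^{n-1}$ in $H'=S'H$ gives $n\,[z^n]H=\sum_{j=1}^n a_j\,[z^{n-j}]H$, which together with $[z^0]H=1$ determines all coefficients of $H$; therefore $F=G$, which is exactly \eqref{eq:symm_fkt}.

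Two alternative routes are worth recording. One may instead expand $F$ as the (formally convergent, since the lowest non-constant term of the $j$-th factor has degree $j$) product $\prod_{j\ge1}\exp(a_jz^j/j)=\prod_{j\ge1}\sum_{m\ge0}\frac{a_j^m z^{jm}}{m!\,j^m}$, read off $[z^n]F=\sum_{\sum_j jm_j=n}\prod_j\frac{a_j^{m_j}}{m_j!\,j^{m_j}}$, and match it with $[z^n]G$ by grouping the permutations in $\S_n$ by cycle type and invoking the classical count $n!/\prod_j(j^{m_j}m_j!)$ for the number of permutations with exactly $m_j$ cycles of length $j$ for each $j$; or, more conceptually, one may invoke the exponential formula, noting that a permutation decomposes uniquely into its cycles and that a single cycle on a $j$-element set has $(j-1)!$ realisations each weighted by $a_j$, so that the exponential generating function of one cycle is $\sum_{j\ge1}a_j(j-1)!z^j/j!=\sum_{j\ge1}a_jz^j/j$. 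None of these presents a genuine obstacle; the only points needing a little care are the (routine) verification that all manipulations are legitimate for formal power series, so that no question of convergence arises, and, in the second route, the bookkeeping behind the cycle-type count.
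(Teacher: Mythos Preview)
Your proof is correct but takes a different route from the paper's. The paper gives a probabilistic argument: assuming without loss of generality that $a_j>0$ and $\sum_j a_j<\infty$, it introduces independent Poisson variables $X_j\sim\poisson(a_j/j)$, sets $X=\sum_j jX_j$, and computes the probability generating function $\E(z^X)$ in two ways---once via the Poisson PGF (yielding the left-hand side up to a normalizing factor) and once by summing over the possible values of $X$ and invoking the cycle-type count $n!/\prod_j j^{c_j}c_j!$ (yielding the right-hand side). Your main argument is instead purely algebraic: you show that both sides satisfy the same first-order linear ODE $H'=S'H$ with $H(0)=1$ in the ring of formal power series, the key step for the right-hand side being the recursion for $h_n$ obtained by conditioning on the cycle containing~$1$. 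Your approach has the advantage of working directly at the formal level without any reduction to positive summable coefficients; the paper's has a pleasant probabilistic interpretation. Your ``second alternative route'' (expand the product and match cycle types) is in fact the combinatorial core of the paper's second computation of $\E(z^X)$, stripped of the probabilistic wrapper.
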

\begin{proof}
The proof is a relatively straightforward calculation, presented below with a probabilistic flavor.

We assume that $a_j>0$ and that $\sum _{j=1}^\infty a_j<\infty$. This is without loss of generality since if \eqref{eq:symm_fkt} holds for such sequences then it holds in a formal sense for all sequences. Recall that a random variable $Y$ has the $\poisson(\lambda)$ distribution if $\mathbb{P}(Y=n)=\exp(-\lambda)\frac{\lambda^n}{n!}$ for integers $n\ge 0$. Note that it has the probability generating function $\mathbb{E}(z^Y) = e^{\lambda (z-1)}$. Define an infinite sequence of independent Poisson random variables,
\[X_j\sim \poisson\left(\frac{a_j}{j}\right),\quad j\ge 1\]
and set
\begin{equation*}
  X := \sum_{j=1}^\infty j X_j.
\end{equation*}
Note that $X$ is almost surely finite as it has finite expectation by our assumption on $(a_j)$. The equality \eqref{eq:symm_fkt} follows by calculating $\mathbb{E}(z^X)$ in two ways. On the one hand,
\[\mathbb{E}(z^X)=\mathbb{E}\left(\prod _{j=1}^\infty z^{jX_j}\right)=\prod _{j=1}^\infty \mathbb{E}(z^{jX_j})=\prod _{j=1}^\infty \exp \left(\frac{a_j\left(z^j-1\right)}{j}\right)=\exp \left(\sum _{j=1}^\infty \frac{a_j\left(z^j-1\right)}{j}\right),\]
On the other hand,
\begin{equation*}
\begin{split}
\mathbb{E}(z^X)&=\sum _{n=0}^\infty z^n\cdot \mathbb P \left(X=n\right)=\sum _{n=0}^\infty z^n \sum _{\substack{ \left(c_1,c_2,\dots \right): \\ \sum _{j=1}^\infty jc_j=n}} \mathbb P \left(\forall j,\, X_j=c_j\right) \\
&=\exp \left(-\sum _{j=1}^\infty \frac{a_j}{j}\right)\sum _{n=0}^\infty z^n \sum _{\substack{ \left(c_1,\dots c_n\right): \\ \sum _{j=1}^njc_j=n}} \prod _{j=1}^n \frac{a_j^{c_j}}{j^{c_j}c_j!}=\exp \left(-\sum _{j=1}^\infty \frac{a_j}{j}\right)\sum _{n=0}^\infty \frac{z^n}{n!} \sum _{\sigma \in \S _n} \prod _{j=1}^n a_j^{C_j(\sigma )},
\end{split}
\end{equation*}
where the last equality follows as the number of permutations $\sigma \in \S _n$ with cycle structure $\left(c_1,\dots ,c_n\right)$ is $\frac{n!}{\prod_{j=1}^nj^{c_j}c_j !}$.
\end{proof}

Lemma~\ref{lem:cycle_index_theorem} can be used to obtain a
convenient expression for the normalization constant $H_N(L)$ involved
in the marginal probability distribution \eqref{eq:probability of a permutation}. To this end set
$H_0(L):=1$ and define the function
\begin{equation}\label{eq:def of G_L(z)}
G_L(z):=\sum _{j=1}^\infty \frac{W_{L,j}}{j}z^j.
\end{equation}
By Lemma~\ref{lem:cycle_index_theorem} (with $a_j=W_{L,j}$), the generating function of the sequence
$\left(H_n(L)\right)_{n\ge 0}$ is given by
\begin{equation}\label{eq:generating_h(v)}
\sum_{n=0}^\infty H_n(L) z^n =\sum_{n=0}^\infty\frac{z^n}{n!}\sum_{\sigma\in \S _n} \prod_{j=1}^n (W_{L,j})^{C_j(\sigma )}= \exp\left(\sum_{j=1}^\infty
\frac{W_{L,j}}{j}\,z^j\right)=e^{G_L(z)}.
\end{equation}
The equality holds in a formal sense, as we relied on Lemma~\ref{lem:cycle_index_theorem}. However, one may check that the non-negative sequence $(W_{L,j})_j$ is bounded, whence the sum in \eqref{eq:def of G_L(z)} converges for $|z|<1$, implying the same for the Taylor series \eqref{eq:generating_h(v)} of its exponential. The boundedness of $(W_{L,j})_j$ follows from its definition \eqref{eq:W_L_j_def} as the density of a convolution on the torus (see \eqref{eq:def of convolution on the torus} and \eqref{eq:convolution_equality}) and is also a consequence of our subsequent Corollary~\ref{cor:corollary on W}.

\subsection{The distribution of the cycle lengths through the generating function}\label{sec: the disribution of C_j and L-1}
Recall the definitions of $L_1,L_2,\dots $ appearing before \eqref{eq:def of nu}.

\begin{lemma}\label{lem:distribution_of_ell_1,ell_2}
For any integers $m\ge 1$ and $j_1,\dots,j_m\ge 1$ with $j_1+\dots +j_m\le N$ we have
\begin{equation}\label{eq:L=l}
\mathbb P\left(L_1 = j_1,\dots, L_m = j_m\right)=
\frac{H_{N-j_1-\dots-j_m}(L)}{H_N(L)}\cdot \prod_{k=1}^m
\frac{W_{L,j_k}}{N-
j_1-\dots-j_{k-1}}.
\end{equation}
In particular, for $m=1$,
\begin{equation}\label{eq:dist_L1}
\mathbb P\left(L_1 = j\right)= \frac{W_{L,j}}{N} \cdot
\frac{H_{N-j}(L)}{H_N(L)},\quad 1\le j\le N.
\end{equation}
\end{lemma}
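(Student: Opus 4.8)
The plan is to deduce \eqref{eq:L=l} directly from the exact formula \eqref{eq:probability of a permutation} for the marginal law of $\pi$, by summing over all permutations consistent with the prescribed initial cycle lengths. First I would recall the ordering convention: $L_1(\pi)$ is the length of the cycle containing the index $1$, $L_2(\pi)$ the length of the cycle containing the smallest index not in the first cycle, and so on. So the event $\{L_1=j_1,\dots,L_m=j_m\}$ is the event that this greedy exploration produces cycles of lengths $j_1,\dots,j_m$ in order. I would partition this event according to the actual vertex sets and cyclic orders of the first $m$ cycles together with the permutation induced on the remaining $N-j_1-\dots-j_m$ points.

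The combinatorial heart of the argument is a counting identity: for fixed lengths $j_1,\dots,j_m$, the number of ways to choose the ordered tuple of the first $m$ cycles (as sets with cyclic structure, constrained by the minimal-index ordering) times any permutation $\sigma'$ on the leftover set, such that the weight $\prod_j (W_{L,j})^{C_j}$ factorizes as $W_{L,j_1}\cdots W_{L,j_m}$ times the weight of $\sigma'$. Concretely, once the first cycle must contain $1$, there are $\binom{N-1}{j_1-1}$ ways to pick its other elements and $(j_1-1)!$ cyclic arrangements, giving $\frac{(N-1)!}{(N-j_1)!}$; then the second cycle must contain the minimal remaining index, contributing $\frac{(N-j_1-1)!}{(N-j_1-j_2)!}$; and so on. Multiplying these and using \eqref{eq:probability of a permutation}, together with $H_{N-j_1-\dots-j_m}(L)=\frac{1}{(N-j_1-\dots-j_m)!}\sum_{\sigma'}\prod (W_{L,j})^{C_j(\sigma')}$ from \eqref{eq:def of partition function}, the factorials telescope: the product of the binomial-type factors times $\frac{1}{N!}$ times $(N-j_1-\dots-j_m)!$ collapses to $\prod_{k=1}^m \frac{1}{N-j_1-\dots-j_{k-1}}$, which is exactly the claimed formula. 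The case $m=1$ in \eqref{eq:dist_L1} is then just the specialization.

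I expect the main obstacle to be bookkeeping the ``minimal index'' constraint carefully enough that the count of admissible first-$m$-cycle configurations is exactly $\prod_{k=1}^m \frac{(N-j_1-\dots-j_{k-1}-1)!}{(N-j_1-\dots-j_k)!}$ rather than something off by a symmetry factor. The clean way to see this is to fix the designated starting point of the $k$-th cycle (the smallest index not yet used) and observe there is a bijection between cyclic orderings of a $j_k$-set with a marked element and linear orderings of the remaining $j_k-1$ elements; then the number of ways to build the $k$-th cycle out of the $N-j_1-\dots-j_{k-1}$ available indices is $\frac{(N-j_1-\dots-j_{k-1}-1)!}{(N-j_1-\dots-j_k)!}$. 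One should also note the weight $\prod_{j}(W_{L,j})^{C_j(\sigma)}$ is genuinely multiplicative over cycles, which is immediate from \eqref{eq:probability of a permutation} and is what allows the leftover sum to be recognized as $(N-j_1-\dots-j_m)!\,H_{N-j_1-\dots-j_m}(L)$. With these two points in hand the identity \eqref{eq:L=l} follows by direct substitution and cancellation.
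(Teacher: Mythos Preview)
Your proposal is correct and follows essentially the same approach as the paper's proof: count the number of choices for each of the first $m$ cycles using the minimal-index constraint (the paper writes this as $(N-1)_{j_1-1}\cdot (N-j_1-1)_{j_2-1}\cdots$, which is exactly your $\prod_k \frac{(N-j_1-\dots-j_{k-1}-1)!}{(N-j_1-\dots-j_k)!}$), use the multiplicativity of the cycle weights, and identify the residual sum as $(N-j_1-\dots-j_m)!\,H_{N-j_1-\dots-j_m}(L)$. Your treatment of the bookkeeping is in fact more explicit than the paper's, which works out $m=1,2$ and then says the general case is handled in the same manner.
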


\begin{proof} Note that there are $(N-1)\cdots(N-j+1)=(N-1)_{j-1}$
possibilities for a cycle of length $j$ containing the element~$1$. Thus, using \eqref{eq:probability of a permutation} and \eqref{eq:def of partition function}, we get
\begin{equation*}
\mathbb P\left(L_1 = j\right) = \frac{  (N-1)_{j-1} \cdot W_{L,j}}{N! H_N(L)}\sum _{\sigma  \in \S_{N-j}} \prod _{j=1}^{N-j}\left(W_{L,j}\right)^{C_j(\sigma )}= \frac{W_{L,j}}{N} \cdot
\frac{H_{N-j}(L)}{H_N(L)},
\end{equation*}
which proves the lemma for $m=1$.
Similarly, for $m=2$,
\begin{equation}\label{eq:N-L1}
\begin{split}
\mathbb P & \left(L_1 = j_1,L_2=j_2\right) \\
&=\frac{(N-1)_{j_1-1} \cdot W_{L,j_1}\cdot (N-j_1-1)_{j_2-1}
W_{L,j_2} \cdot  (N-j_1-j_2)!\cdot H_{N-j_1-j_2}(L)}{N! H_N(L)}\\
& = \frac{W_{L,j_1}W_{L,j_2}}{N(N-j_1)} \cdot
\frac{H_{N-j_1-j_2}(L)}{H_N(L)}.
\end{split}
\end{equation}
The general case $m\ge 1$ is handled in the
same manner.
\end{proof}

\begin{remark}\label{remark:connection}
The relation between the distribution of $L_1$ and the $(C_j)$ is as follows,
\begin{equation}\label{eq:C_j_expectation}
  \mathbb P \left(L_1=j\right)=\mathbb E \left( \frac{jC_j}{N}\right),\quad 1\le j\le N,
\end{equation}
since
\begin{equation*}
\mathbb P \left(L_1=j\right) =\sum_{\substack{k\ge 0 \mbox{ s.t. }\\ \mathbb P \left(C_j=k\right)>0}} \mathbb P \left(L_1=j\, \mid \, C_j=k\right) \cdot \mathbb P \left(C_j=k\right)
=\sum_{k=0}^\infty \frac{jk}{N} \cdot \mathbb P \left( C_j=k\right)=\mathbb E \left( \frac{jC_j}{N}\right),
\end{equation*}
where the second equality follows as there are $jk$ elements in cycles of length $j$ and any $1\le i\le N$ is equally likely to be one of them. In a similar manner one checks that
\begin{equation}\label{eq:C_j_squared_expectation}
  \mathbb{P}(L_1 = j, L_2 = j) = \mathbb E\left(\frac{j C_j}{N}\cdot\frac{j(C_j - 1)}{N-j}\right),\quad 1\le j\le N.
\end{equation}
One may similarly obtain expressions involving higher moments of $C_j$. A useful expression for the variance of $C_j$ is obtained by combining \eqref{eq:C_j_expectation} and \eqref{eq:C_j_squared_expectation},
\begin{equation*}
  \var\left(\frac{C_j}{N}\right) = \frac{N-j}{N}\cdot\frac{\mathbb{P}(L_1 = j, L_2 = j)}{j^2} + \frac{\mathbb P(L_1=j)}{Nj} - \left(\frac{\mathbb P(L_1 = j)}{j}\right)^2.
\end{equation*}
\end{remark}

\begin{remark} \label{remark:generating_function_of_number_of_cycles}
Our analysis of the asymptotic distribution of cycle lengths proceeds by determining the asymptotics of the partition function $H_n(L)$, in various asymptotic regimes of $n$ and $L$, and then applying Lemma~\ref{lem:distribution_of_ell_1,ell_2}. Further information on the random permutation $\pi$ may be obtained by varying the various parameters it depends on. For instance, the number of cycles $C(\pi)$ may be studied as follows. Let us write $H_n^{\theta}(L)$ instead of $H_n(L)$ to note explicitly the dependence of the partition function on $\theta$. Then,
\begin{equation*}
  \E(t^{C(\pi)}) = \frac{H_N^{t\cdot\theta}(L)}{H_N^{\theta}(L)}
\end{equation*}
as one immediately verifies using Proposition~\ref{prop:product of weights}. Thus, the asymptotic behavior of the partition function can be used to determine the probability generating function of the number of cycles. This direction is not developed in this work.
\end{remark}

\section{Basic properties of the generating function}\label{sec:properties}

In this section we state and prove some basic properties of the generating function $G_L$.

\subsection{Fourier transform} We frequently use the Fourier transform $\hat{f}$ (or $\mathcal{F}(f)$) of a function $f:\R ^d\to \mathbb C$, defined by
\begin{equation}\label{eq:fourier transform}
\hat{f}(t)=\mathcal{F}(f)(t):=\intop _{\R ^d}f(x)e^{-2\pi ix\cdot t}dx, \quad t\in \R ^d,
\end{equation}
where we write $x\cdot t$ for the standard scalar product in $\R ^d$.
In the following claims we collect some basic facts about the Fourier transform of a Schwartz function.

\begin{claim}\label{claim:basic facts on the fourier transform}
Let $f:\R ^d \to \mathbb C$ be a Schwartz function. Then:
\begin{enumerate}[label=(\roman{*})]
\item\label{item:1 in basic properties}
For any invertible linear transformation $B:\R^d\to \R ^d$,
\begin{equation}\label{eq:change variables in transform}
\mathcal{F}(f\circ B)=\frac{1}{\left|\det B\right|} \cdot \hat{f}\circ \left(B^T\right)^{-1}.
\end{equation}

\item\label{item:2 in basic properties}
The Fourier inversion theorem states that $\hat{\hat{f}}(x)=f(-x)$ for all $x\in \R ^d$.
\item\label{item:3 in basic properties}
For any $j\in  \N$, the functions $f^{*j}$ and $\hat{f}^j$ are Schwartz functions and we have $\widehat{f^{*j}}=\hat{f}^j$.
\item\label{item:4 in basic properties}
The Poisson summation formula holds:
\begin{equation}\label{eq:fourier sumation}
\sum _{k\in \Z ^d}f(k)=\sum _{m\in \Z ^d}\hat{f}( m).
\end{equation}
\end{enumerate}
\end{claim}

In the following claim we restrict ourselves to the density function $\varphi$.

\begin{claim}\label{claim:basic fourier 2}
The Fourier transform $\hat{\varphi}$ of $\varphi$ satisfies:
\begin{enumerate}[label=(\roman{*})]
\item\label{item:1 in basic properties2}
For every $t\in \R ^d$, $\left|\hat{\varphi}(t)\right|\le 1$ with equality if and only if $t=0$.
\item\label{item:2 in basic properties2}
We have
\begin{equation}\label{eq:taylor of phi}
\hat {\varphi}(t)=1-2\pi ^2t^T \cov(X) t+O(\| t\|^3)=1-2\pi ^2 \|At\|^2+O(\|t\|^3),\quad \|t\|\le 1
\end{equation}
where $A$ is given in \eqref{eq:def of A}. In particular, as $A$ is positive definite, there is a $c>0$ so that $\left|\hat{\varphi}(t)\right|\le 1-c\|t\|^2$ when $\|t\|\le 1 $.
\end{enumerate}
\end{claim}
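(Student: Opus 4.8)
The plan is to use that $\varphi$ is a probability density (it is the density of $X$, hence nonnegative with $\int\varphi=1$), so that $\hat\varphi$ is, up to the sign convention, the characteristic function of $X$: $\hat\varphi(t)=\mathbb{E}\bigl(e^{-2\pi i X\cdot t}\bigr)$. For part~\ref{item:1 in basic properties2} the inequality is immediate, $|\hat\varphi(t)|=\bigl|\mathbb{E}(e^{-2\pi i X\cdot t})\bigr|\le \mathbb{E}\bigl|e^{-2\pi i X\cdot t}\bigr|=1$, and $\hat\varphi(0)=\int_{\R^d}\varphi=1$. For the equality case, I would note that $|\hat\varphi(t)|=1$ forces equality in the triangle inequality for the complex integral, which holds iff $e^{-2\pi i X\cdot t}$ is almost surely equal to a fixed unit complex number; equivalently, $X\cdot t\in c+\Z$ almost surely for some constant $c$. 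When $t\neq 0$, a rotation of coordinates sending $t/\|t\|$ to $e_1$ exhibits $X\cdot t$ as $\|t\|$ times a marginal of the absolutely continuous random vector $X$, hence $X\cdot t$ is itself absolutely continuous with respect to Lebesgue measure on $\R$ and so assigns probability zero to the countable set $c+\Z$ — a contradiction. Therefore $t=0$.

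For part~\ref{item:2 in basic properties2} I would apply the elementary bound $\bigl|e^{iu}-(1+iu-\tfrac12 u^2)\bigr|\le \tfrac16|u|^3$ for $u\in\R$ with $u=-2\pi X\cdot t$, then integrate against $\varphi$. Using the assumptions $\mathbb{E}(X)=0$ and $\mathbb{E}(X X^T)=\cov(X)$, together with $\mathbb{E}\|X\|^3<\infty$ (finite because $\varphi$ is Schwartz) and $|X\cdot t|\le \|X\|\,\|t\|$, this yields $\hat\varphi(t)=1-2\pi^2 t^T\cov(X)t+O(\|t\|^3)$ for $\|t\|\le 1$. The identity $t^T\cov(X)t=t^T A^T A t=\|At\|^2$ then follows from $A=A^T$ and $A^2=\cov(X)$ as in \eqref{eq:def of A}.

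For the final assertion of part~\ref{item:2 in basic properties2} I would patch a local estimate with a compactness estimate. Since $A$ is positive definite there is $c'>0$ with $\|At\|^2\ge c'\|t\|^2$; squaring the expansion gives $|\hat\varphi(t)|^2=1-4\pi^2\|At\|^2+O(\|t\|^3)$, so for $\|t\|\le\delta$ with $\delta$ a sufficiently small constant one obtains $|\hat\varphi(t)|^2\le 1-2\pi^2 c'\|t\|^2$, hence $|\hat\varphi(t)|\le 1-c_1\|t\|^2$ for some $c_1>0$. On the compact annulus $\{\delta\le\|t\|\le 1\}$, part~\ref{item:1 in basic properties2} and the continuity of $\hat\varphi$ (it is Schwartz by Claim~\ref{claim:basic facts on the fourier transform}\ref{item:3 in basic properties}) give $|\hat\varphi(t)|\le 1-\eta$ for some $\eta>0$, and since $\|t\|\le 1$ this is at most $1-\eta\|t\|^2$. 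Taking $c=\min(c_1,\eta)$ covers all $\|t\|\le 1$.

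\textbf{Main obstacle.} There is no deep difficulty here; the only points that need care are the justification in part~\ref{item:1 in basic properties2} that $X\cdot t$ is absolutely continuous for $t\neq 0$ (ruling out the equality case), and the bookkeeping in patching the local quadratic bound near $t=0$ with the compactness bound on the annulus so as to extract a single constant $c$ valid on all of $\{\|t\|\le 1\}$.
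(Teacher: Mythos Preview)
Your proposal is correct. The paper does not actually prove this claim but treats it as standard and refers the reader to Grafakos's textbook, so there is no in-paper argument to compare against; your write-up supplies exactly the standard details (characteristic-function interpretation, equality case via absolute continuity of $X\cdot t$, second-order Taylor remainder, and the local-plus-compactness patching for the quadratic upper bound) that such a reference would provide.
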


The results in Claim~\ref{claim:basic facts on the fourier transform} and Claim~\ref{claim:basic fourier 2} are standard and we refer the reader to \cite{grafakos2004classical} for the proofs.

Throughout this section we denote by $\psi $ the density function of the Gaussian distribution $N(0,\cov (X))$, which is given by
\begin{equation}\label{eq:def of Gaussian_psi}
\psi (x):=\frac{1}{\sqrt{\left(2\pi \right)^{d}\det (\cov (X))}}e^{-\frac{1}{2}x^T \cov (X)^{-1}x}=\frac{1}{\left(2\pi \right)^{\frac{d}{2}}\det A}e^{-\frac{1}{2}\|A^{-1}x\|^2},\quad x\in \R ^d,
\end{equation}
where $A$ is defined in \eqref{eq:def of A}.
Observe that for any $j\in \N$ we have
\begin{equation}\label{eq:convolution of gaussian}
\psi ^{*j}(x)=\frac{1}{\left(2\pi j\right)^{\frac{d}{2}}\det A}e^{-\frac{1}{2j}\|A^{-1}x\|^2}, \quad x\in \R ^d
\end{equation}
and that
\begin{equation}\label{eq:fourier of psi}
\hat{\psi }(t)=e^{-2\pi ^2\|At\|^2},\quad t\in \R ^d,
\end{equation}
where we used part~\ref{item:1 in basic properties} of Claim~\ref{claim:basic facts on the fourier transform} and the fact that $\mathcal{F}(e^{-\|\ \cdot \ \|^2})=\pi ^{\frac{d}{2}}e^{-\pi ^2 \|\ \cdot  \ \|^2}$.

For later reference we write explicitly the following consequence of \eqref{eq:taylor of phi}: For any $M\ge 0$ there are $C,c_M>0$ so that
\begin{equation}\label{eq:phi minus psi}
| \hat{\varphi}^j(t)-\hat{\psi }^j(t) |\le C j\|t\|^3 e^{-c_M j\|t\|^2},\quad \|t\|\le M,\ j\in \N.
\end{equation}
The inequality follows since when $j\|t\|^3 \ge 1 $ and $\|t\|\le M$, using Claim~\ref{claim:basic facts on the fourier transform},
\begin{equation*}
| \hat{\varphi}^j(t)-\hat{\psi }^j(t) |\le | \hat{\varphi}^j(t)|+\hat{\psi }^j(t)\le 2\left(1-c_M\|t\|^2\right)^j\le 2e^{-c_Mj\|t\|^2}\le 2j\|t\|^3e^{-c_M j\|t\|^2}
\end{equation*}
 and when $j\|t\|^3\le 1$,
\begin{equation*}
\hat{\varphi}^j(t)=\left(1-2\pi ^2 \|At\|^2+O\left(\|t\|^3\right) \right)^j=e^{-2\pi ^2 j\|At\|^2+O\left(j\|t\|^3\right)}
=\left(1+O(j\|t\|^3)\right)\hat{\psi }^j(t).
\end{equation*}

\subsection{Asymptotics}\label{sec:asymptotic behavior of W_{L,j}}
Recall that the weights $(W_{L,j})$ are given in \eqref{eq:W_L_j_def} by the formula
\begin{equation}\label{eq:poisson summation for W}
W_{L,j}=\theta L^d\sum _{k\in \Z^d}\varphi^{*j}(Lk)=\theta \sum _{m\in \frac{1}{L} \mathbb Z ^d}\hat{\varphi}^j(m),
\end{equation}
where in the last equality we used the Poisson summation formula (see parts~\ref{item:1 in basic properties}, \ref{item:3 in basic properties} and \ref{item:4 in basic properties} of Claim~\ref{claim:basic facts on the fourier transform}), and that the function $G_L$ from \eqref{eq:def of G_L(z)} is given by
\begin{equation*}
  G_L(z)=\sum _{j=1}^\infty \frac{W_{L,j}}{j}z^j.
\end{equation*}
Define the function
\begin{equation}\label{eq:g_Taylor_expansion}
g(z):=\sum_{j=1}^\infty\frac{\theta \varphi^{*j}(0)}{j}z^j.
\end{equation}
We will show that $W_{L,j}$ approximately equals $\theta L^d\varphi^{*j}(0)$ for small $j$ and therefore, $G_L(z)$ approximately equals $L^dg(z)$ for small values of $|z|$. This will facilitate our analysis as the function $g(z)$ is independent of $L$. We will also see that $g$ determines the critical density in dimensions $d\ge 3$ as
\[g'(1)=\theta \sum _{j=1}^{\infty}\varphi^{*j}(0)=\rho _c.\]
The following lemma, a multidimensional, local central limit theorem, determines the asymptotic behavior of $\varphi^{*j}(0)$.

\begin{lemma}\label{lem:asymptotics of kappa _j}
For sufficiently large $j\in \N$,
\begin{equation}\label{eq:asymptotics of w_j}
\varphi^{*j}(0)=\left(1+O(j^{-\frac{1}{2}})\right)\psi ^{*j}(0)=  \frac{1+O(j^{-\frac{1}{2}})}{\sqrt{\det (\cov (X))\left( 2\pi \right)^d} }\cdot j^{-\frac{d}{2}}.
 \end{equation}
\end{lemma}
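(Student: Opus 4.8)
The plan is to prove \eqref{eq:asymptotics of w_j} by a standard Fourier-analytic argument, using the fact that $\varphi^{*j}(0)$ can be expressed via the Fourier inversion theorem, and then comparing with the corresponding Gaussian quantity $\psi^{*j}(0)$. First I would write, using part~\ref{item:2 in basic properties} and part~\ref{item:3 in basic properties} of Claim~\ref{claim:basic facts on the fourier transform},
\[
\varphi^{*j}(0)=\widehat{\varphi^{*j}}\,\check{}\,(0)=\intop_{\R^d}\hat\varphi(t)^j\,dt,\qquad \psi^{*j}(0)=\intop_{\R^d}\hat\psi(t)^j\,dt=\intop_{\R^d}e^{-2\pi^2 j\|At\|^2}\,dt,
\]
the last integral being elementary and equal to $\frac{1}{(2\pi j)^{d/2}\det A}=\psi^{*j}(0)$, which matches the claimed right-hand side since $\det A=\sqrt{\det(\cov(X))}$ and $(2\pi)^{d/2}(\det A)\,j^{d/2}=\sqrt{(2\pi)^d\det(\cov(X))}\,j^{d/2}$. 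So it suffices to show $\intop_{\R^d}\hat\varphi(t)^j\,dt=(1+O(j^{-1/2}))\intop_{\R^d}\hat\psi(t)^j\,dt$.

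The second step is to split the integral over $\R^d$ into the region $\|t\|\le \delta$ (for a small fixed $\delta>0$) and its complement. On $\{\|t\|\ge\delta\}$, part~\ref{item:1 in basic properties2} of Claim~\ref{claim:basic fourier 2} gives $|\hat\varphi(t)|\le 1$ with the supremum over any annulus $\delta\le\|t\|\le R$ strictly below $1$; combined with the fact that $\hat\varphi$ is Schwartz (hence integrable, and decaying at infinity), a routine argument shows $\intop_{\|t\|\ge\delta}|\hat\varphi(t)|^j\,dt$ decays faster than any power of $j$ — e.g. bound it by $\bigl(\sup_{\delta\le\|t\|\le 1}|\hat\varphi(t)|\bigr)^{j-j_0}\intop|\hat\varphi(t)|^{j_0}\,dt$ on the annulus and use Schwartz decay (say $|\hat\varphi(t)|\le C\|t\|^{-d-1}$) for $\|t\|\ge 1$ to get a term that is $o(j^{-K})$ for every $K$. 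The Gaussian tail $\intop_{\|t\|\ge\delta}e^{-2\pi^2 j\|At\|^2}\,dt$ is likewise super-polynomially small. Hence both tails are negligible compared to $j^{-d/2}$, and it remains to compare the contributions from $\{\|t\|\le\delta\}$.

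The third step — the heart of the matter — is the comparison on $\{\|t\|\le\delta\}$. Here I would use the pointwise bound \eqref{eq:phi minus psi}: with $M=\delta$ (taking $\delta\le 1$), $|\hat\varphi(t)^j-\hat\psi(t)^j|\le Cj\|t\|^3 e^{-c j\|t\|^2}$. Integrating,
\[
\intop_{\|t\|\le\delta}\bigl|\hat\varphi(t)^j-\hat\psi(t)^j\bigr|\,dt\le Cj\intop_{\R^d}\|t\|^3 e^{-c j\|t\|^2}\,dt=Cj\cdot O\bigl(j^{-\frac{d+3}{2}}\bigr)=O\bigl(j^{-\frac{d+1}{2}}\bigr),
\]
by the substitution $s=\sqrt j\,t$. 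Since $\intop_{\|t\|\le\delta}\hat\psi(t)^j\,dt=(1-o(j^{-K}))\psi^{*j}(0)$ is of order $j^{-d/2}$, the error is $O(j^{-1/2})$ relative to the main term. Adding back the negligible tails from step two yields $\varphi^{*j}(0)=(1+O(j^{-1/2}))\psi^{*j}(0)$, which is \eqref{eq:asymptotics of w_j}.

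The main obstacle is the control of the tail integral $\intop_{\|t\|\ge\delta}|\hat\varphi(t)|^j\,dt$: unlike in the compactly-supported or rapidly-decaying-Fourier-transform setting, one must simultaneously exploit $|\hat\varphi|<1$ away from the origin (from Claim~\ref{claim:basic fourier 2}\ref{item:1 in basic properties2}, which rests on $\E(X)=0$ and $X$ having a density) and the Schwartz decay of $\hat\varphi$ at infinity to prevent the tail from contributing at the $j^{-d/2}$ scale. Everything else is bookkeeping with Gaussian integrals and the ready-made estimate \eqref{eq:phi minus psi}. Note also that this argument gives the $O(j^{-1/2})$ rate claimed, which is exactly what one expects from the Edgeworth-type expansion — the first correction to the local CLT is governed by the third-order term in the Taylor expansion \eqref{eq:taylor of phi} and vanishes at $x=0$ after integration only to first order, leaving the $j^{-1/2}$ remainder.
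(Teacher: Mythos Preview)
Your proposal is correct and follows essentially the same route as the paper: Fourier inversion, the pointwise estimate \eqref{eq:phi minus psi} on a bounded region, and Schwartz decay for the tail. The only cosmetic difference is the choice of cutoff---the paper takes a single large radius $2C_0$ (chosen so that $|\hat\varphi(t)|,\hat\psi(t)\le C_0\|t\|^{-d-1}$, making the tail bound a one-line geometric estimate), whereas you take a small $\delta$ and handle the tail in two pieces (compactness on $\delta\le\|t\|\le 1$, Schwartz decay beyond); both work and the core step is identical.
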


\begin{proof}
The second equality in \eqref{eq:asymptotics of w_j} is by \eqref{eq:convolution of gaussian}. We turn to prove the first one. Let $j\in \N$.  As $\varphi$ and $\psi $ are Schwartz functions, there exists a $C_0>0$ so that
\begin{equation}\label{eq:bounds on phi for large t}
\max \left\{\left|\hat{\varphi}(t)\right|,\hat{\psi }(t)\right\}\le \frac{C_0}{\|t\|^{d+1}}, \quad t\in \R ^d.
\end{equation}
Now by parts~\ref{item:2 in basic properties} and \ref{item:3 in basic properties} of Claim~\ref{claim:basic facts on the fourier transform}, we have
\begin{equation}\label{eq:estimate separately}
\begin{split}
|\varphi^{*j}(0)-\psi ^{*j}(0)| &= \left|\intop_{\R ^d} \hat \varphi^j(t)dt-\intop_{\R ^d}\hat{\psi }^j(t)dt\right| \\
&\le \intop _{ \|t\|\le 2C_0} |\hat {\varphi}^j(t)-\hat {\psi}^j(t)|dt + 2\intop _{ \|t\|\ge 2C_0} \max \left\{\left|\hat{\varphi}(t)\right|,\hat{\psi }(t)\right\}dt.
\end{split}
\end{equation}
We estimate each of the integrals separately. First, substituting \eqref{eq:phi minus psi} we obtain
\begin{equation}\label{eq:substituting in integral}
\intop _{ \|t\|\le 2C_0} |\hat {\varphi}^j(t)-\hat {\psi}^j(t)|dt\le Cj\intop _{ \R ^d}\|t\|^3 e^{-cj\|t\|^2}dt\le Cj^{-\frac{d+1}{2}}\intop _{\R ^d}\|s\|^3e^{-c\|s\|^2}ds\le Cj^{-\frac{d+1}{2}},
\end{equation}
where in the second inequality we changed variables to $s=\sqrt{j}\, t$. Second, using \eqref{eq:bounds on phi for large t},
\begin{equation}\label{eq:substituting in integral 2}
\intop _{ \|t\|\ge 2C_0} \max \left\{\left|\hat{\varphi}(t)\right|,\hat{\psi }(t)\right\}dt\le \intop _{\|t\|\ge 2C_0}\left(\frac{C_0}{\|t\|^{d+1}}\right)^jdt \le C e^{-cj}.
\end{equation}
Substituting \eqref{eq:substituting in integral} and \eqref{eq:substituting in integral 2} in \eqref{eq:estimate separately} yields the first equality in \eqref{eq:asymptotics of w_j}.
\end{proof}

The next pair of lemmas determine the asymptotic behavior of $(W_{L,j})$ in all regimes of $j$ and $L$. A change in behavior takes place when $j$ is approximately $L^2$.

\begin{lemma}\label{lem:j<L^(2-epsilon)}
For any $\epsilon >0$ there exists a $C_\epsilon >0$ such that
\begin{equation}
\left|W_{L,j}-\theta L^d\varphi ^{*j}(0)\right|\le C_\epsilon L^{-2}
\end{equation}
for all $L\ge 1$ and integer $j\le L^{2-\epsilon }$.
\end{lemma}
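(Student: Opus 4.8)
The plan is to use the Poisson summation formula representation $W_{L,j}=\theta\sum_{m\in\frac1L\Z^d}\hat\varphi^j(m)$ from \eqref{eq:poisson summation for W}, isolate the $m=0$ term (which equals $\theta\hat\varphi^j(0)$; note, however, that it is cleaner to instead write $W_{L,j}=\theta L^d\sum_{k\in\Z^d}\varphi^{*j}(Lk)$ and isolate the $k=0$ term, which is exactly $\theta L^d\varphi^{*j}(0)$, so that the quantity to bound is $\theta L^d\sum_{k\neq 0}\varphi^{*j}(Lk)$), and show the remaining sum is $O_\epsilon(L^{-2})$ in the stated range $j\le L^{2-\epsilon}$. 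So the core is a tail estimate: for $k\neq 0$, $\|Lk\|\ge L$, and I need $\varphi^{*j}$ to be small at distance $\gtrsim L$ when $j$ is at most $L^{2-\epsilon}$, i.e.\ when the "diffusive" spread $\sqrt{j}\lesssim L^{1-\epsilon/2}$ is much smaller than $L$.

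First I would establish a pointwise decay bound on $\varphi^{*j}$ of the form $|\varphi^{*j}(x)|\le C_M j^{-d/2}(1+\|x\|/\sqrt j)^{-M}$, or something in that spirit, valid for all $j$ and all $x$, for any fixed large $M$; this is the natural Schwartz-type bound on a convolution power and should follow from the Fourier-side bound $|\hat\varphi(t)|\le C_0/\|t\|^{d+1}$ (from \eqref{eq:bounds on phi for large t}) together with the Gaussian-type control $|\hat\varphi(t)|\le e^{-c\|t\|^2}$ for $\|t\|\le 1$ from Claim~\ref{claim:basic fourier 2}, by splitting the inversion integral $\varphi^{*j}(x)=\int\hat\varphi^j(t)e^{2\pi i x\cdot t}\,dt$ and integrating by parts in $t$ to gain powers of $\|x\|^{-1}$ — alternatively one can piggyback on \eqref{eq:phi minus psi} comparing with the Gaussian $\psi^{*j}$, whose decay is explicit from \eqref{eq:convolution of gaussian}, but the honest tail needs the large-$t$ Fourier decay so some direct estimate seems unavoidable. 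Actually a slicker route: since $\varphi$ is Schwartz, for any $M$ there is $C_M$ with $|\varphi(x)|\le C_M(1+\|x\|)^{-M}$, and one can prove by induction that $|\varphi^{*j}(x)|\le C_M' (1+\|x\|/j)^{-M}\cdot(\text{something polynomial in }j)$; I'd pick whichever bookkeeping is least painful, the point being only to get \emph{some} bound that decays in $\|x\|$ faster than any polynomial with controlled $j$-dependence.

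Given such a bound, the estimate is routine: $\theta L^d\sum_{k\neq 0}|\varphi^{*j}(Lk)| \le \theta L^d C_M\sum_{k\neq 0} j^{-d/2}(1+L\|k\|/\sqrt j)^{-M}$. Using $j\le L^{2-\epsilon}$ so that $\sqrt j\le L^{1-\epsilon/2}$, each term with $\|k\|\ge 1$ satisfies $L\|k\|/\sqrt j\ge L^{\epsilon/2}\|k\|$, hence $(1+L\|k\|/\sqrt j)^{-M}\le L^{-M\epsilon/2}\|k\|^{-M}$ (say for $L\ge 1$), and summing over $k\in\Z^d\setminus\{0\}$ with $M$ chosen large (e.g.\ $M>d$ and $M\epsilon/2 > d+2$) gives a bound $\le C_{\epsilon} L^d\cdot j^{-d/2}\cdot L^{-M\epsilon/2}\le C_\epsilon L^d\cdot L^{-M\epsilon/2}\le C_\epsilon L^{-2}$, using $j^{-d/2}\le 1$ and absorbing the $L^d$ into the large negative power of $L$. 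One must handle the constraint $L\ge 1$ (given) to make $L\|k\|/\sqrt j\ge L^{\epsilon/2}$ valid and to allow dropping $1+$ in the denominator; also one should double-check the case of small $j$ (where $j^{-d/2}$ could be as large as $1$, which is fine) — there are no real edge cases since $j\ge 1$.

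The main obstacle, such as it is, is purely the first step: getting a clean, quotable pointwise decay estimate on $\varphi^{*j}$ with explicit (mild) $j$-dependence, since the paper has so far only recorded the value $\varphi^{*j}(0)$ and the Fourier bounds, not a full pointwise tail bound on $\varphi^{*j}(x)$. Everything after that is a convergent geometric-type sum over the lattice and a comparison of powers of $L$. I would therefore devote most of the write-up to the decay lemma (or to an inductive argument bounding $\sup_x (1+\|x\|)^{k}|\varphi^{*j}(x)|$ in terms of Schwartz seminorms of $\varphi$ and a power of $j$), and dispatch the rest in a few lines.
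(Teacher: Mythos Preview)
Your approach is correct, but it differs from the paper's in an interesting way. You propose to first prove a uniform pointwise decay bound of the local-CLT type, $|\varphi^{*j}(x)|\le C_M j^{-d/2}(1+\|x\|/\sqrt j)^{-M}$, and then sum over $k\in\Z^d\setminus\{0\}$; the arithmetic you outline for the sum is fine, and the decay bound itself is obtainable by the Fourier integration-by-parts argument you sketch (the centering $\partial_{t_\ell}\hat\varphi(0)=0$ is what keeps each $s$-derivative from blowing up after the rescaling $t=s/\sqrt j$). The paper instead avoids any uniform pointwise bound on $\varphi^{*j}$: it uses only that $\nabla\varphi^{*j}$ is bounded uniformly in $j$ (trivially, by convolving $\nabla\varphi$ with a probability density), so a large value $\varphi^{*j}(Lk)$ forces $\varphi^{*j}\ge\tfrac12\varphi^{*j}(Lk)$ on a ball of radius $c\,\varphi^{*j}(Lk)$, giving $\mathbb P(\|S_j\|\ge L\|k\|)\ge c\,\varphi^{*j}(Lk)^{d+1}$. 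It then squeezes this against the high-moment Markov bound $\mathbb P(\|S_j\|\ge L\|k\|)\le C_n j^{n/2}/(L\|k\|)^n$, takes the $(d+1)$-st root, and sums. This probabilistic sandwich is softer---it needs no Fourier inversion and no tracking of $j$-dependence in oscillatory integrals---while your route yields a strictly stronger intermediate statement (the full tail profile of $\varphi^{*j}$) at the cost of a heavier lemma. Either is a legitimate proof; just be aware that carrying out your integration-by-parts cleanly, with the correct powers of $j$ after rescaling, is where all the work lies.
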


\begin{proof}
First recall that $\varphi^{*j}$ is the density function of $S_j:=\sum _{i=1}^jX_i$ where $(X_i), i\ge 1$, are independent copies of $X$. Intuitively, this implies that the sum $S_j$ for $j\le L^{2-\epsilon }$ is unlikely to have $\|S_j\|\ge L$. The lemma will follow from a quantitative estimate of this type together with control of the smoothness of $\varphi ^{*j}$.

Since $\varphi$ is Schwartz,
\[\underset{j> 1}{\sup}\ \underset{x\in \R ^d}{\sup }\left|\frac{\partial \varphi^{*j}}{\partial x_i}(x)\right|\le \underset{x\in \R ^d}{\sup }\left|\frac{\partial \varphi}{\partial x_i}(x)\right|\cdot \underset{j> 1}{\sup} \intop _{\R ^d}\varphi ^{*(j-1)}(t) dt<\infty, \quad 1\le i\le d,\]
as $\varphi^{*(j-1)}$ is a density function. Thus, there is a $c>0$ so that for any $k\in \Z ^d$ and $j\ge 1 $,
\[\varphi^{*j}(x)\ge \frac{1}{2}\varphi^{*j}(Lk)  ,\quad x\in B\left(Lk,c\varphi^{*j}(Lk)\right),\]
where $B(x,r)\subseteq \R ^d$ is the ball of radius $r$ centered at $x$.
We obtain that for every $k\in \mathbb Z ^d \setminus \{0\}$, on the one hand,
\begin{equation}\label{eq:1}
\begin{split}
\mathbb P (\| S_j\|\ge L\|k\|)&=\intop _{\| x\|\ge L\|k\|}\varphi^{*j}(x)dx\ge \\
\ge \frac{1}{2} \varphi^{*j}(Lk)&\cdot \vol \left[B\left(Lk,c\varphi^{*j}(Lk)\right)\cap \{x\in \R ^d:\|x\|\ge L\|k\|\}\right]
\ge c(\varphi^{*j}(Lk))^{d+1}.
\end{split}
\end{equation}
On the other hand, by Markov's inequality, for any even $n\ge 2$, there is a $C_n>0$ so that,
\begin{equation}\label{eq:2}
\mathbb P (\| S_j\|\ge \| k\|L)\le \frac{\mathbb E \| S_j\|^n}{\| k\|^nL^n}\le \frac{C_n j^{\frac{n}{2}}}{\| k\|^nL^n}=\frac{C_n}{\| k\|^nL^{\epsilon \frac{n}{2}}}.
\end{equation}
To see the second inequality in \eqref{eq:2}, note that if $d=1$ then, as $n$ is even, $\|S_j\|^n = \left(X_1+\dots +X_j\right)^n$ and there are at most $C_nj^{\frac{n}{2}}$ terms with non-zero expectation (and each term has expectation bounded by $C_n$). The case of higher dimensions may be reduced to the one-dimensional case by noting that, as $n$ is even, $\|S_j\|^n = (S_{j,1}^2 + \cdots + S_{j,d}^2)^{\frac{n}{2}} \le C_n (S_{j,1}^n + \cdots + S_{j,d}^n)$, where we write $S_j = (S_{j,1},\ldots, S_{j,d})$.

Thus, combining \eqref{eq:1} and \eqref{eq:2} and taking $n$ sufficiently large as a function $\epsilon$, we get
\[\varphi^{*j}(Lk)\le \frac{C_\epsilon}{L^{d+2}\|k\|^{d+1}},\quad k\in \Z^d\setminus \{0\},\]
and therefore
\[ W_{L,j}-\theta L^d\varphi^{*j}(0)=\theta L^d \sum_{k\in \mathbb Z ^d \setminus \{0\}}\varphi^{*j}(Lk)\le C_\epsilon L^{-2}\sum_{k\in \mathbb Z ^d \setminus \{0\}}\frac{1}{\|k\|^{d+1}}\le C_\epsilon L^{-2}.  \qedhere \]
\end{proof}

\begin{lemma}\label{lem:bound with fourier}
There are $C,c >0$ such that for all $L\ge 1$ and $j\in \N$,
\begin{equation}\label{eq:bound with fourier inequality}
\left|W_{L,j}-\theta \sum _{m\in \frac{1}{L} \mathbb Z ^d}\hat{\psi}^j(m)\right|\le CL^d j^{-\frac{d+1}{2}}e^{-c\frac{j}{L^2}}.
\end{equation}
\end{lemma}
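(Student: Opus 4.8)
The plan is to compare the weights $W_{L,j}$, expressed via the Poisson summation formula \eqref{eq:poisson summation for W} as $W_{L,j}=\theta\sum_{m\in\frac{1}{L}\Z^d}\hat\varphi^j(m)$, with the Gaussian surrogate $\theta\sum_{m\in\frac{1}{L}\Z^d}\hat\psi^j(m)$, term by term over the lattice $\frac{1}{L}\Z^d$. Accordingly, I would write
\[
\left|W_{L,j}-\theta\sum_{m\in\frac1L\Z^d}\hat\psi^j(m)\right|\le\theta\sum_{m\in\frac1L\Z^d}\bigl|\hat\varphi^j(m)-\hat\psi^j(m)\bigr|,
\]
and split the lattice sum into the region $\|m\|\le M$ (with $M$ a fixed constant to be chosen, e.g.\ $M=2C_0$ as in Lemma~\ref{lem:asymptotics of kappa _j}) and the region $\|m\|>M$.

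For the near-origin region $\|m\|\le M$, I would invoke the pointwise bound \eqref{eq:phi minus psi}, which gives $|\hat\varphi^j(m)-\hat\psi^j(m)|\le Cj\|m\|^3 e^{-c_M j\|m\|^2}$. Summing this over $m\in\frac1L\Z^d$ with $\|m\|\le M$ and comparing the sum to the corresponding integral (the lattice has spacing $\frac1L$, so the sum is at most $L^d$ times an average, i.e.\ $\sum_{m\in\frac1L\Z^d}f(m)\le L^d\int f + (\text{boundary terms})$ for $f$ of the stated shape), one obtains a bound of the form $CL^d j\int_{\R^d}\|t\|^3 e^{-c_M j\|t\|^2}\,dt$; a change of variables $s=\sqrt{j}\,t$ turns this into $CL^d j^{-\frac{d+1}{2}}$. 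To get the exponential factor $e^{-cj/L^2}$ one should be slightly more careful: on $\frac1L\Z^d\setminus\{0\}$ every nonzero lattice point has $\|m\|\ge \frac1L$, so $e^{-c_M j\|m\|^2}\le e^{-c_M j/L^2}$ can be factored out (losing a constant in the exponent), and the $m=0$ term vanishes exactly since $\hat\varphi(0)=\hat\psi(0)=1$. This yields the desired form $CL^d j^{-\frac{d+1}{2}}e^{-cj/L^2}$ for the near-origin part.

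For the far region $\|m\|>M$, I would use $|\hat\varphi(m)-\hat\psi(m)|\le|\hat\varphi(m)|+\hat\psi(m)\le \frac{2C_0}{\|m\|^{d+1}}$ from \eqref{eq:bounds on phi for large t}, hence $|\hat\varphi^j(m)-\hat\psi^j(m)|\le 2(C_0/\|m\|^{d+1})^j$. Choosing $M$ large enough that $C_0/M^{d+1}<1$ makes this exponentially small in $j$; summing the geometric-type series over the lattice (again comparing to an integral, which converges for $j\ge 2$ since $(d+1)j>d$) gives a bound $Ce^{-cj}$, which is absorbed into $CL^d j^{-\frac{d+1}{2}}e^{-cj/L^2}$ because $L\ge1$ forces $j/L^2\le j$ up to... actually one must be mildly careful here since $e^{-cj}$ need not dominate $e^{-cj/L^2}$; but $e^{-cj}\le C j^{-\frac{d+1}{2}} e^{-cj/2}\le Cj^{-\frac{d+1}{2}}e^{-cj/(2L^2)}\cdot\mathbf{1}$ only after noting $L\ge1$ so $j/L^2\le j$, giving $e^{-cj}\le e^{-cj/L^2}$ trivially when... no: $j/L^2\le j$ means $e^{-cj}\le e^{-cj/L^2}$, which is the wrong direction. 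The clean fix is that $e^{-cj}\le C_N L^d j^{-N}$ for any $N$ and all $L\ge 1$ since $L^d\ge1$, and separately $e^{-cj}\le e^{-cj/L^2}$ fails but $e^{-cj}=e^{-cj/2}e^{-cj/2}\le e^{-cj/2}$ and one compares the two exponential rates using that the whole far-region contribution is genuinely $O(e^{-cj})$ which is stronger than $O(j^{-\frac{d+1}2}e^{-cj/L^2})$ precisely when $L$ is not too large; for $L^2\le j$ this is immediate, and for $L^2>j$ the target $e^{-cj/L^2}$ is of order $1$ so the bound $CL^dj^{-\frac{d+1}2}$ (without the exponential) already suffices and follows from the near-origin estimate's integral bound. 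Combining the two regions gives \eqref{eq:bound with fourier inequality}.

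The main obstacle is the passage from lattice sums to integrals with uniform control in $L$: one needs that for the relevant rapidly-decaying summands $f$ on $\frac1L\Z^d$ one has $\sum_{m}f(m)\lesssim L^d\int f$ uniformly in $L\ge1$, which is a standard Riemann-sum comparison but requires a little care near the origin (handled by separating the $m=0$ term, which vanishes) and in bookkeeping the exponential factor $e^{-cj/L^2}$, whose appearance hinges precisely on the minimal nonzero lattice spacing $\|m\|\ge 1/L$. Everything else is routine given Lemma~\ref{lem:asymptotics of kappa _j}, the estimate \eqref{eq:phi minus psi}, and the decay bound \eqref{eq:bounds on phi for large t}.
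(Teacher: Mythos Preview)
Your approach matches the paper's: compare term by term via the Poisson summation formula \eqref{eq:poisson summation for W}, split the lattice into $\|m\|\le M$ and $\|m\|>M$, use \eqref{eq:phi minus psi} on the near part and the Schwartz decay on the far part. Two points deserve correction.

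For the near region, the blanket principle $\sum_{m\in\frac1L\Z^d}f(m)\le L^d\int f+(\text{boundary terms})$ is not valid when the lattice spacing $1/L$ exceeds the scale $j^{-1/2}$ on which $f(t)=j\|t\|^3e^{-cj\|t\|^2}$ concentrates. The paper handles this by splitting into $j\ge L^2$ and $j\le L^2$: in the coarse case the rescaled lattice $\frac{\sqrt j}{L}\Z^d$ has spacing at least $1$, so the sum is bounded directly by $Ce^{-cj/L^2}$ and one then trades exponential slack for the missing polynomial factor $j^{-1/2}$ versus $L^dj^{-(d+1)/2}$; in the fine case a shell decomposition gives the honest Riemann-sum bound $C(L/\sqrt j)^d$, and here $e^{-cj/L^2}\asymp1$. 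Your device of pre-factoring $e^{-cj/(2L^2)}$ is sound, but the residual sum still needs exactly this case analysis to reach $CL^dj^{-(d+1)/2}$.

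For the far region, you correctly derive $e^{-cj}\le e^{-cj/L^2}$ from $j/L^2\le j$ and then declare it ``the wrong direction'': it is the right direction. Writing $e^{-cj}=e^{-cj/2}\cdot e^{-cj/2}\le Cj^{-(d+1)/2}\cdot e^{-cj/(2L^2)}$ (the last factor using $L\ge1$) absorbs the far-region contribution into the target immediately; the case split you eventually resort to is unnecessary. Also, the far sum over the lattice is $CL^de^{-cj}$, not $Ce^{-cj}$---the $L^d$ comes from the density of lattice points near $\|m\|=M$---though this does not affect the conclusion.
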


\begin{proof}
The proof follows similar lines to the proof of Lemma~\ref{lem:asymptotics of kappa _j}.
Let $L\ge 1 $  and $j\in \N$. Since $\varphi$ and $\psi $ are Schwartz, there exists a $C_0>0$ so that
\begin{equation}\label{eq:C_0}
\max \left\{\left|\hat{\varphi}(t)\right|,\hat{\psi }(t)\right\}\le \frac{C_0}{\|t\|^{d+1}}, \quad t\in \R ^d.
\end{equation}
We have, by \eqref{eq:poisson summation for W},
\begin{equation}\label{eq:break the sum}
\begin{split}
\left|W_{L,j}-\theta \sum _{m\in \frac{1}{L} \mathbb Z ^d}\hat{\psi}^j(m)\right|&\le \theta \sum _{m\in \frac{1}{L} \mathbb Z ^d} |\hat{\varphi}^j(m)-\hat{\psi}^j(m)|\\
 \le \theta &\sum _{\substack{m\in \frac{1}{L} \mathbb Z ^d \\ \|m\|\le 2C_0}}|\hat{\varphi}^j(m)-\hat{\psi}^j(m)|+ 2\theta \sum _{\substack{m\in \frac{1}{L} \mathbb Z ^d \\ \|m\|\ge 2C_0}} \max \left\{\left|\hat{\varphi}(m)\right|,\hat{\psi }(m)\right\}.
\end{split}
\end{equation}
We estimate each of the sums separately. First, by \eqref{eq:phi minus psi}, we have
\begin{equation*}
\sum _{\substack{m\in \frac{1}{L} \mathbb Z ^d \\ \|m\|\le 2C_0}}|\hat{\varphi}^j(m)-\hat{\psi}^j(m)|\le Cj\sum _{m\in \frac{1}{L} \mathbb Z ^d}\|m\|^3e^{-cj\|m\|^2}=Cj^{-\frac{1}{2}}\sum _{n\in \frac{\sqrt{j}}{L} \mathbb Z ^d}\|n\|^3e^{-c\|n\|^2}.
\end{equation*}
When $j\ge L^2$, one can easily see that the last expression is at most $Cj^{-\frac{1}{2}}e^{-c\frac{j}{L^2}}\le CL^d j^{-\frac{d+1}{2}}e^{-c\frac{j}{L^2}}$. When $j\le L^2$ we have
\begin{equation*}
\sum _{n\in \frac{\sqrt{j}}{L} \mathbb Z ^d}\|n\|^3e^{-c\|n\|^2}\le \sum _{i=1}^{\infty} \sum_{i-1\le \|n\|< i} i^3e^{-c(i-1)^2}\le C \left(\frac{L}{\sqrt{j}}\right)^d \sum _{i=1}^\infty i^{d+2}e^{-c(i-1)^2} \le C \left(\frac{L}{\sqrt{j}}\right)^d.
\end{equation*}
Second, substituting \eqref{eq:C_0},
\begin{equation*}
\sum _{\substack{m\in \frac{1}{L} \mathbb Z ^d \\ \|m\|\ge 2C_0}} \max \left\{\left|\hat{\varphi}(m)\right|,\hat{\psi }(m)\right\}\le \sum _{\substack{m\in \frac{1}{L} \mathbb Z ^d \\ \|m\|\ge 2C_0}}\left(\frac{C_0}{\|m\|^{d+1}}\right)^j\le CL^d e^{-cj}.
\end{equation*}
Inequality \eqref{eq:bound with fourier inequality} follows by substituting all the bounds in \eqref{eq:break the sum}.
\end{proof}

\begin{cor}\label{cor:corollary on W}
There are $C,c>0$ so that for any $L\ge 1$ and $j\in \N$:
\begin{enumerate}[label=(\roman{*})]
\item\label{item:cor on W part 1}
If $j\le L^2$
\begin{equation}\label{eq: cor on W first inequality}
\left|W_{L,j}-\theta L^d \varphi ^{*j}(0)\right|\le C\left(L^{-\frac{1}{2}}+e^{-c\frac{L^2}{j}}\right).
\end{equation}
\item\label{item:cor on W part 2}
If $j\ge L^2$
\begin{equation}
\left|W_{L,j}-\theta \right|\le Ce^{-c\frac{j}{L^2}}.
\end{equation}
\end{enumerate}
\end{cor}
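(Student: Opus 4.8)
The plan is to derive both parts from Lemma~\ref{lem:j<L^(2-epsilon)}, Lemma~\ref{lem:bound with fourier} and Lemma~\ref{lem:asymptotics of kappa _j}, supplemented by elementary estimates on Gaussian lattice sums. I may assume throughout that $L$ exceeds any prescribed constant, since for bounded $L$ all of $W_{L,j}$, $\theta L^d\varphi^{*j}(0)$ and $\theta$ are $O(1)$ while the asserted right-hand sides are bounded below, making the statement trivial.

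For part~\ref{item:cor on W part 2} I start from the bound $\bigl|W_{L,j}-\theta\sum_{m\in\frac1L\Z^d}\hat\psi^j(m)\bigr|\le CL^d j^{-(d+1)/2}e^{-cj/L^2}$ of Lemma~\ref{lem:bound with fourier}. When $j\ge L^2$ one has $L^d j^{-(d+1)/2}\le L^d L^{-(d+1)}=L^{-1}\le 1$, so this error is already $O(e^{-cj/L^2})$. It then suffices to observe that $\sum_{m\in\frac1L\Z^d}\hat\psi^j(m)=1+O(e^{-cj/L^2})$: indeed $\hat\psi(0)=1$ and $\hat\psi^j(m)=e^{-2\pi^2 j\|Am\|^2}$, so substituting $n=Lm$ and using $\|An\|\ge c\|n\|$ the contribution of $m\ne0$ is at most $\sum_{n\in\Z^d\setminus\{0\}}e^{-cj\|n\|^2/L^2}$; since $j/L^2\ge1$ the splitting $e^{-cj\|n\|^2/L^2}\le e^{-(c/2)(j/L^2)}e^{-(c/2)\|n\|^2}$, valid for $\|n\|\ge1$, bounds this by $Ce^{-cj/L^2}$. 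Adding the two estimates gives $|W_{L,j}-\theta|\le Ce^{-cj/L^2}$.

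For part~\ref{item:cor on W part 1} I split the range $j\le L^2$ at the threshold $j=L^{2-\epsilon}$ with $\epsilon:=\frac1{d+1}$. If $j\le L^{2-\epsilon}$, Lemma~\ref{lem:j<L^(2-epsilon)} applies and gives directly $|W_{L,j}-\theta L^d\varphi^{*j}(0)|\le C_\epsilon L^{-2}\le CL^{-1/2}$, the constant being legitimate because $\epsilon$ depends only on $d$. If $L^{2-\epsilon}<j\le L^2$, I interpolate through $\theta\sum_{m\in\frac1L\Z^d}\hat\psi^j(m)$ and $\theta L^d\psi^{*j}(0)$ and use the triangle inequality. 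The first difference is $O(L^d j^{-(d+1)/2})$ by Lemma~\ref{lem:bound with fourier}, hence $O(L^{d-(2-\epsilon)(d+1)/2})=O(L^{-1/2})$ by the choice of $\epsilon$. The second difference equals $\theta L^d\sum_{k\ne0}\psi^{*j}(Lk)$ by Poisson summation; using \eqref{eq:convolution of gaussian} and $\|A^{-1}y\|\ge c\|y\|$ this is $\le C(L^2/j)^{d/2}\sum_{k\ne0}e^{-cL^2\|k\|^2/j}\le Ce^{-cL^2/j}$, where the last step again uses $L^2/j\ge1$. The third term $\theta L^d|\psi^{*j}(0)-\varphi^{*j}(0)|$ is $O(L^d j^{-(d+1)/2})=O(L^{-1/2})$ since Lemma~\ref{lem:asymptotics of kappa _j} yields $|\psi^{*j}(0)-\varphi^{*j}(0)|\le Cj^{-(d+1)/2}$ for all $j$ (for large $j$ from the lemma, for the remaining finitely many values by enlarging the constant). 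Summing the three bounds gives $|W_{L,j}-\theta L^d\varphi^{*j}(0)|\le C(L^{-1/2}+e^{-cL^2/j})$.

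The one point requiring care is the matching of regimes. Lemma~\ref{lem:j<L^(2-epsilon)} is uniform only for $j\le L^{2-\epsilon}$ with a \emph{fixed} $\epsilon$, whereas the Fourier error $L^d j^{-(d+1)/2}e^{-cj/L^2}$ of Lemma~\ref{lem:bound with fourier} drops below $L^{-1/2}$ only once $j\gtrsim L^{2-1/(d+1)}$; the exponent $\epsilon=\frac1{d+1}$ is chosen precisely so that these two ranges meet, equivalently so that $L^d(L^{2-\epsilon})^{-(d+1)/2}=L^{-1/2}$. Beyond this, everything is routine bookkeeping with lattice sums over $\frac1L\Z^d$, converted to sums over $\Z^d$ and collapsed to a single exponential using that $j/L^2\ge1$ (resp.\ $L^2/j\ge1$) in the relevant regime.
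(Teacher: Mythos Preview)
Your proof is correct and follows essentially the same route as the paper: part~\ref{item:cor on W part 2} via Lemma~\ref{lem:bound with fourier} plus a Gaussian lattice-sum estimate, and part~\ref{item:cor on W part 1} by splitting at $j=L^{2-\epsilon}$ with $\epsilon=\frac{1}{d+1}$, using Lemma~\ref{lem:j<L^(2-epsilon)} below the threshold and the three-term triangle inequality through $\theta\sum_m\hat\psi^j(m)$ and $\theta L^d\psi^{*j}(0)$ above it. One small slip: your opening reduction to ``$L$ large'' is not correctly justified for part~\ref{item:cor on W part 2}, since the right-hand side $Ce^{-cj/L^2}$ is \emph{not} bounded below as $j\to\infty$ for fixed $L$; fortunately your actual argument for that part never uses this assumption and works for all $L\ge 1$, so the proof stands.
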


\begin{proof}
Part~\ref{item:cor on W part 2} is an immediate consequence of Lemma~\ref{lem:bound with fourier} as, when $j\ge L^2$,
\begin{equation*}
\left|W_{L,j}-\theta \right|\le Ce^{-c\frac{j}{L^2}}+ \theta \sum _{m\in \frac{1}{L} \mathbb Z ^d \setminus \{0\}}\hat{\psi}^j(m) \le Ce^{-c\frac{j}{L^2}}+\theta \sum _{m\in \frac{1}{L} \mathbb Z ^d \setminus \{0\}}e^{-cj\|m\|^2}\le Ce^{-c\frac{j}{L^2}}.
\end{equation*}
We turn to prove part~\ref{item:cor on W part 1}. Fix $\epsilon=\frac{1}{d+1}$. When $j\le L^{2-\epsilon }$ then \eqref{eq: cor on W first inequality} holds by Lemma~\ref{lem:j<L^(2-epsilon)}. When $L^{2-\epsilon }\le j\le L^2$,
\begin{equation*}
\left|W_{L,j}-\theta L^d \varphi ^{*j}(0)\right|\le \left|W_{L,j}-\theta L^d\sum _{k\in \Z^d}\psi ^{*j}(Lk)\right|+\theta L^d\sum _{k\in \Z^d \setminus \{0\}}\psi ^{*j}(Lk)+\theta L^d\left|\psi ^{*j}(0)-\varphi^{*j}(0)\right|.
\end{equation*}
We bound each term separately. First, as $L^d\sum _{k\in \Z^d}\psi ^{*j}(Lk) = \sum _{m\in \frac{1}{L} \mathbb Z ^d}\hat{\psi}^j(m)$ by the Poisson summation formula (as in \eqref{eq:poisson summation for W}), the first term is bounded by $CL^dj^{-\frac{d+1}{2}}\le CL^{-\frac{1}{2}}$ by the choice of $\epsilon$. Second, we use \eqref{eq:convolution of gaussian} to bound the second term,
\[L^d\sum _{k\in \Z^d \setminus \{0\}}\psi ^{*j}(Lk)\le C \left(\frac{L}{\sqrt{j}}\right)^d \sum _{k\in \Z ^d \setminus \{0\}} e^{-c\frac{L^2}{j}\|k\|^2}\le C \left(\frac{L}{\sqrt{j}}\right)^d e^{-c\frac{L^2}{j}} \le C e^{-c\frac{L^2}{j}}.\]
Lastly, by Lemma~\ref{lem:asymptotics of kappa _j}, the third term is at most $CL^dj^{-\frac{d+1}{2}}\le CL^{-\frac{1}{2}}$ by the choice of $\epsilon$.
\end{proof}

\begin{cor}\label{cor:G_L is close to L^dg}
There is a $C>0$ so that $\left|W_{L,j}-\theta L^d \varphi^{*j}(0)\right|\le C$ for any $L\ge1$ and $j\in \N$. Therefore, for any integer $n\in \N $ there is a $C_n>0$ for which
\[\left|G_L^{(n)}(z)-L^dg^{(n)}(z)\right|\le C_n\left(1-|z|\right)^{-n},\quad |z|<1,\]
where we recall that the functions $G_L$ and $g$ are defined in \eqref{eq:def of G_L(z)} and \eqref{eq:g_Taylor_expansion} respectively.
\end{cor}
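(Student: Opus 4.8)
The plan is to split the bound $|W_{L,j} - \theta L^d \varphi^{*j}(0)| \le C$ into the two regimes $j \le L^2$ and $j \ge L^2$ and invoke Corollary~\ref{cor:corollary on W} in each. For $j \le L^2$, part~\ref{item:cor on W part 1} of Corollary~\ref{cor:corollary on W} already gives $|W_{L,j} - \theta L^d \varphi^{*j}(0)| \le C(L^{-1/2} + e^{-cL^2/j}) \le C$ since $L \ge 1$ and the exponential is bounded by $1$. For $j \ge L^2$, part~\ref{item:cor on W part 2} gives $|W_{L,j} - \theta| \le C e^{-cj/L^2} \le C$; it remains to observe that $\theta L^d \varphi^{*j}(0)$ is also bounded in this regime, which follows from Lemma~\ref{lem:asymptotics of kappa _j} (for $j$ large, $\varphi^{*j}(0) = O(j^{-d/2})$, so $L^d \varphi^{*j}(0) = O((L^2/j)^{d/2}) = O(1)$ when $j \ge L^2$) together with the boundedness of $\varphi^{*j}(0)$ for the finitely many small $j$ (each $\varphi^{*j}(0)$ is a fixed finite number, and $L^d \ge 1$ need not cause trouble since... actually here one must be slightly careful: for fixed small $j$ and $L$ large, $L^d \varphi^{*j}(0)$ could be large). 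The cleaner route for $j \ge L^2$ is: $|\theta L^d\varphi^{*j}(0)| \le |W_{L,j} - \theta L^d\varphi^{*j}(0)|$ is what we want to bound, so instead write $|W_{L,j} - \theta L^d\varphi^{*j}(0)| \le |W_{L,j} - \theta| + |\theta - \theta L^d \varphi^{*j}(0)|$ and bound the second term using $L^d \varphi^{*j}(0) = L^d \psi^{*j}(0) + O(L^d j^{-(d+1)/2})$ from Lemma~\ref{lem:asymptotics of kappa _j}, with $L^d \psi^{*j}(0) = (2\pi j/L^2)^{-d/2}(\det A)^{-1} \cdot L^{2\cdot 0}$... more simply, $\theta - \theta L^d\psi^{*j}(0) = \theta(1 - (L^2/(2\pi j))^{d/2}(\det A)^{-1})$ which is $O(1)$ when $j \ge L^2$, and the error term $L^d j^{-(d+1)/2} \le L^d L^{-(d+1)} = L^{-1} \le 1$. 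This needs Lemma~\ref{lem:asymptotics of kappa _j} to hold for all $j$, not just large $j$; for the finitely many exceptional small $j$ we use that $j \ge L^2$ forces $L \le \sqrt{j}$ is bounded, so $L^d \varphi^{*j}(0)$ is bounded by a constant depending only on $\varphi$.

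Once $|W_{L,j} - \theta L^d\varphi^{*j}(0)| \le C$ is established, the statement about derivatives follows by termwise differentiation. From the definitions \eqref{eq:def of G_L(z)} and \eqref{eq:g_Taylor_expansion},
\[
G_L(z) - L^d g(z) = \sum_{j=1}^\infty \frac{W_{L,j} - \theta L^d \varphi^{*j}(0)}{j}\, z^j,
\]
so, differentiating $n$ times term by term (justified since the coefficients are bounded, hence the series converges in $\mathbb D$),
\[
G_L^{(n)}(z) - L^d g^{(n)}(z) = \sum_{j=n}^\infty \frac{(W_{L,j} - \theta L^d \varphi^{*j}(0))}{j}\, j(j-1)\cdots(j-n+1)\, z^{j-n}.
\]
Bounding each coefficient in absolute value by $C \cdot j^{n-1} \le C\, j^{n-1}$ and using $\sum_{j \ge n} j^{n-1} |z|^{j-n} \le C_n (1-|z|)^{-n}$ (a standard estimate: $\sum_{k \ge 0} \binom{k+n-1}{n-1} |z|^k = (1-|z|)^{-n}$, and $j^{n-1} \le C_n \binom{j+n-1}{n-1}$) yields $|G_L^{(n)}(z) - L^d g^{(n)}(z)| \le C_n (1-|z|)^{-n}$ for $|z| < 1$.

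The only genuine subtlety — and the step I expect to require the most care — is verifying that $\theta L^d\varphi^{*j}(0)$ is uniformly bounded in the regime $j \ge L^2$, since a priori $L^d$ can be large; this is handled by the local CLT estimate of Lemma~\ref{lem:asymptotics of kappa _j}, which gives $\varphi^{*j}(0) \asymp j^{-d/2}$ and hence $L^d \varphi^{*j}(0) \asymp (L^2/j)^{d/2} \le 1$, with the finitely many small-$j$ exceptions covered by the fact that $j \ge L^2 \ge L^0$... i.e. $L$ is then bounded by a constant. Everything else is routine: the first regime is immediate from Corollary~\ref{cor:corollary on W}\ref{item:cor on W part 1}, and the passage from coefficient bounds to derivative bounds is the standard generating-function estimate.
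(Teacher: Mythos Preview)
Your proposal is correct and follows essentially the same approach as the paper: split into $j\le L^2$ (handled by Corollary~\ref{cor:corollary on W}\ref{item:cor on W part 1}) and $j\ge L^2$ (handled by combining Corollary~\ref{cor:corollary on W}\ref{item:cor on W part 2} with Lemma~\ref{lem:asymptotics of kappa _j}), then deduce the derivative bound by termwise differentiation. The paper's writeup is slightly cleaner, using the exact identity $\frac{(n-1)!}{(1-z)^n}=\sum_{j\ge n}(j-1)\cdots(j-n+1)z^{j-n}$ rather than your comparison $j^{n-1}\le C_n\binom{j+n-1}{n-1}$, but the content is the same.
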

\begin{proof}
  The bound $\left|W_{L,j}-\theta L^d\varphi ^{*j}(0)\right|\le C$ follows in the case $j\le L^2$ from Corollary~\ref{cor:corollary on W} and in the case $j\ge L^2$ from Lemma~\ref{lem:asymptotics of kappa _j} and Corollary~\ref{cor:corollary on W}. The bound on the generating functions is an immediate consequence, noting that $\frac{(n-1)!}{(1-z)^n} = \sum_{j=1}^\infty (j-1)\cdots(j-n+1)z^{j-n}$ for $n\in\N$ and $|z|<1$.
\end{proof}

\subsection{Analytic continuation} We shall view the generating functions $G_L$ and $g$,
given by the Taylor expansions \eqref{eq:def of G_L(z)} and
\eqref{eq:g_Taylor_expansion}, as functions of a complex variable.
By Lemma~\ref{lem:asymptotics of kappa _j} and
Corollary~\ref{cor:corollary on W} , these functions are
analytic in the open unit disk, as their Taylor expansions converge
there. In this section we prove that they can be continued analytically to a larger domain $\Delta_0$.

\begin{definition} \label{def:Delta0} For
$R>1$ and $0<\beta<\frac{\pi}{2}$, define an open domain
$\Delta(R,\beta)$ in the complex plane by (see Figure~\ref{fig_delta_0})
\begin{equation*}
\Delta(R,\beta): = \bigl\{z\in \mathbb C\setminus\{1\}\,\colon|z|<R,
\;\left|\arg(z-1)\right|>\beta \bigr\}.
\end{equation*}
\end{definition}
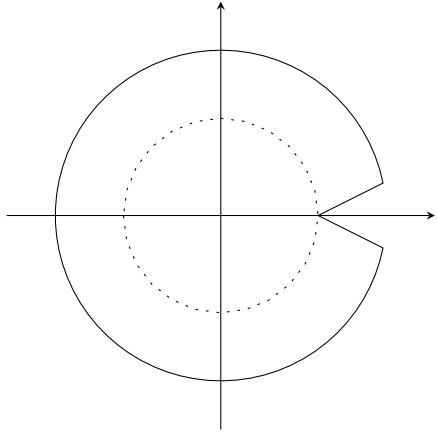
\begin{figure}
\begin{tikzpicture}
\begin{axis}[ticks=none,axis lines=middle,xmin=-2.2,xmax=2.2,ymin=-2.2,ymax=2.2,axis equal image]
\draw [dash pattern=on 1pt off 3pt, domain=-180:180] plot({cos(\x)},{sin(\x)});
\addplot[domain=1:1.668]{0.5*x-0.5};
\addplot[domain=1:1.668]{-0.5*x+0.5};
\draw [samples=70, domain=11.23:360-11.25] plot({1.7*cos(\x)},{1.7*sin(\x)});
\end{axis}
\end{tikzpicture}
\caption{The domain $\Delta(1.7,\frac{\pi }{8})$.}
 \label{fig_delta_0}
\end{figure}

The following technical claim is required to derive the analytic continuation of $G_L$ and $g$.
\begin{claim}\label{claim:1-z phi}
There exists $R_0>1$ such that the following holds for the domain
\begin{equation}\label{eq:Delta_0_def}
\Delta_0:=\Delta\! \left(R_0,\frac{\pi }{8}\right).
\end{equation}
\begin{enumerate}[label=(\roman{*})]
\item\label{item:part 1 of claim 1-z phi}
There is a $c>0$ such that for any $z\in \overline{\Delta_0}$,
\begin{equation}\label{eq:3}
\left|1-z\hat {\varphi}(t)\right|\ge c\min\left\{\|t\|^2 + \left|1-z\right|, 1\right\},\quad t\in \R ^d.
\end{equation}
\item
There is a $C>0$ such that for any $z\in \Delta _0$
\begin{equation}\label{eq:6}
\left| \frac{ 1-z+2\pi ^2 \|At\|^2}{1-z\hat {\varphi}(t)}-1 \right|\le C\|t\|, \quad \|t\|\le 1,
\end{equation}
where $A$ is given in \eqref{eq:def of A}.
\end{enumerate}
\end{claim}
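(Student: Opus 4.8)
The plan is to reduce everything to two facts from Claim~\ref{claim:basic fourier 2} — that $|\hat\varphi(t)|<1$ for every $t\neq0$, and that $\hat\varphi(t)=1-2\pi^2\|At\|^2+r(t)$ with $|r(t)|\le C\|t\|^3$ for $\|t\|\le1$ — together with the fact that $\hat\varphi$, being Schwartz, tends to $0$ at infinity. I would begin with the geometric heart of the matter: if $w\in\mathbb C$ satisfies $|\arg w|\ge\frac\pi8$, then the distance from $w$ to the ray $[0,\infty)$ is at least $\sin(\frac\pi8)|w|$, and a short case split (according to whether $\real w\le\pi^2\|At\|^2$ or not) upgrades this to
\begin{equation*}
\left|2\pi^2\|At\|^2-w\right|\ \ge\ c_1\bigl(\|t\|^2+|w|\bigr),\qquad t\in\R^d,
\end{equation*}
with $c_1>0$ depending only on $\varphi$ (using that $A$ is positive definite, so $2\pi^2\|At\|^2\ge c_2\|t\|^2$); the bound also holds, trivially, when $w=0$.

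For part~(i), write $w=z-1$ and use the Taylor expansion to decompose
\begin{equation*}
1-z\hat\varphi(t)=\bigl(2\pi^2\|At\|^2-w\bigr)+E,\qquad E:=-r(t)+w\bigl(2\pi^2\|At\|^2-r(t)\bigr),
\end{equation*}
so that $|E|\le C\|t\|^3+C|w|\,\|t\|^2$ for $\|t\|\le1$. I would then split into three ranges of $\|t\|$. \emph{(a)} For $\|t\|\le\delta$, where $\delta\in(0,1]$ is a small constant (depending only on $\varphi$) to be fixed now, the estimates $\|t\|^3\le\delta\|t\|^2$ and $|w|\,\|t\|^2\le\delta^2|w|$ force $|E|\le\frac12\bigl|2\pi^2\|At\|^2-w\bigr|$ via the displayed bound once $\delta$ is small enough, hence $|1-z\hat\varphi(t)|\ge\frac12\bigl|2\pi^2\|At\|^2-w\bigr|\ge\frac{c_1}2\bigl(\|t\|^2+|1-z|\bigr)\ge\frac{c_1}2\min\{\|t\|^2+|1-z|,1\}$; this step uses only the angle constraint, not $|z|<R_0$. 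Having fixed $\delta$, choose $M\ge1$ with $|\hat\varphi(t)|\le\frac14$ for $\|t\|\ge M$, put $\eta:=1-\max_{\delta\le\|t\|\le M}|\hat\varphi(t)|$, which is positive by compactness and $|\hat\varphi|<1$ off the origin, and finally choose $R_0\in(1,2]$ so small that $R_0(1-\eta)\le1-\frac\eta2$. \emph{(b)} For $\delta\le\|t\|\le M$ and $z\in\overline{\Delta_0}$ we have $|z|\le R_0$ and $|\hat\varphi(t)|\le1-\eta$, so $|1-z\hat\varphi(t)|\ge1-|z|\,|\hat\varphi(t)|\ge1-R_0(1-\eta)\ge\frac\eta2$. \emph{(c)} For $\|t\|\ge M$, $|1-z\hat\varphi(t)|\ge1-|z|\,|\hat\varphi(t)|\ge1-2\cdot\tfrac14=\tfrac12$. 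Since $\min\{\|t\|^2+|1-z|,1\}\le1$, taking $c=\min\{c_1/2,\eta/2,1/2\}$ establishes (i) (the boundary point $z=1$ being the case $w=0$ above, or following by continuity).

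For part~(ii), the same decomposition gives, for $z\in\Delta_0$ and $\|t\|\le1$,
\begin{equation*}
\frac{1-z+2\pi^2\|At\|^2}{1-z\hat\varphi(t)}-1=-\,\frac{(z-1)\,2\pi^2\|At\|^2-z\,r(t)}{1-z\hat\varphi(t)},
\end{equation*}
whose numerator is at most $C\|t\|^2\bigl(|z-1|+\|t\|\bigr)$ (using $|z|\le|z-1|+1$ and $\|t\|\le1$) while the denominator is at least $c\min\{\|t\|^2+|1-z|,1\}$ by part~(i). If $\|t\|^2+|1-z|\le1$, the quotient is at most $\tfrac Cc\,\|t\|^2\cdot\tfrac{|z-1|+\|t\|}{\|t\|^2+|1-z|}$, which a two-case check (whether $|z-1|\ge\|t\|$ or not) bounds by $C'\|t\|$; if $\|t\|^2+|1-z|>1$, the denominator is $\ge c$ and the numerator is $\le C\|t\|^2(|z-1|+1)\le C''\|t\|$, since $|z-1|<R_0+1$ is bounded on $\Delta_0$. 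This yields (ii).

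The main obstacle is range~\emph{(a)} of part~(i): one must exploit that $z-1$ is kept away from the positive real axis — precisely the purpose of the opening angle $\frac\pi8$ in the definition of $\Delta_0$ — to dominate the competing nonnegative real term $2\pi^2\|At\|^2$, and then verify that the cubic remainder $r(t)$ and the mixed term $w\cdot2\pi^2\|At\|^2$ are genuinely of lower order after shrinking $\delta$. The subtlety is that $|w|=|z-1|$ need not be small, so one cannot simply Taylor-expand $z\hat\varphi(t)$ about $z=1$; the bookkeeping must keep the $\|t\|^2$ and the $|w|$ contributions separate throughout. Everything else is soft — decay of $\hat\varphi$, compactness — or routine algebra.
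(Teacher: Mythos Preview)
Your proof is correct and follows essentially the same architecture as the paper's: an angle-separation argument for small $\|t\|$, a uniform bound $|\hat\varphi(t)|\le 1-\eta$ away from the origin to handle large $\|t\|$, and for part~(ii) the same difference estimate $|(1-z\hat\varphi(t))-(1-z+2\pi^2\|At\|^2)|\le C\|t\|^3+C|1-z|\,\|t\|^2$ divided through by part~(i).

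The one genuine variation worth noting is in the small-$\|t\|$ regime of part~(i). The paper decomposes $1-z\hat\varphi(t)=(1-\hat\varphi(t))-(z-1)\hat\varphi(t)$ and uses that, for $\|t\|$ small, $\arg(1-\hat\varphi(t))$ and $\arg\hat\varphi(t)$ are both within $\pi/24$ of zero, so the two summands are separated in argument by at least $\pi/8-\pi/24-\pi/24=\pi/24$; a single geometric inequality then gives the lower bound. You instead replace $1-\hat\varphi(t)$ outright by its leading term $2\pi^2\|At\|^2$, obtaining $1-z\hat\varphi(t)=(2\pi^2\|At\|^2-(z-1))+E$, apply the angle argument to the exactly-real-nonnegative quantity $2\pi^2\|At\|^2$ versus $z-1$, and absorb the cubic remainder $E$ perturbatively by shrinking $\delta$. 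Your route avoids the separate argument control on $\hat\varphi$ and $1-\hat\varphi$, at the cost of an explicit error-absorption step; the paper's route is a touch cleaner but relies on those auxiliary arg bounds. Both lead to the same estimate with the same dependence on the opening angle $\pi/8$.
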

We proceed to show that $G_L$ and $g$ can be continued analytically to $\Delta_0$.

\begin{lemma}\label{lem:analytic continuation}
The generating functions $G_L$, for any $L\ge 1$, and $g$ extend to analytic
functions in $\Delta_0$ (defined in \eqref{eq:Delta_0_def}), where they are given by
\begin{equation}\label{eq:analytic continuation form G_L}
G_L(z)=-\theta \sum _{m\in \frac{1}{L}\Z ^d} \log \left(1-z\hat {\varphi}(m)\right),
\end{equation}
\begin{equation}\label{eq:analytic continuation form g}
 g(z)=-\theta \intop _{\R ^d} \log \left(1-z\hat {\varphi}(t)\right)dt,
\end{equation}
with the sum and integral converging uniformly on compact subsets of $\Delta_0$.
\end{lemma}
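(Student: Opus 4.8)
The plan is to verify that the right-hand sides of \eqref{eq:analytic continuation form G_L} and \eqref{eq:analytic continuation form g} define analytic functions on $\Delta_0$ agreeing with $G_L$ and $g$ on the unit disk, so that they are the desired analytic continuations. I would carry this out in three steps.

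\emph{Step 1: the series and integral are well-defined and analytic on $\Delta_0$.} Fix $z$ in a compact subset $K\subseteq\Delta_0$. By part~\ref{item:part 1 of claim 1-z phi} of Claim~\ref{claim:1-z phi}, $|1-z\hat\varphi(t)|\ge c\min\{\|t\|^2+|1-z|,1\}$, so $1-z\hat\varphi(t)$ stays in the right half-plane shifted appropriately — more precisely it avoids a neighborhood of $0$ for $\|t\|$ bounded away from $0$, and for small $\|t\|$ it has modulus at least $c\|t\|^2$ while, by \eqref{eq:taylor of phi}, $1-z\hat\varphi(t)=1-z+2\pi^2 z\|At\|^2+O(\|t\|^3)$, which for $z$ near $1$ has positive real part to leading order; together with $|\hat\varphi(t)|<1$ for $t\ne 0$ (Claim~\ref{claim:basic fourier 2}\ref{item:1 in basic properties2}) one checks $1-z\hat\varphi(t)\notin(-\infty,0]$ for all $t\in\R^d$ and all $z\in\overline{\Delta_0}$ (shrinking $R_0$ if necessary), so $\log(1-z\hat\varphi(t))$ is well-defined via the principal branch and analytic in $z$. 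For integrability/summability: for $\|t\|\le 1$, \eqref{eq:6} gives $\log(1-z\hat\varphi(t)) = \log(1-z+2\pi^2\|At\|^2) + \log(1+O(\|t\|))$, and since $\real(1-z+2\pi^2\|At\|^2)$ is bounded below by $c(\|t\|^2+|1-z|)$ on $K$ (again by Claim~\ref{claim:1-z phi}), the first term is $O_K(1)+O(\log(1/\|t\|))$, which is integrable near $0$ in $\R^d$ and summable over $\frac1L\Z^d$ (the lattice has no point other than possibly $0$ inside a fixed small ball once $L$ is large, and the $t=0$ term is simply $-\log(1-z)$ which is finite on $\Delta_0$); for $\|t\|\ge 1$, since $\varphi$ is Schwartz we have $|\hat\varphi(t)|\le C_0\|t\|^{-(d+1)}$, so $|\log(1-z\hat\varphi(t))|\le C|\hat\varphi(t)|\le C\|t\|^{-(d+1)}$, integrable and summable over $\frac1L\Z^d$ uniformly in $L\ge 1$. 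Hence both the series in \eqref{eq:analytic continuation form G_L} and the integral in \eqref{eq:analytic continuation form g} converge uniformly on compact subsets of $\Delta_0$, and by Morera's theorem (or the standard theorem on analyticity of uniform limits / parameter integrals) they define analytic functions there.

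\emph{Step 2: agreement with the Taylor series on $\mathbb{D}$.} For $|z|<1$ we have $|z\hat\varphi(t)|<1$, so $-\log(1-z\hat\varphi(t))=\sum_{j=1}^\infty \frac{z^j\hat\varphi(t)^j}{j}$; summing over $m\in\frac1L\Z^d$ (resp. integrating over $t\in\R^d$) and exchanging the order of summation — justified by absolute convergence, using $\sum_{m}|\hat\varphi(m)|^j\le \sum_m |\hat\varphi(m)|$ for $j\ge1$ and dominated convergence in $z$ — gives $-\theta\sum_m\log(1-z\hat\varphi(m)) = \theta\sum_{j\ge1}\frac{z^j}{j}\sum_{m\in\frac1L\Z^d}\hat\varphi^j(m) = \sum_{j\ge1}\frac{W_{L,j}}{j}z^j = G_L(z)$ by \eqref{eq:poisson summation for W} and \eqref{eq:def of G_L(z)}. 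Likewise, using $\widehat{\varphi^{*j}}=\hat\varphi^j$ and Fourier inversion (Claim~\ref{claim:basic facts on the fourier transform}\ref{item:2 in basic properties},\ref{item:3 in basic properties}), $\int_{\R^d}\hat\varphi^j(t)\,dt = \varphi^{*j}(0)$, so $-\theta\int_{\R^d}\log(1-z\hat\varphi(t))\,dt = \theta\sum_{j\ge1}\frac{z^j}{j}\varphi^{*j}(0) = g(z)$ by \eqref{eq:g_Taylor_expansion}.

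\emph{Step 3: conclusion.} Both $G_L$ and the right-hand side of \eqref{eq:analytic continuation form G_L} are analytic on the connected open set $\Delta_0\supseteq\mathbb{D}\cap\Delta_0$ (note $\Delta_0$ contains a neighborhood of $0$ and is connected) and agree on the nonempty open set $\mathbb{D}$, hence agree on all of $\Delta_0$ by the identity theorem; the same argument applies to $g$. This gives the claimed analytic continuations and the uniform convergence statement, completing the proof.

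\emph{Main obstacle.} The delicate point is Step~1: controlling $\log(1-z\hat\varphi(t))$ simultaneously for \emph{small} $\|t\|$ (where $1-z\hat\varphi(t)$ approaches $0$ when $z\to 1$, requiring the precise quadratic lower bound and the substitution \eqref{eq:6} from Claim~\ref{claim:1-z phi}, together with care that the logarithm does not wind around $0$ as $z$ ranges over $\overline{\Delta_0}$) and for \emph{large} $\|t\|$ (where one needs the Schwartz decay of $\hat\varphi$ and, crucially, uniformity in $L\ge1$ of the summability over $\frac1L\Z^d$, handled by comparing the lattice sum to the integral as in Lemma~\ref{lem:bound with fourier} and Corollary~\ref{cor:corollary on W}). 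Verifying that $1-z\hat\varphi(t)$ genuinely avoids $(-\infty,0]$ for all $z\in\overline{\Delta_0}$ — which may force shrinking $R_0$ and is exactly what Claim~\ref{claim:1-z phi} is designed to supply — is the real content.
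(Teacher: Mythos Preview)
Your proposal is correct and follows the same overall strategy as the paper: verify the identities on $\{|z|<1\}$ by expanding the logarithm and interchanging sum/integral, then show the right-hand sides are analytic on $\Delta_0$ via uniform convergence on compacts, and conclude by the identity theorem. Step~2 and Step~3 match the paper essentially verbatim.

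The difference lies in Step~1, where the paper is more economical. For well-definedness of the logarithm, rather than arguing geometrically that $1-z\hat\varphi(t)\notin(-\infty,0]$ (and contemplating ``shrinking $R_0$''), the paper observes that if $z\hat\varphi(t)=R\ge 1$ for some $z\in\Delta_0$, then $z':=z/R\in\Delta_0$ satisfies $1-z'\hat\varphi(t)=0$, contradicting Claim~\ref{claim:1-z phi}; no adjustment of $R_0$ is needed. For uniform convergence, the paper bypasses your small-$\|t\|$/large-$\|t\|$ split entirely: on a compact $\Delta\subseteq\Delta_0$ one has $|1-z\hat\varphi(t)|\ge c_\Delta$ uniformly in $t$ (from Claim~\ref{claim:1-z phi}, since $|1-z|\ge c_\Delta$ on $\Delta$), and then by considering small versus large $|\hat\varphi(t)|$ one gets the single bound $|\log(1-z\hat\varphi(t))|\le C_\Delta|\hat\varphi(t)|$, which is summable and integrable because $\hat\varphi$ is Schwartz. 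Your logarithmic estimate near $t=0$ is correct but unnecessary, and your remark that ``the lattice has no point other than possibly $0$ inside a fixed small ball once $L$ is large'' is backwards (the lattice $\frac1L\Z^d$ becomes \emph{denser} as $L$ grows); fortunately the lemma only asks for each fixed $L$, so this slip and your concern about uniformity in $L$ in the ``Main obstacle'' paragraph are both moot.
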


We start with the proof of Claim~\ref{claim:1-z phi}.

\begin{proof}[Proof of Claim~\ref{claim:1-z phi}]
We start with the first part. We assume that $t\neq 0$ as the case $t=0$ is straightforward. By Claim~\ref{claim:basic fourier 2} there exists a $\delta >0$ such that for any $0<\|t\|\le \delta $,
\[|\arg (1-\hat{\varphi}(t))|\le \frac{\pi }{24}, \quad \left|1-\hat{\varphi}(t)\right|\ge c\|t\|^2,\quad  |\arg \hat{\varphi}(t)|\le \frac{\pi }{24} \ \mbox{ and } \ |\hat{\varphi }(t)|\ge \frac{1}{2}. \]
Thus, for $\|t\| \le \delta $ and $z\in \mathbb C\setminus \{1\}$ with $\arg (z-1)\ge \frac{\pi }{8}$ we have
\begin{equation}\label{eq:small t}
\begin{split}
|1-z\hat{\varphi}(t)|&= |(1-\hat{\varphi}(t))-(z-1)\hat{\varphi}(t)| \\
&\ge c \left(|1-\hat{\varphi }(t)|+ |1-z|\cdot|\hat{\varphi}(t)|\right)\ge c\left(\|t\|^2+|1-z|\right),
\end{split}
\end{equation}
where in the first inequality we used the fact that for any $w_1,w_2\in \C$ satisfying $|\arg w_1|\le \frac{\pi }{24}$ and $|\arg w_2|\ge \frac{\pi }{12}$ we have
\begin{equation*}
|w_1-w_2|\ge \sin \frac{\pi }{24}\cdot \max \left(|w_1|,|w_2|\right)\ge c \left(|w_1|+|w_2|\right).
\end{equation*}

Again, by Claim~\ref{claim:basic fourier 2}, there exists some $\epsilon>0$ such that $\left|\hat{\varphi}(t)\right|\le 1-\epsilon$ for $\|t\|\ge \delta $. Thus, for $\|t\|\ge \delta $ and $|z|\le 1+\epsilon $,
\begin{equation}\label{eq:for big t}
\left|1-z\hat {\varphi}(t)\right|\ge 1-\left|z\right| \cdot\left|\hat {\varphi}(t)\right|\ge c.
\end{equation}
The first part of the claim with $R_0=1+\epsilon $ follows from \eqref{eq:small t} and \eqref{eq:for big t}.

We continue with the second part. Fix $R_0$ and $\Delta _0$ such that the first part holds. Using the Taylor expansion of $\hat{\varphi}$ in \eqref{eq:taylor of phi}, we obtain
\begin{equation}
\begin{split}
\left|1-z\hat{\varphi}(t) -(1-z+2\pi ^2 \|At\|^2)\right|\le \left|(1-2\pi ^2\|At\|^2-\hat{\varphi}(t))+(z-1)(1-\hat{\varphi}(t))\right|\\
\le C\|t\|^3+C|1-z|\cdot \|t\|^2\le C\|t\|\cdot \left|1-z\hat{\varphi}(t)\right|,\quad 	z\in \Delta _0, \ \|t\|\le 1,
\end{split}
\end{equation}
where in the third inequality we used the first part of the claim. Dividing both sides by $\left|1-z\hat{\varphi}(t)\right|$ yields \eqref{eq:6}.
\end{proof}

We turn to prove Lemma~\ref{lem:analytic continuation}.

\begin{proof}[Proof of Lemma~\ref{lem:analytic continuation}]
We will start by proving that the identities in \eqref{eq:analytic continuation form G_L} and \eqref{eq:analytic continuation form g} hold when $\left|z\right|<1$. Then, we will show that the right-hand side of these identities define analytic functions in $\Delta _0$.

Fix $z$ with $|z|<1$. By \eqref{eq:def of G_L(z)} and \eqref{eq:poisson summation for W},
\begin{equation}
\begin{split}
G_L(z)=\sum _{j=1}^{\infty}\frac{W_{L,j}}{j}z^j&=\theta \sum _{j=1}^\infty \sum _{m\in \frac{1}{L} \mathbb Z ^d}\frac{1}{j}\hat {\varphi}^{j}(m)z^j \\
&=\theta \sum _{m\in \frac{1}{L} \mathbb Z ^d}\sum _{j=1}^{\infty}\frac{1}{j}(z\hat {\varphi}(m))^j=-\theta \sum _{m\in \frac{1}{L} \mathbb Z ^d}\log \left(1-z\hat {\varphi}(m)\right),
 \end{split}
 \end{equation}
where the last equality holds as $\left|z\hat{\varphi}(m)\right|\le |z|<1$. The change of order of summation in the third equality is justified as the sum converges absolutely. Indeed, since $\hat{\varphi}$ is a Schwartz function (part~\ref{item:3 in basic properties} of Claim~\ref{claim:basic facts on the fourier transform}) and $\left| \hat {\varphi}(m)\right|\le 1$,
\begin{equation}\label{eq:8}
\begin{split}
\sum _{j=1}^\infty \sum _{m\in \frac{1}{L} \mathbb Z ^d}\frac{1}{j}\left|\hat {\varphi}^{j}(m)z^j\right|\le \sum _{j=1}^\infty \left|z\right|^j\sum _{m\in \frac{1}{L} \mathbb Z ^d}\left|\hat {\varphi}(m)\right|
\le \frac{\left|z\right|}{1-\left|z\right|}\sum _{m\in \frac{1}{L} \mathbb Z ^d}\left|\hat {\varphi}(m)\right|<\infty .
\end{split}
\end{equation}
We turn to prove the identity for $g$. Using Fourier inversion theorem (Part~\ref{item:2 in basic properties} of Claim~\ref{claim:basic facts on the fourier transform}) we obtain,
\[ g(z)=\sum _{j=1}^\infty \frac{\theta \varphi ^{*j}(0)}{j}z^j=\theta \sum _{j=1}^\infty \frac{z^j}{j}\intop _{\R ^d}\hat{\varphi}^j(t)dt=\theta \intop _{\mathbb R ^d}\sum _{j=1}^\infty \frac{1}{j}(z\hat{\varphi}(t))^jdt=-\theta \intop _{\mathbb R ^d}\log \left(1-z\hat{\varphi}(t)\right) dt,\]
where the second inequality is justified by a calculation similar to \eqref{eq:8}.

Now, consider the right-hand side of the identities \eqref{eq:analytic continuation form G_L} and \eqref{eq:analytic continuation form g} for $z\in \Delta _0$. First note that the logarithms are well defined. Indeed, otherwise there are $z\in \Delta_0$ and $t\in \R ^d$ so that $z\hat{\varphi}(t)=R\ge 1$, but then $z'=\frac{z}{R}\in \Delta _0$ satisfies $1-z'\hat{\varphi}(t)=0$ contradicting the bound in Claim~\ref{claim:1-z phi}.

It suffices to prove that the sum and integral in \eqref{eq:analytic continuation form G_L} and \eqref{eq:analytic continuation form g} converge uniformly on compact subsets of $\Delta _0$.
Let $\Delta \subseteq \Delta _0$ be a compact subset. By the first part of Claim~\ref{claim:1-z phi}, for any $t\in \R ^d$ and $z\in \Delta $,  $\left|1-z\hat {\varphi}(t)\right|\ge c_\Delta  $. Thus, considering separately the cases of small and large $|\hat{\varphi}(t)|$, we obtain
\[\left|\log \left(1-z\hat{\varphi}(t)\right)\right|\le C_\Delta \left|\hat{\varphi}(t)\right|.\]
The uniform convergence follows as $\hat{\varphi}$ is a Schwartz function.
\end{proof}

In the following corollary we deduce explicit formulas for the derivatives of $G_L$ and $g$ in the domain $\Delta_0$ which will be of repeated use in the sequel.

\begin{cor}\label{cor:G_L^(n) and g^(n)}
For any $L\ge 1$ and $n\in \N $,
\begin{equation}\label{eq:analytic continuation for G_L'}
G_L^{(n)}(z)=\theta\left(n-1\right)! \sum_{m\in \frac{1}{L} \mathbb Z ^d}\frac{\hat {\varphi} ^n(m)}{\left(1-z\hat {\varphi}(m)\right)^n},\quad z\in \Delta _0.
\end{equation}
\begin{equation}\label{eq:analytic continuation for g'}
g^{(n)}(z)=\theta \left(n-1\right)! \intop _{\mathbb R ^d}\frac{\hat {\varphi}^n(t)}{\left(1-z\hat {\varphi}(t)\right)^n}dt,\quad z\in \Delta _0.
\end{equation}
\end{cor}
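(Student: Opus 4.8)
The plan is to derive both formulas by differentiating, term by term in the case of $G_L$ and under the integral sign in the case of $g$, the representations \eqref{eq:analytic continuation form G_L} and \eqref{eq:analytic continuation form g} furnished by Lemma~\ref{lem:analytic continuation}. Everything reduces to the elementary identity
\[\frac{d^n}{dz^n}\log(1-zw)=-(n-1)!\,\frac{w^n}{(1-zw)^n},\]
valid for $w\in\mathbb C$ whenever $z$ ranges over a domain on which $1-zw\notin(-\infty,0]$; this follows by a short induction on $n$, the inductive step being a single application of the chain rule.

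For $G_L$, recall from Lemma~\ref{lem:analytic continuation} that $G_L(z)=-\theta\sum_{m\in\frac1L\Z^d}\log(1-z\hat\varphi(m))$ on $\Delta_0$, that each summand is holomorphic on $\Delta_0$, and that the series converges uniformly on compact subsets of $\Delta_0$. By the standard theorem on locally uniformly convergent series of holomorphic functions (Weierstrass), one may differentiate the series term by term any number of times, the differentiated series again converging uniformly on compact subsets of $\Delta_0$. Applying this $n$ times and substituting the identity above yields \eqref{eq:analytic continuation for G_L'}. I would also record the local uniform convergence of the series in \eqref{eq:analytic continuation for G_L'}, since this is used repeatedly later.

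For $g$ the only point requiring care is the interchange of $\tfrac{d^n}{dz^n}$ with $\int_{\R^d}$ in \eqref{eq:analytic continuation form g}, which I would justify by exhibiting an integrable dominating function valid uniformly in $t$ over the relevant compact set of $z$'s. Fix a compact $\Delta\subseteq\Delta_0$ and choose $\delta>0$ so small that the closed $\delta$-neighbourhood $\Delta^{+}$ of $\Delta$ is still contained in $\Delta_0$; note $\Delta^{+}$ is compact. As in the proof of Lemma~\ref{lem:analytic continuation}, the first part of Claim~\ref{claim:1-z phi} shows that $|1-z\hat\varphi(t)|$ is bounded below by a positive constant for $z\in\Delta^{+}$, whence $|\log(1-z\hat\varphi(t))|\le C_{\Delta^{+}}|\hat\varphi(t)|$ for all $z\in\Delta^{+}$ and $t\in\R^d$. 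Since $z\mapsto\log(1-z\hat\varphi(t))$ is holomorphic on $\Delta_0$, Cauchy's estimate on the disc of radius $\delta$ gives
\[\Bigl|\tfrac{d^n}{dz^n}\log(1-z\hat\varphi(t))\Bigr|\le \frac{n!}{\delta^n}\,C_{\Delta^{+}}\,|\hat\varphi(t)|\qquad(z\in\Delta,\ t\in\R^d),\]
and the right-hand side is integrable over $\R^d$ because $\hat\varphi$ is a Schwartz function (part~\ref{item:3 in basic properties} of Claim~\ref{claim:basic facts on the fourier transform}). This domination legitimises $n$-fold differentiation under the integral, and the elementary identity again produces \eqref{eq:analytic continuation for g'}.

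The main obstacle is thus the $g$-case domination: one needs a single radius $\delta$ and a single integrable bound, uniform in $t\in\R^d$, valid for all $z$ in a prescribed compact subset of $\Delta_0$, which is why I pass to the neighbourhood $\Delta^{+}$ and invoke Cauchy's estimate rather than attempt to bound the derivative of the integrand directly. As an alternative to the differentiation-exchange arguments altogether, one could verify \eqref{eq:analytic continuation for G_L'} and \eqref{eq:analytic continuation for g'} on the disc $\{|z|<1\}$ — where, after expanding $(1-z\hat\varphi)^{-n}$, the sum and integral converge absolutely and collapse, via \eqref{eq:poisson summation for W}, to the Taylor coefficients of $G_L^{(n)}$ and $g^{(n)}$ — and then conclude by the identity theorem, both sides being holomorphic on the connected domain $\Delta_0$; I would mention this route as a remark.
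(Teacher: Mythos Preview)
Your proposal is correct and follows essentially the same route as the paper: differentiate the representations \eqref{eq:analytic continuation form G_L} and \eqref{eq:analytic continuation form g} from Lemma~\ref{lem:analytic continuation} under the sum and integral signs, respectively, justified by the uniform convergence on compact subsets established there. Your treatment is more explicit than the paper's one-line proof---in particular, your Cauchy-estimate domination argument for the $g$ case is a careful spelling-out of what the paper leaves implicit (for holomorphic integrands, locally uniform convergence of the integral already suffices to differentiate under the integral sign, which is all the paper invokes).
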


\begin{proof}
The formulas follow from Lemma~\ref{lem:analytic continuation} by differentiating the identities in \eqref{eq:analytic continuation form G_L} and \eqref{eq:analytic continuation form g} under the sum and integral signs, making use of the uniform convergence on compact subsets.
\end{proof}

\subsection{Near the singularity}
The following lemma determines the asymptotic behavior of the derivatives of the function $g$ near the potential singularity $z=1$. The derivative of $g$ is closely related to the polylogarithm function. Indeed, they differ merely by a constant multiple when the density function $\varphi$ is the Gaussian density $\psi$ (given by \eqref{eq:def of Gaussian_psi}). Our goal here is to show that, for general $\varphi$, $g$ shares the asymptotic behavior of the polylogarithm function near $z=1$.

\begin{lemma}\label{lem:asymptotics of g at 1}
The following holds for any integer $n\ge 1$:
\begin{enumerate}[label=(\roman{*})]
\item\label{item:d>2n}
If $d>2n$ then the function $g^{(n)}$ may be extended to a continuous function in $\overline{\Delta _0}$.
\item\label{item:d=2n}
If $d=2n$ then for $z\in \Delta _0$ we have
\[g^{(n)}(z)=\frac{\theta }{\left(2\pi \right)^{\frac{d}{2}}\sqrt{\det(\cov(X))}}\log \left(\frac{1}{1-z}\right)+O(1).\]
\item\label{item:d<2n}
If $d<2n$ then for $z\in \Delta _0$ we have
\[g^{(n)}(z)=\frac{\theta \cdot \Gamma \left(n-\frac{d}{2}\right)}{\left(2\pi \right)^{\frac{d}{2}}\sqrt{\det(\cov(X))}}\left(1-z\right)^{\frac{d}{2}-n} +O_n\left(\left|1-z\right|^{\frac{d}{2}-n+\frac{1}{2}}+\left| \log \left(1-z\right)\right|\right),\]
where $\Gamma $ is the gamma function.
\end{enumerate}
\end{lemma}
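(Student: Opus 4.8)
The plan is to analyze the integral representation
\[
g^{(n)}(z)=\theta(n-1)!\int_{\R^d}\frac{\hat\varphi^n(t)}{(1-z\hat\varphi(t))^n}\,dt,\qquad z\in\Delta_0,
\]
furnished by Corollary~\ref{cor:G_L^(n) and g^(n)}, comparing the integrand near the origin with the ``Gaussian model'' $(1-z+2\pi^2\|At\|^2)^{-n}$ whose integral can be computed exactly. Fix a small $\delta>0$, depending only on $\varphi$, for which Claim~\ref{claim:basic fourier 2} and part~(ii) of Claim~\ref{claim:1-z phi} apply on $\{\|t\|\le\delta\}$ and $|\hat\varphi(t)|\ge\tfrac12$ there. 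On $\{\|t\|>\delta\}$ one has $|\hat\varphi(t)|\le 1-c$, hence $|1-z\hat\varphi(t)|\ge c'$ uniformly for $z\in\overline{\Delta_0}$ (recall $\Delta_0$ is bounded), so this part of the integral is $O(1)$ and, since $\hat\varphi$ is Schwartz, continuous in $z$; this disposes of the tail in all three cases. For case~\ref{item:d>2n} this already suffices: Claim~\ref{claim:1-z phi}(i) gives $|1-z\hat\varphi(t)|\ge c\min\{\|t\|^2,1\}$ for $z\in\overline{\Delta_0}$, so $|\hat\varphi(t)|^n\min\{\|t\|^{2n},1\}^{-1}$ is an integrable majorant when $d>2n$ (integrable near $0$ because $2n<d$, at infinity because $\hat\varphi$ decays rapidly), and dominated convergence shows that the integral, hence $g^{(n)}$, extends continuously to $\overline{\Delta_0}$.

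For cases~\ref{item:d=2n} and~\ref{item:d<2n} I would write, on $\{\|t\|\le\delta\}$,
\[
\frac{\hat\varphi^n(t)}{(1-z\hat\varphi(t))^n}=\frac{1}{(1-z+2\pi^2\|At\|^2)^n}+\Bigl(\frac{\hat\varphi^n(t)}{(1-z\hat\varphi(t))^n}-\frac{1}{(1-z+2\pi^2\|At\|^2)^n}\Bigr).
\]
Using $\hat\varphi^n(t)=1+O(\|t\|^2)$ from \eqref{eq:taylor of phi} together with Claim~\ref{claim:1-z phi}(ii), which gives $1-z+2\pi^2\|At\|^2=(1+O(\|t\|))(1-z\hat\varphi(t))$, the parenthesised remainder is bounded by $C\|t\|\,|1-z+2\pi^2\|At\|^2|^{-n}$; and since $z\in\Delta_0$ forces $|\arg(1-z)|<\tfrac{7\pi}{8}$ (equivalently, by Claim~\ref{claim:1-z phi}) one has the elementary lower bound $|1-z+2\pi^2\|At\|^2|\ge c(|1-z|+\|t\|^2)$. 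Passing to polar coordinates and rescaling $r=\sqrt{|1-z|}\,\rho$ turns the remainder integral into $O\bigl(\int_0^\delta r^d(|1-z|+r^2)^{-n}\,dr\bigr)$, which equals $O\bigl(|1-z|^{\frac d2-n+\frac12}+|\log(1-z)|\bigr)$ when $d<2n$ (the logarithm is needed only in the borderline $d=2n-1$) and $O(1)$ when $d=2n$; both lie within the stated error.

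It remains to evaluate $I(z):=\int_{\|t\|\le\delta}(1-z+2\pi^2\|At\|^2)^{-n}\,dt$. The substitution $u=\sqrt{2\pi^2}\,At$, with $\det A=\sqrt{\det(\cov(X))}$, carries $\{\|t\|\le\delta\}$ to an ellipsoid squeezed between two balls $B(0,c\delta)$ and $B(0,C\delta)$; on the annulus between them the integrand is bounded for all $z\in\Delta_0$ (same lower bound as above), contributing $O(1)$, so with $w:=1-z$,
\[
I(z)=\frac{(2\pi^2)^{-d/2}}{\sqrt{\det(\cov(X))}}\cdot\frac{2\pi^{d/2}}{\Gamma(d/2)}\int_0^{c\delta}\frac{r^{d-1}}{(w+r^2)^n}\,dr+O(1).
\]
When $d<2n$ the integral over $[0,\infty)$ converges and, substituting $v=r^2$ and using the Beta integral $\int_0^\infty v^{d/2-1}(w+v)^{-n}\,dv=\Gamma(d/2)\Gamma(n-d/2)\Gamma(n)^{-1}w^{d/2-n}$ (extended to $w\in\C\setminus(-\infty,0]$ by analyticity), one gets $\frac{\Gamma(d/2)\Gamma(n-d/2)}{2\Gamma(n)}w^{\frac d2-n}$, while the tail $\int_{c\delta}^\infty$ is $O(1)$; multiplying by $\theta(n-1)!=\theta\Gamma(n)$ and simplifying $(2\pi^2)^{-d/2}\pi^{d/2}=(2\pi)^{-d/2}$ yields case~\ref{item:d<2n}. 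When $d=2n$, substituting $v=r^2$ and then $v=ws$ gives $\int_0^{(c\delta)^2/w}s^{n-1}(1+s)^{-n}\,ds=\log\bigl(1+\tfrac{(c\delta)^2}{w}\bigr)+O(1)=\log\tfrac1{1-z}+O(1)$ (since $s^{n-1}(1+s)^{-n}-(1+s)^{-1}$ is integrable along any path from $0$ avoiding $s=-1$), and the same bookkeeping gives case~\ref{item:d=2n}.

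The step I expect to be the main obstacle is the \emph{uniform} control of the error terms: one must verify that the replacement of $1-z\hat\varphi(t)$ by the quadratic model, the truncation at $\|t\|=\delta$, the change of variables, and the tail $\int_{c\delta}^\infty$ all produce contributions of strictly lower order than the stated leading term, uniformly over all of $\Delta_0$ rather than merely as $z\to1$; this is precisely where the quantitative estimates of Claim~\ref{claim:1-z phi} do the essential work. The exact evaluation of the Gaussian model integral, the treatment of the complex power $(1-z)^{d/2-n}$ and of the logarithm, and the final consolidation of constants into $\frac{\theta\Gamma(n-d/2)}{(2\pi)^{d/2}\sqrt{\det(\cov(X))}}$ are then routine.
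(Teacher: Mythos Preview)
Your proposal is correct and follows essentially the same route as the paper: both start from the integral formula of Corollary~\ref{cor:G_L^(n) and g^(n)}, discard the tail $\|t\|\gg 0$ as $O(1)$, replace $(1-z\hat\varphi(t))^{-n}$ by the quadratic model $(1-z+2\pi^2\|At\|^2)^{-n}$ on the remaining ball using Claim~\ref{claim:1-z phi} to control the error, and then reduce via the linear change $x=At$ and polar coordinates to a one-variable integral $\int_0^{\mathrm{const}}\frac{r^{d-1}}{(1-z+r^2)^n}\,dr$. The only noteworthy difference is in how this last integral is evaluated: the paper sets up the recursion $I_{n,d}=\tfrac{2n}{d}I_{n+1,d+2}+O_n(1)$ by integration by parts and reduces to the base cases $I_{k,1},I_{k,2}$ (computed by a residue and by an elementary antiderivative), whereas you extend to $[0,\infty)$ and invoke the Beta integral $\int_0^\infty v^{d/2-1}(w+v)^{-n}\,dv=B(\tfrac d2,n-\tfrac d2)\,w^{d/2-n}$ directly when $d<2n$, and handle $d=2n$ by the elementary observation that $s^{n-1}(1+s)^{-n}-(1+s)^{-1}$ is integrable. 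Your evaluation is a bit more streamlined; the paper's recursion has the virtue of making the half-integer error exponent in part~\ref{item:d<2n} emerge mechanically from $I_{n,d+1}$.
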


\begin{proof}
Inside the unit disc, the three parts follow in a straightforward manner from the definition of $g$ in \eqref{eq:g_Taylor_expansion} and the asymptotic behavior of $\varphi ^{*j}(0)$ given in Lemma~\ref{lem:asymptotics of kappa _j}. However, since we need the asymptotics outside the radius of convergence as well, we will derive it using the analytic continuation representation of $g^{(n)}$ given in \eqref{eq:analytic continuation for g'}.

We start with part~\ref{item:d>2n}. When $d>2n$, we have by \eqref{eq:3},
\[\underset{z\in \overline{\Delta _0}}{\sup}\ \intop _{\R ^d}\left|\frac{\hat{\varphi}^n(t)}{\left(1-z\hat{\varphi}(t)\right)^n}\right|dt\le C_n\intop_{\|t\|\le 1 } \frac{1}{\|t\|^{2n}}+C_n\intop_{\|t\|\ge 1 }\left|\hat{\varphi}(t)\right|dt<\infty\]
as $\hat{\varphi}$ is a Schwartz function. Thus, the integral on the right-hand side of \eqref{eq:analytic continuation for g'} converges uniformly in $z\in \overline{\Delta _0}$ and therefore it is continuous.

We turn to prove parts~\ref{item:d=2n} and \ref{item:d<2n}. Using \eqref{eq:analytic continuation for g'}, the Taylor expansion of $\hat{\varphi}$ in \eqref{eq:taylor of phi} and the bounds \eqref{eq:6} and \eqref{eq:3} we get
\[g^{(n)}(z)=O_n(1)+\theta \left(n-1\right)!\intop _{\|t\|\le 1}\frac{1+O_n(\|t\|)}{\left(1-z+2\pi ^2 \|At\|^2\right)^n}dt,\quad z\in \Delta _0.\]
We develop the above integral using a change of variables,
\begin{equation*}
\begin{split}
&\intop _{\|t\|\le 1}\frac{1+O_n(\|t\|)}{\left(1-z+2\pi ^2 \|At\|^2\right)^n}dt=O_n(1)+\frac{1}{\det (A)}\intop _{\|x\|\le 1 }\frac{1+O_n(\|x\|)}{\left(1-z+2\pi ^2 \|x\|^2\right)^n}dx\\
&=O_n(1)+\frac{\vol \left(\mathbb S^{d-1}\right)}{\det A}\intop _0^1 \frac{r^{d-1}\left(1+O_n(r)\right)}{\left(1-z+2\pi ^2r^2\right)^n}dr=O_n(1)+\frac{2\left(2\pi \right)^{-\frac{d}{2}}}{\Gamma \left(\frac{d}{2}\right)\det A}\intop _0^1\frac{s^{d-1}\left(1+O_n(s)\right)}{\left(1-z+s^2\right)^n}ds,
\end{split}
\end{equation*}
where we denote by $\vol \left(\mathbb S^{d-1}\right)=2\pi ^{\frac{d}{2}}(\Gamma \left(\frac{d}{2}\right))^{-1}$ the surface area of the $(d-1)$-dimensional unit sphere and where an additional $O_n(1)$ term is added due to the changes in the domain of integration. We set
\[I_{n,d}(\xi ):=\intop _0^1\frac{s^{d-1}ds}{\left(\xi +s^2\right)^n},\quad \xi \in \C \setminus \{0\},\ |\arg(\xi )|\le \frac{7\pi }{8}.\]
The last computation shows that, for $z\in \Delta _0$,
\begin{equation}\label{eq:substitute I}
\left| g^{(n)}(z)-\frac{2\theta \left(n-1\right)!\left(2\pi \right)^{-\frac{d}{2}}}{\Gamma \left(\frac{d}{2}\right)\det A}\cdot I_{n,d}(1-z)\right|\le C_n\intop _0^1\frac{s^dds}{\left|1-z+s^2\right|^n}\le C_nI_{n,d+1}(|1-z|).
\end{equation}
Thus is suffices to estimate $I_{n,d}(\xi )$ as $\xi \to 0$. To this end, note that $I_{n,d}(\xi)$ is bounded when $d>2n$ and that, using integration by parts,
\begin{equation}\label{eq:recursive formula}
I_{n,d}(\xi)=\intop _0^1\frac{s^{d-1}}{\left(\xi +s^2\right)^n}ds=\frac{s^d}{d\left(\xi +s^2\right)^n}\Bigg|_0^1+\frac{2n}{d}\intop _0^1 \frac{s^{d+1}}{\left(\xi +s^2\right)^{n+1}}=\frac{2n}{d}I_{n+1,d+2}(\xi )+O_n(1).
\end{equation}
As a result, it suffices to estimate $I_{k,1}$ and $I_{k,2}$ for $k\ge 1$. For any $\xi \in \C \setminus \{0\}$ with $|\arg (\xi )|\le \frac{7\pi}{8}$, we have
\[I_{k,2}(\xi)=\intop _0^1 \frac{s}{\left(\xi +s^2\right)^k}ds=\frac{1}{2}\intop _0^1 \frac{dy}{\left(\xi +y\right)^k} =O_k(1)+\left\{ \begin{array}{ll}
 -\frac{1}{2}\log \left(\xi \right) & \mbox{for } k=1 \\
\frac{1}{2}\left(k-1\right)^{-1}\xi ^{-\left(k-1\right)} & \mbox{for } k>1 \\
\end{array}
\right.\]
and
\[I_{k,1}(\xi )=\intop _0^1 \frac{ds}{\left(\xi+s^2\right)^n}=O_k(1)+\frac{1}{2}\intop _{-\infty}^\infty \frac{ds}{\left(\xi+s^2\right)^k}=\frac{\sqrt{\pi }\Gamma \left(k-\frac{1}{2}\right)\xi^{\frac{1}{2}-k}}{2\left(k-1\right)!}+O_k(1),\]
where the last equation follows using a standard residue argument. Indeed, it follows by integrating over the semi-circle contour noting that the integrand has a unique pole in  $\{ \imag(z)>0\}$, located at $i\sqrt{\xi}$ having order $k$. The details are left to the reader.

We conclude that, when $d=2n$,
\[I_{n,d}(\xi )= -\frac{1}{2}\log \xi +O(1)\]
and when $d<2n$,
\[I_{n,d}(\xi)=\frac{\Gamma \left(\frac{d}{2}\right)\Gamma \left(n-\frac{d}{2}\right)}{2(n-1)!}\xi ^{\frac{d}{2}-n}+O_n(1)\]
as one can check that the last expressions satisfy the recursion in \eqref{eq:recursive formula} and the cases $d=1,2$ (note that $\Gamma \left(\frac{1}{2}\right)=\sqrt{\pi}$).
Substituting the last estimates in \eqref{eq:substitute I} finishes the proof of the lemma.
\end{proof}

\section{The sub-critical regime}\label{sec:sub-critical}
In this section we establish parts~\ref{item:constant in d=1} and~\ref{item:sub-critical in d=1} of
Theorem~\ref{thm:d=1}, part~\ref{item:constant in d=2} and the sub-critical regime in part~\ref{item:sub-critical and critical in d=2} of Theorem~\ref{thm:d=2}
and part~\ref{item:sub-critical in d=3} of Theorem~\ref{thm:d=3}.

\setlist[description]{font=\normalfont}
Let $\rho = \rho(N)$ and consider several possibilities for the
dimension $d$ and the asymptotic regime of $\rho (N)$ as
$N\to\infty$ corresponding to the various statements in the
theorems. We give names to these cases to simplify later reference.
By the name (Sub-Critical) we refer collectively to any of the following
cases:
\begin{description}[leftmargin=!,labelwidth=75pt]
\item[(SubConst1)] Dimension $d=1$ and density $\rho$  satisfying $\rho \to \rho _* \in (0,\infty)$ as $N\to\infty$.
\item[(Sub1)] Dimension $d=1$ and density $\rho$ satisfying
$\rho\to\infty$ and $\rho=o(\sqrt{N})$.
\item[(SubConst2)]
Dimension $d=2$ and density $\rho$  satisfying $\rho \to \rho _* \in (0,\infty)$ as $N\to\infty$.
\item[(Sub2)]
Dimension $d=2$ and density $\rho$ satisfying $\rho\to\infty$ and
$\frac{\rho}{\log N}\rightarrow \alpha \in \left[0,\alpha
_c\right)$.
\item[(Sub3)] Dimension $d\ge 3$ and density $\rho$  satisfying $\rho \to \rho _* \in (0,\rho _c)$ as $N\to\infty$.
\end{description}

We remind the reader that the critical thresholds $\alpha_c$ and
$\rho_c$ are defined in \eqref{eq:def of alpha _c } and
\eqref{eq:rho_c_def}. We note explicitly that we allow $\rho$ to vary with $N$ in the cases (SubConst1), (SubConst2) and (Sub3), thus obtaining a somewhat stronger results than those stated in part~\ref{item:constant in d=1} of the main theorems. This additional flexibility will be used in our analysis of the critical regime in dimensions $d\ge3$ in Section~\ref{sec:critical in dimension 5}.

Our strategy, following \cite[Section~3.2]{bogachev2015asymptotic}, is to use saddle point analysis to estimate the Cauchy integral (recall \eqref{eq:generating_h(v)}),
\begin{equation}\label{eq:H_n_Cauchy_integral}
  H_n(L)=\left[z^n\right]e^{G_L(z)}=\frac{1}{2\pi i}\ointop _{\gamma }\frac{e^{G_L(z)}}{z^{n+1}}dz
\end{equation}
(see Theorem~\ref{thm:sub-critical integral}) and then apply the results of Section~\ref{sec:combinatorics} to connect $H_n(L)$ with the distribution of $L_1$ (Section~\ref{sec:proof of sub-critical parts of the main theorem}).

\subsection{Saddle point analysis}\label{sec:saddle point}
The integrand in \eqref{eq:H_n_Cauchy_integral} is $\exp(G_L(z) - (n+1)\log(z))$ and thus its saddle points, the critical points of the exponent, are the solutions $z$ to $z G_L'(z) = n+1$. Motivated by this and the fact that the values of $n$ in our analysis will be close to $N$, we define $r_N=r_{N,L}$ to be the unique $0<r<1$ satisfying
\begin{equation}\label{eq:def of r_N}
rG_L'(r)=N.
\end{equation}
This solution exists as, by Corollary~\ref{cor:corollary on W}, $G_L'(r)\overset{r\uparrow 1}{\longrightarrow }\infty$, for all $L\ge 1$. $r_N$ is unique since $G_L'$ has non-negative Taylor coefficients. The saddle point method suggests to take a contour of integration that passes through $r_N$.

In the following theorem we find the asymptotic behavior of $H_{N-j}(L)$. In the theorem and what follows we set, for $N\ge 1$,
\begin{equation}\label{eq:def of a_N_b_N}
a_N:=r_N G_L'(r_N)+r_N^2G_L''(r_N)\quad\text{and}\quad b_N:=r_N G_L'(r_N)+3r_N^2G_L''(r_N)+r_N^3G_L'''(r_N)
\end{equation}
and note that by \eqref{eq:def of r_N} and the fact that the Taylor coefficients of $G_L$ are non-negative we have $a_N, b_N\ge N$.
\begin{thm}\label{thm:sub-critical integral}
Let $j_N$ be a sequence of integers such that $1 \le j_N\le N$ and $j_N^2=o\left(a_N\right)$ as $N\to\infty$. Then, in each of the (Sub-Critical) asymptotic regimes,
\begin{equation}\label{eq:main subcritical h_N(N-j)}
H_{N-j}(L)=[z^{N-j}]e^{G_L(z)}\sim\frac{ e^{G_L(r_N)}}{r_N^{N-j}\sqrt{2\pi a_N}},\quad N\to \infty.
\end{equation}
uniformly in $0\le j\le j_N$.
\end{thm}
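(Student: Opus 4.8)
The plan is to apply the classical saddle-point method to the Cauchy integral~\eqref{eq:H_n_Cauchy_integral}, taking the contour to be the circle $\gamma = \{|z| = r_N\}$ passing through the saddle point $r_N$ defined by~\eqref{eq:def of r_N}. Writing $z = r_N e^{i\phi}$, we have
\[
  H_{N-j}(L) = \frac{1}{2\pi} \intop_{-\pi}^{\pi} \frac{e^{G_L(r_N e^{i\phi})}}{r_N^{N-j} e^{i(N-j)\phi}}\, d\phi,
\]
and the strategy is to show that the integral concentrates on a small arc $|\phi| \le \phi_0$ around $\phi = 0$, where the exponent admits a quadratic Taylor expansion, and that the contribution of the remaining arc is negligible. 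First I would record the Taylor expansion of the exponent near $\phi = 0$: setting $F(\phi) := G_L(r_N e^{i\phi}) - (N-j)(\log r_N + i\phi)$, one computes $F(0) = G_L(r_N) - (N-j)\log r_N$, $F'(0) = i(r_N G_L'(r_N) - (N-j)) = ij$ (using~\eqref{eq:def of r_N}), and $F''(0) = -(r_N G_L'(r_N) + r_N^2 G_L''(r_N)) = -a_N$, with $F'''(0)$ controlled by $b_N$; here $a_N, b_N$ are as in~\eqref{eq:def of a_N_b_N}. Thus on the central arc,
\[
  F(\phi) = F(0) + ij\phi - \tfrac{1}{2} a_N \phi^2 + O\!\left(b_N |\phi|^3\right).
\]
Choosing $\phi_0 = a_N^{-1/2} \log a_N$ (or any width with $\phi_0 \to 0$, $a_N \phi_0^2 \to \infty$, $b_N \phi_0^3 \to 0$), the cubic error is $o(1)$ uniformly, the linear term contributes $ij\phi$ with $|j\phi| \le j_N \phi_0 \to 0$ by the hypothesis $j_N^2 = o(a_N)$, and a Gaussian integral yields that the central arc contributes $(1+o(1)) e^{F(0)} / \sqrt{2\pi a_N}$, uniformly in $0 \le j \le j_N$. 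This gives exactly the claimed right-hand side of~\eqref{eq:main subcritical h_N(N-j)} once we verify the tail is lower-order.

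The main obstacle, and the step requiring the most care, is bounding the off-central contribution $\int_{\phi_0 \le |\phi| \le \pi} e^{\real F(\phi)}\, d\phi$ and showing it is $o\!\left(e^{F(0)}/\sqrt{a_N}\right)$. This amounts to showing $\real\left(G_L(r_N e^{i\phi}) - G_L(r_N)\right)$ is sufficiently negative for $\phi$ bounded away from $0$. Since $G_L(z) = -\theta \sum_{m \in \frac{1}{L}\Z^d} \log(1 - z\hat\varphi(m))$ by Lemma~\ref{lem:analytic continuation}, one has $\real(G_L(re^{i\phi}) - G_L(r)) = \frac{\theta}{2}\sum_m \log\frac{|1-r\hat\varphi(m)|^2}{|1 - re^{i\phi}\hat\varphi(m)|^2} \le 0$, and the task is to extract a quantitative gap. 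I would treat two ranges separately: for $\phi_0 \le |\phi| \le \delta$ (small fixed $\delta$), I expect to use the $m=0$ term — or the cluster of small-$m$ terms when $L$ is large — together with the expansion of $G_L''$ near $z=1$ and the behavior of $r_N$, to get a bound of the form $\real(G_L(r_Ne^{i\phi}) - G_L(r_N)) \le -c\, a_N \phi^2$ on this range (mimicking the lower bound on $-F''(0)$); for $\delta \le |\phi| \le \pi$, the bound $|\hat\varphi(m)| \le 1$ with strict inequality for $m \ne 0$ (Claim~\ref{claim:basic fourier 2}) and the proximity of $r_N$ to $1$ give that $|1 - r_N e^{i\phi}\hat\varphi(0)|$ is bounded below while $|1 - r_N\hat\varphi(0)| = 1-r_N \to 0$, so a single term already produces a gain of order $\log\frac{1}{1-r_N}$, hence $e^{\real(G_L(r_Ne^{i\phi}) - G_L(r_N))} \le (1-r_N)^{c}$, which beats any power of $a_N$.

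To complete the argument I would also need: (i) control on the location of $r_N$ and on $a_N$ in each (Sub-Critical) regime, obtained by combining Corollary~\ref{cor:G_L is close to L^dg}, Corollary~\ref{cor:G_L^(n) and g^(n)} and the near-singularity asymptotics of Lemma~\ref{lem:asymptotics of g at 1} — in particular confirming $1 - r_N \to 0$ and $a_N \to \infty$, which is what makes $\sqrt{a_N}$ the correct normalization and what justifies the tail estimate above; (ii) a uniform-in-$j$ version of everything, which is automatic since $j$ enters $F$ only through the harmless linear term $ij\phi$ and through replacing $N$ by $N-j$ in the exponent $z^{-(N-j)}$, already accounted for, while $r_N, a_N, b_N$ themselves do not depend on $j$. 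The uniformity in the class of jump densities and in the precise asymptotic regime is inherited from the corresponding uniformity in the cited results of Section~\ref{sec:properties}. Assembling the central-arc Gaussian estimate with the tail bound then yields~\eqref{eq:main subcritical h_N(N-j)}.
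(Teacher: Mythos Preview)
Your overall architecture matches the paper exactly: integrate over the circle $|z|=r_N$, Taylor-expand on a central arc of width $t_N$ chosen so that $a_Nt_N^2\to\infty$, $b_Nt_N^3\to0$ and $j_Nt_N\to0$, and bound the complementary arc. The paper packages the two nontrivial inputs as Proposition~\ref{prop:sub-critical proposition}: (i) $b_N^2=o(a_N^3)$, which is what guarantees that a width $t_N$ with the required three properties exists (your specific choice $\phi_0=a_N^{-1/2}\log a_N$ need not satisfy $j_N\phi_0\to0$ or $b_N\phi_0^3\to0$ in all regimes, so you do need this relation); and (ii) a quantitative decay estimate on the tail arc.

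There are two genuine gaps in your tail argument. First, the assertion ``confirming $1-r_N\to0$'' is false in three of the five (Sub-Critical) regimes: in (SubConst1), (SubConst2) and (Sub3) one has $r_N\to r_*<1$ (Lemma~\ref{lem:find r_N}), so your far-range argument, which relies on $|1-r_N|\to0$ to extract a gain of order $\log\frac{1}{1-r_N}$ from the $m=0$ term, does not apply there (and even where $r_N\to1$ that single-term gain is only of size $\theta\log\frac{1}{1-r_N}$, which for small $\theta$ does not beat $a_N^{-1/2}$). Second, your proposed intermediate-range bound $\real(G_L(r_Ne^{i\phi})-G_L(r_N))\le -c\,a_N\phi^2$ is too strong in (Sub1) and (Sub2): for $\phi$ of order a fixed $\delta$, the left-hand side is at most $2G_L(r_N)=O(N/\rho)$, whereas $a_N\phi^2$ is of order $N\rho^2$ in (Sub1), so the inequality fails once $\rho\to\infty$. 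The cluster of small-$m$ terms in the Fourier representation cannot rescue this, because their total contribution is also only $O(N/\rho)$.

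The paper sidesteps both issues by working instead with the power-series representation
\[
G_L(r_N)-\real G_L(r_Ne^{it})=\sum_{j\ge1}\frac{W_{L,j}r_N^j}{j}\bigl(1-\cos(jt)\bigr),
\]
and proving the weaker but uniform bound $\real G_L(r_Ne^{it})\le G_L(r_N)-c'\,a_N^{1/4}\sqrt{|t|}$ for all $a_N^{-1/2}\le|t|\le\pi$ (Proposition~\ref{prop:sub-critical proposition}\ref{item:part two of subcritical prop}), by splitting according to whether $|t|\le 1-r_N$ or $|t|\ge 1-r_N$ and summing over $j\lesssim\min(|t|^{-1},(1-r_N)^{-1})$. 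This bound is exactly strong enough to make the tail integral $o(e^{G_L(r_N)}/\sqrt{a_N})$, and its proof treats the fixed-$\rho$ and growing-$\rho$ regimes on the same footing.
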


The proof of the theorem relies on the following technical estimates.

\begin{prop}\label{prop:sub-critical proposition}
In each of the (Sub-Critical) cases:
\begin{enumerate}[label=(\roman{*})]
\item\label{item:part one of subcritical prop}
As $N\to \infty$,
\begin{equation}\label{eq:b_N^2=o(a_N^3)}
b_N^2=o(a_N^3).
\end{equation}
\item\label{item:part two of subcritical prop}
There exists $c' >0$ such that, for large $N$,
\begin{equation}\label{eq:36}
\real \left(G_L\left(r_Ne^{it}\right)\right)\le G_L(r_N)-c'\cdot a_N^{\frac{1}{4}}\sqrt{|t|},\quad a_N^{-\frac{1}{2}}\le |t|\le \pi,
\end{equation}
where $c'$ may depend on $\rho _*$ in the cases (Sub3), (SubConst1) and (SubConst2) (in addition to the usual dependence on $d,\varphi$ and $\theta$).
\end{enumerate}
\end{prop}

Let us first show how Theorem~\ref{thm:sub-critical integral} follows from Proposition~\ref{prop:sub-critical proposition}.

\begin{proof}[Proof of Theorem~\ref{thm:sub-critical integral}]
Fix a sequence $(t_N)_{N=1}^\infty \subseteq (0,\pi )$ so that as $N\to \infty$,
\begin{equation}\label{eq:t_N_relations}
  a_Nt_N^2\to \infty ,\quad b_Nt_N^3\to 0 \mbox{ and }  t_Nj_N\to 0.
\end{equation}
This is possible by part~\ref{item:part one of subcritical prop} of Proposition~\ref{prop:sub-critical proposition} and the assumption on $j_N$, e.g., by setting $t_N:=\min\{a_N^{-1/4}j_N^{-1/2}, b_N^{-1/6}a_N^{-1/4}, \pi\}$. By the Cauchy integral formula with the contour $\gamma $ parametrized as $\gamma (t):=r_Ne^{it}$ for $t\in \left[-\pi ,\pi \right]$,
\[ H_{N-j}(L)=[z^{N-j}]e^{G_L(z)}=\frac{1}{2\pi i}\ointop _\gamma \frac{e^{G_L(z)}}{z^{N-j+1}}dz=\frac{1}{2\pi r_N^{N-j}}\intop _{-\pi }^{\pi }e^{G_L\left(r_Ne^{it}\right)-itN+itj}dt=I_1+I_2,\]
where $I_1$, $I_2$ are the integrals corresponding to $|t|\le t_N$ and $t_N< |t|\le \pi$. We start by estimating $I_1$ via a Taylor expansion of $G_L\left(r_N e^{it}\right)$ around $t=0$. A straightforward calculation yields that
\begin{align*}
  &\frac{d}{dt} G_L(r_N e^{it}) = ir_Ne^{it}G_L'(r_Ne^{it}),\\
  &\frac{d^2}{dt^2} G_L(r_N e^{it}) = -r_N e^{it} G_L'(r_Ne^{it}) - r_N^2 e^{2it} G_L''(r_N e^{it}),\\
  &\frac{d^3}{dt^3} G_L(r_N e^{it}) = -ir_N e^{it} G_L'(r_Ne^{it}) - 3ir_N^2 e^{2it} G_L''(r_N e^{it}) - ir_N^3e^{3it} G_L'''(r_N e^{it}),
\end{align*}
from which we have, by the definition of $r_N$ and as $G_L$ has non-negative Taylor coefficients, that
\begin{equation*}\label{eq:Taylor expantion}
\begin{split}
G_L\left(r_Ne^{it}\right)=G_L(r_N)+it N-\frac{a_N}{2}t^2+O(b_Nt_N^3),\quad |t|\le t_N.
\end{split}
\end{equation*}
Substituting this expansion in the definition of $I_1$ and using the relations \eqref{eq:t_N_relations},
\[ I_1=\frac{e^{G_L(r_N)}}{2\pi r_N^{N-j}}\intop _{-t_N}^{t_N}e^{-\frac{a_N}{2}t^2+o(1)}dt\sim \frac{e^{G_L(r_N)}}{\pi r_N^{N-j}\sqrt{2a_N}}\intop _{-\infty}^\infty e^{-s^2}ds=\frac{e^{G_L(r_N)}}{r_N^{N-j}\sqrt{2\pi a_N}},\quad N\to\infty.\]
We turn to bound $I_2$. By part~\ref{item:part two of subcritical prop} of Proposition~\ref{prop:sub-critical proposition}, for large enough $N$,
\[|I_2|\le \frac{C}{r_N^{N-j}}\intop_{t_N}^{\pi}e^{\real \left(G_L\left(r_Ne^{it}\right)\right)}dt\le \frac{C\cdot e^{G_L(r_N)} }{r_N^{N-j}}\intop_{t_N}^{\infty}e^{-c'\cdot a_N^{\frac{1}{4}}\sqrt{t}}dt= \frac{C\cdot e^{G_L(r_N)} }{r_N^{N-j}\sqrt{a_N}}\intop _{\sqrt{a_N}t_N}^\infty e^{-c'\sqrt{s}}ds,\]
and the last expression is negligible compared to $I_1$ as $\sqrt{a_N}t_N\to \infty$ by \eqref{eq:t_N_relations}.
\end{proof}

The rest of Subsection~\ref{sec:saddle point} is devoted to the proof of Proposition~\ref{prop:sub-critical proposition}.

Recall the definition of $r_N$ in \eqref{eq:def of r_N} and that $\rho =\frac{N}{L^d}$ is the density. In the next lemma we find the asymptotic behavior of~$r_N$.

\begin{lemma}\label{lem:find r_N}
As $N\to \infty$ we have:
\begin{enumerate}[label=(\roman{*})]
\item
In case (Sub1),
\[1-r_N\sim \frac{\theta ^2}{2\var(X)\rho^2},\quad \text{in particular}\quad 1-r_N\to 0 .\]
\item
In case (Sub2),
\[\log \left(\frac{1}{1-r_N}\right)\sim \frac{\rho }{\alpha _c},\quad \text{in particular}\quad \frac{1}{1-r_N}=N^{\frac{\alpha }{\alpha _c}+o(1)}. \]
\item
In cases (Sub3), (SubConst1) and (SubConst2),
\[  r_N\to r_*<1,\]
where $r_*$ is the unique solution of $rg'(r)=\rho _*$ for $r\in(0,1)$.
\end{enumerate}
\end{lemma}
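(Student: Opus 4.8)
The plan is to analyze the defining relation $r_NG_L'(r_N)=N$ through the function $F_L(r):=rG_L'(r)$, which is continuous and strictly increasing on $[0,1)$ (since $G_L$ has non-negative Taylor coefficients) with $F_L(r)\to\infty$ as $r\uparrow 1$ by Corollary~\ref{cor:corollary on W}; thus $r_N$ is the unique point with $F_L(r_N)=N$. In all three regimes $N\to\infty$ while $\rho$ remains bounded (cases (SubConst1), (SubConst2), (Sub3)), is $O(\log N)$ (case (Sub2)), or is $o(\sqrt N)$ (case (Sub1)); in every case $L^d=N/\rho\to\infty$, so $L\ge 1$ for large $N$ and the results of Section~\ref{sec:properties} apply. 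There are two inputs: Corollary~\ref{cor:G_L is close to L^dg}, which gives
\[
F_L(r)=L^d\,r\,g'(r)+O\bigl((1-r)^{-1}\bigr)\qquad\text{uniformly in }r\in[0,1),
\]
and Lemma~\ref{lem:asymptotics of g at 1} with $n=1$, which describes $g'$ near $z=1$: when $d\ge 3$, $g'$ extends continuously to $\overline{\Delta_0}$ with $g'(1)=\rho_c$; when $d=2$, $g'(z)=\alpha_c\log\frac{1}{1-z}+O(1)$; and when $d=1$, $g'(z)=\frac{\theta}{\sqrt{2\var(X)}}(1-z)^{-1/2}+O(|\log(1-z)|)$ (using $\Gamma(1/2)=\sqrt{\pi}$). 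The device that keeps the argument non-circular is to avoid substituting $r_N$ into these expansions; instead, for each regime I substitute explicitly chosen trial radii $r=r(N)$, estimate $F_L(r)$ relative to $N$, and transfer the conclusion to $r_N$ by monotonicity of $F_L$ (so $F_L(r)>N$ forces $r>r_N$ and $F_L(r)<N$ forces $r<r_N$).

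In cases (SubConst1), (SubConst2) and (Sub3), I first observe that $r\mapsto rg'(r)=\sum_{j\ge 1}\theta\varphi^{*j}(0)r^j$ is continuous and strictly increasing on $(0,1)$ with $\sup_{r\in(0,1)}rg'(r)=\lim_{r\uparrow1}rg'(r)=\rho_c$ (Abel's theorem), so for $\rho_*\in(0,\rho_c)$ there is a unique $r_*\in(0,1)$ with $r_*g'(r_*)=\rho_*$. Fix $\delta>0$ with $0<r_*-\delta$ and $r_*+\delta<1$; then $(r_*\pm\delta)g'(r_*\pm\delta)=\rho_*\pm\eta_\pm$ with $\eta_\pm>0$, and since $(1-r_*\mp\delta)^{-1}=O_\delta(1)$, the displayed approximation gives $F_L(r_*\pm\delta)=L^d(\rho_*\pm\eta_\pm+o(1))$, to be compared with $N=\rho L^d=L^d(\rho_*+o(1))$. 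Hence $F_L(r_*-\delta)<N<F_L(r_*+\delta)$ for large $N$, so $|r_N-r_*|<\delta$; as $\delta>0$ was arbitrary, $r_N\to r_*$.

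For case (Sub1), the hypothesis $\rho=o(\sqrt N)$ gives $\rho^2=o(N)$. Fix $\lambda>0$ and take $r=1-\lambda\rho^{-2}$, which tends to $1$ as $\rho\to\infty$. Then $(1-r)^{-1/2}=\rho/\sqrt\lambda$, so $g'(r)=\frac{\theta}{\sqrt{2\var(X)}}\cdot\frac{\rho}{\sqrt\lambda}+O(\log\rho)$; using that $r\to1$, $\rho L=N$, the $O(\log\rho)$ term in $g'(r)$ contributes only $O(L\log\rho)=O(N\log\rho/\rho)=o(N)$ to $L^d\,r\,g'(r)$, and the remainder is $O((1-r)^{-1})=O(\rho^2)=o(N)$, the two approximations combine to give
\[
F_L(r)=\frac{\theta}{\sqrt{2\,\var(X)\,\lambda}}\,N\,(1+o(1)).
\]
Hence $F_L(r)>N$ for large $N$ whenever $\lambda<\frac{\theta^2}{2\var(X)}$, which forces $1-r_N>\lambda\rho^{-2}$, and $F_L(r)<N$ for large $N$ whenever $\lambda>\frac{\theta^2}{2\var(X)}$, which forces $1-r_N<\lambda\rho^{-2}$. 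Letting $\lambda$ approach $\frac{\theta^2}{2\var(X)}$ from both sides gives $(1-r_N)\rho^2\to\frac{\theta^2}{2\var(X)}$, i.e.\ $1-r_N\sim\frac{\theta^2}{2\var(X)\rho^2}$, and in particular $r_N\to 1$.

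Case (Sub2) has the same structure, with trial radii $r$ defined by $1-r=e^{-\lambda\rho}$. Now $g'(r)=\alpha_c\lambda\rho+O(1)=\alpha_c\lambda\rho(1+o(1))$ since $\rho\to\infty$, so $L^d\,r\,g'(r)=\alpha_c\lambda\,\rho L^2(1+o(1))=\alpha_c\lambda\,N(1+o(1))$, while the remainder $O((1-r)^{-1})=O(e^{\lambda\rho})$ is $o(N)$: since $\rho/\log N\to\alpha<\alpha_c$, for all $\lambda$ in a fixed two-sided neighborhood of $\frac{1}{\alpha_c}$ one has $\lambda\rho\le(1-\epsilon')\log N$ eventually for some $\epsilon'=\epsilon'(\lambda)>0$ (and $\lambda\rho=o(\log N)$ when $\alpha=0$), so $e^{\lambda\rho}=O(N^{1-\epsilon'})=o(N)$. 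Thus $F_L(r)=\alpha_c\lambda\,N(1+o(1))$, which exceeds $N$ when $\lambda>\frac{1}{\alpha_c}$---forcing $\log\frac{1}{1-r_N}<\lambda\rho$---and lies below $N$ when $\lambda<\frac{1}{\alpha_c}$---forcing $\log\frac{1}{1-r_N}>\lambda\rho$; letting $\lambda\to\frac{1}{\alpha_c}$ gives $\log\frac{1}{1-r_N}\sim\frac{\rho}{\alpha_c}$, and then $\frac{1}{1-r_N}=N^{\alpha/\alpha_c+o(1)}$ follows from $\rho=(\alpha+o(1))\log N$. The one genuine difficulty in all of this is the error bookkeeping at the trial radii: one must verify that the remainder $O((1-r)^{-1})$ from replacing $G_L'$ by $L^dg'$, together with the subleading terms in the expansion of $g'$ near $z=1$, are $o(N)$ there. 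It is precisely the hypotheses $\rho=o(\sqrt N)$ (in $d=1$) and $\rho/\log N\to\alpha<\alpha_c$ (in $d=2$) that make this go through, which is why the argument runs through explicit trial radii rather than by expanding around $r_N$ itself.
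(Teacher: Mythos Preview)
Your proof is correct and follows essentially the same approach as the paper's: both arguments combine Corollary~\ref{cor:G_L is close to L^dg} with the $n=1$ case of Lemma~\ref{lem:asymptotics of g at 1}, evaluate $rG_L'(r)$ at explicit trial radii, and appeal to monotonicity to locate $r_N$. The only minor difference is in part~(iii): the paper first obtains an a~priori bound $r_N\le\frac{r_*+1}{2}$ directly from the Taylor-coefficient inequality $W_{L,j}\ge\theta L^d\varphi^{*j}(0)$ before invoking Corollary~\ref{cor:G_L is close to L^dg}, whereas you test at $r_*\pm\delta$ symmetrically---both work equally well.
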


\begin{proof}
We start with the case $d=1$. Corollary~\ref{cor:G_L is close to L^dg} and Lemma~\ref{lem:asymptotics of g at 1} imply that
\begin{equation}\label{eq:r_G_asymptotics 1}
  \left|rG_L'(r) - \frac{r\theta L}{\sqrt{2\var(X)(1-r)}}\right|\le C\left(\frac{r}{1-r} + L r\log\left(\frac{1}{1-r}\right)\right),\quad r\in[0,1),\, L\ge 1.
\end{equation}
We work in the asymptotic regime given by (Sub1). Denote $I:=[\frac{\theta^2}{8\var(X)\rho^2}, \frac{2\theta^2}{\var(X)\rho^2}]$ and recall that $\rho = \frac{N}{L}\to\infty$ as $N\to\infty$. Having the terms in \eqref{eq:r_G_asymptotics 1} in mind, we observe that as $N\to\infty$, uniformly in $1-r\in I$,
\begin{align*}
  &\frac{r\theta L}{\sqrt{2\var(X)(1-r)}} \in \left[\frac{1}{4} N, 2N\right]\quad\text{and}\quad\frac{r}{1-r} + L r\log\left(\frac{1}{1-r}\right) = o(N),
\end{align*}
where we used that $\rho^2 = o(N)$ and $L \log\left(\frac{1}{1-r}\right) \sim 2N \frac{\log(\rho)}{\rho}$.
It follows, since $r_N$ is the unique value in $[0,1)$ for which \eqref{eq:def of r_N} holds and since $rG_L'(r)$ is increasing in $[0,1)$ that
\begin{equation*}
  N = r_N G_L'(r_N) \sim \frac{\theta L}{\sqrt{2\var(X)(1-r_N)}},\quad\text{$N\to\infty$},
\end{equation*}
from which the required asymptotic formula follows.

We continue with the case $d=2$. Corollary~\ref{cor:G_L is close to L^dg} and Lemma~\ref{lem:asymptotics of g at 1} imply that
\begin{equation}\label{eq:r_G_asymptotics 2}
  \left|rG_L'(r) - rL^2\alpha_c\log\left(\frac{1}{1-r}\right)\right|\le C\left(\frac{r}{1-r} + L^2\right),\quad r\in[0,1),\, L\ge 1.
\end{equation}
We work in the asymptotic regime (Sub2), where $\rho = \frac{N}{L^2}\to\infty$ and $\frac{\rho}{\log N}\to\alpha\in[0,\alpha_c)$ as $N\to\infty$. Fix $0<\epsilon<\min\left\{\frac{\alpha_c}{\alpha}-1, 1\right\}$. Denote $I:=\left[\exp\left(-\frac{(1+\epsilon)\rho}{\alpha_c}\right), \exp\left(-\frac{\rho}{2\alpha_c}\right)\right]$. Then, as $N\to\infty$, uniformly in $1-r\in I$,
\begin{align*}
  &rL^2\alpha_c\log\left(\frac{1}{1-r}\right) \in \left[\frac{1}{4}N, 2N\right]\quad\text{and}\quad\frac{r}{1-r} + L^2 = o(N),
\end{align*}
where we used that $\lim_{N\to\infty} \frac{(1+\epsilon)\rho}{\alpha_c \log N} < 1$ and $L^2 = N /\rho$.
It follows, since $r_N$ is the unique value in $[0,1)$ for which \eqref{eq:def of r_N} holds and since $rG_L'(r)$ is increasing in $[0,1)$ that
\begin{equation*}
  N = r_N G_L'(r_N) \sim L^2\alpha_c\log\left(\frac{1}{1-r_N}\right),\quad\text{$N\to\infty$},
\end{equation*}
from which the required asymptotic formula follows.

In cases (Sub3), (SubConst1) and (SubConst2), for sufficierntly large $N$ we have
\[\frac{r_*+1}{2} G_L'\left(\frac{r_*+1}{2}\right)\ge L^d\frac{r_*+1}{2} g'\left(\frac{r_*+1}{2}\right)=N\rho ^{-1}\frac{r_*+1}{2} g'\left(\frac{r_*+1}{2}\right) > N,\]
where the first inequality is by comparison of the Taylor coefficients (see \eqref{eq:W_L_j_def}, \eqref{eq:def of G_L(z)} and \eqref{eq:g_Taylor_expansion}) and the last inequality holds as $\rho \to \rho _*=r_*g'(r_*)$ and as $rg'(r)$ is strictly increasing. As $rG_L'(r)$ is increasing, we conclude that, for sufficiently large $N$, $r_N\le \frac{r_*+1}{2}<1$. Thus, by Corollary~\ref{cor:G_L is close to L^dg},
\[N=r_NG_L'(r_N)\sim N\rho ^{-1}r_Ng'(r_N)\sim  N\rho_* ^{-1}r_Ng'(r_N)\]
so that $r_Ng'(r_N)\to \rho _*$ and therefore $r_N\rightarrow r_*$.
\end{proof}

In the next lemma we find the asymptotic behavior of~$a_N$ and $b_N$.

\begin{lemma}\label{lem:a_N_b_N_estimates}
As $N\to \infty$ we have:
\begin{enumerate}[label=(\roman{*})]
\item
In case (Sub1),
\[\text{$a_N\sim \frac{\var(X) N\rho ^2}{\theta ^2}\quad$ and $\quad b_N\sim \frac{3\var(X)^2N\rho ^4}{\theta ^4}$}.\]
\item
In case (Sub2),
\[\text{$a_N\sim \frac{\alpha _c N}{\rho \left(1-r_N\right)}=N^{1+\frac{\alpha }{\alpha _c}+o(1)}\quad$ and $\quad b_N\sim N^{1+\frac{2\alpha }{\alpha _c}+o(1)}$.}\]
\item\label{item:a_N_b_N fixed rho}
In cases (Sub3), (SubConst1) and (SubConst2),
\[\text{$c_\rho N\le a_N\le C_\rho N\quad$ and $\quad c_\rho N\le b_N\le C_\rho N$.}\]
\end{enumerate}
\end{lemma}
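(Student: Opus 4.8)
The plan is to evaluate the quantities $r_N^2 G_L''(r_N)$ and $r_N^3 G_L'''(r_N)$ and substitute them into the definitions \eqref{eq:def of a_N_b_N}, recalling that $r_N G_L'(r_N) = N$ by \eqref{eq:def of r_N}. The first step is to invoke Corollary~\ref{cor:G_L is close to L^dg}, which gives $G_L^{(n)}(r_N) = L^d g^{(n)}(r_N) + O_n\big((1-r_N)^{-n}\big)$, so the whole problem reduces to understanding $g^{(n)}(r_N)$ for $n\in\{2,3\}$; I would read this off from the near-singularity asymptotics of Lemma~\ref{lem:asymptotics of g at 1} combined with the location of $r_N$ supplied by Lemma~\ref{lem:find r_N}. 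Since $G_L$ has nonnegative Taylor coefficients, every summand in $a_N$ and $b_N$ is nonnegative, so it is enough to identify the dominant summand --- always the one carrying the highest-order derivative --- and to check that the remaining summands and all error terms are of strictly smaller order.

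In cases (Sub1) and (Sub2) one has $r_N\to 1$ by Lemma~\ref{lem:find r_N}, hence for $n\in\{2,3\}$ we land in the regime $d<2n$ of Lemma~\ref{lem:asymptotics of g at 1} (because $d\le 2<4\le 2n$), which yields $g^{(n)}(r_N)=\frac{\theta\,\Gamma(n-d/2)}{(2\pi)^{d/2}\sqrt{\det(\cov(X))}}(1-r_N)^{d/2-n}(1+o(1))$. Substituting $\Gamma(3/2)=\sqrt{\pi}/2$ and $\Gamma(5/2)=3\sqrt{\pi}/4$ when $d=1$, resp.\ $\Gamma(1)=\Gamma(2)=1$ when $d=2$, together with $L^d=N/\rho$ and the asymptotics of $1-r_N$ from Lemma~\ref{lem:find r_N} (namely $1-r_N\sim\frac{\theta^2}{2\var(X)\rho^2}$ in (Sub1) and $(1-r_N)^{-1}=N^{\alpha/\alpha_c+o(1)}$ in (Sub2)), a short computation gives $r_N^2G_L''(r_N)\sim\frac{\var(X)N\rho^2}{\theta^2}$ and $r_N^3G_L'''(r_N)\sim\frac{3\var(X)^2N\rho^4}{\theta^4}$ in (Sub1), and $r_N^2G_L''(r_N)\sim\frac{\alpha_cN}{\rho(1-r_N)}$, $r_N^3G_L'''(r_N)\sim\frac{\alpha_cN}{\rho(1-r_N)^2}$ in (Sub2). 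In both cases these terms dominate $r_NG_L'(r_N)=N$ (since $\rho\to\infty$, resp.\ $(1-r_N)^{-1}\to\infty$), and inside $b_N$ the cubic-derivative term dominates the quadratic one, so the stated formulas follow; the polynomial-in-$N$ rewriting in (Sub2) is immediate from $(1-r_N)^{-1}=N^{\alpha/\alpha_c+o(1)}$ and $\rho=N^{o(1)}$.

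In cases (Sub3), (SubConst1), (SubConst2) one has $r_N\to r_*\in(0,1)$, so $(1-r_N)^{-n}$ stays bounded and Corollary~\ref{cor:G_L is close to L^dg} gives $G_L^{(n)}(r_N)=\frac{N}{\rho}\big(g^{(n)}(r_*)+o(1)\big)$, with $g^{(n)}(r_*)>0$ because $g$ has positive Taylor coefficients and $r_*>0$. Combined with the elementary bound $a_N,b_N\ge N$, this immediately gives $c_\rho N\le a_N,b_N\le C_\rho N$.

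The main thing to be careful about --- and the only real obstacle --- will be the error accounting: one must confirm that the $O_n\big((1-r_N)^{-n}\big)$ coming from Corollary~\ref{cor:G_L is close to L^dg} and the error term inside Lemma~\ref{lem:asymptotics of g at 1} are both negligible next to the main term $L^d(1-r_N)^{d/2-n}$. Since $1-r_N\to 0$ in (Sub1) and (Sub2), the error term of Lemma~\ref{lem:asymptotics of g at 1} is automatically of lower order; the error from Corollary~\ref{cor:G_L is close to L^dg} demands $(1-r_N)^{-d/2}=o(L^d)$, which in (Sub1) is exactly the hypothesis $\rho=o(\sqrt N)$ and in (Sub2) follows from $\alpha<\alpha_c$ via $(1-r_N)^{-1}=N^{\alpha/\alpha_c+o(1)}=o(N/\rho)$.
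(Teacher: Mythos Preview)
Your proposal is correct and follows essentially the same approach as the paper: both use Corollary~\ref{cor:G_L is close to L^dg} to replace $G_L^{(n)}(r_N)$ by $L^d g^{(n)}(r_N)$, invoke Lemma~\ref{lem:asymptotics of g at 1} for the behavior of $g^{(n)}$ near $1$, and plug in the location of $r_N$ from Lemma~\ref{lem:find r_N}. Your write-up is in fact more explicit than the paper's about the error accounting (the check that $(1-r_N)^{-d/2}=o(L^d)$ is exactly what makes the approximation $G_L^{(n)}(r_N)\sim L^d g^{(n)}(r_N)$ legitimate), which the paper leaves implicit.
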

\begin{proof}
In (Sub1), by Corollary~\ref{cor:G_L is close to L^dg}, Lemma~\ref{lem:asymptotics of g at 1} and Lemma~\ref{lem:find r_N}, as $N\to \infty$ we have
\begin{equation}\label{eq:d=1 G''_N(r_N)}
G_L''(r_N)\sim Lg''(r_N)\sim \frac{\theta L}{2\sqrt{2}\sigma }\left(1-r_N\right)^{-\frac{3}{2}}\sim \frac{\sigma ^2N\rho ^2}{\theta ^2},
\end{equation}
where we denoted $\sigma:=\sqrt{\var(X)}$. In a similar manner,
\begin{equation}\label{eq:d=1 G'''_N(r_N)}
G_L'''(r_N)\sim Lg'''(r_N)\sim\frac{3\theta L}{4\sqrt{2}\sigma }\left(1-r_N\right)^{-\frac{5}{2}} \sim  \frac{3\sigma ^4N\rho ^4}{\theta ^4}.
\end{equation}
The last two asymptotic equalities together with the definitions of $a_N$ and $b_N$ from \eqref{eq:def of a_N_b_N} and the definition of $r_N$ imply that $a_N\sim G_L''(r_N)\sim\frac{\sigma ^2N\rho ^2}{\theta ^2}$ and $b_N\sim G_L'''(r_N)\sim\frac{3\sigma ^4N\rho ^4}{\theta ^4}$ as $N\to\infty$.

Similarly in (Sub2),
\begin{equation}\label{eq:d=2 G''_N(r_N)}
a_N\sim G_L''(r_N)\sim \frac{\alpha _c N}{\rho \left(1-r_N\right)}=N^{1+\frac{\alpha }{\alpha _c}+o(1)}, \quad b_N\sim G_L'''(r_N)= N^{1+\frac{2\alpha }{\alpha _c}+o(1)}.
\end{equation}

Similar asymptotic estimates apply also to the cases (Sub3), (SubConst1) and (SubConst2) and show that $G_L''(r_N)$ and $G_L'''(r_N)$ have order of magnitude $N$ as $N\to\infty$. Thus, $a_N$ and $b_N$ are also of the same order of magnitude.
\end{proof}

We now have everything needed to prove Proposition~\ref{prop:sub-critical proposition}.

\begin{proof}[Proof of Proposition~\ref{prop:sub-critical proposition}]
Part~\ref{item:part one of subcritical prop} of the proposition follows immediately from Lemma~\ref{lem:a_N_b_N_estimates}.

We turn to prove part~\ref{item:part two of subcritical prop}. Let $j_0$ be a fixed large integer so that, by Lemma~\ref{lem:asymptotics of kappa _j}, $\varphi ^{*j}(0)>c j^{-\frac{d}{2}}>0$ for all $j\ge j_0$. Thus, using the definitions \eqref{eq:def of G_L(z)} of $G_L$ and \eqref{eq:W_L_j_def} of $W_{L,j}$, we have
\begin{equation}\label{eq:bounds on Re}
\begin{split}
G_L(r_N)-\real&\left(G_L\left(r_Ne^{it}\right)\right)=\sum_{j=1}^\infty \frac{W_{L,j}r_N^j}{j}\left(1-\real\left(e^{itj}\right)\right) \\
& \ge c \frac{N}{\rho }\sum_{j=1}^{\infty } \frac{\varphi^{*j}(0) r_N^j}{j}\left(1-\cos (tj)\right)\ge c \frac{N}{\rho }\sum_{j=j_0}^{\infty } \frac{r_N^j}{j^{\frac{d}{2}+1}}\left(1-\cos (tj)\right)=:S.
\end{split}
\end{equation}
Our goal is to show that there exists $c'>0$ (depending on $\rho_*$ in (Sub3), (SubConst1) and (SubConst2)) such that $S\ge c' a_N^{1/4} \sqrt{|t|}$ for large $N$, uniformly in $a_N^{-1/2}\le |t|\le \pi$. Examining the expression for $S$ reveals that it may be useful to compare the relative sizes of $\frac{N}{\rho}$ and $a_N$. We record the following relations as $N\to\infty$, which follow directly from Lemma~\ref{lem:a_N_b_N_estimates} and Lemma~\ref{lem:find r_N},
\begin{equation}\label{eq:a_N_relations_sub12}
  a_N^{1/4} \le C\frac{N}{\rho}(1-r_N)^{\frac{d-1}{2}}\;\;\text{and}\;\; a_N\le C\frac{N}{\rho}(1-r_N)^{\frac{d}{2}-2},\quad\text{in (Sub1), (Sub2)},
\end{equation}

Suppose first that $|t|\ge \frac{1}{2j_0}$. In this case $\max\{1-\cos(tj_0),1-\cos(t(j_0+1))\}\ge c$. Thus, when $N$ is sufficiently large, by \eqref{eq:a_N_relations_sub12} and part~\ref{item:a_N_b_N fixed rho} of Lemma~\ref{lem:a_N_b_N_estimates},
\[S\ge c N\rho ^{-1} r_N^{j_0+1}\ge c \cdot a_N^{\frac{1}{4}}\ge c \cdot a_N^{\frac{1}{4}}\sqrt{|t|},\]
Second, suppose that $|t|<\frac{1}{2j_0}$. Consider the cases (Sub3), (SubConst1) and (SubConst2). We bound the sum $S$ by the $j=j_0$ element and use the inequality $1-\cos (x)\ge cx^2$ for $|x|\le \frac{1}{2}$ and part~\ref{item:a_N_b_N fixed rho} of Lemma~\ref{lem:a_N_b_N_estimates} to obtain that for $|t|\ge a_N^{-\frac{1}{2}}$ and large $N$,
\[S\ge cN\rho ^{-1}r_N^{j_0}t^2\ge c_{\rho_*}\cdot a_N t^2\ge c_{\rho_*}\cdot a_N^{\frac{1}{4}}\sqrt{|t|}.\]

Now consider the cases (Sub1) and (Sub2). Observe that when
$j\le \min \left(|t|^{-1},\left(1-r_N\right)^{-1}\right)$
we have the bounds $r_N^{j}\ge c$ and $1-\cos (tj)\ge ct^2j^2$. Suppose $N$ is large. If $a_N^{-\frac{1}{2}}\le |t|\le 1-r_N$ then, by \eqref{eq:a_N_relations_sub12},
\[S\ge c\frac{N}{\rho }t^2\sum _{j=j_0}^{\lfloor \left(1-r_N\right)^{-1}\rfloor}j^{1-\frac{d}{2}}\ge c \frac{N}{\rho}t^2\left(1-r_N\right)^{\frac{d}{2}-2}\ge c\cdot a_Nt^2\ge c\cdot a_N^{\frac{1}{4}}\sqrt{|t|},\]
whereas if $|t|\ge 1-r_N$ then, by \eqref{eq:a_N_relations_sub12},
\[S\ge c\frac{N}{\rho }t^2\sum _{j=j_0}^{\lfloor |t|^{-1}\rfloor}j^{1-\frac{d}{2}}\ge c \frac{N}{\rho}|t|^{\frac{d}{2}}\ge c\cdot a_N^{\frac{1}{4}}\sqrt{|t|}.\]

Putting all of the above cases together proves the required lower bound on $S$ and finishes the proof of the proposition.
\end{proof}

\subsection{Proof of the sub-critical parts of the main theorems}\label{sec:proof of sub-critical parts of the main theorem}
The next corollary restates the sub-critical parts of Theorem~\ref{thm:d=1}, Theorem~\ref{thm:d=2} and Theorem~\ref{thm:d=3} in the notation of this section.
\begin{cor}\label{cor:connection to L_1}
In the (Sub-Critical) cases $\nu =0$ and the following holds as $N\to\infty$:
\begin{enumerate}[label=(\roman{*})]
\item\label{item:part_1_sub_critical_restatement}
In case (Sub1),
\[\frac{\theta ^2L_1}{2\var(X)\rho ^2}\overset{d}{\longrightarrow } \gama \left(\frac{1}{2},1\right).\]
\item\label{item:part_2_sub_critical_restatement}
In case (Sub2),
\[\frac{\alpha _c\log L_1}{\rho }\overset{d}{\longrightarrow } U[0,1].\]
\item\label{item:part_3_sub_critical_restatement}
In cases (Sub3), (SubConst1) and (SubConst2),
\[L_1\overset{d}{\longrightarrow } Y,\]
where $Y$ is the integer-valued random variable defined by
\begin{equation*}
  \mathbb P \left(Y=j\right)=\theta \rho _* ^{-1}\varphi ^{*j}(0) r_*^j,\quad j\ge 1
\end{equation*}
and
\begin{equation}\label{eq:r_*_cor_def}
  \text{$r_*$ is the unique number satisfying $0<r_*<1$ and $\sum_{j=1}^\infty \varphi ^{*j}(0) r_*^j=\rho_* \theta^{-1}$.}
\end{equation}
\end{enumerate}
\end{cor}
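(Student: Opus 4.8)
The plan is to feed the partition‑function asymptotics of Theorem~\ref{thm:sub-critical integral} into the exact formula \eqref{eq:dist_L1}. Dividing the estimate for $H_{N-j}(L)$ by the one for $H_N(L)$ (the $j=0$ case) gives, for any sequence $j_N$ with $j_N^2=o(a_N)$,
\[
\mathbb P(L_1=j)=\frac{W_{L,j}}{N}\,r_N^{\,j}\,(1+o(1))\qquad\text{uniformly in }1\le j\le j_N,
\]
so everything reduces to controlling $W_{L,j}$ via Corollary~\ref{cor:corollary on W} and Lemma~\ref{lem:asymptotics of kappa _j}, controlling $r_N$ via Lemma~\ref{lem:find r_N} (and $a_N$ via Lemma~\ref{lem:a_N_b_N_estimates}), and then summing. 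For cases (Sub3), (SubConst1), (SubConst2) this is immediate: since $a_N\ge N\to\infty$ we may take $j_N=j$ for each fixed $j$; Corollary~\ref{cor:corollary on W}\ref{item:cor on W part 1} gives $W_{L,j}=\theta L^d\varphi^{*j}(0)+o(1)$ with $L^d=N/\rho$, and $\rho\to\rho_*$, $r_N\to r_*$ by Lemma~\ref{lem:find r_N}\ref{item:a_N_b_N fixed rho}, so $\mathbb P(L_1=j)\to \theta\rho_*^{-1}\varphi^{*j}(0)r_*^j=\mathbb P(Y=j)$. The limiting masses sum to $1$ because $\sum_j\theta\varphi^{*j}(0)r_*^j=r_*g'(r_*)=\rho_*$, i.e.\ \eqref{eq:r_*_cor_def}, so $L_1\overset{d}{\to}Y$; tightness of $(L_1)$ then gives $\liminf_N\mathbb P(L_1\ge\epsilon N)=0$ for every $\epsilon>0$, hence $\nu=0$.

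For case (Sub1) ($d=1$), set $s_N:=2\var(X)\rho^2/\theta^2$, so that $1-r_N\sim s_N^{-1}$ and $a_N\sim\var(X)N\rho^2/\theta^2$ by Lemma~\ref{lem:find r_N} and Lemma~\ref{lem:a_N_b_N_estimates}. For fixed $M>0$ take $j_N=\lceil Ms_N\rceil$; since $\rho=o(\sqrt N)$ one checks $s_N^2=o(a_N)$, so Theorem~\ref{thm:sub-critical integral} applies on $1\le j\le j_N$. On this window (discarding a negligible $O(j_0/\rho)$ contribution from $j$ below a large constant $j_0$) Lemma~\ref{lem:asymptotics of kappa _j} and Corollary~\ref{cor:corollary on W}\ref{item:cor on W part 1} give $W_{L,j}=\theta L\bigl(2\pi\var(X)j\bigr)^{-1/2}(1+o(1))$ (the error terms $L^{-1/2}$ and $e^{-cL^2/j}$ are negligible as $j\ll L^2$), and $r_N^{\,j}=e^{-j/s_N}(1+o(1))$; using $L/N=1/\rho$ and $\sqrt{s_N}=\rho\sqrt{2\var(X)}/\theta$ the factors collapse so that $\mathbb P(L_1=j)=\frac{1}{\sqrt\pi\, s_N}\cdot(j/s_N)^{-1/2}e^{-j/s_N}(1+o(1))$. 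Hence for any continuity point $x>0$,
\[
\mathbb P(L_1\le xs_N)=\sum_{j\le xs_N}\mathbb P(L_1=j)\longrightarrow \frac{1}{\sqrt\pi}\int_0^x t^{-1/2}e^{-t}\,dt,
\]
the Riemann sum converging despite the integrable singularity at $0$ by monotonicity of $t^{-1/2}e^{-t}$. Since $\sqrt\pi=\Gamma(\tfrac12)$, the right side is the CDF of $\gama(\tfrac12,1)$, giving $\frac{\theta^2L_1}{2\var(X)\rho^2}=L_1/s_N\overset{d}{\to}\gama(\tfrac12,1)$; the tail is automatic as the limit is a genuine law, and $\nu=0$ because $s_N=o(N)$ forces $\mathbb P(L_1\ge\epsilon N)=\mathbb P(L_1/s_N\ge\epsilon N/s_N)\to 0$.

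For case (Sub2) ($d=2$) the analysis is parallel: $\log\frac{1}{1-r_N}\sim\rho/\alpha_c$ and $a_N=N^{1+\alpha/\alpha_c+o(1)}$ (Lemma~\ref{lem:find r_N}, Lemma~\ref{lem:a_N_b_N_estimates}), and Lemma~\ref{lem:asymptotics of kappa _j} with Corollary~\ref{cor:corollary on W}\ref{item:cor on W part 1} give $W_{L,j}=\alpha_c L^2 j^{-1}(1+o(1))$, so $\mathbb P(L_1=j)=\frac{\alpha_c}{\rho j}e^{-j(1-r_N)}(1+o(1))$ using $L^2/N=1/\rho$. Fix $u\in(0,1)$ and $K=\lfloor e^{\rho u/\alpha_c}\rfloor$; because $\alpha<\alpha_c$ we get both $K(1-r_N)\to 0$ and $K^2=o(a_N)$, so Theorem~\ref{thm:sub-critical integral} applies up to $K$ and the exponential factor is $1+o(1)$ there, whence $\mathbb P(\alpha_c\log L_1/\rho\le u)=\mathbb P(L_1\le K)=\sum_{j\le K}\frac{\alpha_c}{\rho j}(1+o(1))=\frac{\alpha_c}{\rho}\log K+o(1)\to u$. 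This matches the $U[0,1]$ CDF on $(0,1)$; it is $0$ for $u\le 0$, and for $u\ge 1$ it tends to $1$ by monotonicity (compare with $u'\uparrow 1$). The same comparison, since $e^{\rho u'/\alpha_c}=o(N)$, shows $\mathbb P(L_1\ge\epsilon N)\to 0$, hence $\nu=0$.

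The main obstacle is the uniform bookkeeping: establishing $\mathbb P(L_1=j)=\frac{W_{L,j}}{N}r_N^{\,j}(1+o(1))$ together with the asymptotics of $W_{L,j}$ and $r_N^{\,j}$ \emph{uniformly} over a $j$‑window that grows with $N$, and verifying this window still satisfies $j^2=o(a_N)$ so that Theorem~\ref{thm:sub-critical integral} is applicable — it is precisely the hypotheses $\rho=o(\sqrt N)$ and $\alpha<\alpha_c$ that make the windows fit. Once that is in place, identifying the limits is a routine continuum limit in which all constants conspire to produce the clean $\gama(\tfrac12,1)$ and $U[0,1]$ laws.
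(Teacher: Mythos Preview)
Your proof is correct and follows essentially the same approach as the paper: combine Theorem~\ref{thm:sub-critical integral} with \eqref{eq:dist_L1} to get $\mathbb P(L_1=j)\sim\frac{W_{L,j}}{N}r_N^{\,j}$ uniformly on a growing window, plug in the estimates for $W_{L,j}$ and $r_N$ from Corollary~\ref{cor:corollary on W}, Lemma~\ref{lem:asymptotics of kappa _j}, Lemma~\ref{lem:find r_N} and Lemma~\ref{lem:a_N_b_N_estimates}, and pass to the continuum limit. The only cosmetic differences are that the paper computes $\mathbb P(a\le\cdot\le b)$ over compact intervals rather than the CDF directly, and that it explicitly remarks on the finitely many $j$ with $\varphi^{*j}(0)=0$ (which you tacitly absorb); also note your label \ref{item:a_N_b_N fixed rho} belongs to Lemma~\ref{lem:a_N_b_N_estimates}, not Lemma~\ref{lem:find r_N}.
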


\begin{proof}
The corollary follows easily by substituting the estimates given in Theorem~\ref{thm:sub-critical integral}. Let $j_N$ be a sequence of integers satisfying $1\le j_N\le N$ and
\begin{equation}\label{eq:condition on j_N}
j_N=o\left(\min \left(\sqrt{a_N},L^2\right)\right),\quad N\to\infty.
\end{equation}
By Lemma~\ref{lem:distribution_of_ell_1,ell_2}, theorem~\ref{thm:sub-critical integral} and Corollary~\ref{cor:corollary on W}, uniformly in $1\le j\leq j_N$ with $\varphi^{*j}(0)\neq 0$,
\begin{equation}\label{eq:prob of cycle}
\mathbb P \left(L_1=j\right)=\frac{W_{L,j}}{N}\cdot \frac{H_{N-j}(L)}{H_N(L)}\sim \frac{\theta L^d \varphi^{*j}(0)}{N}\cdot r_N^j=\theta \rho ^{-1}\varphi^{*j}(0) r_N^j,\quad N\to \infty.
\end{equation}
Note that, by Lemma~\ref{lem:asymptotics of kappa _j}, there are only finitely many $j$ for which $\varphi^{*j}(0)=0$, and that $L^d\varphi^{*j}(0)\ge c$ when $\varphi^{*j}(0)\neq0$ and $j\le L^2$. The above arguments also adapt to show that $\mathbb P \left(L_1=j\right)\to 0$ when $\varphi^{*j}(0)=0$.

In cases (Sub3), (SubConst1) and (SubConst2), one first verifies using \eqref{eq:g_Taylor_expansion} that the definition of $r_*$ given by \eqref{eq:r_*_cor_def} coincides with the one given in Lemma~\ref{lem:find r_N}. It then follows from \eqref{eq:prob of cycle} with fixed $j$ and Lemma~\ref{lem:find r_N} that $L_1\overset{d}{\longrightarrow }Y$, proving part~\ref{item:part_3_sub_critical_restatement}

Consider the case (Sub1). Denote $\sigma:=\sqrt{\var(X)}$. Let $0<a<b<\infty$ and set $j_N=\lfloor \frac{2\sigma ^2\rho ^2b}{\theta ^2}\rfloor$. Observe that $j_N$ satisfies \eqref{eq:condition on j_N} by Lemma~\ref{lem:a_N_b_N_estimates}. Thus, we may use \eqref{eq:prob of cycle} to obtain
\begin{equation}
\begin{split}
\mathbb P \left(a\le \frac{\theta ^2L_1}{2\sigma ^2\rho ^2}\le b\right)=&\sum _{j=\lceil\frac{2\sigma ^2\rho ^2a}{\theta ^2} \rceil}^{\lfloor\frac{2\sigma ^2\rho ^2b}{\theta ^2} \rfloor}\mathbb P \left(L_1=j\right)\sim \theta \rho ^{-1}\sum _{j=\lceil\frac{2\sigma ^2\rho ^2a}{\theta ^2} \rceil}^{\lfloor\frac{2\sigma ^2\rho ^2b}{\theta ^2} \rfloor}\varphi^{*j}(0) r_N^j \\
\sim \frac{\theta ^2}{2\sqrt{\pi}\sigma ^2\rho ^2}&\sum _{j=\lceil\frac{2\sigma ^2\rho ^2a}{\theta ^2} \rceil}^{\lfloor\frac{2\sigma ^2\rho ^2b}{\theta ^2} \rfloor}\left(\frac{\theta ^2j}{2\sigma ^2\rho ^2}\right)^{-\frac{1}{2}}\exp \left(-\frac{\theta ^2j}{2\sigma ^2\rho ^2}\right)\to \frac{1}{\sqrt{\pi}}\intop _a^b x^{-\frac{1}{2}}e^{-x}dx
\end{split}
\end{equation}
where in the second asymptotic equality we used Lemma~\ref{lem:asymptotics of kappa _j}, Lemma~\ref{lem:find r_N} and $1-x=e^{-x+o(x^2)}$ as $x\to 0$ and the final limit is obtained by convergence of the Riemann sum to the integral. This finishes the proof of part~\ref{item:part_1_sub_critical_restatement}.

Consider the case (Sub2). Let $0<a<b<1$ and set $j_N=e^{\frac{b\rho }{\alpha _c}}$. Note that we have $j_N=o\left(\left(1-r_N\right)^{-1}\right)$ by Lemma~\ref{lem:find r_N} and that $j_N$ satisfies \eqref{eq:condition on j_N} by Lemma~\ref{lem:a_N_b_N_estimates}. Thus, we may use \eqref{eq:prob of cycle} to obtain
\[ \mathbb P \left(a\le \frac{\alpha _c\log L_1}{\rho}\le b\right)=\! \sum _{j=\lceil e^{\frac{a\rho}{\alpha _c}}\rceil}^{\lfloor e^{\frac{b\rho}{\alpha _c}}\rfloor}\mathbb P \left(L_1=j\right)\sim \theta  \rho ^{-1}\sum _{j=\lceil e^{\frac{a\rho}{\alpha _c}}\rceil}^{\lfloor e^{\frac{b\rho}{\alpha _c}}\rfloor} \! \varphi^{*j}(0) r_N^j\sim \alpha _c \rho ^{-1}\sum _{j=\lceil e^{\frac{a\rho}{\alpha _c}}\rceil}^{\lfloor e^{\frac{b\rho}{\alpha _c}}\rfloor} \frac{1}{j} \to b-a,\]
where the second asymptotic equality follows from Lemma~\ref{lem:asymptotics of kappa _j} and from $r_N^{j_N}\to 1$. This finishes the proof of part~\ref{item:part_2_sub_critical_restatement}.

The fact that $\nu=0$ follows from the above results as, for any $\epsilon >0$, $\mathbb P \left(L_1\ge  \epsilon
N\right)\to 0$, as $N\to \infty$.
\end{proof}
\begin{remark}\label{remark:asymptotically_IID}
  The analysis in Corollary~\ref{cor:connection to L_1} extends to the study of the joint distribution of $L_1, L_2, \ldots$. The key fact is that an analog of \eqref{eq:prob of cycle} remains valid. By Lemma~\ref{lem:distribution_of_ell_1,ell_2}, for any fixed $m$ as $N\to\infty$,
  \begin{multline*}
    \mathbb P\left(L_1 = j_1,\dots, L_m = j_m\right)\\
    =\frac{H_{N-j_1-\dots-j_m}(L)}{H_N(L)}\cdot \prod_{k=1}^m \frac{W_{L,j_k}}{N-j_1-\dots-j_{k-1}}\sim\mathbb P\left(L_1 = j_1\right)\cdots\mathbb P\left(L_m = j_m\right),
  \end{multline*}
  uniformly in $1\le j_1+\cdots+j_m\le j_N$ with $\varphi^{*j_k}(0)\neq 0$ for all $k$, as follows from theorem~\ref{thm:sub-critical integral} and Corollary~\ref{cor:corollary on W} in the same manner as in \eqref{eq:prob of cycle}. One may then follow the analogous steps to the analysis in Corollary~\ref{cor:connection to L_1} and deduce that the $(L_k)$ become asymptotically independent and identically distributed, in the sense explained in the remark following Theorem~\ref{thm:d=3}.
\end{remark}

\section{The supercritical case}\label{sec:super-sritical}
In this section we prove part~\ref{item:super-critical in d=1} in Theorem~\ref{thm:d=1}, part~\ref{item:super-critical in d=2} in Theorem~\ref{thm:d=2} and part~\ref{item:super-critical in d=3} in Theorem~\ref{thm:d=3}. In particular we show the convergence to the Poisson-Dirchlet distribution.

\setlist[description]{font=\normalfont}
Let $\rho = \rho(N)$ and consider several possibilities for the
dimension $d$ and the asymptotic regime of $\rho (N)$ as
$N\to\infty$ corresponding to the various statements in the
theorems. We give names to these cases to simplify later reference.
By the name (Super-Critical) we refer collectively to any of the following
cases:
\begin{description}[leftmargin=!,labelwidth=75pt]
\item[(Super1)] Dimension $d=1$ and density $\rho$ satisfying $\rho =\omega \left(\sqrt N\right)$ and $\rho\le N$.
\item[(Super2)] Dimension $d=2$ and density $\rho $ satisfying $\frac{\rho}{\log N}\rightarrow \alpha\in(\alpha_c, \infty)$.
\item[(Hyper2)]
Dimension $d=2$ and density $\rho$ satisfying $\rho =\omega \left(\log N\right)$ and $\rho\le N$.
\item[(Super3)]
Dimension $d\ge 3$ and density $\rho >\rho _c$, fixed as $N\to \infty$.
\end{description}
Our assumption that $\rho\le N$ in all cases is equivalent to taking the side length $L$ of $\Lambda$ to be at least $1$, as we assumed in the estimates of Section~\ref{sec:properties}.
Define the auxiliary parameter $\tau=\tau_{d,\rho}$ in the cases above:
\begin{equation}\label{eq:def of tau}
\tau =\tau _{d,\rho }:=\left\{ \begin{array}{ll}
 0 & \mbox{in cases (Super1) and (Hyper2)} \\
\\
\frac{\alpha _c}{\alpha } & \mbox{in case (Super2)}\\
\\
\frac{\rho _c}{\rho } & \mbox{in case (Super3)}\\
\end{array}.
\right.
\end{equation}
We will show that, in all of the (Super-Critical) cases, $\tau$ is the fraction of points in non-macroscopic cycles, or equivalently $\tau=1-\nu  $ where $\nu $ is defined in \eqref{eq:def of nu}.

As in the sub-critical case, our strategy is to apply the Cauchy integral formula to obtain the asymptotic behavior of $H_n(L)=\left[z^n\right]e^{G_L(z)}$ and then use the results of Section~\ref{sec:combinatorics} which connect $H_n(L)$ with the distribution of the cycle lengths $\left(L_1,L_2,\dots \right)$.

\subsection{Singularity analysis}\label{sec:singularity}

In the following theorem we find the asymptotic behavior of $H_{N-j}(L)$. We first introduce an analytic function $F_L$ defined by
\begin{equation}\label{eq:def of F}
F_L(z):=G_L(z)+\theta \log \left(1-z\right)=-\theta \sum _{m\in \frac{1}{L}\Z ^d\setminus\{0\}} \log \left(1-z\hat {\varphi}(m)\right),\quad z\in \Delta _0,
\end{equation}
where the second equality uses \eqref{eq:analytic continuation form G_L} and where we recall from \eqref{eq:Delta_0_def} that
\begin{equation*}
  \Delta_0 = \left\{z\in \mathbb C\setminus\{1\}\,\colon|z|<R_0,
\;\left|\arg(z-1)\right|>\frac{\pi}{8} \right\},
\end{equation*}
with $R_0>1$ determined by Claim~\ref{claim:1-z phi}. For every $L\ge 1$, the sum in \eqref{eq:def of F} converges uniformly in $\overline{\Delta _0}$, by Claim~\ref{claim:1-z phi} and the fact that $\hat{\varphi}$ is a Schwartz function, and therefore $F_L$ is continuous at $z=1$. Moreover, by the definition of $G_L$ in \eqref{eq:def of G_L(z)}, in the closed unit disc $F_L$ is given by the power series
\begin{equation}\label{eq:power series for F_L}
F_L(z)=\sum _{j=1}^\infty \frac{W_{L,j}-\theta }{j}z^j,\quad z\in \overline{\mathbb{D}},
\end{equation}
where the sum converges absolutely in $\overline{\mathbb{D}}$ by Corollary~\ref{cor:corollary on W}. In particular $F_L(1)$ is real.

\begin{thm}\label{thm:supercritical integral}
In the (Super-Critical) cases, for every $\epsilon >0$,
\[H_{N-j}(L)=\left[z^{N-j}\right] e^{G_L(z)} \sim \frac{e^{F_L(1)}N^{\theta-1 }\left(1-\tau -\frac{j}{N}\right)^{\theta -1}}{\Gamma \left(\theta \right)}, \quad N\to \infty,
 \]
uniformly in $j\le \left(1-\tau -\epsilon\right)N$.

\end{thm}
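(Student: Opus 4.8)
The plan is to follow the saddle-point scheme of Theorem~\ref{thm:sub-critical integral}, replacing the local Gaussian (Laplace) approximation near the saddle by the local behaviour forced by the branch singularity $(1-z)^{-\theta}$ of $e^{G_L}$. By \eqref{eq:def of F} we have $e^{G_L(z)}=e^{F_L(z)}(1-z)^{-\theta}$, with $F_L$ analytic in $\Delta_0$ and continuous on $\overline{\Delta_0}$. Let $r_N=r_{N,L}\in(0,1)$ be defined, as in \eqref{eq:def of r_N}, by $r G_L'(r)=N$; it exists and is unique since $G_L'$ has non-negative Taylor coefficients and $G_L'(r)\to\infty$ as $r\uparrow1$ (Corollary~\ref{cor:corollary on W}). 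Applying the Cauchy integral \eqref{eq:H_n_Cauchy_integral} on the circle $z=r_Ne^{it}$, one splits
\[
H_{N-j}(L)=\frac{1}{2\pi r_N^{N-j}}\intop_{-\pi}^{\pi}e^{G_L(r_Ne^{it})-it(N-j)}\,dt=I_1+I_2,
\]
with $I_1$, $I_2$ the integrals over $|t|\le t_N$ and $t_N<|t|\le\pi$, for a cutoff $t_N$ chosen so that $t_N\to0$, $Nt_N\to\infty$ and $Nt_N^2\to0$ (so that $w:=N(1-r_Ne^{it})$ stays bounded on $|t|\le t_N$).

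The key new input, which I would isolate as a separate lemma — the analogue for $F_L$ at $z=1$ of Lemma~\ref{lem:find r_N}, Lemma~\ref{lem:a_N_b_N_estimates} and Lemma~\ref{lem:asymptotics of g at 1} for $G_L$ at $r_N$ — is the first-order expansion of $F_L$ at its singularity, uniform in $L$,
\[
F_L(z)=F_L(1)-F_L'(1)(1-z)+o(1),\qquad |1-z|\le C/N,
\]
together with the asymptotics $F_L'(1)=\tau N+o(N)$ as $N\to\infty$ in each of the (Super-Critical) cases, with $\tau$ given by \eqref{eq:def of tau}. Both would be derived from the representation \eqref{eq:analytic continuation form G_L}, which gives $F_L'(1)=\sum_{j\ge1}(W_{L,j}-\theta)=\theta\sum_{m\in\frac1L\Z^d\setminus\{0\}}\hat\varphi(m)/(1-\hat\varphi(m))$, together with the Taylor expansion \eqref{eq:taylor of phi} and Claim~\ref{claim:1-z phi}: up to lower-order terms this is a Riemann sum which for $d\ge3$ converges to $\rho_cL^d=\tau N$, for $d=2$ grows like $\alpha_cL^2\log L=\tau N(1+o(1))$ in case (Super2) and like $L^2=o(N)$ in case (Hyper2), and for $d=1$ is of order $L^2=o(N)$ — matching $\tau=0$ in the last two cases; the second-order remainder of $F_L$ at $z=1$ on the scale $|1-z|\le C/N$ is bounded by $N^{-2}\sup_{|1-z|\le C/N}|F_L''(z)|$, and the same estimates show $\sup|F_L''|=o(N^2)$ in every (Super-Critical) regime. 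Feeding $F_L'(1)=\tau N+o(N)$ into the defining identity $N=r_N\bigl(F_L'(r_N)+\theta/(1-r_N)\bigr)$ and using continuity of $F_L'$ near $1$ then yields $1-r_N\sim\theta/((1-\tau)N)$; in particular $r_N\to1$ and $F_L(r_N)=F_L(1)+O(1)$.

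Granting this, the main term is handled by substituting $w=N(1-r_Ne^{it})$ in $I_1$: the image contour has $\real w\to\theta/(1-\tau)$ and, after $N\to\infty$, becomes the vertical line $\{\real w=\theta/(1-\tau)\}$; one has $r_N^{-(N-j)}e^{-it(N-j)}\to e^{(1-j/N)w}$, $(1-r_Ne^{it})^{-\theta}=N^\theta w^{-\theta}$, and $e^{F_L(r_Ne^{it})}=e^{F_L(1)-F_L'(1)(1-r_Ne^{it})+o(1)}\to e^{F_L(1)}e^{-\tau w}$, whence
\[
I_1\sim\frac{e^{F_L(1)}N^{\theta-1}}{2\pi i}\intop_{\real w=c}\frac{e^{(1-\tau-j/N)w}}{w^{\theta}}\,dw=\frac{e^{F_L(1)}N^{\theta-1}\bigl(1-\tau-\tfrac jN\bigr)^{\theta-1}}{\Gamma(\theta)},
\]
the last equality being the Bromwich representation of $1/\Gamma(\theta)$, valid for all $\theta>0$ and legitimate here because $1-\tau-j/N\ge\epsilon$; since $1-\tau-j/N$ remains in $[\epsilon,1]$, all approximations are uniform in $0\le j\le(1-\tau-\epsilon)N$. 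For the tail $I_2$, I would establish, exactly as in part~\ref{item:part two of subcritical prop} of Proposition~\ref{prop:sub-critical proposition} (using $\real G_L(r_Ne^{it})=G_L(r_N)-\sum_k\frac{W_{L,k}r_N^k}{k}(1-\cos kt)$ and the positivity of the $W_{L,k}$), a bound $\real G_L(r_Ne^{it})\le G_L(r_N)-h_N(|t|)$ on $t_N<|t|\le\pi$ with $h_N$ large enough — one contribution of size $\theta\log(N|t|)$ coming already from the $-\theta\log(1-r_Ne^{it})$ part of $G_L$, together with a further $|t|$-dependent gain from the remaining terms — which makes $|I_2|$ negligible against $I_1$.

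The main obstacle is the second step: proving $F_L'(1)=\tau N+o(N)$ and the second-order smallness of $F_L$ on the scale $|1-z|\sim1/N$, \emph{uniformly in $L$}, in all four regimes. The cases $d=1,2$ are the delicate ones, since there the density $\rho$ — and with it the scale on which $F_L$ crosses over to its singular behaviour — varies with $N$, so the Riemann-sum-to-integral comparisons behind these asymptotics must be carried out with the explicit error control developed for the sub-critical estimates in Section~\ref{sec:sub-critical}; once they are in place, the rest is the singularity-analysis adaptation of the saddle-point computation above.
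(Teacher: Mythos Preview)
Your identification of the key analytic input is right and matches the paper's Proposition~\ref{prop:super-critical prop}: one needs $G_L(z)=-\theta\log(1-z)+F_L(1)+\tau N(z-1)+o(1)$ on a shrinking neighbourhood of $z=1$, with $\tau N$ arising as (the leading part of) $F_L'(1)$. The paper proves exactly this by bounding $F_L'-L^dg'$ (Lemma~\ref{lem:sum to integral}) and invoking Lemma~\ref{lem:asymptotics of g at 1} for $g'$ near~$1$.

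The genuine gap is in the choice of contour and the tail bound $I_2$. On the circle $|z|=r_N$ with $1-r_N\sim c/N$, the only systematic decay of the integrand for $t_N<|t|\le\pi$ comes from $|1-r_Ne^{it}|^{-\theta}\asymp (N|t|)^{-\theta}$; your hoped-for ``further $|t|$-dependent gain'' from $F_L$ is, in the regimes with $\tau=0$ (and for $|t|$ just above $t_N$ in the others), of lower order and does not help. Thus
\[
|I_2|\;\lesssim\; \frac{e^{G_L(r_N)}}{r_N^{N-j}}\int_{t_N}^{\pi}(N|t|)^{-\theta}\,dt
\;\asymp\; e^{F_L(1)}N^{\theta}\cdot N^{-1}\!\int_{Nt_N}^{N\pi}\! s^{-\theta}\,ds,
\]
and for $\theta\le 1$ the last integral is \emph{not} $o(1)$ (it is $\asymp\log N$ for $\theta=1$ and $\asymp N^{1-\theta}$ for $\theta<1$), so $|I_2|/I_1\to\infty$. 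Equivalently, the Bromwich integral $\int_{\real w=c}e^{sw}w^{-\theta}\,dw$ is only conditionally convergent for $\theta\le1$, so one cannot separate the near-$1$ and far-from-$1$ parts of a circular contour by absolute-value estimates. (The toy case $e^{G_L}=(1-z)^{-\theta}$ already exhibits this: on $|z|=1-c/N$ the tail integral $\int_{t_N}^\pi|1-r_Ne^{it}|^{-\theta}dt$ dominates the main term when $\theta<1$.)

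The paper's fix is to abandon the circle and use a Hankel-type ``pacman'' contour $\gamma=\gamma(\eta,\tfrac\pi3,N)$ that exits the unit disc near $z=1$ along two rays at angle $\pm\tfrac\pi3$; see Figure~\ref{fig_gamma}. After the change of variables $z=1+\omega/(N(1-\tau)-j)$ the local integral becomes the absolutely convergent Hankel integral \eqref{eq:Hankel integral}, whose integrand $(-\omega)^{-\theta}e^{-\omega}$ decays exponentially along the arms for every $\theta>0$. On the rays $\gamma_1\cup\gamma_3$ and the outer arc $\gamma_4$ (radius $R_1>1$), the factor $|z|^{-(N-j)}$ supplies the decay that was missing on the circle; this is where parts~\ref{item:part two of super-critical prop} and~\ref{item:part three of super-critical prop} of Proposition~\ref{prop:super-critical prop} enter. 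Your near-$1$ analysis is salvageable, but the contour must be deformed in this way for the argument to close when $\theta\le1$.

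A smaller point: your parenthetical ``so that $w=N(1-r_Ne^{it})$ stays bounded'' is not what happens --- $|w|$ runs up to $Nt_N\to\infty$, which is precisely what you need to recover the full inverse-Laplace contour; correspondingly the remainder $|F_L(z)-F_L(1)-F_L'(1)(z-1)|$ must be controlled for $|1-z|$ up to $t_N$, not $C/N$, and the bound $\sup|F_L''|=o(N^2)$ must hold on that larger scale (in $d=3$ this forces $Nt_N^{3/2}\to0$ rather than $Nt_N^2\to0$).
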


The proof of Theorem~\ref{thm:supercritical integral}, following \cite{bogachev2015asymptotic}, proceeds by expressing $H_{N-j}(L)$ as a contour integral and changing the contour of integration to a `pacman-shaped' contour $\gamma$ defined as follows (see Figure~\ref{fig_gamma}):  $\gamma=\gamma \left(\eta ,\beta,N \right) =\gamma_1+\gamma_2+\gamma_3+\gamma_4$, where $+$ denotes concatenation of contours,
\begin{equation}\label{eq:def of gamma}
\begin{split}
&\gamma _1(t)=1-te^{-i\beta }, \quad t\in \left[-\eta ,-\frac{1}{N}\right]\\
&\gamma_2(t)=1+\frac{1}{N}e^{-it}, \quad t\in \left[\beta , 2\pi -\beta \right]\\
&\gamma_3(t)=1+te^{i\beta }, \quad t\in \left[\frac{1}{N},\eta  \right]\\
&\gamma _4(t)=R_1e^{it}, \quad t\in \left[\beta ',2\pi -\beta '\right]
\end{split}
\end{equation}
and $R_1=\left|1+\eta e^{i\beta }\right| $,  $\beta '=\arg \left(1+\eta e^{i\beta }\right)$.
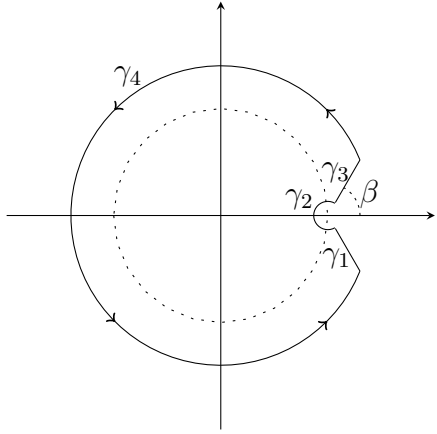
\begin{figure}
\begin{tikzpicture}
\begin{axis}[ticks=none,axis lines=middle,xmin=-2,xmax=2,ymin=-2,ymax=2,axis equal image]
\draw [dash pattern=on 1pt off 3pt, domain=-180:180] plot({cos(\x)},{sin(\x)});
\addplot[domain=1.07:1.3]{tan(60)*x-tan(60)};
\addplot[domain=1.07:1.3]{-tan(60)*x+tan(60)};
\draw [samples =70, domain=atan(0.3*tan(60)/1.3):360-atan(0.3*tan(60)/1.3)] plot({((0.3*tan(60))^2+1.3^2)^0.5*cos(\x)},{((0.3*tan(60))^2+1.3^2)^0.5*sin(\x)});
\draw [domain=60:300] plot({1+0.08*((1+tan(60))^0.5)*cos(\x)},{0.08*((1+tan(60))^0.5)*sin(\x)});
\node[text width=0cm] at (1.3,0.2)
    {$\beta $};
    \draw [dash pattern=on 1pt off 2pt, domain=0:60] plot({1+0.3*cos(\x)},{0.3*sin(\x)});
\node[text width=0cm] at (0.95,-0.4)
{$\gamma _1 $};
\node[text width=0cm] at (0.6,0.15)
{$\gamma _2 $};
\node[text width=0cm] at (0.94,0.4)
{$\gamma _3 $};
\node[text width=0cm] at (-1,1.3)
{$\gamma _4 $};
\draw[->,thick] (-0.99,0.99) -- (-0.99-0.001,0.99-0.001) node [pos=0.66,above]{} ;
\draw[->,thick] (-0.99,-0.99) -- (-0.99+0.001,-0.99-0.001) node [pos=0.66,above]{} ;
\draw[->,thick] (0.99,0.99) -- (0.99-0.001,0.99+0.001) node [pos=0.66,above]{} ;
\draw[->,thick] (0.99,-0.99) -- (0.99+0.001,-0.99+0.001) node [pos=0.66,above]{} ;
\end{axis}
\end{tikzpicture}
\caption{The contour $\gamma=\gamma (\eta ,\beta ,N)$.}
 \label{fig_gamma}
\end{figure}
We will see that the main contribution to the Cauchy integral comes from a small neighborhood of $1$. In the proof of Theorem~\ref{thm:supercritical integral} we estimate this contribution using the following proposition that identifies the behavior of $G_L$ close to $1$ and on the rest of $\gamma$.

\begin{prop}\label{prop:super-critical prop}
In the (Super-Critical) cases, for any $\epsilon >0$ there exists $0<\eta<\frac{1}{10}$ and $N_0>0$ such that $\gamma =\gamma (\eta ,\frac{\pi}{3},N )$ satisfies $\gamma\subseteq \Delta_0$ for $N\ge N_0$ and the following holds:
\begin{enumerate}[label=(\roman{*})]
\item\label{item:part one of super-critical prop}
There exists a sequence $t_N\rightarrow \infty $, $t_N=o(\sqrt{N})$, for which
\[G_L(z)=-\theta \log \left(1-z\right)+F_L(1)+\tau N\left(z-1\right)+o(1),\quad  N\to \infty,\]
uniformly in $z\in \Delta _0$ such that $\frac{1}{N}\le \left|1-z\right|\le \frac{t_N}{N}$.
\item\label{item:part two of super-critical prop}
For any $z\in \gamma _1\cup \gamma _3$ and $N\ge N_0$,
\[\real \left(G_L(z)\right)\le -\theta \log \left|1-z\right|+F_L(1)+\left(\tau +\epsilon \right)N\real \left(z-1\right).\]
\item\label{item:part three of super-critical prop}
For any $z\in \gamma _4$ and $N\ge N_0$,
\[\real \left(G_L(z)\right)\le F_L(1)+(\tau +\epsilon )N\log |z|.\]
\end{enumerate}
\end{prop}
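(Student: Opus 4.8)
The plan is to rewrite all three statements in terms of $F_L(z):=G_L(z)+\theta\log(1-z)$, which, as recorded just before the proposition, is analytic on $\Delta_0$, continuous on $\overline{\Delta_0}$, has $F_L(1)\in\R$, and --- by differentiating \eqref{eq:def of F} under the sum, the uniform convergence on compacts being supplied by Claim~\ref{claim:1-z phi} and the Schwartz decay of $\hat\varphi$ exactly as in Corollary~\ref{cor:G_L^(n) and g^(n)} --- satisfies $F_L^{(n)}(z)=\theta(n-1)!\sum_{m\in\frac1L\Z^d\setminus\{0\}}\hat\varphi^n(m)(1-z\hat\varphi(m))^{-n}$ on $\Delta_0$. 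Since $\real(-\theta\log(1-z))=-\theta\log|1-z|$, part~\ref{item:part one of super-critical prop} is equivalent to $F_L(z)=F_L(1)+\tau N(z-1)+o(1)$ on $\{\frac1N\le|1-z|\le\frac{t_N}N\}$, part~\ref{item:part two of super-critical prop} to $\real F_L(z)\le F_L(1)+(\tau+\epsilon)N\real(z-1)$ on $\gamma_1\cup\gamma_3$, and part~\ref{item:part three of super-critical prop} to $\real F_L(z)\le F_L(1)+(\tau+\epsilon)N\log|z|$ on $\gamma_4$. One first fixes $\eta$ small enough (in terms of $R_0$) that $\gamma=\gamma(\eta,\frac\pi3,N)\subseteq\Delta_0$ for every $N$: on $\gamma_1,\gamma_3$ one has $|\arg(z-1)|=\frac\pi3>\frac\pi8$ and $|z|\le1+\eta<R_0$; $\gamma_2\subseteq\mathbb D$; and $\gamma_4$ is the arc of the circle of radius $R_1=|1+\eta e^{i\pi/3}|<R_0$ on which $|\arg(z-1)|>\frac\pi8$.

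The key preliminary is the linearisation of $F_L$ at $z=1$. From the power series \eqref{eq:power series for F_L}, which converges absolutely on $\overline{\mathbb D}$ by Corollary~\ref{cor:corollary on W}, one reads off $F_L'(1)=\sum_{j\ge1}(W_{L,j}-\theta)\in\R$, and I would prove that $F_L'(1)=\tau N+o(N)$, with moreover a rate $F_L'(1)=\tau N+o(N/t_N)$ for a suitable $t_N\to\infty$, $t_N=o(\sqrt N)$ (needed in part~\ref{item:part one of super-critical prop}). The proof splits the sum at $j\asymp L^2$: for $j>L^2$ part~\ref{item:cor on W part 2} of Corollary~\ref{cor:corollary on W} gives an exponentially small tail, and for $j\le L^2$ part~\ref{item:cor on W part 1} replaces $W_{L,j}$ by $\theta L^d\varphi^{*j}(0)$ with negligible error. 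In dimensions $d\ge3$, Lemma~\ref{lem:asymptotics of kappa _j} gives $\sum_{j>L^2}\varphi^{*j}(0)=O(L^{2-d})$, whence $\theta L^d\sum_{j\le L^2}\varphi^{*j}(0)=L^d\rho_c+O(L^2)=\tau N+O(N^{2/3})$. In dimensions $d=1,2$ one uses Lemma~\ref{lem:asymptotics of kappa _j} to evaluate $\sum_{j\le L^2}\varphi^{*j}(0)$ directly (of order $L$ if $d=1$, of order $\log L$ if $d=2$) and checks --- using $\rho=\omega(\sqrt N)$ in (Super1), $\rho\sim\alpha\log N$ in (Super2), and the definition of $\alpha_c$ --- that the result equals $\tau N+o(N)$ (with $\tau=0$ in (Super1) and (Hyper2)).

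Given this, part~\ref{item:part one of super-critical prop} is Taylor's theorem: for $\frac1N\le|1-z|\le\frac{t_N}N$ the segment $[1,z]$ lies in $\Delta_0$ and $F_L(z)=F_L(1)+F_L'(1)(z-1)+\int_1^z(z-w)F_L''(w)\,dw$; the first two terms are $F_L(1)+\tau N(z-1)+o(1)$ by the rate above, while the remainder is bounded by $|1-z|^2\sup_{[1,z]}|F_L''|$, and part~\ref{item:part 1 of claim 1-z phi} of Claim~\ref{claim:1-z phi} gives $|F_L''(w)|\le C\sum_{m\ne0}|\hat\varphi(m)|^2\|m\|^{-4}=O(L^{\max(4,d)})$, a sub-quadratic power of $N$ in every regime, so $t_N\to\infty$ can be chosen slowly enough to kill the remainder. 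For parts~\ref{item:part two of super-critical prop} and~\ref{item:part three of super-critical prop} I would split the contour into the piece with $|1-z|\le t_N/N$, handled directly by part~\ref{item:part one of super-critical prop} (on the rays $N\real(z-1)\ge\cos\frac\pi3=\frac12$, so the $o(1)$ is absorbed into $\epsilon N\real(z-1)$), and the ``far'' piece. On the far piece one writes $\real F_L(z)-F_L(1)=-\theta\sum_{m\ne0}\log|1-\zeta_m|$ with $\zeta_m:=(z-1)\hat\varphi(m)/(1-\hat\varphi(m))$ and splits the Fourier modes at $\|m\|\asymp\sqrt{|1-z|}$: above the scale $|\zeta_m|\le\frac12$, the logarithm is expanded, and its linear term, recombined with the low-mode part of $F_L'(1)$, gives $(\tau+o(1))N\real(z-1)$; below the scale one uses that on $\gamma_1\cup\gamma_3$ the fixed angle $\arg(z-1)=\pm\frac\pi3$ forces $\arg\zeta_m$ into a cone around $\pm\frac\pi3$ (since $1-\hat\varphi(m)\approx2\pi^2\|Am\|^2>0$ and $\hat\varphi(m)\approx1$ for small $m$ by \eqref{eq:taylor of phi} and Claim~\ref{claim:1-z phi}), so $|1-\zeta_m|$ is bounded below by a positive constant and the low modes contribute only $O((L\sqrt{|1-z|})^d)$; the resulting error terms (each a power $|1-z|^aL^b$ with $a>1$, or with $a=1$ and $b<d$) are $\le\epsilon N\real(z-1)$ once $\eta$ is small (depending on $\epsilon$, and in case (Super3) on $\rho$), using in $d=1,2$ also the slack from $L^2=o(N)$ in (Super1) and $L^2\log L\sim N/(2\alpha)$ in (Super2). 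Part~\ref{item:part three of super-critical prop} follows the same scheme with $|1-z|\ge\eta$ throughout and $\log|z|=\log R_1\asymp\eta$ in place of $\real(z-1)$, together with continuity with the endpoint bound of part~\ref{item:part two of super-critical prop} for the portion of $\gamma_4$ nearest $1$.

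The hard part is precisely this last analysis --- the behaviour of $F_L$ on the ``far'' portion of the pacman contour. Unlike in the surrogate-spatial model of \cite{bogachev2015asymptotic}, where the weight sequence has an $N$-independent part and the analogous estimate reduces to a statement about a fixed analytic function, here $W_{L,j}$ genuinely depends on $L$ and changes behaviour around the crossover $j\asymp L^2$, so one must control $W_{L,j}$ on all scales of $j$ and, more delicately, track the leading constant $\tau N$ through a Fourier-mode sum whose dominant contribution in low dimensions is of order $N$ and concentrated on the smallest modes $\|m\|\asymp L^{-1}$; crude bounds on $F_L$ or $F_L'$ along the rays lose this constant. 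The opening angle $\frac\pi3$ of the contour is used essentially here: it keeps each ratio $(1-z\hat\varphi(m))/(1-\hat\varphi(m))$ away from $0$ on the rays (no resonances) and makes the small-mode correction subdominant. Everything else is bookkeeping with the estimates of Section~\ref{sec:properties}.
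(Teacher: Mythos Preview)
Your approach to part~\ref{item:part one of super-critical prop} --- Taylor-expanding $F_L$ at $1$ with $F_L'(1)=\sum_{j\ge1}(W_{L,j}-\theta)=\tau N+o(N)$ and remainder controlled by $\sup|F_L''|=O(L^{\max(4,d)})=o(N^2)$ --- is correct and close to the paper's, which writes $F_L(z)-F_L(1)=\int_1^z F_L'$ and estimates $F_L'$ directly. You are, however, working much harder than necessary in the low-dimensional cases: in (Super1) and (Hyper2) the crude bound $|F_L'(z)|\le C\sum_{m\ne0}\min(\|m\|^{-2},|\hat\varphi(m)|)\le CL^2\log L=o(N)$ holds \emph{uniformly} on $\Delta_0$, so $F_L(z)=F_L(1)+o(N|1-z|)$ and all three parts follow at once (recall $\tau=0$); similarly $|F_L(z)|\le CL^d=o(N)$ on $\overline{\Delta_0}$ disposes of part~\ref{item:part three of super-critical prop} in (Super1), (Super2) and (Hyper2). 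Your mode-splitting for part~\ref{item:part two of super-critical prop} in (Super2) and (Super3) can be made to work on the rays, though with more bookkeeping than the paper's route.

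The genuine gap is part~\ref{item:part three of super-critical prop} in (Super3). On $\gamma_4$ away from the endpoints one has $|1-z|\asymp1$, so your scale $\sqrt{|1-z|}$ is of order one: essentially all modes with $\|m\|\lesssim1$ are ``low'', the expansion $-\log(1-\zeta_m)\approx\zeta_m$ is unavailable for them, and your stated bound on the low-mode contribution becomes $O(L^d)=O(N)$ rather than the required $(\tau+\epsilon)N\log R_1=O(N\eta)$. The ``continuity with part~\ref{item:part two of super-critical prop}'' patch covers only a neighbourhood of the two endpoints, not the full arc. What you are missing is precisely the reduction you dismissed: the paper proves (Lemma~\ref{lem:sum to integral}, a Riemann-sum estimate) that $|F_L'(z)-L^dg'(z)|\le C(L^{d-1}+L^2\log L)$ on $\Delta_0$ when $d\ge3$, whence $|F_L(z)-L^dg(z)|=o(N)$ uniformly. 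The problem then \emph{does} reduce to a fixed analytic function, namely $g$, and part~\ref{item:part three of super-critical prop} becomes the inequality $\real g(z)\le g(1)+g'(1)\log|z|$ on $\gamma_4$; this is Lemma~\ref{lem:bound on Re g(z)}, proved by the local expansion of $g$ near $z=1$ (from Lemma~\ref{lem:asymptotics of g at 1}) combined with a compactness argument on the rest of the arc. The same reduction also streamlines parts~\ref{item:part one of super-critical prop}--\ref{item:part two of super-critical prop} in (Super2) and (Super3), replacing your mode-by-mode accounting with the known behaviour of $g'$ near $1$.
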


We start by proving Theorem~\ref{thm:supercritical integral} given the proposition. We require also the following well-known Hankel integral \cite[Theorem 8.4b]{henrici1991applied}:
For $\beta \in (0,\pi )$, define the contour  $\gamma '=\gamma '(\beta )=\gamma '_1+\gamma '_2+\gamma '_3$ by
\begin{equation}\label{eq:def of gamma '}
\begin{split}
&\gamma '_1(t):=-te^{-i\beta }, \quad t\in \left(-\infty,-1\right],\\
&\gamma '_2(t):=e^{-it }, \quad t\in \left[\beta ,2\pi -\beta \right],\\
&\gamma '_3(t):=te^{i\beta }, \quad t\in \left[1,\infty\right),
\end{split}
\end{equation}
see Figure~\ref{fig_gamma '}.
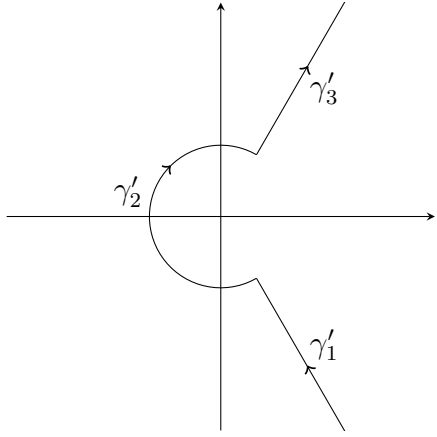
\begin{figure}
\begin{tikzpicture}
\begin{axis}[ticks=none,axis lines=middle,xmin=-3,xmax=3,ymin=-3,ymax=3,axis equal image]
\addplot[domain=0.5:3]{tan(60)*x};
\addplot[domain=0.5:3]{-tan(60)*x};
\draw [samples=50, domain=60:300] plot({cos(\x)},{sin(\x)});
\node[text width=0cm] at (1.25,-1.8)
{$\gamma _1' $};
\node[text width=0cm] at (-1.5,0.35)
{$\gamma _2' $};
\node[text width=0cm] at (1.25,1.8)
{$\gamma _3' $};
\draw[->,thick] (1.217,2.1) -- (1.217+1/200,2.1+3^0.5/200) node [pos=0.66,above]{} ;
\draw[->,thick] (1.217,-2.1) -- (1.217-1/200,-2.1+3^0.5/200) node [pos=0.66,above]{} ;
\draw[->,thick] (-2^0.5/2,2^0.5/2) -- (-2^0.5/2+1/200,2^0.5/2+1/200) node [pos=0.66,above]{} ;
\end{axis}
\end{tikzpicture}
\caption{The contour $\gamma '(\frac{\pi}{3})$.}
\label{fig_gamma '}
\end{figure}
Then, if $\beta \in (0,\frac{\pi }{2})$, we have
\begin{equation}\label{eq:Hankel integral}
\frac{1}{2\pi i}\intop _{\gamma '} (-\omega )^{-s} e^{-\omega } d\omega=\frac{1}{\Gamma (s)},\quad s\in \C.
\end{equation}

\begin{proof}[Proof of Theorem~\ref{thm:supercritical integral}]
Fix $0<\epsilon <\frac{1-\tau }{2}$ where $\tau $ is given in \eqref{eq:def of tau}. Let $\eta $ and $N_0$ be the numbers and $(t_N)$ be the sequence from Proposition~\ref{prop:super-critical prop} corresponding to $\epsilon $. Set $\gamma :=\gamma \left(\eta ,\frac{\pi}{3},N\right)$ where $N\ge N_0$ is taken large enough for the following calculations. We assume throughout this proof that $0\le j\le (1-\tau -2\epsilon )N$.

By the Cauchy integral formula
\[\left[z^{N-j}\right]e^{G_L(z)}=\frac{1}{2\pi i}\ointop _\gamma \frac{e^{G_L(z)}}{z^{N+1-j}}dz=I_1+I_2+I_3,\]
where $I_1$, $I_2$ and $I_3$ are the corresponding integrals over $\gamma \cap \{|1-z|\le \frac{t_N}{N} \}$, $\left(\gamma _1\cup \gamma _3 \right)\cap \{|1-z|\ge \frac{t_N}{N}\}$ and $\gamma _4$ respectively.
We estimate separately the three parts.

We start with $I_1$. The following holds as $N\to\infty$, uniformly in $0\le j\le (1-\tau -2\epsilon )N$ and $z\in \Delta_0 \cap \{\left|1-z\right|\le \frac{t_N}{N}\}$. We have
\begin{equation*}
z^{-\left(N+1-j\right)}=\exp\left[-\left(N+1-j\right)\log z\right]=\exp \left[-\left(N-j\right)\left(z-1\right)+o(1)\right],
\end{equation*}
where we used that $t_N=o(\sqrt{N})$.
Substituting this and part~\ref{item:part one of super-critical prop} of Proposition~\ref{prop:super-critical prop} in $I_1$ we obtain that
\begin{equation}\label{eq:main contribution}
I_1=\frac{e^{F_L(1)}}{2\pi i}\intop _{\gamma \cap \{|1-z|\le \frac{t_N}{N} \} } \left(1-z\right)^{-\theta }\exp\left[\left(\tau N-N+j\right)\left(z-1\right)+o(1)\right]dz,
\end{equation}
We make the change of variables $z=1+\frac{\omega }{N(1-\tau)-j}$, so that $(\tau N-N+j)(z-1)=-\omega $, to obtain
\[I_1= \frac{e^{F_L(1)}\left(N(1-\tau) -j\right)^{\theta -1}}{2\pi i}\intop _{\tilde{\gamma } }\left(-\omega \right)^{-\theta} e^{-\omega+o(1) }d\omega, ,\]
where $\tilde{\gamma }$ is the image of $\gamma \cap \{|1-z|\le \frac{t_N}{N} \}$ under the change of variables, which is a modification of the $\gamma '(\frac{\pi }{3})$ from \eqref{eq:def of gamma '} having the circular arc at radius $1-\tau -\frac{j}{N}\ge 2\epsilon $ and having finite `arms', terminating at radius $t_N(1-\tau -\frac{j}{N})
\ge 2\epsilon t_N\to \infty $. As the integral in \eqref{eq:Hankel integral} with $s=\theta$ and $\beta =\frac{\pi }{3}$ converge to a non-zero quantity, we conclude that
\begin{equation}\label{eq:expression}
I_1\sim \frac{e^{F_L(1)}N^{\theta-1 }\left(1-\tau -\frac{j}{N}\right)^{\theta -1}}{\Gamma \left(\theta \right)}.
\end{equation}

We turn to bound $I_2$. A Taylor expansion shows that
\begin{equation}\label{eq:expanding log}
\left|\real (\log z)-\real (z-1)+\frac{1}{2}\real[(z-1)^2]\right|\le \frac{1}{8}|z-1|^3,\quad |z-1|\le \frac{1}{10}.
\end{equation}
Thus, for $z\in \gamma _1\cup \gamma _3$,
\[\real \left(\log z\right)\ge \real z-1>0.\]
Thus, for such $z$ and using our assumption that $0\le j\le (1-\tau -2\epsilon )N$,
\[\left|z^{-\left(N-j+1\right)}\right|\le    \exp \left[-\left(\tau +2\epsilon \right)N\left(\real z-1\right)\right].\]
Substituting this and part~\ref{item:part two of super-critical prop} of Proposition~\ref{prop:super-critical prop} in $I_2$ we obtain that
\begin{equation}\label{eq:bound I_2 in supercritical}
\begin{split}
|I_2|\le e^{F_L(1)}\!\!\! \!\!\! \intop_{(\gamma _1\cup \gamma _3)\cap \{|1-z|\ge \frac{t_N}{N}\}} \!\!\!\!\! &\left|1-z\right|^{-\theta} \exp \left[-\epsilon \cdot N(\real z-1)\right]|dz| \\
&\le e^{F_L(1)}N^{\theta -1}\intop _{\gamma '\cap \{\left|\omega \right|\ge t_N\}}\left|\omega \right|^{-\theta }e^{-\epsilon \real \omega }=o(I_1),\quad N\to \infty,
\end{split}
\end{equation}
where in the second inequality we made the change of variables $z=1+\frac{\omega }{N}$ and used the contour $\gamma '$ defined in \eqref{eq:def of gamma '} and the final estimate follows from \eqref{eq:expression} and as the integral in \eqref{eq:bound I_2 in supercritical} tends to $0$. We bound $I_3$ using part~\ref{item:part three of super-critical prop} of Proposition~\ref{prop:super-critical prop} by
\[|I_3|\le e^{F_L(1)}\intop_{\gamma _4} |z|^{(\tau +\epsilon-1)N+j-1 }|dz|\le e^{F_L(1)}\intop_{\gamma _4} |z|^{-\epsilon N}|dz|\le 2\pi e^{F_L(1)}R_1^{-\epsilon N} ,\]
which is exponentially smaller than $I_1$ by \eqref{eq:expression}.
\end{proof}

The rest of Section~\ref{sec:singularity} is devoted to the proof of Proposition~\ref{prop:super-critical prop}.

\subsubsection{Near $z=1$} In this section we prove parts~\ref{item:part one of super-critical prop} and \ref{item:part two of super-critical prop} of Proposition~\ref{prop:super-critical prop}.

We first note that, by the definition \eqref{eq:def of F} of $F_L$,
\begin{equation}\label{eq:G=log +F}
G_L(z)=-\theta \log \left(1-z\right)+F_L(z)=-\theta \log \left(1-z\right)+F_L(1)+\intop_1^zF_L'(w)dw,\quad z\in \Delta _0.
\end{equation}
This representation will allow us to prove parts~\ref{item:part one of super-critical prop} and \ref{item:part two of super-critical prop} by suitably estimating $F_L'$. We start by writing $F_L'$ as a series, using again the definition \eqref{eq:def of F},
\begin{equation}\label{eq:F'}
F_L'(z)=\theta \sum_{m\in \frac{1}{L}\Z ^d \setminus \{0\}}\frac{\hat{\varphi}(m)}{1-z\hat{\varphi}(m)},\quad z\in \overline{\Delta _0}.
\end{equation}
We start with the following bound on $F_L'(z)$,
\begin{equation}\label{eq:bound F'}
\begin{split}
\left|F_L'(z)\right|&\le \theta \sum_{\substack{m\in \frac{1}{L}\Z^d  \\ 0<\|m\|\le 1 }}\left|\frac{\hat{\varphi}(m)}{1-z\hat{\varphi}(m)}\right|+\theta \sum_{\substack{m\in \frac{1}{L}\Z^d  \\ \|m\|> 1 }}\left|\frac{\hat{\varphi}(m)}{1-z\hat{\varphi}(m)}\right| \\
&\le C\sum_{\substack{m\in \frac{1}{L}\Z^d  \\ 0<\|m\|\le 1 }}\frac{1}{\|m\|^2}+C\sum_{m\in \frac{1}{L}\Z^d  }\left|\hat{\varphi}(m)\right|\le CL^d\intop _{\frac{1}{L}\le \|t\|\le 1 }\frac{1}{\|t\|^2}+CL^d,
\end{split}
\end{equation}
where in the second inequality we used Claim~\ref{claim:1-z phi}. Now it is relatively straightforward to deduce parts~\ref{item:part one of super-critical prop} and \ref{item:part two of super-critical prop} of Proposition~\ref{prop:super-critical prop} in the cases where $\tau=0$. Indeed, the last expression in (Super1) is $O\left(L^2\right)=O\left(\frac{N^2}{\rho ^2}\right)=o(N)$ and in (Hyper2) it is $O\left(L^2\log L\right)=O\left(\frac{N\log N}{\rho }\right)=o(N)$ as $N\to \infty$.
Therefore, by \eqref{eq:G=log +F}, we get that in both these cases,
\[G_L(z)=-\theta \log \left(1-z\right)+F_L(1)+o(N)\cdot |1-z|,\quad N\to \infty,\]
uniformly in $z\in \Delta _0$. This implies parts~\ref{item:part one of super-critical prop} and \ref{item:part two of super-critical prop} of the proposition in these cases (the fact that the $o(N)$ term is uniform in $z\in \Delta _0$ implies the existence of a sequence $t_N\to \infty$, $t_N=o(\sqrt{N})$ satisfying that $\frac{t_N}{N}$ times the $o(N)$ still tends to $0$).

We turn to prove parts~\ref{item:part one of super-critical prop} and \ref{item:part two of super-critical prop} in the cases (Super2) and (Super3). In these asymptotic regimes, it turns out that $F_L'$ is well-approximated by $L^d g'$, as the next lemma makes precise.
\begin{lemma}\label{lem:sum to integral}
There exists a $C>0$ so that for any $L\ge 1 $ and $z\in \Delta _0$ the following holds:
\begin{enumerate}[label=(\roman{*})]
\item
If $d=2$ then
\begin{equation}\label{eq:d=2 F'-g'}
\left|F_L'(z)-L^2g'(z)\right|\le CL^2 \max \left\{ 1,\log \left(\frac{L^{-2}}{|1-z|}\right) \right\}.
\end{equation}
\item
If $d\ge 3$ then
\begin{equation}\label{eq:d=3 F'-g'}
\left|F_L'(z)-L^dg'(z)\right|\le C\left(L^{d-1}+L^2\log L\right).
\end{equation}
\end{enumerate}
\end{lemma}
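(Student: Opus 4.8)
The plan is to recognise $F_L'(z)-L^dg'(z)$ as the error of a Riemann-sum approximation and to estimate it scale by scale. Writing $h(t):=\hat\varphi(t)/(1-z\hat\varphi(t))$, formulas \eqref{eq:F'} and \eqref{eq:analytic continuation for g'} (the latter with $n=1$) give
\[
F_L'(z)-L^dg'(z)=\theta\Bigl(\sum_{m\in\frac1L\Z^d\setminus\{0\}}h(m)-L^d\int_{\R^d}h(t)\,dt\Bigr).
\]
For $m\in\frac1L\Z^d$ set $Q_m:=m+[-\tfrac1{2L},\tfrac1{2L})^d$, a cube of volume $L^{-d}$; then $\R^d=\bigsqcup_mQ_m$, $\sum_{m\ne0}L^d\int_{Q_m}h=L^d\int_{\R^d\setminus Q_0}h$, and hence
\[
\tfrac1\theta\bigl(F_L'(z)-L^dg'(z)\bigr)=-L^d\int_{Q_0}h(t)\,dt+\sum_{m\ne0}E_m,\qquad E_m:=h(m)-L^d\int_{Q_m}h(t)\,dt.
\]
For $L$ in any fixed compact interval the estimate is elementary (both sides are then bounded in terms of that interval and of $d,\varphi,\theta$), so I assume throughout that $L$ is as large as convenient.

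The pointwise inputs I would use come from Claim~\ref{claim:1-z phi} together with the Taylor expansion \eqref{eq:taylor of phi}: there is a fixed $r_0\in(0,1)$ with $|1-z\hat\varphi(t)|\ge c(\|t\|^2+|1-z|)$ for all $z\in\overline{\Delta_0}$, $\|t\|\le r_0$, while $|1-z\hat\varphi(t)|\ge c$ for $\|t\|\ge r_0$. Combined with $|\hat\varphi|\le1$, $|\nabla\hat\varphi(t)|\le C\|t\|$ for $\|t\|\le1$, and $\nabla h=\nabla\hat\varphi\cdot(1-z\hat\varphi)^{-2}$, this yields, writing $a:=|1-z|$,
\[
|h(t)|\le\frac{C}{\|t\|^2+a},\qquad |\nabla h(t)|\le\frac{C\|t\|}{(\|t\|^2+a)^2}\qquad(\|t\|\le r_0),
\]
and $|h(t)|\le C|\hat\varphi(t)|$, $|\nabla h(t)|\le C|\nabla\hat\varphi(t)|$ for $\|t\|\ge r_0$, the functions $|\hat\varphi|$ and $|\nabla\hat\varphi|$ being Schwartz.

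Now I would estimate the two pieces. For $Q_0$: since $Q_0\subseteq\{\|t\|\le r_0\}$ once $L$ is large, $\bigl|L^d\int_{Q_0}h\bigr|\le CL^d\int_0^{c/L}r^{d-1}(r^2+a)^{-1}\,dr$, which is $\le C_dL^2$ when $d\ge3$ (use $r^{d-1}/(r^2+a)\le r^{d-3}$) and equals $\tfrac C2L^2\log\bigl(1+\tfrac{c}{L^2a}\bigr)\le CL^2\max\{1,\log(L^{-2}/a)\}$ when $d=2$ — this is where the logarithm of the statement is born. For the $E_m$'s, the mean value theorem gives $|E_m|\le\tfrac{\sqrt d}{2L}\sup_{Q_m}|\nabla h|$; since $\|t\|\asymp_d\|m\|$ on $Q_m$ for $m\ne0$, the gradient bound gives $\sup_{Q_m}|\nabla h|\le C\|m\|(\|m\|^2+a)^{-2}\le C\|m\|^{-3}$ for $0<\|m\|\le r_0/C_d$, $\sup_{Q_m}|\nabla h|\le C$ for $r_0/C_d\le\|m\|\le2r_0$, and $\sup_{Q_m}|\nabla h|\le C|\nabla\hat\varphi(m)|$ (up to a harmless shift of $m$) for $\|m\|>2r_0$. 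Summing: the middle range has $O(L^d)$ members and contributes $\le\frac CL\cdot CL^d=CL^{d-1}$; the tail contributes $\le\frac CL\sum_{m\in\frac1L\Z^d}|\nabla\hat\varphi(m)|\le\frac CL\cdot CL^d=CL^{d-1}$ (a Riemann sum of a Schwartz function); and the near-origin range contributes $\le\frac CL\sum_{0<\|m\|\le r_0/C_d}\|m\|^{-3}$, where a dyadic-shell count (the shell $\|m\|\asymp2^{-k}$ carries $\asymp(2^{-k}L)^d$ lattice points, each of size $\asymp2^{3k}$) bounds the sum by $CL^3$ if $d=2$, $CL^3\log L$ if $d=3$, and $CL^d$ if $d\ge4$, so this range contributes $\le CL^2$, $CL^2\log L$, $CL^{d-1}$ respectively. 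Adding the $Q_0$ term and these three contributions yields $|F_L'(z)-L^dg'(z)|\le CL^2\max\{1,\log(L^{-2}/|1-z|)\}$ for $d=2$ and $\le C(L^{d-1}+L^2\log L)$ for $d\ge3$.

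The only point I expect to require genuine care is the interaction between the mesh $1/L$ and the length scale $\sqrt{|1-z|}$ on which $h$ varies near the origin: for a function with bounded derivatives $\sum_{m\ne0}E_m$ would be $O(1/L)$, but the spike of $|\nabla h|$ near $0$ inflates it, and one must verify the inflation never exceeds the asserted bound. The dyadic count above does exactly this; the worst lattice points are those with $\|m\|$ of order $\max(1/L,\sqrt{|1-z|})$ (responsible for the logarithmic factor) and those with $\|m\|$ of order $r_0$ (responsible for the $L^{d-1}$ factor). Everything else is routine bookkeeping, the only structural inputs beyond Claim~\ref{claim:1-z phi} being that a Riemann sum over $\frac1L\Z^d$ of a Schwartz function is $O(L^d)$ and that $\|t\|\asymp_d\|m\|$ on $Q_m$ for $m\ne0$.
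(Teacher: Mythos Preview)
Your proposal is correct and follows essentially the same route as the paper: both recognise $F_L'(z)-L^dg'(z)$ as a Riemann-sum error, split off the central cube $Q_0$ (which produces the logarithm in $d=2$ and an $O(L^2)$ term in $d\ge3$), and bound the remaining increments $h(m)-L^d\int_{Q_m}h$ scale by scale. The only cosmetic difference is that the paper bounds each increment via the algebraic identity
\[
\frac{\hat\varphi(m)}{1-z\hat\varphi(m)}-\frac{\hat\varphi(t)}{1-z\hat\varphi(t)}=\frac{\hat\varphi(m)-\hat\varphi(t)}{(1-z\hat\varphi(m))(1-z\hat\varphi(t))}
\]
together with $|\hat\varphi(m)-\hat\varphi(t)|\le CL^{-1}\min\{\|t\|,\|t\|^{-d-1}\}$, whereas you use the mean value theorem on $h$ and the formula $\nabla h=\nabla\hat\varphi\,(1-z\hat\varphi)^{-2}$; both lead to the same pointwise bound $CL^{-1}\|m\|(\|m\|^2+a)^{-2}$ near the origin and the resulting case split $d=2$, $d=3$, $d\ge4$ for $\sum_{0<\|m\|\lesssim1}\|m\|^{-3}$ is identical.
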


\begin{proof}
We approximate the Riemann sum $L^{-d}F_L'(z)$ by the corresponding integral (see \eqref{eq:F'} and \eqref{eq:analytic continuation for g'}). Decomposing the domain of integration to cubes around each $m\in \frac{1}{L}\Z ^d \setminus \{0\}$ we obtain
\begin{equation}\label{eq:each term F' g'}
\begin{split}
\left|F_L'(z)-L^dg'(z)\right| &= \left|\theta \!\! \sum_{m\in \frac{1}{L}\Z ^d \setminus \{0\}}\frac{\hat{\varphi}(m)}{1-z\hat{\varphi}(m)}-\theta L^d\intop _{\R ^d}\frac{\hat{\varphi}(t)}{1-z\hat{\varphi}(t)}dt\right|  \\
\le & \theta L^d\intop _{Q_L}\frac{|\hat{\varphi}(t)|}{|1-z\hat{\varphi}(t)|}dt+\theta L^d \!\! \sum_{\substack{m\in \frac{1}{L}\Z ^d \\ m\neq 0}} \ \intop _{m+Q_L}\!\! \frac{\left|\hat{\varphi}(m)-\hat{\varphi}(t)\right|}{\left|1-z\hat{\varphi}(m)\right|\cdot \left|1-z\hat{\varphi}(t)\right|}dt,
\end{split}
\end{equation}
where we set $Q_L:=\left[-(2L)^{-1}, (2L)^{-1} \right]^d\subseteq \R ^d$. We bound each term in \eqref{eq:each term F' g'} separately and start with the second one. For any $m\in \frac{1}{L}\Z ^d \setminus \{0\}$ and $t\in m+Q_L$ we have
\begin{equation}
\left|\hat{\varphi}(m)-\hat{\varphi}(t)\right|\le CL^{-1} \max _{x\in m+Q_L} \max _i \left|\frac{\partial \hat{\varphi}}{\partial t_i}(x)\right| \le CL^{-1}  \min \{ \|t\|, \|t\|^{-d-1} \},
\end{equation}
where the second inequality follows as $\frac{\partial \hat{\varphi}}{\partial t_i}$ decays fast, $\frac{\partial \hat{\varphi}}{\partial t_i}(0)=0$ and $\frac{\partial ^2 \hat{\varphi}}{\partial t_i^2}$ is bounded. Thus, by part~\ref{item:part 1 of claim 1-z phi} of Claim~\ref{claim:1-z phi} we obtain
\begin{equation*}
\begin{split}
L^d \!\! \sum_{\substack{m\in \frac{1}{L}\Z ^d \\ m\neq 0}} \ \intop _{m+Q_L}\!\! \frac{\left|\hat{\varphi}(m)-\hat{\varphi}(t)\right|}{\left|1-z\hat{\varphi}(m)\right|\cdot \left|1-z\hat{\varphi}(t)\right|}dt &\le CL^{d-1}\!\!\! \intop _{L^{-1}\le \|t\|\le 1} \!\! \|t\|^{-3}dt+CL^{d-1}\intop _{ \|t\|\ge 1} \|t\|^{-d-1}dt \\
&\le CL^{d-1} \intop _{L^{-1}}^{1} r^{d-4}dr+ CL^{d-1 }
\end{split}
\end{equation*}
and we check that the last expression is bounded by the right-hand side of \eqref{eq:d=2 F'-g'} when $d=2$ and by the right-hand side of \eqref{eq:d=3 F'-g'} when $d\ge 3$.

Next, we bound the first term on the right-hand side of \eqref{eq:each term F' g'}, again using Claim~\ref{claim:1-z phi} and using also that $|\hat{\varphi}(t)|\le 1$. When $d= 2$,
\[\intop _{Q_L}\frac{|\hat{\varphi}(t)|}{|1-z\hat{\varphi}(t)|}dt\le C\!\! \intop _{\|t\|\le CL^{-1}}\frac{dt}{|1-z|+\|t\|^2}\le C \intop _0^{CL^{-2}} \frac{dr}{|1-z|+r} \le C\log \left(\frac{|1-z|+CL^{-2}}{|1-z|}\right).\]
When $d\ge 3$,
\[\intop _{Q_L}\frac{|\hat{\varphi}(t)|}{|1-z\hat{\varphi}(t)|}dt\le C\intop _{\|t\|\le CL^{-1}}\frac{1}{\|t\|^2}dt\le C \intop _0^{CL^{-1}}r^{d-3}dr\le \frac{C}{L^{d-2}}.\]
Substituting these bounds in \eqref{eq:each term F' g'} finishes the proof of the lemma.
\end{proof}

We proceed to deduce parts (i) and (ii) of Proposition~\ref{prop:super-critical prop} for the cases (Super3) and (Super2). Consider first the case (Super3). By the definitions of $\tau, g$ and $\rho $ in \eqref{eq:def of tau},\eqref{eq:g_Taylor_expansion} and \eqref{eq:def of rho} respectively, we have that $\tau N=L^dg'(1)$. Thus, using Lemma~\ref{lem:sum to integral} and Lemma~\ref{lem:asymptotics of g at 1} we get that for any $z\in \Delta _0$,
\begin{equation*}
\begin{split}
\left| \intop _1^z F_L'(w)dw -\tau N(z-1) \right|&\le \left| \intop _1^z F_L'(w)dw -\intop _1^z L^dg'(1)dw \right| \\
&\le \intop _1^z |F_L'(w)-L^dg'(w)|\cdot |dw| +L^d\intop _1^z |g'(w)-g'(1)|\cdot |dw| \\
&\le CL^{d-1}\log L \cdot |z-1|+CL^d |z-1|^{\frac{3}{2}}.
\end{split}
\end{equation*}
Parts~\ref{item:part one of super-critical prop} and \ref{item:part two of super-critical prop} of the proposition follows in case (Super3) by substituting the last result in \eqref{eq:G=log +F} and taking $t_N=\sqrt{L}$ and sufficiently small $\eta >0$.

Finally, consider the case (Super2). We estimate the integral on the right-hand side of \eqref{eq:G=log +F} as follows. Let $z\in \Delta _0$ with $|1-z|\ge N^{-1}$ and let $x\in \Delta _0$ be the unique point on the line segment connecting $1$ and $z$ satisfying $|1-x|=(N\log N)^{-1}$. We have,
\begin{equation*}
\begin{split}
&\left|\intop _1^z \! F_L'(w)dw + \alpha _c L^2 (z-1)\log\left(1-z\right) \right| \le \left|\intop _1^z \! F_L'(w)dw + \alpha _c L^2 \!\! \intop_1^z \! \log\left(1-w\right)dw \right|+CL^2|1-z| \\
&\le \! \intop _1^x |F_L'(w)||dw|+CL^2\! \intop _1^x \! |\log (1-w)||dw|
+\! \intop_x^z \! |F_L'(w)+\alpha _c L^2 \log (1-w)||dw|+CL^2|1-z|,
\end{split}
\end{equation*}
where $\alpha _c$ is defined in \eqref{eq:def of alpha _c }. We bound the first and the third integrals separately. To bound the first integral, note that for any $w\in \Delta _0$, we have by \eqref{eq:bound F'}, $|F_L'(w)| \le CL^2\log L\le CN$. To bound the third integral observe that by Lemma~\ref{lem:sum to integral} and Lemma~\ref{lem:asymptotics of g at 1}, for $w\in \Delta _0$ with $|1-w|\ge (N\log N)^{-1}$ and large enough $N$, we have
\begin{equation*}
|F_L'(w)+\alpha _cL^2 \log (1-w)|\le |F_L'(w)-L^2g'(w)|+L^2|g'(w)+\alpha _c \log (1-w)|\le \frac{N}{\sqrt{\log N}}.
\end{equation*}
Therefore, for $z\in \Delta _0$ with $|1-z|\ge N^{-1}$ and sufficiently large $N$,
\begin{equation}
\left|\intop _1^z \! F_L'(w)dw + \alpha _c L^2 (z-1)\log\left(1-z\right) \right| \le \frac{N}{\sqrt{\log N}}|1-z|.
\end{equation}
Part~\ref{item:part one of super-critical prop} of the proposition follows since, as $N\to \infty$,
\[-\alpha _c L^2 (z-1)\log\left(1-z\right) = \alpha _cL^2\log N(z-1)+ o(N)|1-z|= \tau N(z-1)+o(N)|1-z|,\]
uniformly in $z\in \Delta _0$ with $\frac{1}{N}\le |1-z|\le \frac{\log N}{N}$, where in the last asymptotic equality we used that $\frac{\rho }{\log N}\to \alpha $ as $N\to \infty$ and that $\tau =\frac{\alpha_c }{\alpha } $. Part~\ref{item:part two of super-critical prop} of the proposition follows as, for large enough $N$ and $z\in \gamma _1\cup \gamma _2$,
\[\real \left(-\alpha _c L^2 (z-1)\log\left(1-z\right)\right)\le \alpha _c L^2 \log N \real (z-1) +CL^2|1-z|\le (\tau +\epsilon )N\real (z-1).\]

\subsubsection{Far from $z=1$} In this section we prove the third part of Proposition~\ref{prop:super-critical prop}.

We start with dimensions $1$ and $2$ whose treatment is relatively straightforward. We claim that
\begin{equation}\label{eq:third_part_stronger_estimate_dim_1_2}
|F_L(z)|\le CL^d,\quad z\in \overline{\Delta _0}.
\end{equation}
This implies part (iii) of the proposition in cases (Super1), (Super2) and (Hyper2) as, for any $z\in \gamma _4$,
\[\real G_L(z)\le \left|G_L(z)\right|=\left|-\theta \log \left(1-z\right)+F_L(z)\right|\le C L^d\le F_L(1)+C L^d \le F_L(1)+\epsilon N\log |z|,\]
where the last inequality holds for large enough $N$ since $L^d=o(N)$ in these cases.

To see \eqref{eq:third_part_stronger_estimate_dim_1_2}, fix $C_0>0$ such that
\[|\hat{\varphi}(t)|\le \frac{C_0}{R_0 \|t\|^{d+1}},\quad t\in \R ^d,\]
where $R_0$ is the radius from the definition of $\Delta _0$ in \eqref{eq:Delta_0_def}. We have, for $z\in \overline{\Delta _0}$,
\begin{equation*}
\left|F_L(z)\right| \le \sum _{m\in \frac{1}{L}\Z ^d\setminus\{0\}}\left| \log \left(1-z\hat {\varphi}(m)\right)\right|\le C \sum _{\substack{m\in \frac{1}{L}\Z ^d \\ 0<\|m\|\le 2C_0 }}(1+\left|\log (\|m\|)\right|)+\sum _{\substack{m\in \frac{1}{L}\Z ^d \\ \|m\|\ge 2C_0 }} \frac{C}{\|m\|^{d+1}},
\end{equation*}
where in the second inequality we used that $c\|m\|^2\le |1-z\hat{\varphi}(m)|\le C$ for $\|m\|\le 2C_0$ by Claim~\ref{claim:1-z phi} and that $|\log (1+x)|\le C|x|$ for $|x|\le \frac{1}{2}$. One can easily check that the last expression is at most $CL^d$
which finishes the proof of part (iii) of Proposition~\ref{prop:super-critical prop} in the cases (Super1), (Super2) and (Hyper2).

We turn to prove part (iii) of Proposition~\ref{prop:super-critical prop} in the remaining case (Super3). We start with the following lemma which bounds the real part of the function $g$.

\begin{lemma}\label{lem:bound on Re g(z)}
Let $d\ge 3$. There is an $\eta >0$ such that for any $z\in \gamma_4=\gamma _4\left(\eta ,\frac{\pi}{3}\right)$ we have
\begin{equation}\label{eq:Re g(z)<g(1)+g'(1)+log|z|}
\real g(z)\le g(1)+g'(1)\log |z|=g(1)+g'(1)\log R_1.
\end{equation}
\end{lemma}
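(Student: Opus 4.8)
The plan is as follows. Write $g(z)=\sum_{j\ge1}a_jz^j$ with non‑negative coefficients $a_j:=\theta\varphi^{*j}(0)/j$; by Lemma~\ref{lem:asymptotics of kappa _j} one has $a_j>0$ for all $j\ge j_0$ (some fixed $j_0$), and since $d\ge3$, Lemma~\ref{lem:asymptotics of g at 1} (with $n=1$) shows $g$ and $g'$ extend continuously to the compact set $\overline{\Delta_0}$, so $g(1)$ and $g'(1)=\rho_c>0$ are finite. As $g$ has real coefficients, $\real g(\bar z)=\real g(z)$, so it suffices to treat $z\in\gamma_4$ with $\imag z\ge0$. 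I fix a small $\delta_0>0$ first and then take $\eta>0$ small. For small $\eta$ one has $R_1<R_0$, and since the point of $\gamma_4$ nearest $1$ is $1+\eta e^{i\pi/3}$ while $\tfrac{d}{dt}\arg(R_1e^{it}-1)=\real\frac{R_1e^{it}}{R_1e^{it}-1}>0$, one gets $\arg(z-1)\in[\tfrac\pi3,\pi]$ on $\gamma_4$; in particular $\gamma_4\subseteq\Delta_0$ and the segment $[1,z]$ lies in $\overline{\Delta_0}$. Two easy facts will be used: (a) $\real(z-1)\le\log|z|$ on $\gamma_4$ for small $\eta$, since $\real(z-1)=R_1\cos t-1$ is maximal at $t=\beta'$ where it equals $\eta/2$, while $\log|z|=\tfrac12\log(1+\eta+\eta^2)\ge\eta/2$; and (b) $\real g(e^{it})=\sum_ja_j\cos(jt)=g(1)$ forces $\cos(j_0t)=\cos((j_0+1)t)=1$, hence $t\in2\pi\Z$, so $\real g<g(1)$ on $\{|w|=1\}\setminus\{1\}$ and, by compactness, $\max\{\real g(w):|w|=1,\ |w-1|\ge\delta_0/2\}=g(1)-m_0$ with $m_0>0$.

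For $z\in\gamma_4$ with $|z-1|\ge\delta_0$, I would write $z=R_1e^{it}$ and compare with $z':=e^{it}$: then $|z-z'|=R_1-1$, and $|z'-1|\ge|z-1|-(R_1-1)\ge\delta_0/2$ for $\eta$ small, so $z'$ lies in the set of (b) and $z'\in\Delta_0$. Using uniform continuity of $g$ on $\overline{\Delta_0}$ and $\eta$ small, $\real g(z)\le\real g(z')+o(1)\le g(1)-m_0+o(1)<g(1)\le g(1)+g'(1)\log|z|$.

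The heart of the matter is $z\in\gamma_4$ with $|z-1|\le\delta_0$, where I would prove $\real\big[g(z)-g(1)-g'(1)(z-1)\big]\le0$; combined with (a) and $g'(1)>0$ this gives $\real g(z)\le g(1)+g'(1)\real(z-1)\le g(1)+g'(1)\log|z|$. Since $g''$ is integrable near $1$ and $[1,z]\subseteq\overline{\Delta_0}$, Taylor's formula gives $g(z)-g(1)-g'(1)(z-1)=\int_1^z(z-v)g''(v)\,dv$. Inserting the behaviour of $g''$ near $1$ from Lemma~\ref{lem:asymptotics of g at 1} with $n=2$: for $d\ge5$, $g''(v)=g''(1)+o(1)$ with $g''(1)=\theta\sum_j(j-1)\varphi^{*j}(0)>0$, so the integral is $\tfrac12g''(1)(z-1)^2+o(|z-1|^2)$; for $d=4$, $g''(v)=c_4\log\tfrac1{1-v}+O(1)$ with $c_4>0$, so it is $\tfrac{c_4}2(z-1)^2\log\tfrac1{1-z}+O(|z-1|^2)$; for $d=3$, $g''(v)=c_3(1-v)^{-1/2}+O(|\log(1-v)|)$ with $c_3>0$, and the elementary identity $\int_0^1(1-s)s^{-1/2}\,ds=\tfrac43$ gives $\tfrac{4c_3}3(1-z)^{3/2}+O(|z-1|^2\log\tfrac1{|z-1|})$ — in each case the error term being of strictly smaller order than the main term. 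Finally, on $\gamma_4$ one has $\cos(\arg(z-1))=\tfrac{\real(z-1)}{|z-1|}=\tfrac{\eta+\eta^2-|z-1|^2}{2|z-1|}\ge-\tfrac{|z-1|}2\ge-\tfrac{\delta_0}2$, so $\arg(z-1)\in[\tfrac\pi3,\tfrac{2\pi}3)$ once $\delta_0$ is small; hence $2\arg(z-1)$ and $\tfrac32\arg(1-z)$ lie in arcs on which $\cos\le-\tfrac12$, giving $\real[(z-1)^2]\le-\tfrac12|z-1|^2$, $\real[(1-z)^{3/2}]\le-\tfrac12|z-1|^{3/2}$ and $\real[(1-z)^2\log\tfrac1{1-z}]\le-\tfrac12|z-1|^2\log\tfrac1{|z-1|}+O(|z-1|^2)$. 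Multiplying by the positive constants $\tfrac12g''(1)$, $\tfrac{4c_3}3$, $\tfrac{c_4}2$, the main term has negative real part and dominates the error, yielding the claim.

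Choosing $\delta_0$ small enough for the third step and then $\eta$ small enough for the first two (and for $\gamma_4\subseteq\Delta_0$) finishes the proof. I expect the main obstacle to be precisely this last step in dimensions $d=3,4$: there $g''$ is unbounded at $z=1$, and one must track the exact phase of the leading singular contribution to $\int_1^z(z-v)g''(v)\,dv$ to see that it is negative on the ``arms'' of the contour — which is where the opening half‑angle $\pi/3$ (larger than the $\pi/8$ used to define $\Delta_0$) is used.
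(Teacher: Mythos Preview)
Your argument is correct and follows essentially the same route as the paper: split $\gamma_4$ into a neighbourhood of $1$ and the complementary arc, treat the former by the Taylor expansion of $g$ at $1$ (distinguishing $d=3$, $d=4$, $d\ge5$ via the asymptotics of $g''$) combined with $\real(z-1)\le\log|z|$, and handle the latter using that $g$ has nonnegative Taylor coefficients (positive for large $j$) together with compactness/continuity. The only cosmetic differences are that you write the Taylor remainder as an explicit double integral of $g''$ and carry out the phase bounds on $(z-1)^2$, $(1-z)^{3/2}$, $(1-z)^2\log\frac{1}{1-z}$ in more detail, while for the far arc you pull back to the unit circle rather than directly invoking continuity on a slightly enlarged disc; these lead to the same conclusion.
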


\begin{proof}
We use different arguments for $z$ close to $1$ and far from $1$. First we claim that there exists a $\delta >0$ such that the inequality \eqref{eq:Re g(z)<g(1)+g'(1)+log|z|} holds in
\[\Omega _\delta =\left\{z\in \Delta _0 \ \big|\  \left|1-z\right|\le \delta , \ \left|\arg \left(z-1\right)\right|\ge \frac{\pi }{3},\ |z|\ge 1\right\}.\]
For this we expand $g(z)$ around $z=1$. We have
\[g(z)=g(1)+g'(1)(z-1)+\intop_1^z \intop _1^w g''(x)dxdw\]
and therefore, by Lemma~\ref{lem:asymptotics of g at 1}, we have the following asymptotics as $z\to 1$ according to the dimension~$d$,
\[g(z)=g(1)+g'(1)\left(z-1\right)+c_1\left(1-z\right)^{\frac{3}{2}}\left(1+o(1)\right),\quad d=3,\]
\[g(z)=g(1)+g'(1)\left(z-1\right)-c_2\log \left(1-z\right)\left(z-1\right)^2\left(1+o(1)\right),\quad d=4,\]
\[g(z)=g(1)+g'(1)\left(z-1\right)+c_3\left(z-1\right)^2\left(1+o(1)\right),\quad d\ge 5,\]
for some positive constants $c_1,c_2$ and $c_3$. Thus, for sufficiently small $\delta >0$ and $z\in \Omega _\delta $ we have
\[\real g(z)\le g(1)+g'(1)\left(\real z-1\right)\le g(1)+g'(1)\log |z|,\]
where in the second inequality we use that
\[\log |z|=\real \left(\log z\right)= \real z-1-\frac{1}{2}\real \left(z-1\right)^2+O\left(z-1\right)^3,\quad z\in \Delta _0.\]

Fix $\delta >0$ such that \eqref{eq:Re g(z)<g(1)+g'(1)+log|z|} holds in $\Omega _\delta $. As $g$ has non-negative Taylor coefficients (see \eqref{eq:g_Taylor_expansion}), positive with finitely many exceptions by Lemma~\ref{lem:asymptotics of kappa _j}, it follows that for any $z\neq 1$ with $|z|\le 1$,
\begin{equation}\label{eq:Re g(z)<g(1)}
\real g(z)<g(1).
\end{equation}
Thus, by continuity and compactness arguments, there is $R _\delta >1$ such that the inequality \eqref{eq:Re g(z)<g(1)} holds in
\[D  :=\left\{z\in \Delta _0 \ \big| \ |z|\le R_\delta , \ |1-z|\ge \delta \right\}.\]
Now, the fact that for sufficiently small $\eta >0$,
\[\gamma _4=\gamma _4(\eta ,\frac{\pi}{3})\subseteq \Omega _\delta \cup D,\]
completes the proof of the lemma.
\end{proof}

We can now finish the proof of Proposition~\ref{prop:super-critical prop} for the case (Super3). We note that $F_L(0)=G_L(0)=g(0)=0$. By Lemma~\ref{lem:sum to integral}, for large enough $N$ and $z\in \overline{\Delta _0}$, we have
\begin{equation}\label{eq:F_L-L^dg}
\left|F_L(z)-L^dg(z)\right|\le \intop _0^z\left|F_L'(w)-L^dg'(w)\right|\cdot |dw|\le C(L^{d-1}+L^2 \log L)<\frac{\epsilon }{3}\log (R_1) N.
\end{equation}
Thus, recalling that $\tau N=L^dg'(1)$, we get that for large enough $N$ and $z\in \gamma _4$,
\begin{equation}
\begin{split}
\real G_L(z)&\le \real F_L(z)+C_\eta \le L^d \real g(z)+\frac{2\epsilon }{3}\log (R_1)N \\ &\le L^dg(1)+L^dg'(1)\log |z|+\frac{2\epsilon }{3}\log (R_1)N \le F_L(1)+(\tau+\epsilon ) N \log |z|,
\end{split}
\end{equation}
where in the second and fourth inequalities we used \eqref{eq:F_L-L^dg} and in the third one we used Lemma~\ref{lem:bound on Re g(z)}.
This implies the third part of Proposition~\ref{prop:super-critical prop} in the case (Super3) and finishes the proof of all parts of the proposition.

\subsection{Convergence to Poisson-Dirichlet}\label{sec:poisson dirichlet}

In this section we establish the convergence of the cycle length to the Poisson-Dirichlet distribution in the (Super-Critical) cases. Recall that $\tau $ is defined in \eqref{eq:def of tau}. We denote by $\betadist (\alpha,\beta ) $ the beta distribution with shape parameters $\alpha $ and $\beta $ whose density with respect to Lebesgue measure on $[0,1]$ is given by
\[\frac{\Gamma (\alpha +\beta )x^{\alpha -1}\left(1-x\right)^{\beta -1}}{\Gamma (\alpha )\Gamma (\beta )},\quad x\in [0,1].\]

\begin{definition}[Modified stick-breaking process]\label{def:def of X_i}
We define a sequence of random variables $\left(X_1,X_2,\dots\right) $ inductively: Let $Y_1,Y_2,\dots $ be independent random variables with distribution $\betadist \left(1,\theta \right)$ and let $U_1,U_2,\dots $ be independent random variables with distribution $ U\left[0,1\right]$ which are independent of $Y_1,Y_2,\dots $. Set $S_0:=0$ and, inductively for $k\ge 0$,
\begin{align*}
  &A_{k+1}=\left\{U_{k+1}<1-\frac{\tau }{1-S_k}\right\},\quad X_{k+1}:=\left(1-S_k-\tau \right)Y_{k+1}\mathds{1}_{A_{k+1}},\quad S_{k+1}:=\sum _{j=1}^{k+1} X_j.
\end{align*}
\end{definition}

In other words, $X_1$ equals $0$ with probability $\tau $ and is otherwise distributed as a $\betadist \left(1,\theta \right)$ fraction of $1-\tau$. Conditioned on $X_1$, $X_2$ equals $0$ with probability $\frac{\tau }{1-X_1}$ and is otherwise distributed as a $\betadist \left(1,\theta \right)$ fraction of $1 -X_1 -\tau$, etc.

The name of the process arises from the intuitive idea of taking a stick (interval) of length $1$ for which the sub-interval of length $\tau$ in its beginning is deemed the `unbreakable' part. Then, one iteratively samples a point uniformly in the remaining part of the stick and, if the point does not fall in the unbreakable part, one removes (breaks) the part of the stick from the sampled point until the end of the stick. The existence of the unbreakable part is the reason that the process is called a `modified' stick breaking process.

Note that $X_k\in \sigma \left(Y_1,\dots ,Y_k,U_1,\dots ,U_k\right)$ and that $\sum _{k=1}^{\infty}X_k=1-\tau $, a.s.

\begin{claim}\label{cl:rearrangement_PD}
The sequence $\frac{1}{1-\tau }(X_1,X_2,\dots )$ rearranged in decreasing order (i.e. the largest element of the sequence, the second largest element and so on) has the $\PD(\theta)$ distribution.
\end{claim}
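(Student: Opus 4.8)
The plan is to recognize the modified stick-breaking process as an ordinary (Griffiths–Engen–McCloskey) stick-breaking process conditioned to avoid an initial ``unbreakable'' segment, and then invoke the classical fact that GEM stick-breaking with $\betadist(1,\theta)$ factors, rearranged in decreasing order, gives $\PD(\theta)$. First I would observe that the core of Definition~\ref{def:def of X_i} is a standard residual-allocation scheme: if we write $R_k := 1 - S_k$ for the length of the remaining stick, then on the event $A_{k+1}$ the stick is cut so that $R_{k+1} = R_k - (R_k - \tau)Y_{k+1}$, whereas on $A_{k+1}^c$ nothing is removed. The key structural point is that the positions of the cuts on the interval $[\tau, 1]$ (i.e., the points $\tau + (R_k-\tau)Y_{k+1}$, normalized to $[0,1]$ by the affine map $t\mapsto (t-\tau)/(1-\tau)$) are exactly the successive records of an ordinary stick-breaking process, and the rejection steps (when $U_{k+1}\ge 1-\tau/R_k$) simply re-sample the same cut and thus do not change the set of cut locations. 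So the multiset of gaps $\{X_k/(1-\tau): X_k>0\}$ is the same as the multiset of gaps of an honest $\betadist(1,\theta)$ stick-breaking process.

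To make this precise I would argue as follows. Consider an ordinary stick-breaking sequence $\left(\tilde X_1,\tilde X_2,\dots\right)$ defined by $\tilde X_k := \tilde Y_k \prod_{i<k}(1-\tilde Y_i)$ with $\tilde Y_i$ i.i.d.\ $\betadist(1,\theta)$; it is classical (see \cite{feng2010poisson}) that $(\tilde X_1,\tilde X_2,\dots)$ rearranged in decreasing order is $\PD(\theta)$. I would then build a coupling: run the modified process and, whenever a ``success'' occurs (event $A_{k+1}$), declare that this produces the next nonzero block, and show by induction on the number of successes $n$ that, conditionally on having had $n$ successes so far, the remaining normalized stick $R_{k}/(1-\tau) - \tau/(1-\tau)$... more cleanly: set $T_k := (R_k - \tau)/(1-\tau)$, so $T_0 = 1$ and on a success $T_{k+1} = T_k(1-Y_{k+1})$, on a failure $T_{k+1}=T_k$. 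Thus $T_k = \prod_{i\le k}(1-Y_i)^{\mathds{1}_{A_i}}$, and the nonzero values of $X_k/(1-\tau)$ are precisely $\{T_{k-1} - T_k : A_k \text{ occurs}\} = \{T_{m_{n-1}} Y_{m_n} : n\ge 1\}$ where $m_1<m_2<\cdots$ enumerate the success times. Since the $Y_i$ are i.i.d.\ and independent of the $A_i$, the subsequence $(Y_{m_n})_{n\ge1}$ is again i.i.d.\ $\betadist(1,\theta)$ (independence of the $m_n$ from the $Y$-values, because $A_i \in \sigma(U_i, S_{i-1})$ and the $U_i$ are independent of the $Y_i$), so $\{X_k/(1-\tau): X_k>0\}$ has exactly the law of $\{\tilde X_n : n\ge1\}$, an ordinary stick-breaking sequence. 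I would also note $\sum_k X_k/(1-\tau) = 1$ a.s., which follows since $T_k\to 0$ a.s.\ (there are infinitely many successes a.s., as $\mathbb P(A_{k+1}^c \mid \mathcal F_k) = \tau/R_k \le \tau < 1$, and each success multiplies $T$ by an independent factor with $\mathbb E\log(1-Y)<0$), so no mass is lost to the rejected steps or escapes to infinity.

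Finally I would conclude: the decreasing rearrangement only depends on the multiset of values, the zero entries contribute nothing, and the multiset of nonzero entries of $\frac{1}{1-\tau}(X_1,X_2,\dots)$ equals in distribution the multiset of entries of the GEM$(\theta)$ sequence, whose decreasing rearrangement is $\PD(\theta)$ by definition. The main obstacle, and the only place requiring genuine care, is the measurability/independence bookkeeping in the coupling: one must check that conditioning on the failure pattern does not bias the successful $Y$-values, and that the a.s.\ infinitude of successes guarantees the total mass is $1-\tau$ rather than something smaller. Both are handled by the filtration argument above, using that $A_{k+1}\in\sigma(U_1,\dots,U_{k+1},Y_1,\dots,Y_k)$ while $Y_{k+1}$ is independent of this $\sigma$-algebra.
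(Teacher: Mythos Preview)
The paper does not actually prove this claim; immediately after stating it, the authors remark that a reader may take it as the \emph{definition} of $\PD(\theta)$, the content then being only that the decreasing rearrangement does not depend on $\tau$. Your argument therefore supplies what the paper leaves as a known fact, and the route you take --- normalize via $T_k=(R_k-\tau)/(1-\tau)$, observe that successes multiply $T$ by an independent $1-Y$ while failures leave it fixed, and identify the nonzero $X_k/(1-\tau)$ with a $\mathrm{GEM}(\theta)$ sequence --- is correct and natural.

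Two places deserve tightening. First, the assertion ``the $Y_i$ are i.i.d.\ and independent of the $A_i$'' is false as stated: $A_i$ depends on $S_{i-1}$, which in turn depends on $Y_1,\dots,Y_{i-1}$. What is true, and what you correctly isolate in your final paragraph, is that $A_{k+1}\in\sigma(U_1,\dots,U_{k+1},Y_1,\dots,Y_k)$, which is independent of $Y_{k+1}$; from this the i.i.d.\ property of $(Y_{m_n})$ follows by noting that for any $k_1<\cdots<k_n$ the event $\{m_1=k_1,\dots,m_n=k_n\}\cap\{Y_{k_1}\in B_1,\dots,Y_{k_{n-1}}\in B_{n-1}\}$ lies in $\sigma(U_1,\dots,U_{k_n},Y_1,\dots,Y_{k_n-1})$, hence is independent of $Y_{k_n}$. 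Second, the bound $\mathbb P(A_{k+1}^c\mid\mathcal F_k)=\tau/R_k\le\tau$ is incorrect (since $R_k\le 1$, not $R_k\ge 1$); the right observation is that $R_k>\tau$ a.s., so $\tau/R_k<1$, and since this probability is constant between consecutive successes, the waiting time to the next success is a conditionally geometric random variable and hence a.s.\ finite. With these two fixes your proof is complete.
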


The reader unfamiliar with the $\PD(\theta )$ distribution may take the above as its definition, in which case the content of Claim~\ref{cl:rearrangement_PD} is that the distribution of the rearranged sequence does not depend on $\tau$ (the proof of the claim does not use the specific value of $\tau$ given by \eqref{eq:def of tau}).

\begin{thm}\label{thm:convergence to Poisson-Dirichlet}
In all the  (Super-Critical) cases,
\[\frac{1}{N}\left(L_1,L_2,\dots \right)\overset{d}{\longrightarrow}\left(X_1,X_2,\dots\right).\]
\end{thm}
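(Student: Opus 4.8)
The plan is to prove the joint convergence $\frac{1}{N}(L_1,L_2,\dots)\overset{d}{\longrightarrow}(X_1,X_2,\dots)$ by induction on the number of coordinates, using the exact formula for the joint law of cycle lengths in Lemma~\ref{lem:distribution_of_ell_1,ell_2} together with the asymptotics of the partition function $H_{N-j}(L)$ from Theorem~\ref{thm:supercritical integral}. Since convergence in distribution of a sequence in $\R^\N$ (with the product topology) is equivalent to convergence of every finite-dimensional marginal, and since the $(L_j)$ and $(X_j)$ are all supported in $[0,1]$ after dividing by $N$, it suffices to show for each fixed $m$ that $\frac1N(L_1,\dots,L_m)\overset{d}{\longrightarrow}(X_1,\dots,X_m)$.

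First I would treat the one-dimensional case. By \eqref{eq:dist_L1}, for $1\le j\le N$ we have $\mathbb P(L_1=j)=\frac{W_{L,j}}{N}\cdot\frac{H_{N-j}(L)}{H_N(L)}$. Splitting according to whether $j/N$ stays away from $1-\tau$: for $j=\lfloor xN\rfloor$ with $x\in(0,1-\tau)$, Corollary~\ref{cor:corollary on W} gives $W_{L,j}\to\theta$ when $j\ge L^2$ (which holds here since $\rho\to\infty$ forces $L^2=o(N)$ in the relevant cases — one must check this in each of (Super1),(Super2),(Hyper2),(Super3)), and Theorem~\ref{thm:supercritical integral} gives $\frac{H_{N-j}(L)}{H_N(L)}\sim\frac{(1-\tau-x)^{\theta-1}}{(1-\tau)^{\theta-1}}$; multiplying by $\frac{1}{N}$ and summing over $j\in(aN,bN)$ turns the sum into a Riemann integral, yielding $\mathbb P(a\le L_1/N\le b)\to\int_a^b\frac{\theta}{1-\tau}\left(1-\frac{x}{1-\tau}\right)^{\theta-1}dx$, which is exactly the law of $X_1=(1-\tau)Y_1\mathds 1_{A_1}$ on $(0,1-\tau)$. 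The remaining mass, namely $\mathbb P(L_1/N\to 0)$ and $\mathbb P(L_1/N\to 1)$, must be shown to equal $\tau$ and $0$ respectively; the first is handled by summing \eqref{eq:prob of cycle}-type small-cycle estimates (these follow from the sub-critical analysis, giving $\mathbb P(L_1=j)\to\theta\rho^{-1}\varphi^{*j}(0)$ for fixed $j$ in case (Super3), and $\to 0$ for fixed $j$ when $\rho\to\infty$), while the statement $\nu=1-\tau$ — i.e. that the fraction of mass in macroscopic cycles is $1-\tau$ — pins down the total escaping mass and identifies the atom at $0$ with weight $\tau$. This last point, reconciling the ``escaping to small cycles'' mass with $\tau$, is where the definition \eqref{eq:def of tau} of $\tau$ and the identity $\tau N=L^dg'(1)$ (resp. $\tau=\alpha_c/\alpha$) enter, via $F_L(1)=G_L(1)+\theta\log(1-z)|_{z=1}$ bookkeeping already built into Theorem~\ref{thm:supercritical integral}.

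For the inductive step, assume the claim for $m-1$. By Lemma~\ref{lem:distribution_of_ell_1,ell_2},
\[
\mathbb P(L_1=j_1,\dots,L_m=j_m)=\frac{H_{N-j_1-\dots-j_m}(L)}{H_N(L)}\prod_{k=1}^m\frac{W_{L,j_k}}{N-j_1-\dots-j_{k-1}}.
\]
Writing $j_k=\lfloor x_kN\rfloor$ with $x_1+\dots+x_m<1-\tau$, Theorem~\ref{thm:supercritical integral} gives $\frac{H_{N-j_1-\dots-j_m}(L)}{H_N(L)}\sim\left(\frac{1-\tau-x_1-\dots-x_m}{1-\tau}\right)^{\theta-1}$ and $W_{L,j_k}\to\theta$, so the joint density of $\frac1N(L_1,\dots,L_m)$ on the simplex $\{x_k>0,\ \sum x_k<1-\tau\}$ converges to
\[
\theta^m\,\frac{(1-\tau-\sum_{k=1}^m x_k)^{\theta-1}}{(1-\tau)^{\theta-1}}\prod_{k=1}^m\frac{1}{1-\tau-\sum_{i<k}x_i}.
\]
One then checks, by the change of variables $x_k=(1-\tau-\sum_{i<k}x_i)\,y_k$ coming from the stick-breaking construction (Definition~\ref{def:def of X_i}), that this is precisely the density of $(X_1,\dots,X_m)$ restricted to the event $A_1\cap\dots\cap A_m$ that no coordinate has yet vanished; the Jacobian of this substitution produces exactly the product $\prod_k(1-\tau-\sum_{i<k}x_i)$, and the $Y_k\sim\betadist(1,\theta)$ densities are the $\theta(1-y_k)^{\theta-1}$ factors, while the $U_k$'s produce the conditional probabilities $1-\frac{\tau}{1-S_{k-1}}$ of staying on the ``unbroken'' event, which integrate against the density to give total mass $\mathbb P(A_1\cap\dots\cap A_m)$. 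The complementary events — some $L_k/N\to 0$ (small cycle, corresponding to $\mathds 1_{A_k}=0$, i.e. $X_k=0$) or some partial sum $\to 1-\tau$ — are handled as in the $m=1$ case: small-cycle contributions are accounted for by the fixed-$j$ asymptotics and by the total-mass constraint $\sum_{k\le m}\mathbb P(L_k/N\in\cdot)$ integrating to $1$, and the boundary $\sum x_k=1-\tau$ carries no mass by dominated convergence against the integrable limiting density.

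The main obstacle is the bookkeeping of the ``lost'' probability mass near the boundary of the simplex and at the origin. Theorem~\ref{thm:supercritical integral} controls $H_{N-j}(L)$ only for $j\le(1-\tau-\epsilon)N$, so to conclude that the limit density integrates to the correct total (and hence that the remaining mass really does collapse onto $\{$some $X_k=0\}$ rather than leaking elsewhere) one needs a separate argument: either a uniform tail bound showing $\mathbb P(L_1+\dots+L_m\ge(1-\tau-\epsilon)N)$ is small for small $\epsilon$ (uniformly in $N$), or an exchangeability/consistency argument bounding the mass of configurations with many macroscopic cycles. The cleanest route is probably to first establish $\nu=1-\tau$ directly — computing $\liminf_N\mathbb E\big(\frac1N\sum_j L_j\mathds 1_{L_j\ge\epsilon N}\big)$ from the one- and two-dimensional marginal asymptotics already in hand — which forces the total macroscopic mass to be $1-\tau$ and the total escaping (small-cycle) mass to be $\tau$, and then to show via the variance bound on $C_j$ (Remark~\ref{remark:connection}) or a direct second-moment estimate that this escaping mass is distributed among cycles of bounded size, so that the finitely many largest cycles carry all of the macroscopic mass with the stick-breaking structure above. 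Finally, Claim~\ref{cl:rearrangement_PD} converts the stick-breaking description into the $\PD(\theta)$ statements of the theorems.
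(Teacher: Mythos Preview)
Your plan is fundamentally sound and uses exactly the same ingredients as the paper (the product formula of Lemma~\ref{lem:distribution_of_ell_1,ell_2}, the partition-function asymptotics of Theorem~\ref{thm:supercritical integral}, and Corollary~\ref{cor:corollary on W} for $W_{L,j}\to\theta$). The inductive architecture is also the same. The difference lies in how the inductive step is executed.

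You compute the joint density of $\frac{1}{N}(L_1,\dots,L_m)$ directly on the open simplex $\{x_k>0,\ \sum x_k<1-\tau\}$, matching it to the density of $(X_1,\dots,X_m)$ restricted to the event $A_1\cap\dots\cap A_m$. This works, but it forces you to treat separately each of the $2^m$ mixed patterns (which $L_k/N$ are macroscopic and which collapse to~$0$); your final paragraph rightly flags this bookkeeping as the main obstacle. The paper sidesteps it entirely by working with \emph{conditional expectations}: rather than computing the joint density, it shows (Lemma~\ref{lem:psi_N is close to psi}) that $\mathbb{E}\big(f(L_{k+1}/N)\mid L_1,\dots,L_k\big)=\psi_N(S_k^N)$ and that $\psi_N\to\psi$ in $L^1(\mathbb{P}_{S_k^N})$, where $\psi(s)=\frac{\tau}{1-s}f(0)+\frac{\theta}{1-s}\int_0^{1-s-\tau}f(x)(1-\frac{x}{1-s-\tau})^{\theta-1}\,dx$. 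The point is that $\psi$ encodes simultaneously the atom at~$0$ \emph{and} the absolutely continuous part, so the induction step $\mathbb{E}\big(\prod_{i\le k+1}f_i\big)=\mathbb{E}\big(\prod_{i\le k}f_i\cdot\psi_N(S_k^N)\big)$ goes through in one line, regardless of which of the first $k$ cycles happened to be microscopic. The mass accounting you worry about is done once, inside the proof of $\psi_N\to\psi$: the paper proves $\sum_{l\le a_N}\frac{W_{L,l}}{N(1-s)}\frac{H_{N-sN-l}(L)}{H_{N-sN}(L)}\to\frac{\tau}{1-s}$ for a suitable scale $a_N=o(N)$, with the upper bound obtained by subtracting the macroscopic Riemann sum from~$1$ and the lower bound from a direct estimate $\liminf\frac{\theta}{\rho}\sum_{j\le a_N}\varphi^{*j}(0)\ge\tau$ (Claim~\ref{claim:technical claim for poisson}). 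This is exactly the argument you gesture at (``total-mass constraint'' plus ``fixed-$j$ asymptotics''), but packaged so that it need not be repeated for each $k$.

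Two smaller remarks. First, your limiting joint density has a slip: the normalising factors should be $\prod_k\frac{1}{1-\sum_{i<k}x_i}$ (coming from $\frac{1}{N-j_1-\dots-j_{k-1}}$), not $\prod_k\frac{1}{1-\tau-\sum_{i<k}x_i}$; equivalently, your $m=1$ density $\frac{\theta}{1-\tau}(1-\frac{x}{1-\tau})^{\theta-1}$ integrates to~$1$ over $(0,1-\tau)$ rather than to $\mathbb{P}(X_1>0)=1-\tau$, so the prefactor $\frac{\theta}{1-\tau}$ should be $\theta$. Second, your suggestion to first prove $\nu=1-\tau$ and then deduce where the missing mass goes is circular in the paper's logic: there, $\nu=1-\tau$ is a \emph{consequence} of the convergence theorem (via $\mathbb{P}(X_1>0)=1-\tau$), not an input.
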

The theorem together with Claim~\ref{cl:rearrangement_PD} imply the statements on convergence to the Poisson-Dirichlet distribution in the (Super-critical) parts of our main theorems. This is a consequence of the fact that for any $\epsilon>0$,
\begin{equation*}
  \lim_{k_0\to\infty}\lim_{N\to\infty}\mathbb{P}(\text{there exists $k\ge k_0$ for which $L_k\ge \epsilon N$}) = 0,
\end{equation*}
where $\rho$ can depend on $N$ in an arbitrary fashion in the limit. This can be proved using similar arguments to those used in Remark~\ref{remark:connection}. A similar argument appears in \cite[Theorem 5.9]{bogachev2015asymptotic}.

In order to prove Theorem~\ref{thm:convergence to Poisson-Dirichlet}, we prove the convergence in distribution of the finite-dimensional marginals by induction. We assume that
\begin{equation}\label{eq:induction hypothesis for convergence to PD}
\frac{1}{N}\left(L_1,\dots ,L_k\right)\overset{d}{\longrightarrow}\left(X_1,\dots,X_k\right),
\end{equation}
and prove that
\begin{equation}\label{eq:what we need to prove}
\frac{1}{N}\left(L_1,\dots ,L_{k+1}\right)\overset{d}{\longrightarrow}\left(X_1,\dots,X_{k+1}\right).
\end{equation}

We need the following lemma in order to prove \eqref{eq:what we need to prove}. Before the lemma let us define
\[S_k^N:=\frac{1}{N}\sum_{j=1}^k L_j,\]
and observe that, by the induction hypothesis \eqref{eq:induction hypothesis for convergence to PD}, $S_k^N\overset{d}{\longrightarrow }S_k$.
\begin{lemma}\label{lem:psi_N is close to psi}
Let $f:\R\to \R $ be a bounded and continuous function. Then
\begin{enumerate}[label=(\roman{*})]
\item
There is a bounded and continuous function $\psi =\psi _f:\R\to \R$  so that
\begin{equation}\label{eq:def of psi}
\mathbb E \left(f\left(X_{k+1}\right) \ | \ X_1,\dots,X_k \right)=\psi \left(S_k \right),\quad a.s.
\end{equation}
\item
There are functions $\psi _N=\psi _{N,f}$ so that
\begin{equation}\label{eq:def of psi_N}
\mathbb E\left(f\left(\frac{L_{k+1}}{N}\right) \ \Big| \ L_1,\dots,L_k\right)=\psi_N\left(S_k^N\right),\quad a.s.
\end{equation}
\item
The functions $\psi $ and $\psi _N$ from the previous parts
satisfy
\[\mathbb E \left|\psi_N\left(S_k^N\right)-\psi\left(S_k^N\right)\right|\to 0,\quad N\to \infty.\]
\end{enumerate}
\end{lemma}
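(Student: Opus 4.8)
The plan is to treat (i) and (ii) as bookkeeping and concentrate the real work on (iii). For (i), observe from Definition~\ref{def:def of X_i} that $S_k=\sum_{j=1}^k X_j$ is measurable with respect to $\sigma(X_1,\dots,X_k)\subseteq\sigma(Y_1,\dots,Y_k,U_1,\dots,U_k)$, while $(Y_{k+1},U_{k+1})$ is independent of that $\sigma$-algebra and has its original product law; conditioning and integrating out $(Y_{k+1},U_{k+1})$ gives $\mathbb E(f(X_{k+1})\mid X_1,\dots,X_k)=\psi(S_k)$ with the explicit formula
$$\psi(s):=\frac{\tau}{1-s}\,f(0)+\Big(1-\frac{\tau}{1-s}\Big)\,\mathbb E\big[f((1-s-\tau)Y)\big],\qquad Y\sim\betadist(1,\theta),$$
valid for $s\in[0,1-\tau]$, where $|\psi|\le\|f\|_\infty$ and $\psi$ is continuous on $[0,1-\tau]$; one extends it to a bounded continuous function on $\R$ in any way, the extension being irrelevant since $S_k\in[0,1-\tau]$ a.s. For (ii), Lemma~\ref{lem:distribution_of_ell_1,ell_2} shows the conditional law of $L_{k+1}$ given $(L_1,\dots,L_k)$ depends on those variables only through $\sum_{i\le k}L_i=NS_k^N$, via $\mathbb P(L_{k+1}=j\mid L_1,\dots,L_k)=\tfrac1n W_{L,j}H_{n-j}(L)/H_n(L)$ with $n=N-\sum_{i\le k}L_i$; hence $\mathbb E(f(L_{k+1}/N)\mid L_1,\dots,L_k)=\psi_N(S_k^N)$, where for $m\in\{k,\dots,N\}$ and $n=N-m$,
$$\psi_N(m/N):=\frac1n\sum_{j=1}^{n}f(j/N)\,p_{n,j},\qquad p_{n,j}:=W_{L,j}\,\frac{H_{n-j}(L)}{H_n(L)},$$
extended arbitrarily off the values actually taken by $S_k^N$.

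For (iii) I would first record the exact recursion obtained by differentiating $\sum_n H_n(L)z^n=e^{G_L(z)}$ and using $G_L'(z)=\sum_j W_{L,j}z^{j-1}$, namely $nH_n(L)=\sum_{j=1}^n W_{L,j}H_{n-j}(L)$; equivalently $\sum_{j=1}^n p_{n,j}=n$, so $(p_{n,j}/n)_{j=1}^n$ is a probability vector (indeed the conditional law of $L_{k+1}$ above) and $\psi_N(m/N)=\mathbb E[f(J_n/N)]$ for a random variable $J_n$ on $\{1,\dots,n\}$, with $n=N(1-s)$. Since $S_k<1-\tau$ a.s. and $S_k^N\overset{d}{\to}S_k$, for any continuity point $1-\tau-2\epsilon'$ of $S_k$ one has $\mathbb P(S_k^N>1-\tau-2\epsilon')\to\mathbb P(S_k>1-\tau-2\epsilon')\to0$ as $\epsilon'\downarrow0$, and on that event $|\psi_N(S_k^N)-\psi(S_k^N)|\le 2\|f\|_\infty$. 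It therefore suffices to show that for each fixed small $\epsilon'>0$, $\sup_{0\le s\le 1-\tau-2\epsilon',\,s\in\frac1N\Z}|\psi_N(s)-\psi(s)|=o_N(1)+C\,\mathrm{err}(\epsilon')$ with $\mathrm{err}(\epsilon')\to0$ as $\epsilon'\to0$; then $\mathbb E|\psi_N(S_k^N)-\psi(S_k^N)|\le o_N(1)+C\,\mathrm{err}(\epsilon')+2\|f\|_\infty\,\mathbb P(S_k^N>1-\tau-2\epsilon')$, and letting $N\to\infty$ and then $\epsilon'\to0$ finishes.

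The core estimate I would obtain by fixing $\epsilon'>0$ and $s=m/N\le1-\tau-2\epsilon'$ (so $1-\tau-s\ge2\epsilon'$) and splitting $\sum_{j=1}^n$ into $j\le\epsilon'N$, into $\epsilon'N<j\le(1-\tau-s-\epsilon')N$, and into the remainder. In the first block $f(j/N)=f(0)+O(\omega_f(\epsilon'))$, $\omega_f$ being the modulus of continuity of $f$ on $[0,1]$; inside it one sub-splits at $j\approx L^2$, where the behaviour of $W_{L,j}$ changes. For $j\le L^2$ one has $j/N=o(1)$ uniformly, hence $H_{n-j}(L)/H_n(L)=1+o(1)$ by Theorem~\ref{thm:supercritical integral}, and a case analysis in $d=1,2$ and $d\ge3$ — using Corollary~\ref{cor:corollary on W}, Lemma~\ref{lem:asymptotics of kappa _j}, the identities $\rho_c=\theta\sum_j\varphi^{*j}(0)$ and $\alpha_c=\theta/(2\pi\sqrt{\det\cov(X)})$, and the fact that $L^2=o(N)$ in every (Super-Critical) regime — yields $\sum_{j\le L^2}W_{L,j}=\tau N+o(N)$. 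For $L^2<j\le\epsilon'N$ one has $W_{L,j}=\theta(1+o(1))$ by Corollary~\ref{cor:corollary on W} and $H_{n-j}(L)/H_n(L)=(1+o(1))\big(1-\tfrac{j/N}{1-\tau-s}\big)^{\theta-1}$ by Theorem~\ref{thm:supercritical integral}, so a Riemann sum contributes $O(\epsilon')$; altogether the first block contributes $\tfrac{\tau}{1-s}f(0)+o(1)+O(\epsilon'+\omega_f(\epsilon'))$. In the second block $j\gg L^2$, so again $W_{L,j}=\theta(1+o(1))$ and $H_{n-j}(L)/H_n(L)=(1+o(1))\big(1-\tfrac{j/N}{1-\tau-s}\big)^{\theta-1}$, and a Riemann-sum approximation converts the block into $\tfrac{\theta}{1-s}\int_{\epsilon'}^{1-\tau-s-\epsilon'}f(u)\big(1-\tfrac{u}{1-\tau-s}\big)^{\theta-1}du+o(1)$, which after $u=(1-s-\tau)y$ is within $O(\|f\|_\infty\epsilon'^{\min(\theta,1)})$ of $(1-\tfrac{\tau}{1-s})\mathbb E[f((1-s-\tau)Y)]$. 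The third block carries $p$-mass $n$ minus that of the first two, which by $\sum p_{n,j}=n$ and the preceding computations equals $n\cdot(O(\epsilon'^{\min(\theta,1)})+o(1))$; it thus contributes $O(\|f\|_\infty\epsilon'^{\min(\theta,1)})+o(1)$, matching the corresponding tail of $\psi(s)$ up to the same order. Summing the three blocks and comparing with the formula for $\psi(s)$ gives $|\psi_N(s)-\psi(s)|\le o_N(1)+C(\omega_f(\epsilon')+\epsilon'+\|f\|_\infty\epsilon'^{\min(\theta,1)})$ uniformly in $s$, as needed.

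The main obstacle is the identity $\sum_{j\le L^2}W_{L,j}=\tau N+o(N)$: this is precisely the quantitative form of the assertion that non-macroscopic cycles carry mass $\tau$, and proving it requires the two-sided asymptotics of $W_{L,j}$ on both sides of the transition $j\approx L^2$ (Corollary~\ref{cor:corollary on W}) together with a separate computation in each of the four (Super-Critical) regimes, in which the precise values of $\alpha_c$, $\rho_c$ and the prescribed scalings of $\rho$ all enter. A secondary difficulty is uniformity: since the conclusion concerns $\mathbb E|\psi_N(S_k^N)-\psi(S_k^N)|$ with $S_k^N$ random and potentially close to $1-\tau$, every error term must be uniform in $s$, which is exactly why the uniform-in-$j$ version of Theorem~\ref{thm:supercritical integral} is essential; and when $\theta<1$ the limiting density of $Z_s$ blows up at its right endpoint, so the third-block tail must be handled through the integrable bound $\int_0^{\epsilon'}v^{\theta-1}\,dv=O(\epsilon'^{\theta})$.
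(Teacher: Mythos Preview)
Your proposal is correct and follows essentially the same strategy as the paper: parts (i) and (ii) are bookkeeping from Definition~\ref{def:def of X_i} and Lemma~\ref{lem:distribution_of_ell_1,ell_2}, and for (iii) one restricts to $s\le 1-\tau-\epsilon'$ (using $S_k^N\overset{d}{\to}S_k<1-\tau$ to control the complement), splits $\sum_j$ into a small-$j$ region, a bulk region, and a tail, handles the bulk by Theorem~\ref{thm:supercritical integral} together with $W_{L,j}\sim\theta$, and shows the small-$j$ region carries mass $\tfrac{\tau}{1-s}$.

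Two organizational points where the paper differs, both worth noting. First, the paper cuts the small region at a scale $a_N$ with $\max(L^2,N/\log N)=o(a_N)$ and $a_N=o(N)$, not at $L^2$; this matters because your assertion ``$W_{L,j}=\theta(1+o(1))$ for $L^2<j\le\epsilon'N$'' is not uniform for $j$ just above $L^2$ (Corollary~\ref{cor:corollary on W}(ii) only gives $|W_{L,j}-\theta|\le Ce^{-cj/L^2}$), though the slab $L^2<j\le a_N$ is easily seen to contribute $o(1)$. Second, the paper avoids your direct two-sided computation of $\sum_{j\le L^2}W_{L,j}=\tau N+o(N)$: it proves only the \emph{lower} bound on the small-region mass, via $W_{L,j}\ge\theta L^d\varphi^{*j}(0)$ and Claim~\ref{claim:technical claim for poisson}, and obtains the matching upper bound indirectly by subtracting the bulk contribution (computed by the Riemann-sum argument you also use) from the total mass~$1$. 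This complement trick replaces your case-by-case verification across the four (Super-Critical) regimes by a single line, at the price of the auxiliary Claim~\ref{claim:technical claim for poisson}.
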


We first explain how Lemma~\ref{lem:psi_N is close to psi} implies \eqref{eq:what we need to prove}, and therefore Theorem~\ref{thm:convergence to Poisson-Dirichlet}.

\begin{proof}[Proof of Theorem~\ref{thm:convergence to Poisson-Dirichlet}]
In order to prove \eqref{eq:what we need to prove}, it suffices to show that for any bounded and continuous functions $f_1,\dots ,f_{k+1}:\R \to \R$,
\begin{equation}\label{eq:claim in poisson dirchlet}
\mathbb E \left(f_1\left(\frac{L_1}{N}\right)\cdots f_{k+1}\left(\frac{L_{k+1}}{N}\right)\right)\to \mathbb E \left(f_1\left(X_1\right)\cdots f_{k+1}\left(X_{k+1}\right)\right),\quad N\to \infty.
\end{equation}
Using Lemma~\ref{lem:psi_N is close to psi} with $f=f_{k+1}$ we get as $N\to \infty$ that
\begin{equation}
\begin{split}
\mathbb E & \left(f_1\left(\frac{L_1}{N}\right)\cdots f_{k+1}\left(\frac{L_{k+1}}{N}\right)\right)=\mathbb E \left(\mathbb E \left(f_1\left(\frac{L_1}{N}\right)\cdots f_{k+1}\left(\frac{L_{k+1}}{N}\right) \ \Big| \ L_1,\dots ,L_k\right)\right) \\
&= \mathbb E \left(f_1\left(\frac{L_1}{N}\right)\cdots f_k\left(\frac{L_k}{N}\right)\psi_N(S_k^N)\right)=
\mathbb E \left(f_1\left(\frac{L_1}{N}\right)\cdots f_k\left(\frac{L_k}{N}\right)\psi(S_k^N)\right)+o(1),
\end{split}
\end{equation}
where in the last equation we also used that $f_i$, $1\le i\le k$, are bounded.
The function,
\[\left(y_1,\dots,y_k\right)\mapsto f_1(y_1)\cdots f_k(y_k)\psi\left(\sum _{j=1}^k y_j\right)\]
is bounded and continuous and therefore, by the induction hypothesis \eqref{eq:induction hypothesis for convergence to PD} and Lemma~\ref{lem:psi_N is close to psi}, as $N\to \infty$,
\begin{equation}
\begin{split}
\mathbb E \left(f_1\left(\frac{L_1}{N}\right)\cdots f_k\left(\frac{L_k}{N}\right)\psi(S_k^N)\right) \to & \mathbb E \left(f_1\left(X_1\right)\cdots f_k\left(X_k\right)\psi (S_k)\right) \\
=& \mathbb E \left(f_1\left(X_1\right)\cdots f_{k+1}\left(X_{k+1}\right)\right).
\end{split}
\end{equation}
This implies \eqref{eq:claim in poisson dirchlet} and therefore the theorem.
\end{proof}

We turn to prove Lemma~\ref{lem:psi_N is close to psi}. The following claim proves, in particular, the first part of the lemma.

\begin{claim}
For a bounded and continuous function $f:\R\to \R$, define $\psi=\psi_f$ on $[0,1-\tau)$ by
\begin{equation}
\begin{split}
\psi\left(s\right)&=\frac{\tau }{1-s}f(0)+\frac{\theta }{1-s}\intop _{0}^{1-s-\tau}f\left(x\right)\left(1-\frac{x}{1-s-\tau }\right)^{\theta -1}dx \\
&=\frac{\tau }{1-s}f(0)+\frac{1-s-\tau  }{1-s}\intop _{0}^{1}f\left(\left(1-s-\tau\right)y\right)\theta \left(1-y\right)^{\theta -1}dy.
\end{split}
\end{equation}
Then, $\psi$ extends to a continuous and bounded function on $\R$ and we have
\[\mathbb E \left(f\left(X_{k+1}\right)|X_1,\dots,X_k \right)=\psi \left(S_k \right),\quad a.s.\]
\end{claim}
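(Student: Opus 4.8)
The plan is to read off the conditional expectation directly from the recursion defining the modified stick‑breaking process, and then verify the two displayed forms of $\psi$ agree and define a bounded continuous function. First I would record the relevant independence structure. Since $X_j\in\sigma(Y_1,\dots,Y_j,U_1,\dots,U_j)$ for every $j$, we have $\sigma(X_1,\dots,X_k)\subseteq \sigma(Y_1,\dots,Y_k,U_1,\dots,U_k)$, and this $\sigma$-algebra is independent of the fresh pair $(U_{k+1},Y_{k+1})$. Moreover $S_k=\sum_{j=1}^k X_j$ is $\sigma(X_1,\dots,X_k)$-measurable, and $0\le S_k<1-\tau$ almost surely: each $Y_j<1$ a.s., so every step leaves a strictly positive breakable part, and $0\le\tau<1$ in all the (Super-Critical) cases. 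Writing $X_{k+1}=h(S_k,U_{k+1},Y_{k+1})$ with $h(s,u,y):=(1-s-\tau)\,y\,\mathds 1_{\{u<1-\tau/(1-s)\}}$, the standard ``freezing'' lemma for conditional expectations (valid because $h$ is bounded measurable, $S_k$ is $\sigma(X_1,\dots,X_k)$-measurable, and $(U_{k+1},Y_{k+1})$ is independent of $\sigma(X_1,\dots,X_k)$) yields $\mathbb E\!\left(f(X_{k+1})\mid X_1,\dots,X_k\right)=\Psi(S_k)$ a.s., where $\Psi(s):=\mathbb E\big(f(h(s,U_{k+1},Y_{k+1}))\big)$.

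Next I would evaluate $\Psi(s)$ for $s\in[0,1-\tau)$ by splitting on the event $\{U_{k+1}<1-\tau/(1-s)\}$, which, since $U_{k+1}$ is uniform on $[0,1]$ and $0\le \tau/(1-s)\le 1$, has probability $1-\tfrac{\tau}{1-s}=\tfrac{1-s-\tau}{1-s}$. On the complement $h=0$, contributing $\tfrac{\tau}{1-s}f(0)$; on the event itself $h=(1-s-\tau)Y_{k+1}$ with $Y_{k+1}\sim\betadist(1,\theta)$ of density $\theta(1-y)^{\theta-1}$ on $[0,1]$, and the independence of $U_{k+1}$ and $Y_{k+1}$ gives the contribution $\tfrac{1-s-\tau}{1-s}\int_0^1 f\big((1-s-\tau)y\big)\,\theta(1-y)^{\theta-1}\,dy$. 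Summing the two contributions produces exactly the second displayed expression for $\psi$, and the substitution $x=(1-s-\tau)y$ converts it into the first; hence $\Psi=\psi$ on $[0,1-\tau)$ and $\mathbb E\!\left(f(X_{k+1})\mid X_1,\dots,X_k\right)=\psi(S_k)$.

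Finally I would check that $\psi$, a priori defined only on $[0,1-\tau)$, extends to a bounded continuous function on all of $\R$. Boundedness is immediate from the triangle inequality: $|\psi(s)|\le \|f\|_\infty\big(\tfrac{\tau}{1-s}+\tfrac{1-s-\tau}{1-s}\big)=\|f\|_\infty$. Continuity on $[0,1-\tau)$ holds because the prefactors $\tfrac{\tau}{1-s}$ and $\tfrac{1-s-\tau}{1-s}$ are continuous there (the denominator $1-s$ is bounded away from $0$ on compact subsets) and $s\mapsto\int_0^1 f\big((1-s-\tau)y\big)\theta(1-y)^{\theta-1}\,dy$ is continuous by dominated convergence, using that $f$ is bounded and continuous and $(1-y)^{\theta-1}$ is integrable on $[0,1]$. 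As $s\uparrow 1-\tau$ one has $\tfrac{\tau}{1-s}\to 1$, $\tfrac{1-s-\tau}{1-s}\to 0$, and the integral stays bounded, so $\psi(s)\to f(0)$; setting $\psi\equiv f(0)$ on $[1-\tau,\infty)$ and $\psi\equiv\psi(0)$ on $(-\infty,0)$ yields the desired bounded continuous extension. There is no substantial obstacle in this argument; the only point requiring care is the measurability and independence bookkeeping that licenses the freezing lemma, i.e.\ replacing conditioning on $X_1,\dots,X_k$ by conditioning on $S_k$ while treating $(U_{k+1},Y_{k+1})$ as independent fresh randomness.
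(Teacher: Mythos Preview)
Your proposal is correct and follows exactly the approach the paper intends: the paper says only that ``the proof follows directly from the definition of $X_k$'' and that $\psi$ extends continuously since the limit as $s\to 1-\tau$ exists, and you have simply filled in these details carefully (independence/freezing, splitting on $A_{k+1}$, the change of variables $x=(1-s-\tau)y$, and the dominated-convergence check for continuity).
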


The proof follows directly from the definition of $X_k$ and we leave it to the reader. Note that $\psi $ can be extended to a continuous and bounded function on $\R $ since the limit as $s\to 1-\tau $ exists.

We need the following claim for the proof of the third part of Lemma~\ref{lem:psi_N is close to psi}. Before the claim we fix a sequence of integers $(a_N)$ such that as $N\to \infty$,  $\max \left(L^2,\frac{N}{\log N}\right)= o(a_N)$ and $a_N=o(N)$ (note this is possible in all of the (Super-Critical) cases).

\begin{claim}\label{claim:technical claim for poisson}
In all of the (Super-Critical) cases we have,
\[\underset{N\to \infty}{\liminf }\  \frac{\theta }{\rho} \sum _{j=1}^{a_N} \varphi ^
{*j}(0)\ge \tau. \]
\end{claim}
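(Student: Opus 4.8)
The plan is to show that the partial sums $\frac{\theta}{\rho}\sum_{j=1}^{a_N}\varphi^{*j}(0)$ converge (or have liminf at least) $\tau$ in each of the four (Super-Critical) cases, treating them according to the value of $\tau$. The key observation is that, since $L\ge 1$ and $a_N = o(N)$ with $L^2 = o(a_N)$, we have good control on the difference between $W_{L,j}$ and $\theta L^d \varphi^{*j}(0)$ from Corollary~\ref{cor:corollary on W} in the range $j\le a_N$, and more importantly the partial sums of $\varphi^{*j}(0)$ are tied to the function $g$ and its derivative $g'$ via $g'(r)=\sum_{j\ge1}\theta\varphi^{*j}(0)r^{j-1}$ (so $\theta\sum_{j=1}^{\infty}\varphi^{*j}(0) = g'(1)$ whenever this is finite, which is exactly $\rho_c$). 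First I would record that $\frac{\theta}{\rho}\sum_{j=1}^{a_N}\varphi^{*j}(0) = \frac{L^d}{N}\sum_{j=1}^{a_N}\theta\varphi^{*j}(0)$, and then split into cases.

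In the cases (Super1) and (Hyper2), where $\tau = 0$, the claim is trivial since every term $\varphi^{*j}(0)\ge 0$, so the liminf is automatically $\ge 0 = \tau$. In case (Super3), where $d\ge 3$ and $\rho > \rho_c$ is fixed, we use that $\sum_{j=1}^{\infty}\theta\varphi^{*j}(0) = \rho_c$ is a convergent series (by Lemma~\ref{lem:asymptotics of kappa _j}, $\varphi^{*j}(0)\sim c j^{-d/2}$ with $d/2 > 1$). Since $L$ is fixed ($\rho = N/L^d$ fixed means $L$ fixed) — wait, here $\rho$ fixed and $N\to\infty$ forces $L\to\infty$; so $L^d/N = 1/\rho$ is constant, and $a_N\to\infty$, hence $\sum_{j=1}^{a_N}\theta\varphi^{*j}(0)\to\rho_c$, giving $\frac{\theta}{\rho}\sum_{j=1}^{a_N}\varphi^{*j}(0)\to\frac{\rho_c}{\rho} = \tau$. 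So in fact the limit exists and equals $\tau$ here.

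The genuinely substantive case is (Super2): $d=2$, $\rho/\log N\to\alpha\in(\alpha_c,\infty)$, $\tau = \alpha_c/\alpha$. Here $\sum_j\varphi^{*j}(0)$ diverges (since $d/2=1$), and by Lemma~\ref{lem:asymptotics of kappa _j}, $\theta\varphi^{*j}(0)\sim \frac{\theta}{2\pi\sqrt{\det\cov(X)}}\,j^{-1} = \alpha_c j^{-1}$. Therefore $\sum_{j=1}^{a_N}\theta\varphi^{*j}(0) = \alpha_c\log a_N + O(1)$ as $N\to\infty$. Since $L^2 = N/\rho$ and $a_N = o(N)$ with $a_N$ chosen so that $N/\log N = o(a_N)$, we get $\log a_N = (1+o(1))\log N$ — indeed $a_N\ge N/\log N$ gives $\log a_N \ge \log N - \log\log N = (1+o(1))\log N$, and $a_N\le N$ gives $\log a_N\le\log N$. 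Hence $\frac{\theta}{\rho}\sum_{j=1}^{a_N}\varphi^{*j}(0) = \frac{1}{\rho}\bigl(\alpha_c\log a_N + O(1)\bigr) = \frac{\alpha_c(1+o(1))\log N}{\rho} + o(1) \to \frac{\alpha_c}{\alpha} = \tau$, using $\rho/\log N\to\alpha$. The main (mild) obstacle is simply bookkeeping the asymptotic $\sum_{j=1}^{a_N}\varphi^{*j}(0) = \alpha_c\log a_N + O(1)$ carefully from Lemma~\ref{lem:asymptotics of kappa _j} — splitting off a fixed initial segment $j\le j_0$ where the asymptotic may not yet hold, and estimating $\sum_{j_0\le j\le a_N}(j^{-1} + O(j^{-3/2}))$ — and then confirming $\log a_N/\log N\to 1$ from the constraints on $(a_N)$. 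None of this is deep; it is a direct consequence of the estimates already established in Section~\ref{sec:properties}.
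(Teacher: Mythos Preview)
Your proposal is correct and follows essentially the same approach as the paper: the cases $\tau=0$ are trivial, case (Super3) reduces to the convergence of $\sum_j\varphi^{*j}(0)$ to $\rho_c/\theta$, and case (Super2) uses the asymptotic $\theta\varphi^{*j}(0)\sim\alpha_c/j$ from Lemma~\ref{lem:asymptotics of kappa _j}. The only cosmetic difference is that in (Super2) the paper lower-bounds the sum by restricting to $L^\delta\le j\le L^2$ and then lets $\delta\to0$, whereas you compute the full partial sum $\sum_{j\le a_N}$ directly and show $\log a_N/\log N\to1$; your version actually yields the limit (not just the liminf) and is marginally cleaner.
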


\begin{proof}
The claim holds trivially when $\tau =0$. Thus it remains to prove the claim in cases (Super2) and (Super3).

In case (Super3),
\begin{equation}\label{eq:under assumption (4)}
\underset{N\to \infty}{\lim} \frac{\theta}{\rho } \sum _{j=1}^{a_N} \varphi^{*j}(0)=\frac{\theta }{\rho }\sum _{j=1}^{\infty}\varphi^{*j}(0)=\frac{g'(1)}{\rho }=\frac{\rho _c}{\rho }=\tau.
\end{equation}
In case (Super2), by Lemma~\ref{lem:asymptotics of kappa _j}, for any $\delta >0$,
\begin{equation}
\begin{split}
\underset{N\to \infty}{\liminf}\ \frac{\theta }{\rho }\sum _{j=1}^{a_N} \varphi^{*j}(0)\ge \underset{N\to \infty}{\lim}\frac{\theta}{\rho }\sum _{j=L^\delta }^{L^2} \varphi^{*j}(0)&=\underset{N\to \infty}{\lim} \ \frac{1}{\alpha \log N }\sum _{j=L^\delta }^{L^2}\frac{\alpha _c}{j} \\
&=\underset{N\to \infty}{\lim}\  \frac{\alpha _c \left(\log \left(L^2\right)-\log L^\delta \right)}{\alpha \log N} = \tau - \frac{\delta }{2}\tau.
\end{split}
\end{equation}
The claim follows as we can take arbitrarily small $\delta $.
\end{proof}

\begin{proof}[Proof of Lemma~\ref{lem:psi_N is close to psi}]
Denote by $\supp (S_k^N)$ the support of the discrete random variable $S_k^N$ defined by
\[ \supp (S_k^N):=\{s\in \R \ | \  \mathbb{P} (S_k^N=s)>0\}.\]
Define
\[\psi_N(s):=\sum _{l=1}^{N-sN} f\left(\frac{l}{N}\right)\cdot \frac{W_{L,l}}{N\left(1-s\right)}\cdot \frac{H_{N-sN-l}(L)}{H_{N-sN}(L)}, \quad s\in \supp (S_k^N)\]
and note that \eqref{eq:def of psi_N} holds by Lemma~\ref{lem:distribution_of_ell_1,ell_2}. Let
\[A_{N,\epsilon }=\supp(S_k^N)\cap \{s\in \R \,|\, s<1-\tau-\epsilon \}.\]
It suffices to prove that for any $\epsilon >0$,
\begin{equation}\label{eq:converge to 0 on support}
\underset{s\in A_{N,\epsilon }}{\max}\left|\psi_N(s)-\psi(s)\right|\to 0,\quad N\to \infty,
\end{equation}
since, using the induction hypothesis \eqref{eq:induction hypothesis for convergence to PD} and the fact that $\psi _N,\psi$ are bounded, we get
\begin{equation}
\begin{split}
\underset{N\to \infty}{\limsup }\ \mathbb E &\left(\left|\psi_N\left(S_k^N\right)-\psi\left(S_k^N\right)\right|\mathds{1}_{\{S_k^N\ge 1-\tau -\epsilon\}}\right)\le \\
&\underset{N\to \infty}{\limsup }\ C\cdot \mathbb P \left(S_k^N \ge 1-\tau -\epsilon \right) =C\cdot \mathbb P\left(S_k\ge 1-\tau -\epsilon \right)\overset{\epsilon \to 0}{\longrightarrow }0.
\end{split}
\end{equation}
Let us first prove that
\begin{equation}\label{eq:probability of atom}
\underset{s\in A_{N,\epsilon }}{\max} \left|\sum _{l=1}^{a_N} \frac{W_{L,l}}{N\left(1-s\right)}\cdot \frac{H_{N-sN-l}(L)}{H_{N-sN}(L)}- \frac{\tau }{1-s}\right|\to 0,\quad N\to \infty.
\end{equation}
We start with an upper bound on the sum in \eqref{eq:probability of atom}. For any positive  $\delta <\epsilon $ and $s\in \supp (S_k^N)$ we have
\begin{equation}\label{eq:1-p}
\sum _{l=1}^{a_N} \frac{W_{L,l}}{N\left(1-s\right)}\cdot \frac{H_{N-sN-l}(L)}{H_{N-sN}(L)}\le 1-\sum _{l=a_N+1}^{\lfloor \left(1-s-\tau -\delta \right)N\rfloor } \frac{W_{L,l}}{N\left(1-s\right)}\cdot \frac{H_{N-sN-l}(L)}{H_{N-sN}(L)},
\end{equation}
since
\[\sum _{l=1}^{N-S_k^NN} \frac{W_{L,l}}{N\left(1-S_k^N\right)}\cdot \frac{H_{N-S_k^NN-l}(L)}{H_{N-S_k^NN}(L)}=\mathbb E \left(1\,|\,L_1,\dots ,L_k\right)=1,\quad a.s.\]
Using Theorem~\ref{thm:supercritical integral} (with  $\delta$ as $\epsilon $) and substituting $W_{L,l}\sim \theta $ (which follows from Corollary~\ref{cor:corollary on W} for $l\ge a_N$) we obtain as $N\to \infty$,
\begin{equation}\label{eq:Riemann sum}
\begin{split}
\sum _{l=a_N+1}^{\lfloor \left(1-s-\tau -\delta \right)N\rfloor }\!\! \frac{W_{L,l}}{N\left(1-s\right)}&\cdot \frac{H_{N-sN-l}(L)}{H_{N-sN}(L)}= \frac{\theta +o (1)}{1-s}\sum _{l=a_N+1}^{\lfloor \left(1-s-\tau -\delta \right)N\rfloor }\frac{1}{N} \left(1-\frac{\frac{l}{N}}{1-s-\tau}\right)^{\theta -1} \\
=\frac{\theta +o (1)}{1-s}\intop _0^{1-s-\tau -\delta }& \left(1-\frac{x}{1-s-\tau}\right)^{\theta -1}dx= \frac{1-s-\tau}{1-s}-\frac{\left(1-s-\tau \right)^{1-\theta }\delta ^\theta }{1-s}+o (1),
\end{split}
\end{equation}
where all the estimates hold uniformly in $s\in A_{N,\epsilon }$.
Substituting the last estimate in \eqref{eq:1-p} and choosing $\delta >0$ arbitrarily small, we get
\[\sum _{l=1}^{a_N} \frac{W_{L,l}}{N\left(1-s\right)}\cdot \frac{H_{N-sN-l}(L)}{H_{N-sN}(L)}\le  \frac{\tau }{1-s}+o(1),\quad N\to \infty,\]
uniformly in $s\in A_{N,\epsilon }$.

On the other hand, using the inequality $W_{L,l}\ge \theta L^d \varphi ^{*l}(0)$, Theorem~\ref{thm:supercritical integral} and Claim~\ref{claim:technical claim for poisson}, we get
\begin{equation*}
\begin{split}
\sum _{l=1}^{a_N} \frac{W_{L,l}}{N\left(1-s\right)}\cdot \frac{H_{N-sN-l}(L)}{H_{N-sN}(L)}\ge \frac{\theta +o(1)}{1-s}\cdot \frac{1}{\rho }\sum _{l=1}^{a_N} \varphi^{*l}(0) \ge \frac{\tau }{1-s}+o(1),\quad N\to \infty ,
\end{split}
\end{equation*}
uniformly in $s\in A_{N,\epsilon }$. This completes the proof of \eqref{eq:probability of atom}.

From the last equation and \eqref{eq:Riemann sum} we also get that
\begin{equation}\label{eq:bound S_3}
\underset{N\to \infty}{\limsup} \underset{s\in A_{N,\epsilon }}{\max} \sum _{\lceil l=\left(1-s-\tau -\delta \right)N \rceil}^{N-sN} \frac{W_{L,l}}{N\left(1-s\right)}\cdot \frac{H_{N-sN-l}(L)}{H_{N-sN}(L)} \le C_\epsilon \delta^\theta.
\end{equation}

Now, for $s\in A_{N,\epsilon }$ and $\delta >0$, we write
\[\psi_N(s)=\sum _{l=1}^{N-sN} f\left(\frac{l}{N}\right)\cdot \frac{W_{L,l}}{N\left(1-s\right)}\cdot \frac{H_{N-sN-l}(L)}{H_{N-sN}(L)}=S_1+S_2+S_3,\]
where $S_1,S_2$ and $S_3$ are the sums corresponding to $l\le a_N$,\\ $a_N<l\le \left(1-s-\tau -\delta \right)N$ and $\left(1-s-\tau -\delta \right)N<l\le N-sN$ respectively. By the continuity of $f$ and \eqref{eq:probability of atom},
\[\underset{s\in A_{N,\epsilon }}{\max} \left|S_1-\frac{\tau}{1-s}f(0)\right|\to 0,\quad N\to \infty.\]
By the same arguments as in \eqref{eq:Riemann sum},
\[\underset{N\to \infty}{\limsup} \underset{s\in A_{N,\epsilon }}{\max} \left|S_2-\frac{\theta }{1-s}\intop _{0}^{1-s-\tau-\delta }f\left(x\right)\left(1-\frac{x}{1-s-\tau }\right)^{\theta -1}dx\right|\le C_{\epsilon,f} \delta ^{\theta }.\]
Finally, as $f$ is bounded and by \eqref{eq:bound S_3},
\[\underset{N\to \infty}{\limsup} \underset{s\in A_{N,\epsilon }}{\max} \left( \left|S_3\right|+\bigg|\frac{\theta }{1-s}\intop _{1-s-\tau-\delta }^{1-s-\tau }f\left(x\right)\left(1-\frac{x}{1-s-\tau }\right)^{\theta -1}dx\bigg| \right) \le C_{\epsilon,f} \delta ^\theta. \]
Equation \eqref{eq:converge to 0 on support} follows using the triangle inequality and taking arbitrarily small $\delta $.
\end{proof}

\subsection{The length of a cycle containing a given point}
To finish the proof of our main theorems in the (Super-critical) cases it remains to establish the second limit law in part~\ref{item:super-critical in d=2} of Theorem~\ref{thm:d=2} and the limit \eqref{eq:small cycles in supr critical} in part~\ref{item:super-critical in d=3} of Theorem~\ref{thm:d=3} (as convergence to the Poisson-Dirichlet law follows from Theorem~\ref{thm:convergence to Poisson-Dirichlet}). We start with Theorem~\ref{thm:d=2}. The proof that the limiting distribution has a constant density on $(0,1)$ is almost identical to the proof of part~\ref{item:part_2_sub_critical_restatement} of Corollary~\ref{cor:connection to L_1}. Indeed, using Lemma~\ref{lem:distribution_of_ell_1,ell_2}, Theorem~\ref{thm:supercritical integral}, Corollary~\ref{cor:corollary on W} and Lemma~\ref{lem:asymptotics of kappa _j}, we get that for any $0<a<b< 1$,
\begin{equation*}
\mathbb P \left(a\le \frac{\log L_1}{\log N }\le b\right)=\sum _{j=\lceil N^a \rceil} ^{j=\lfloor N^b \rfloor} \frac{W_{L,j}}{N}\cdot \frac{H_{N-j}(L)}{H_N(L)}\sim \frac{\alpha _c}{\rho }\sum _{j=\lceil N^a \rceil} ^{j=\lfloor N^b \rfloor} \frac{1}{j} \to \frac{\alpha _c}{\alpha } (b-a),\quad N\to \infty,
\end{equation*}
where in the last limit we used that $\frac{\rho }{\log N}\to \alpha $.
On the other hand, by Theorem~\ref{thm:convergence to Poisson-Dirichlet}, we have that $\frac{L_1}{N}\to X_1$ and therefore for any $\epsilon >0$,
\[\underset{N\to \infty}{\liminf}\ \mathbb P \left(\left|\frac{\log L_1}{\log N }-1\right|\le \epsilon \right)\ge \mathbb P \left(X_1>0\right)=1-\tau =1-\frac{\alpha _c}{\alpha },\]
from which the limit law follows.

We proceed to prove \eqref{eq:small cycles in supr critical} in part~\ref{item:super-critical in d=3} of Theorem~\ref{thm:d=3}. Under the corresponding assumptions, by Lemma~\ref{lem:distribution_of_ell_1,ell_2}, Theorem~\ref{thm:supercritical integral} and Corollary~\ref{cor:corollary on W}, for any fixed $j\in \N$,
\[\mathbb P (L_1=j)=\frac{W_{L,j}}{N}\cdot \frac{H_{N-j}(L)}{H_N(L)}\to \theta \rho ^{-1}\varphi ^{*j}(0),\quad N\to \infty.\]

\section{The critical case in dimensions $d\ge 2$}\label{sec:critical in dimension 5}

In this section we find the asymptotic distribution of $L_1$ in the critical regimes in dimensions $d\ge 2$, establishing the case $\alpha=\alpha_c$ in part~\ref{item:sub-critical and critical in d=2} of Theorem~\ref{thm:d=2} and proving part~\ref{item:critical in d=3} in Theorem~\ref{thm:d=3}. The results are deduced as a consequence of the estimates in the sub-critical regime, by proving a type of monotonicity statement as the density increases.

An approach via contour integrals, as done in the sub-critical and super-critical cases, is also possible, though the estimates seem quite involved in dimensions $d\in\{2,3,4\}$ where $g''(1)=\infty$. Although it is not used in the proof of our main theorems, we also follow this route and find the asymptotics of the partition function in dimensions $d\ge 5$ (see Theorem~\ref{thm:critical integral} below) as it may be useful for obtaining further information on the random permutation (e.g., the probability-generating function of the number of cycles as discussed in Remark~\ref{remark:generating_function_of_number_of_cycles}).

\subsection{Distribution of $L_1$}
We proceed to find the limiting distribution of $L_1$ in the critical regimes in dimensions $d\ge 2$. In the following claim we prove  monotonicity of the cycle weights.
\begin{claim}\label{claim:claim on W}
For each $0<\epsilon<1$ there is $j_0(\epsilon )$ such that for any  $j\ge j_0(\epsilon )$ and $L\ge 1$,
\begin{equation}\label{eq:inequality for W}
W_{L,j}\le (1+\epsilon ) W_{L,j-i},\quad 0\le i\le \frac{j}{2}.
\end{equation}
\end{claim}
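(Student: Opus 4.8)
The plan is to compare $W_{L,j}$ to the Gaussian surrogate weight
\[
\widetilde W_{L,j}:=\theta\sum_{m\in\frac1L\Z^d}\hat\psi^{\,j}(m)=\theta\sum_{m\in\frac1L\Z^d}e^{-2\pi^2 j\|Am\|^2},
\]
using \eqref{eq:fourier of psi}, where $\psi$ and $A$ are as in \eqref{eq:def of Gaussian_psi} and \eqref{eq:def of A}. This surrogate has two features we will exploit. First, it is non-increasing in $j$: every summand is non-increasing in $j$, so $\widetilde W_{L,j}\le\widetilde W_{L,j-i}$ for all $i\ge0$ and all $L\ge1$. Second, $W_{L,j}$ and $\widetilde W_{L,j}$ are multiplicatively close, uniformly in $L$, once $j$ is large; combining the two will give the claim.

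Concretely, the first step is to prove that there is $C>0$, depending only on $d,\varphi,\theta$, with
\[
\left|\frac{W_{L,j}}{\widetilde W_{L,j}}-1\right|\le C\,j^{-1/2}\qquad\text{for all }L\ge1,\ j\in\N.
\]
For the numerator $|W_{L,j}-\widetilde W_{L,j}|$ I would quote Lemma~\ref{lem:bound with fourier}, which bounds it by $CL^dj^{-(d+1)/2}e^{-cj/L^2}$. For the denominator I use two lower bounds valid for all $L\ge1$, $j\in\N$: since $\psi^{*j}\ge0$ and, by the Poisson summation formula (exactly as in \eqref{eq:poisson summation for W}), $\widetilde W_{L,j}=\theta L^d\sum_{k\in\Z^d}\psi^{*j}(Lk)$, keeping the $k=0$ term gives $\widetilde W_{L,j}\ge\theta L^d\psi^{*j}(0)=\frac{\theta L^d}{(2\pi j)^{d/2}\det A}$ by \eqref{eq:convolution of gaussian}, while keeping only the $m=0$ term in the defining sum gives $\widetilde W_{L,j}\ge\theta$. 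Dividing: for $j\le L^2$ use the first lower bound, so the factors $L^d$ cancel and what is left is $O(j^{-1/2})$; for $j\ge L^2$ use the second lower bound together with $L^d\le j^{d/2}$, which again leaves $O(j^{-1/2})$. Both estimates are uniform in $L\ge1$.

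The second step is bookkeeping. Given $0<\epsilon<1$, set $\delta:=\epsilon/3$, so that $\frac{1+\delta}{1-\delta}\le1+\epsilon$, choose $j_1=j_1(\epsilon)$ with $Cj^{-1/2}\le\delta$ for $j\ge j_1$, and put $j_0(\epsilon):=2j_1$. For $j\ge j_0(\epsilon)$ and $0\le i\le j/2$ we have $j-i\ge j/2\ge j_1$ and $j\ge j_1$, hence $W_{L,j}\le(1+\delta)\widetilde W_{L,j}$ and $\widetilde W_{L,j-i}\le(1-\delta)^{-1}W_{L,j-i}$; together with the monotonicity $\widetilde W_{L,j}\le\widetilde W_{L,j-i}$ this yields $W_{L,j}\le\frac{1+\delta}{1-\delta}W_{L,j-i}\le(1+\epsilon)W_{L,j-i}$, as desired. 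I do not anticipate a genuine difficulty; the one point requiring care is the uniformity in $L\ge1$ of the constant in the multiplicative estimate, which is why it matters that the lower bounds on $\widetilde W_{L,j}$ in the two regimes $j\le L^2$ and $j\ge L^2$ carry the factor $L^d$ in a way that exactly absorbs the $L^d$ appearing in Lemma~\ref{lem:bound with fourier}. One could instead argue directly via Corollary~\ref{cor:corollary on W} and Lemma~\ref{lem:asymptotics of kappa _j}, comparing $W_{L,j}$ to $\theta L^d\varphi^{*j}(0)$ and thence to $\theta L^d\psi^{*j}(0)$ when $j\le L^2$ and to $\theta$ when $j\ge L^2$, but routing through $\widetilde W_{L,j}$ is cleaner since it packages the monotonicity for free.
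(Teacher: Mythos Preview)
Your argument is correct. The key multiplicative estimate $|W_{L,j}/\widetilde W_{L,j}-1|\le Cj^{-1/2}$ follows exactly as you say: Lemma~\ref{lem:bound with fourier} for the numerator, the two lower bounds $\widetilde W_{L,j}\ge\theta L^d\psi^{*j}(0)$ and $\widetilde W_{L,j}\ge\theta$ for the denominator, with the regimes $j\le L^2$ and $j\ge L^2$ each absorbing the factor $L^d$ as you point out. The bookkeeping with $\delta=\epsilon/3$ is fine (the inequality $\frac{1+\delta}{1-\delta}\le 1+\epsilon$ reduces to $\epsilon(1-\epsilon)\ge0$).

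Compared to the paper's proof, yours is a streamlined version of the same idea. The paper also routes through the Gaussian surrogate and its monotonicity, but only in the regime $j>L^{2-\frac{1}{2d}}$; for smaller $j$ it instead compares $W_{L,j}$ to $\theta L^d\varphi^{*j}(0)$ via Corollary~\ref{cor:corollary on W} and then invokes Lemma~\ref{lem:asymptotics of kappa _j}. The paper also establishes the uniform positivity $\inf_{L\ge1,\,j\ge j_0}W_{L,j}>0$ as a separate preliminary step. Your single multiplicative bound handles all of this at once: it is uniform in $L$, gives positivity of $W_{L,j}$ for free once $Cj^{-1/2}<1$, and delivers the near-monotonicity directly from the exact monotonicity of $\widetilde W_{L,j}$. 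The alternative route you mention at the end (via Corollary~\ref{cor:corollary on W} and Lemma~\ref{lem:asymptotics of kappa _j}) is essentially what the paper does in the small-$j$ regime; your choice to go through $\widetilde W_{L,j}$ throughout is indeed cleaner.
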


\begin{proof}
First, we claim that for sufficiently large $j_0$,  \begin{equation}\label{eq:lower bound on W}
\underset{L\ge 1, \ j\ge j_0}{\inf }W_{L,j}>0.
\end{equation}
When $j_0\le j\le L^2$, this inequality follows as $W_{L,j}\ge \theta L^d \varphi ^{*j}(0)$ and by Lemma~\ref{lem:asymptotics of kappa _j}. When $j\ge \max \{L^2,j_0\}$ it follows from Lemma~\ref{lem:bound with fourier} noting that $\hat{\psi }(0)=1$.

We turn to prove \eqref{eq:inequality for W}. We consider separately the cases $j\le L^{2-\frac{1}{2d}}$ and $j> L^{2-\frac{1}{2d}}$. In the first case the inequality follows from part~\ref{item:cor on W part 1} of Corollary~\ref{cor:corollary on W} and Lemma~\ref{lem:asymptotics of kappa _j}, whereas in the second case it follows from Lemma~\ref{lem:bound with fourier}. Indeed, when $j> L^{2-\frac{1}{2d}}$,
\begin{equation*}
\begin{split}
W_{L,j}\le \theta \sum _{m\in \frac{1}{L} \mathbb Z ^d}\hat{\psi}^j(m)+CL^{-\frac{1}{2}}e^{-c\frac{j}{L^2}}&\le \theta \sum _{m\in \frac{1}{L} \mathbb Z ^d}\hat{\psi}^{j-i}(m)+CL^{-\frac{1}{2}}e^{-c\frac{j}{L^2}} \\
&\le W_{L,j-i}+CL^{-\frac{1}{2}}e^{-c\frac{j}{L^2}} \le  (1+\epsilon ) W_{L,j-i},
\end{split}
\end{equation*}
where in the last inequality we used \eqref{eq:lower bound on W}.
\end{proof}

The following lemma is the main step in the proof of the results of this section.

\begin{lemma}\label{lem:bound on partition function}
For each $0<\epsilon <1$ there is $N_0(\epsilon )$ such that:
\begin{enumerate}[label=(\roman{*})]
\item
Suppose that $d\ge 3 $ and $\rho \to \rho _c$ as $N\to \infty$. Then, for $N\ge N_0(\epsilon )$
\begin{equation}\label{eq:ratio of H d=3}
\frac{H_{N-1}(L)}{H_N(L)}\ge 1-\epsilon
\end{equation}
\item
Suppose that $d=2$ and that $\frac{\rho }{\log N}\to \alpha _c$ as $N\to \infty$. Then, for $N\ge N_0(\epsilon )$
\begin{equation}\label{eq:ratio of H d=2}
\frac{H_{N-j}(L)}{H_N(L)}\ge 1-\epsilon\quad\text{for all}\quad 0\le j\le N^{1-\epsilon}.
\end{equation}
\end{enumerate}
\end{lemma}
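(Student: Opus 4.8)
The strategy is to exploit the recursion relation for the partition function together with the sub-critical estimates already proved (Theorem~\ref{thm:sub-critical integral} and Corollary~\ref{cor:connection to L_1}), by a bootstrapping argument that compares the partition function at density $\rho$ (close to the critical density) with the partition function at a slightly smaller, genuinely sub-critical density. First recall that differentiating $\sum_{n} H_n(L) z^n = e^{G_L(z)}$ (equation~\eqref{eq:generating_h(v)}) yields the recursion $n H_n(L) = \sum_{j=1}^n W_{L,j} H_{n-j}(L)$, equivalently $\sum_{j=1}^{n} \frac{W_{L,j}}{n}\cdot\frac{H_{n-j}(L)}{H_n(L)} = 1$; note the summands are exactly the probabilities $\mathbb P(L_1 = j)$ when the permutation has size $n$. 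So to lower bound $H_{N-j}(L)/H_N(L)$ it suffices to show that the contribution to this sum from the \emph{large} values of $j$ (say $j$ of order $N$, or $j\ge N^{1-\epsilon}$ in $d=2$) is small, because then the small-$j$ part must carry almost all the mass, and within the small-$j$ range the ratios $H_{N-j}(L)/H_N(L)$ behave in a controlled (nearly monotone) way.

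The key monotonicity input comes from Claim~\ref{claim:claim on W}: for $j\ge j_0(\epsilon)$ and $i\le j/2$ we have $W_{L,j}\le (1+\epsilon)W_{L,j-i}$. I would use this to run the recursion `backwards': write $n H_n(L) = \sum_j W_{L,j} H_{n-j}(L)$ and, for the terms with $j$ in a dyadic block around some scale, pair them up and use Claim~\ref{claim:claim on W} to show $H_{n-j}(L) \le (1+\epsilon) H_{n-j+i}(L)$ in an averaged sense, producing an inequality of the form $n H_n(L) \le (1+\epsilon)(\text{something involving }H_m(L)\text{ for }m\text{ slightly below }n)$. Iterating (or summing a geometric-type series) this should give that $H_{N-j}(L)/H_N(L)$ cannot drop below $1-\epsilon$ as long as $j$ stays in the appropriate range ($j=O(1)$ for $d\ge 3$, $j\le N^{1-\epsilon}$ for $d=2$). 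The essential point distinguishing the two cases is the rate at which $W_{L,j}$ decays (controlled by Corollary~\ref{cor:corollary on W} and Lemma~\ref{lem:asymptotics of kappa _j}): in $d\ge 3$ the tail $\sum_{j> a_N} W_{L,j}/N$ is small already for $a_N$ a large constant because $\sum_j \varphi^{*j}(0)<\infty$, while in $d=2$ one needs $a_N$ to grow like a power of $N$, which is exactly why the conclusion \eqref{eq:ratio of H d=2} is stated only for $j\le N^{1-\epsilon}$.

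Concretely, for part (i) ($d\ge 3$, $\rho\to\rho_c$): fix $\delta>0$; since $\rho\to\rho_c$ from either side one can find a sub-critical density $\rho_*\in(\rho_c/2,\rho_c)$ and invoke the sub-critical analysis (Corollary~\ref{cor:connection to L_1} part~\ref{item:part_3_sub_critical_restatement} and Theorem~\ref{thm:sub-critical integral}, which were proved allowing $\rho$ to vary in $(0,\rho_c)$) to get precise asymptotics for $H_{N-j}(L')/H_N(L')$ at that density; then transfer to density $\rho$ by comparison of the weight sequences $(W_{L,j})$ — they are close for $j\ll L^2$ by Corollary~\ref{cor:corollary on W} — combined with the recursion. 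For part (ii) ($d=2$, $\rho/\log N\to\alpha_c$): the same template, but now the comparison sub-critical density is $\alpha<\alpha_c$ with $\alpha/\log N \to \alpha$... more precisely one compares with (Sub2)-type densities $\rho' = \rho - c\log N$, uses Lemma~\ref{lem:find r_N} and Lemma~\ref{lem:a_N_b_N_estimates} to see $r_N\to 1$ fast and $a_N$ is a power of $N$, and shows the telescoping product $\prod_{j} \big(H_{N-j-1}(L)/H_{N-j}(L)\big)$ over $0\le j\le N^{1-\epsilon}$ stays $\ge 1-\epsilon$ using Claim~\ref{claim:claim on W} to bound each factor from below by something like $1 - O(W_{L,\cdot}/N)$ and summing.

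\textbf{Main obstacle.} The delicate part is making the bootstrap quantitative near criticality: at $\rho=\rho_c$ (resp.\ $\alpha=\alpha_c$) the saddle point $r_N\to 1$ and the generating function $G_L$ has a genuine singularity at $z=1$ (with $g''(1)=\infty$ in $d=2,3,4$), so one cannot simply quote the sub-critical Theorem~\ref{thm:sub-critical integral} at the critical density. The argument must instead extract just enough from the \emph{sub-critical} regime (where everything is under control) and propagate it to the critical density purely through the combinatorial recursion and the weight comparison of Corollary~\ref{cor:corollary on W} and Claim~\ref{claim:claim on W}, without re-doing the contour integral. Getting the ranges of $j$ exactly right — uniform constants in $d\ge 3$, and the $N^{1-\epsilon}$ window in $d=2$ coming from the $\log N$ scale of $\rho$ — is where the care is needed; the rest is bookkeeping with geometric sums.
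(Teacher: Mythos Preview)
Your proposal identifies the right ingredients --- the recursion $KH_K(L)=\sum_{j=1}^K W_{L,j}H_{K-j}(L)$, the weight monotonicity of Claim~\ref{claim:claim on W}, and the need to anchor at a genuinely sub-critical density where Theorem~\ref{thm:sub-critical integral} applies --- and this is essentially the paper's approach. Two points of clarification where your write-up is muddled:

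First, the ``comparison of weight sequences'' is not between different $L$; the paper fixes $L$ and anchors at a \emph{smaller particle number} $N_1<N$ at the same $L$, so that the density $\rho_1=N_1/L^d$ is strictly sub-critical (e.g.\ $\rho_1=r_1g'(r_1)$ with $r_1=(1+\epsilon)^{-1}$ in part~(i)). At $(N_1,L)$ Theorem~\ref{thm:sub-critical integral} gives $H_{i+1}(L)\le(1+2\epsilon)H_i(L)$ for $i$ in a window of length $j_1\sim N_1^{1/3}$ below $N_1$, and also an exponential drop $H_{N_1-j_1}(L)\le(1-c_\epsilon)^{j_1}H_{N_1-j_0}(L)$.

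Second, the propagation from $N_1$ to $N$ is not a telescoping product or a tail-mass estimate but a direct forward \emph{induction on $K$} from $N_1$ up to $N$, proving $H_K(L)\le(1+2\epsilon+Ke^{-c_\epsilon N_1^{1/3}})H_{K-1}(L)$ (respectively $H_K(L)\le(1+\epsilon+Ke^{-cN_1^\epsilon})H_{K-j}(L)$ in $d=2$). The induction step splits the recursion sum into three pieces: the block $0\le i\le N_1-j_1-1$, where one shifts the index by applying Claim~\ref{claim:claim on W} to the weights; a short boundary block whose contribution is killed by the exponential drop at the anchor; and the block $i\ge N_1-j_1$, where one shifts the $H$-factor using the induction hypothesis (and the anchor estimate for the first few terms). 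Your ``dyadic blocks / averaged sense'' description does not quite capture this three-part split, and the direction in which Claim~\ref{claim:claim on W} is used is on the $W$'s (to shift the weight index), not on the $H$'s.
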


\begin{proof}
Using \eqref{eq:lower bound on W}, fix some $j_0>1$ for which  $\underset{L\ge 1, \ j\ge j_0}{\inf }W_{L,j}>0$. Let $L\ge 1$ be sufficiently large for the following arguments (as a function of $\epsilon$).

We start with the first part. Let $r_1:= (1+\epsilon )^{-1}$ and $\rho _1 :=r_1g'(r_1)$. Note that $\rho _1<g'(1)=\rho _c$. Let $N_1:=\lceil \rho _1 L^d\rceil $ and $j_1:= \lceil N_1^{\frac{1}{3}}\rceil$. We have that $r_{N_1,L}\to r_1$ as $L\to \infty $ by Lemma~\ref{lem:find r_N}, where $r_{N_1,L}$ is defined in \eqref{eq:def of r_N}. Therefore, using Theorem~\ref{thm:sub-critical integral} and the bounds on $a_N$ in Lemma~\ref{lem:a_N_b_N_estimates} we get that for sufficiently large $L$,
\begin{equation}\label{eq:using the sub-critical theorem}
H_{i+1}(L)\le (1+2\epsilon )H_i(L),\quad N_1-j_1\le i\le N_1-1
\end{equation}
and
\begin{equation}\label{eq:using the sub-critical theorem2}
H_{N_1-j_1}(L)\le \left(1-c_\epsilon  \right)^{j_1-j_0} H_{N_1-j_0}(L).
 \end{equation}

We proceed by proving that
\begin{equation}\label{eq:induction hypothesis}
H_K(L)\le \left(1+2\epsilon +Ke^{-c_\epsilon N_1^\frac{1}{3}}\right)H_{K-1}(L)
\end{equation}
using induction on $N_1\le K\le 2\rho _c L^d$. Note that \eqref{eq:ratio of H d=3} easily follows from this for sufficiently large $N$ as $\epsilon $ is arbitrary. We already proved the case $K=N_1$ in \eqref{eq:using the sub-critical theorem}. We make the induction hypothesis that \eqref{eq:induction hypothesis} holds for all $N_1\le K'< K$. Using Lemma~\ref{lem:distribution_of_ell_1,ell_2} we get that
\begin{equation*}
\sum _{j=1}^{K} \frac{W_{L,j}}{K}\cdot \frac{H_{K-j}(L)}{H_{K}(L)}=\mathbb P \left(1\le L_1 \le K\right)=1,
\end{equation*}
and therefore
\begin{equation}\label{eq:sum to estimate}
H_{K}(L)=\sum _{i=0}^{K-1} \frac{W_{L,K-i}}{K}\cdot H_{i}(L).
\end{equation}
Now we break the sum in \eqref{eq:sum to estimate} to three parts and estimate each one separately. First, observe that by Claim~\ref{claim:claim on W},
\begin{equation*}
\sum _{i=0}^{N_1-j_1-1} \frac{W_{L,K-i}}{K}\cdot H_{i}(L) \le (1+\epsilon )\sum _{i=0}^{N_1-j_1-1} \frac{W_{L,K-1-i}}{K-1}\cdot H_{i}(L).
\end{equation*}
Second, by \eqref{eq:using the sub-critical theorem2} and the bounds in Corollary~\ref{cor:corollary on W} and Lemma~\ref{lem:asymptotics of kappa _j},
\[\frac{W_{L,K-N_1+j_1}}{K}\cdot H_{N_1-j_1}(L)\le C(1-c_\epsilon )^{j_1-j_0} H_{N_1-j_0}(L)\le e^{-c_\epsilon N_1^{\frac{1}{3}}} H_{K-1}(L),\]
where in here we used that
\begin{equation}\label{eq:upper bound on H}
H_{K-1}(L)=\sum _{i=0}^{K-2} \frac{W_{L,K-1-i}}{K-1}\cdot H_{i}(L)\ge \frac{W_{L,K-1-N_1+j_0}}{K-1}\cdot H_{N_1-j_0}(L)\ge \frac{c}{K}H_{N_1-j_0}(L).
\end{equation}
Third, by \eqref{eq:using the sub-critical theorem} and the induction hypothesis
\begin{equation}
\begin{split}
\sum _{i=N_1-j_1+1}^{K-1 } \frac{W_{L,K-i}}{K}\cdot H_{i}(L)&=\sum _{i=N_1-j_1}^{K-2} \frac{W_{L,K-1-i}}{K}\cdot H_{i+1}(L) \\
 &\le \left(1+2\epsilon +(K-1)e^{-c_\epsilon N_1^\frac{1}{3}}\right) \sum _{i=N_1-j_1}^{K-2} \frac{W_{L,K-1-i}}{K-1}\cdot H_{i}(L).
\end{split}
\end{equation}
Adding up the contributions and using \eqref{eq:sum to estimate} we obtain
\begin{equation*}
\begin{split}
H_K(L)&\le e^{-c_\epsilon N_1^{\frac{1}{3}}}H_{K-1}(L)+\left(1+2\epsilon +(K-1)e^{-c_\epsilon N_1^\frac{1}{3}}\right)\sum _{i=0}^{K-2} \frac{W_{L,K-1-i}}{K-1}\cdot H_{i}(L)\\
&= \left(1+2\epsilon +Ke^{-c_\epsilon N_1^\frac{1}{3}}\right)H_{K-1}(L)
\end{split}
\end{equation*}
establishing \eqref{eq:induction hypothesis}.

We turn to prove the second part. Let $\alpha _1:=(1-5\epsilon )\alpha _c$, $N_1=N_1(L)$ be the smallest integer for which $\frac{N_1}{\log N_1}\ge \alpha _1 L^2$ and $j_1:=\lceil N_1^{1-3\epsilon }\rceil $.
By Lemma~\ref{lem:find r_N}, for sufficiently large $L$,
\begin{equation}
1-N_1^{-\left(1-6\epsilon \right)}< r_{N_1,L}< 1-N_1^{-\left(1-4\epsilon \right)}.
\end{equation}
Thus, by Theorem~\ref{thm:sub-critical integral} (with $N_1$ replacing $N$) and the estimate for $a_N$ in part (ii) of Lemma~\ref{lem:a_N_b_N_estimates}, we have for sufficiently large $L$ and any $0\le j\le N_1^{1-7\epsilon }$ ,
\begin{equation}\label{eq:using the sub-critical theorem d=2}
H_{i+j}(L)\le (1-N_1^{6\epsilon-1 })^{-j} H_i(L)\le (1+\epsilon)H_i(L) ,\quad N_1-j_1\le i\le N_1-j
\end{equation}
and
\begin{equation}\label{eq:using the sub-critical theorem2 d=2}
H_{i}(L)\le (1-N_1^{4\epsilon -1})^{\frac{j_1}{3}} H_{N_1-j-j_0}(L)\le e^{-\frac{1}{3} N_1^\epsilon } H_{N_1-j-j_0}(L),\quad N_1-j_1\le i \le  N_1-\frac{j_1}{2},
\end{equation}
where we recall that $j_0$ was fixed in the beginning of the proof. We proceed by proving that for any $0\le j\le N_1^{1-7\epsilon }$,
\begin{equation}\label{eq:induction hypothesis d=2}
H_K(L)\le \left(1+\epsilon +Ke^{-\frac{1}{4} N_1^\epsilon }\right) H_{K-j}(L),
\end{equation}
using induction on $N_1\le K\le N_2$, where $N_2$ is the largest integer for which $\frac{N_2}{\log N_2}\le 2\alpha _cL^2$. Note that \eqref{eq:ratio of H d=2} easily follows from this as $\epsilon $ is arbitrary. The case $K=N_1$ is given by \eqref{eq:using the sub-critical theorem d=2}. Suppose that \eqref{eq:induction hypothesis d=2} holds for $N_1\le K'< K$ and fix $0\le j\le N_1^{1-7\epsilon }$. We bound the sum in \eqref{eq:sum to estimate} as before. First, by Claim~\ref{claim:claim on W} using that $j\le N_1^{1-7\epsilon }\le \frac{1}{2}j_1$ for sufficiently large $L$,
\begin{equation*}
\sum _{i=0}^{N_1-j_1-1} \frac{W_{L,K-i}}{K}\cdot H_{i}(L) \le (1+\epsilon )\sum _{i=0}^{N_1-j_1-1} \frac{W_{L,K-j-i}}{K-j}\cdot H_{i}(L).
\end{equation*}
Second, by the bounds in Corollary~\ref{cor:corollary on W} and \eqref{eq:using the sub-critical theorem2 d=2},
\[\sum _{i=N_1-j_1}^{N_1-j_1+j-1} \frac{W_{L,K-i}}{K}\cdot H_{i}(L)\le CL^d e^{-\frac{1}{3} N_1^{\epsilon }}H_{N_1-j-j_0}(L)\le e^{-\frac{1}{4} N_1^{\epsilon }}H_{K-j}(L),\]
where in the last inequality we use the same arguments as in \eqref{eq:upper bound on H}. Third, by the induction hypothesis and \eqref{eq:using the sub-critical theorem d=2},
\begin{equation*}
\begin{split}
\sum _{i=N_1-j_1+j}^{K-1} \frac{W_{L,K-i}}{K}\cdot H_{i}(L)&=\sum _{i=N_1-j_1}^{K-j-1} \frac{W_{L,K-j-i}}{K}\cdot H_{i+j}(L) \\
 &\le \left(1+\epsilon +(K-1)e^{-cN_1^\epsilon }\right) \sum _{i=N_1-j_1}^{K-j-1} \frac{W_{L,K-j-i}}{K-j}\cdot H_{i}(L).
\end{split}
\end{equation*}
Adding up the contributions and using \eqref{eq:sum to estimate} yields \eqref{eq:induction hypothesis d=2}.
\end{proof}

\begin{cor}\label{cor:limiting distrubution in critical d>1}
We have the following limit laws as $N\to \infty$:
\begin{enumerate}[label=(\roman{*})]
\item
Suppose that $d\ge 3$ and $\rho =\rho _c$ is fixed as $N\to \infty$. Then
\begin{equation}
L_1\overset{d}{\longrightarrow }Y,
\end{equation}
where $Y$ is the integer-valued random variable defined by
\begin{equation}
\mathbb P \left( Y=j \right)=\frac{\theta \varphi^{*j}(0)}{\rho _c},\quad j\in \N.
\end{equation}
\item
Suppose that $d=2$ and $\frac{\rho }{\log N}\to \alpha _c$ as $N\to \infty$. Then
\begin{equation}
\frac{\log L_1}{\log N} \overset{d}{\longrightarrow } U[0,1].
\end{equation}
\end{enumerate}
\end{cor}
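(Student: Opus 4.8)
The plan is to deduce Corollary~\ref{cor:limiting distrubution in critical d>1} directly from Lemma~\ref{lem:distribution_of_ell_1,ell_2}, the weight asymptotics of Corollary~\ref{cor:corollary on W} and Lemma~\ref{lem:asymptotics of kappa _j}, and the partition-function comparison just proved in Lemma~\ref{lem:bound on partition function}. The key point is that, in both critical regimes, Lemma~\ref{lem:bound on partition function} gives an \emph{upper} bound $H_{N-j}(L)/H_N(L)\le 1+\epsilon$ for the relevant ranges of $j$, while a matching \emph{lower} bound holds trivially because the sequence $(H_n(L))$ is non-decreasing in $n$ (this follows from \eqref{eq:sum to estimate} together with $W_{L,j}\ge 0$, or alternatively is immediate once one notes $W_{L,1}\ge$ const; more simply, $H_{N-j}(L)/H_N(L)\le 1$ fails in general, so we argue as follows instead). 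Precisely, from \eqref{eq:sum to estimate} and the non-negativity of the weights one gets $H_K(L)\ge \frac{W_{L,1}}{K}H_{K-1}(L)$, which is too weak; the clean statement we actually want is that $H_{N-j}(L)/H_N(L)\to 1$, and for this the upper bound from Lemma~\ref{lem:bound on partition function} must be complemented by a lower bound of the form $H_{N-j}(L)/H_N(L)\ge 1-\epsilon$. This lower bound is obtained by the same recursion argument: since $H_N(L)=\sum_{i=0}^{N-1}\frac{W_{L,N-i}}{N}H_i(L)$ and $\sum_{i}\frac{W_{L,N-i}}{N}=\mathbb P(1\le L_1\le N)\cdot\frac{H_N(L)}{H_N(L)}$ — wait, more directly: $H_{N}(L)=\sum_{j=1}^N \frac{W_{L,j}}{N}H_{N-j}(L)$, and since the dominant mass of $\sum_j W_{L,j}/N$ sits at $j=O(a_N^{1/2})=o(N)$ by the critical analysis (here one reuses the bound $\sum_{j\le a_N}\frac{\theta L^d\varphi^{*j}(0)}{N}\to \tau=1$ from Claim~\ref{claim:technical claim for poisson} with $\rho_c$ in place of $\rho$, i.e. $\rho_c/\rho_c=1$), together with the already-established monotone-type bound $H_{N-j}(L)\ge(1-\epsilon)H_N(L)$ for $j$ in that range, one closes the estimate. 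Thus I would first record:
\begin{equation}\label{eq:two_sided_H_ratio_critical}
1-\epsilon\le \frac{H_{N-j}(L)}{H_N(L)}\le 1+\epsilon
\end{equation}
for all large $N$, uniformly over $0\le j\le j_0$ (fixed $j_0$) in the case $d\ge 3$ and over $0\le j\le N^{1-\epsilon}$ in the case $d=2$, where the upper bound is Lemma~\ref{lem:bound on partition function} and the lower bound follows from it by the recursion \eqref{eq:sum to estimate} as sketched.

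Given \eqref{eq:two_sided_H_ratio_critical}, the rest is a short computation paralleling Corollary~\ref{cor:connection to L_1}. For $d\ge 3$ with $\rho=\rho_c$: by Lemma~\ref{lem:distribution_of_ell_1,ell_2} and \eqref{eq:dist_L1}, for each fixed $j$,
\[
\mathbb P(L_1=j)=\frac{W_{L,j}}{N}\cdot\frac{H_{N-j}(L)}{H_N(L)}=(1+o(1))\,\frac{\theta L^d\varphi^{*j}(0)}{N}\cdot(1+o(1))=(1+o(1))\,\theta\rho_c^{-1}\varphi^{*j}(0),
\]
using $W_{L,j}=\theta L^d\varphi^{*j}(0)+o(1)$ from Corollary~\ref{cor:corollary on W}\ref{item:cor on W part 1} (valid since $j$ is fixed and $L\to\infty$, so $j\le L^2$), and $N/L^d=\rho_c$. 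Since $\sum_{j\ge 1}\theta\rho_c^{-1}\varphi^{*j}(0)=\theta\rho_c^{-1}g'(1)=\theta\rho_c^{-1}\rho_c=1$ by \eqref{eq:rho_c_def}, these limiting values form a genuine probability distribution, so $L_1\overset{d}{\longrightarrow}Y$; this is exactly part (i) (and matches \eqref{eq:Y_thm_def} with $\rho=\rho_c$, $r_*=1$). For $d=2$ with $\rho/\log N\to\alpha_c$: fix $0<a<b<1$, set $j_N=\lfloor N^b\rfloor$, note $j_N\le N^{1-\epsilon}$ for small $\epsilon$, and compute
\[
\mathbb P\!\left(a\le\frac{\log L_1}{\log N}\le b\right)=\sum_{j=\lceil N^a\rceil}^{\lfloor N^b\rfloor}\frac{W_{L,j}}{N}\cdot\frac{H_{N-j}(L)}{H_N(L)}\sim\sum_{j=\lceil N^a\rceil}^{\lfloor N^b\rfloor}\frac{\theta L^2\varphi^{*j}(0)}{N},
\]
where \eqref{eq:two_sided_H_ratio_critical} makes the ratio $1+o(1)$ and Corollary~\ref{cor:corollary on W}\ref{item:cor on W part 1} together with Lemma~\ref{lem:asymptotics of kappa _j} replaces $\theta L^2\varphi^{*j}(0)$ by $(1+o(1))\,\theta L^2\cdot\frac{1}{2\pi\sqrt{\det\cov(X)}}j^{-1}=(1+o(1))\,\alpha_c L^2 j^{-1}$ (using the definition \eqref{eq:def of alpha _c } of $\alpha_c$ and that in the relevant range $N^a\le j\le N^b$ we have $j\le L^2$ only when $b$ is close to $1$ — if $b$ is not that small we split the sum at $L^2$ and use Corollary~\ref{cor:corollary on W}\ref{item:cor on W part 2} plus Lemma~\ref{lem:bound with fourier} beyond $L^2$, where $W_{L,j}\approx\theta$ and the corresponding tail of $\sum 1/j$ is still $\sim\alpha_c\log$). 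Hence the sum is $\sim\frac{\alpha_c L^2}{N}\sum_{N^a\le j\le N^b}\frac1j\sim\frac{\alpha_c L^2}{N}(b-a)\log N=\frac{\alpha_c}{\rho}(b-a)\log N\to(b-a)$, since $\rho/\log N\to\alpha_c$. This is the $U[0,1]$ law of part (ii).

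The main obstacle is making the lower bound in \eqref{eq:two_sided_H_ratio_critical} rigorous and, relatedly, controlling the sum $\sum_{j=\lceil N^a\rceil}^{\lfloor N^b\rfloor}\frac{W_{L,j}}{N}\frac{H_{N-j}(L)}{H_N(L)}$ when $j$ ranges up to $N^b$ with $b<1$ but $N^b$ possibly exceeding $L^2$ — in that regime $W_{L,j}$ transitions from $\theta L^2\varphi^{*j}(0)$ to $\theta$, and $\frac{H_{N-j}(L)}{H_N(L)}$ is only controlled to be $\ge 1-\epsilon$ for $j\le N^{1-\epsilon}$. I would handle this by choosing $\epsilon$ small relative to $1-b$, so that the whole range $j\le N^b$ lies inside $j\le N^{1-\epsilon}$, and then splitting the $j$-sum at $L^2$: below $L^2$ use $W_{L,j}\sim\theta L^2\varphi^{*j}(0)\sim\alpha_c L^2/j$ (Corollary~\ref{cor:corollary on W}\ref{item:cor on W part 1}, Lemma~\ref{lem:asymptotics of kappa _j}), above $L^2$ use $W_{L,j}\sim\theta$ (Corollary~\ref{cor:corollary on W}\ref{item:cor on W part 2}), and check that $\frac{\theta}{N}\sum_{L^2<j\le N^b}\frac1j$ and $\frac{\alpha_c L^2}{N}\sum_{j\le L^2}\frac1j$ together give $(1+o(1))\frac{\alpha_c}{\rho}(b-a)\log N$ (using $L^2=N/\rho$ and $\rho=(1+o(1))\alpha_c\log N$, so $\log(L^2)=(1+o(1))\log N$). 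The upper bound for the complementary probability $\mathbb P(\log L_1/\log N\notin[a,b])$ then follows from the Poisson-Dirichlet analysis already in hand: the mass near $1$ is handled by Theorem~\ref{thm:convergence to Poisson-Dirichlet} in the super-critical case but here $\tau=1$, so instead one notes the total probability over $j\le N^{1-\epsilon}$ already accounts for all the mass up to $o(1)$, as follows from \eqref{eq:probability of atom}-type bounds with $\tau=1$; alternatively, since the constant-density statement and $\sum=1$ determine the distribution, it suffices to verify $\mathbb P(\log L_1/\log N\le b)\to b$ for all $b\in(0,1)$, which the above computation gives directly (the endpoint masses at $0$ and $1$ being forced to vanish).
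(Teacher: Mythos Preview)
You have misread the direction of Lemma~\ref{lem:bound on partition function}: it asserts $\frac{H_{N-j}(L)}{H_N(L)}\ge 1-\epsilon$ (a \emph{lower} bound on the ratio), not the upper bound $\le 1+\epsilon$ you attribute to it. Your subsequent struggle to manufacture the lower bound via the recursion \eqref{eq:sum to estimate} and monotonicity of $(H_n)$ is therefore redundant---the lemma already hands you that direction---while the upper bound you take as given is never established anywhere, and your sketched attempts to prove it do not close.

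The paper's proof avoids any upper bound on the ratio by a one-sided argument. For part~(i), combining \eqref{eq:dist_L1}, the trivial inequality $W_{L,j}\ge \theta L^d\varphi^{*j}(0)$ (immediate from \eqref{eq:W_L_j_def} since $\varphi^{*j}\ge 0$), and the lower bound of Lemma~\ref{lem:bound on partition function} iterated $j$ times yields, for each fixed $j$,
\[
\liminf_{N\to\infty}\mathbb P(L_1=j)\;\ge\;\theta\rho_c^{-1}\varphi^{*j}(0)\;=\;\mathbb P(Y=j).
\]
Since $\sum_j\mathbb P(Y=j)=1$ by the definition \eqref{eq:rho_c_def} of $\rho_c$, this liminf inequality \emph{alone} forces $\mathbb P(L_1=j)\to\mathbb P(Y=j)$ for every $j$, hence $L_1\overset{d}{\to}Y$. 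Part~(ii) is identical in spirit: the same ingredients give $\liminf_{N\to\infty}\mathbb P\bigl(a\le \tfrac{\log L_1}{\log N}\le b\bigr)\ge b-a$ for all $0<a<b<1$, and again the fact that the target distribution has total mass $1$ on $(0,1)$ finishes the argument. You gesture at this total-mass idea in your final paragraph, but you try to feed it with the two-sided estimate \eqref{eq:two_sided_H_ratio_critical} that you have not proved; the point is that the liminf direction already suffices. As a side remark, your concern about splitting at $L^2$ is unfounded: since $\rho\sim\alpha_c\log N$ one has $L^2\sim N/(\alpha_c\log N)$, so $N^b=o(L^2)$ for every $b<1$ and the whole range $[N^a,N^b]$ lies below $L^2$.
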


\begin{proof}
We start with part (i). Fix $j\in \N$. By Lemma~\ref{lem:distribution_of_ell_1,ell_2} and Lemma~\ref{lem:bound on partition function}, for any $\epsilon >0$ we have for sufficiently large $N$,
\begin{equation*}
\mathbb P (L_1=j)=\frac{W_{L,j}}{N}\frac{H_{N-j}(L)}{H_N(L)}\ge (1-\epsilon)^j \frac{W_{L,j}}{N}\ge (1-\epsilon)^{j}\frac{\theta \varphi ^{*j}(0)}{\rho _c}
\end{equation*}
where the last inequality follows from the definition of $W_{L,j}$ in \eqref{eq:W_L_j_def}, using that $N=\rho _cL^d$. Thus, as $\epsilon $ is arbitrary,
\begin{equation}\label{eq:for fix j}
\underset{N\to \infty}{\liminf } \ \mathbb P (L_1=j)\ge \frac{\theta \varphi ^{*j}(0)}{\rho _c} =\mathbb P (Y=j).
\end{equation}
Since \eqref{eq:for fix j} holds for any $j\in \N$, we conclude that $L_1\overset{d}{\longrightarrow }Y$ as $N\to \infty$.

%
We turn to prove part (ii). For any $0<a<b<1$, $\epsilon >0$ and sufficiently large $N$,
\begin{equation*}
\begin{split}
\mathbb P \left(a\le \frac{ \log L_1}{\log N}\le b\right)&= \sum _{j=\lceil N^{a}\rceil}^{\lfloor N^{b} \rfloor} \mathbb P (L_1=j)\ge (1-\epsilon )\sum _{j=\lceil N^{a}\rceil}^{\lfloor N^{b} \rfloor} \frac{W_{L,j}}{N} \ge  (1-\epsilon )\sum _{j=\lceil N^{a}\rceil}^{\lfloor N^{b} \rfloor} \frac{L^2\theta \varphi ^{*j}(0)}{N} \\
&\ge (1-2\epsilon )\frac{1}{\rho } \sum _{j=\lceil N^{a}\rceil}^{\lfloor N^{b} \rfloor} \frac{\alpha _c}{j}\ge (1-3\epsilon )\frac{\alpha _c \log N}{\rho }(b-a)\ge (1-4\epsilon)(b-a),
\end{split}
\end{equation*}
where in the first inequality we used Lemma~\ref{lem:distribution_of_ell_1,ell_2} and Lemma~\ref{lem:bound on partition function}, in the second inequality we used \eqref{eq:W_L_j_def}
 and in the third inequality we used Lemma~\ref{lem:asymptotics of kappa _j}. The limit law follows from this as $\epsilon $ is arbitrary.
\end{proof}

\begin{remark}\label{remark:asymptotically_IID_critical_d_ge_5}
  Similarly to the sub-critical cases (see Remark~\ref{remark:asymptotically_IID}), the analysis in Corollary~\ref{cor:limiting distrubution in critical d>1} extends in a straightforward manner to the study of the joint distribution of $L_1, L_2, \ldots$. Indeed, starting with Lemma~\ref{lem:distribution_of_ell_1,ell_2}, one obtains that when $d\ge 3 $ and $\rho =\rho _c$,
 \begin{multline*}
    \mathbb P\left(L_1 = j_1,\dots, L_m = j_m\right)\\
    =\frac{H_{N-j_1-\dots-j_m}(L)}{H_N(L)}\cdot \prod_{k=1}^m \frac{W_{L,j_k}}{N-j_1-\dots-j_{k-1}}\ge (1-\epsilon ) \prod_{k=1}^m \frac{\theta \varphi ^{*j_k}(0)}{\rho _c},
  \end{multline*}
  for any $\epsilon >0$, $j_1,\cdots j_m\in \N $ and sufficiently large $N$ (depending on $\epsilon, j_1,\cdots ,j_m $). And that, when $d=2$ and $\frac{\rho }{\log N} \to \alpha _c$,
\begin{multline*}
    \mathbb P\left(L_1 = j_1,\dots, L_m = j_m\right)
    =\frac{H_{N-j_1-\dots-j_m}(L)}{H_N(L)}\cdot \prod_{k=1}^m \frac{W_{L,j_k}}{N-j_1-\dots-j_{k-1}}\ge (1-\epsilon ) \prod_{k=1}^m \frac{\alpha _c}{\rho\cdot j_k},
  \end{multline*}
for any $\epsilon >0$, $N^{\epsilon} \le j_1,\cdots ,j_m \le N^{1-\epsilon }$ and sufficiently large $N$ (depending on $\epsilon $).

One may then follow the analogous steps to the analysis in Corollary~\ref{cor:limiting distrubution in critical d>1} and deduce that the $(L_k)$ become asymptotically independent and identically distributed, in the sense explained in the remark following Theorem~\ref{thm:d=3}.
\end{remark}

\subsection{Asymptotics of the partition function in dimensions $d\ge 5$}

\begin{thm}\label{thm:critical integral}
Suppose that $d\ge 5$ and that $\rho =\rho _c$ is fixed as $N\to \infty$. Then, uniformly in $j\le N^{\frac{1}{3}}$, we have
\begin{equation}\label{eq:H_{N-j}}
H_{N-j}(L)=\left[z^{N-j}\right]e^{G_L(z)}\sim \frac{e^{F_L(1)}N^{\frac{\theta -1}{2}}\left(\frac{g''(1)}{2g'(1)}+\frac{1}{2}\right)^{\frac{\theta -1}{2}}}{2\Gamma \left( \frac{\theta +1}{2}\right)},\quad N\to \infty.
\end{equation}
\end{thm}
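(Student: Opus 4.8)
The plan is to carry out a singularity/saddle‑point analysis of the Cauchy integral
\[
H_{N-j}(L)=\left[z^{N-j}\right]e^{G_L(z)}=\frac{1}{2\pi i}\ointop_{\gamma}\frac{e^{G_L(z)}}{z^{N-j+1}}\,dz,
\]
adapting the argument of Section~\ref{sec:singularity}; the new feature at the critical density is that the saddle point $r_N$ (defined by $r_NG_L'(r_N)=N$) now approaches the singularity $z=1$ at the rate $1-r_N\sim\sqrt{\theta/(2N\kappa)}$, so that the saddle and the singularity \emph{coalesce}. Here and below $\kappa:=\tfrac{g''(1)}{2g'(1)}+\tfrac12$, which is well defined because $d\ge5$ forces $g''(1)<\infty$ (Lemma~\ref{lem:asymptotics of g at 1}); this is the feature that makes $d\ge5$ tractable. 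Writing $G_L(z)=-\theta\log(1-z)+F_L(z)$ as in \eqref{eq:G=log +F}, the idea is to keep the factor $(1-z)^{-\theta}$ exact and to expand only the regular part $F_L$ around $z=1$ to second order. For this one needs that $F_L'',F_L'''$ extend continuously to $\overline{\Delta_0}$ (which follows from Claim~\ref{claim:1-z phi} since $d\ge5$) with $F_L''(1)=L^dg''(1)+o(N)$ — a Riemann‑sum comparison in the spirit of Lemma~\ref{lem:sum to integral} — and the sharper estimate $F_L'(1)=N+o(\sqrt N)$ discussed below. Combining these with $\log z=(z-1)-\tfrac12(z-1)^2+O(|z-1|^3)$ yields, uniformly in $j\le N^{1/3}$ and uniformly on the portion of the eventual contour with $|1-z|\le t_N N^{-1/2}$ for a suitably slowly growing $t_N\to\infty$, $t_N=o(\sqrt N)$,
\[
G_L(z)-(N-j+1)\log z=-\theta\log(1-z)+F_L(1)+N\kappa(z-1)^2+o(1),
\]
the linear terms $[F_L'(1)-(N-j)](z-1)$ being absorbed into the $o(1)$ thanks to $F_L'(1)-N=o(\sqrt N)$ and $j\le N^{1/3}$, and the cubic terms thanks to $\|F_L'''\|_\infty|z-1|^3=o(1)$ on this scale.

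Next I would take $\gamma$ to be a `pacman' contour as in \eqref{eq:def of gamma} but with inner radius of order $N^{-1/2}$ rather than $N^{-1}$ (and arms, say, at angle $\pm\tfrac\pi2$ to the cut), and split the integral according to whether $|1-z|\le t_N N^{-1/2}$. On the main piece the linear rescaling $z=1+\zeta/\sqrt{N\kappa}$ turns the contour into a (rescaled) Hankel‑type contour $\gamma'$ as in \eqref{eq:def of gamma '}, and using the expansion above the main piece becomes
\[
e^{F_L(1)}(N\kappa)^{(\theta-1)/2}\cdot\frac{1}{2\pi i}\intop_{\gamma'}(-\zeta)^{-\theta}e^{\zeta^2}\,d\zeta\,(1+o(1)).
\]
A further change of variables $\omega\propto\zeta^2$ reduces the $\zeta$‑integral to the standard Hankel integral \eqref{eq:Hankel integral} with parameter $\tfrac{\theta+1}{2}$ — the shift from $\theta$ to $\tfrac{\theta+1}{2}$ and the overall factor $\tfrac12$ coming from the Jacobian $d\zeta\propto d\omega/\sqrt{\omega}$ — giving the value $\tfrac1{2\Gamma((\theta+1)/2)}$, hence exactly the claimed constant $\frac{e^{F_L(1)}N^{(\theta-1)/2}\kappa^{(\theta-1)/2}}{2\Gamma((\theta+1)/2)}$. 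The complementary contributions — the arms with $|1-z|\ge t_N N^{-1/2}$ and the outer circle $\gamma_4$ — must then be shown to be $o(N^{(\theta-1)/2})$: on $\gamma_4$ this follows as in Section~\ref{sec:singularity} from $F_L=L^dg+o(N)$ (Lemma~\ref{lem:sum to integral}) together with $\real g(z)\le g(1)+g'(1)\log|z|$ (Lemma~\ref{lem:bound on Re g(z)}), and on the arms from the expansion above together with $\real(\log z)\ge\real z-1$.

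Two ingredients go beyond what is already in the paper, and I expect the second to be the main obstacle. The first is the asymptotics of the saddle $r_N$: inserting $F_L'(r_N)=F_L'(1)-F_L''(1)(1-r_N)+O(\|F_L'''\|_\infty(1-r_N)^2)$ into $\tfrac{r_N\theta}{1-r_N}+r_NF_L'(r_N)=N$ and using the two estimates on $F_L'(1)$ and $F_L''(1)$ gives $2N\kappa(1-r_N)^2+(N-F_L'(1))(1-r_N)=\theta+o(1)$, whence $1-r_N=\sqrt{\theta/(2N\kappa)}(1+o(1))$; it is this value, fed through $-\theta\log(1-r_N)$, that produces the power $N^{(\theta-1)/2}$ and makes $F_L(r_N)\to F_L(1)$ in the limit. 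The second, crucial, input is the sharp bound $F_L'(1)=L^dg'(1)+o(\sqrt N)$. The bound $O(L^{d-1})$ available from Lemma~\ref{lem:sum to integral} is far too weak, since $L^{d-1}=N^{(d-1)/d}$ is much larger than $N^{1/2}$ and $F_L'(1)-N$ enters multiplied by $(z-1)$, which is of order $N^{-1/2}$ on the dominant part of the contour. The point is that at $z=1$ the integrand $\frac{\hat\varphi(t)}{1-\hat\varphi(t)}$ defining $F_L'$ has only the mild singularity $\frac{1}{2\pi^2\|At\|^2}$ at the origin (by \eqref{eq:taylor of phi} and Claim~\ref{claim:1-z phi}); isolating this homogeneous part and comparing the Riemann sum of the smooth remainder with its integral (via Poisson summation applied to the remainder, or a direct Euler–Maclaurin estimate) should yield $L^dg'(1)-F_L'(1)=O(L^2)=O(N^{2/d})=o(\sqrt N)$ for $d\ge5$. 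Establishing this refined lattice‑sum estimate, and then propagating all error terms with the required precision through the analysis near the coalescing saddle and singularity on the scale $N^{-1/2}$, is the technical heart of the proof.
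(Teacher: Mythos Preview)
Your overall strategy is correct and matches the paper's: a pacman contour at scale $|1-z|\sim N^{-1/2}$, the expansion $G_L(z)-(N-j+1)\log z=-\theta\log(1-z)+F_L(1)+\kappa N(z-1)^2+o(1)$ on the dominant piece, followed by the quadratic change of variables $\omega\propto(z-1)^2$ reducing to the Hankel integral \eqref{eq:Hankel integral} with parameter $\tfrac{\theta+1}{2}$. You also correctly isolate the crucial input $F_L'(1)=N+o(\sqrt N)$ (equivalently $F_L'(z)-L^dg'(z)=O(L^2)$).

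Where you and the paper diverge is in how to obtain this last estimate, and here your plan is needlessly complicated. The paper does not refine the Riemann-sum comparison of Lemma~\ref{lem:sum to integral}. Instead it simply compares Taylor coefficients: by \eqref{eq:power series for F_L} and \eqref{eq:g_Taylor_expansion},
\[
\bigl|F_L'(z)-L^dg'(z)\bigr|\le \sum_{j\ge1}\bigl|W_{L,j}-\theta-\theta L^d\varphi^{*j}(0)\bigr|,\qquad z\in\overline{\mathbb D},
\]
and the right-hand side is $O(L^2)$ directly from Corollary~\ref{cor:corollary on W}, Corollary~\ref{cor:G_L is close to L^dg} and Lemma~\ref{lem:asymptotics of kappa _j} (split at $j=L^2$). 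This one-line bound replaces your proposed Poisson/Euler--Maclaurin argument entirely. Because the bound is only on $\overline{\mathbb D}$, the paper chooses the opening angle $\beta(\eta)>\tfrac\pi2$ so that the arms and the outer arc $\gamma_4$ lie inside the closed unit disc (Figure~\ref{fig_critical contour}); this also simplifies the $\gamma_4$ estimate, since on $|z|=1$ one may use $\real g(z)<g(1)$ directly rather than Lemma~\ref{lem:bound on Re g(z)}. With your angle $\tfrac\pi2$ the contour leaves $\overline{\mathbb D}$, so you would indeed need the harder Fourier-side estimate you outline; taking $\beta$ slightly larger avoids this. Finally, the paper does not compute the saddle $r_N$ at all---that discussion in your proposal is correct but unnecessary, since the second-order expansion of $F_L$ at $z=1$ already encodes the coalescence.
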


In order to prove Theorem~\ref{thm:critical integral}, we use the contour
\begin{equation}\label{eq:gamma in critical case}
\gamma =\gamma \left(\eta ,\beta (\eta ),\sqrt{N}\right),
\end{equation}
where $\gamma \left(\eta ,\beta ,N\right)$ is defined in \eqref{eq:def of gamma}, $\eta $ is sufficiently small and $\beta (\eta )> \frac{\pi }{2}$ is the corresponding angle such that $\gamma _4$ is contained in the unit circle i.e., $\left|1+\eta e^{i\beta (\eta )}\right|=1$ (see Figure~\ref{fig_critical contour}). Note that, unlike the super-critical case, the distance from the singularity $z=1$ in this case is $N^{-\frac{1}{2}}$ and $\gamma $ is contained in the closed unit disc $\overline{\mathbb{D}}$.
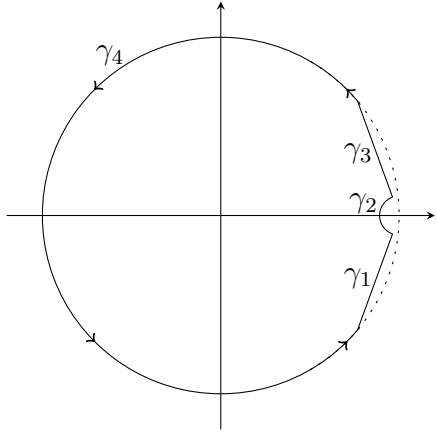
\begin{figure}
\begin{tikzpicture}
\begin{axis}[ticks=none,axis lines=middle,xmin=-1.2,xmax=1.2,ymin=-1.2,ymax=1.2,axis equal image]
\draw [dash pattern=on 1pt off 3pt, domain=-60:60] plot({cos(\x)},{sin(\x)});
\addplot[domain=0.766:0.9629]{tan(70)*x-tan(70)};
\addplot[domain=0.766:0.9629]{-tan(70)*x+tan(70)};
\draw [samples=70, domain=39:321] plot({cos(\x)},{sin(\x)});
\draw [domain=110:250] plot({1+0.11*cos(\x)},{0.11*sin(\x)});
\node[text width=0cm] at (0.69,-0.35)
{$\gamma _1 $};
\node[text width=0cm] at (0.72,0.07)
{$\gamma _2 $};
\node[text width=0cm] at (0.69,0.35)
{$\gamma _3 $};
\node[text width=0cm] at (-0.7,0.9)
{$\gamma _4 $};
\draw[->,thick] (-1/2^0.5,1/2^0.5) -- (-1/2^0.5-0.001,1/2^0.5-0.001) node [pos=0.66,above]{} ;
\draw[->,thick] (-1/2^0.5,-1/2^0.5) -- (-1/2^0.5+0.001,-1/2^0.5-0.001) node [pos=0.66,above]{} ;
\draw[->,thick] (1/2^0.5,1/2^0.5) -- (1/2^0.5-0.001,1/2^0.5+0.001) node [pos=0.66,above]{} ;
\draw[->,thick] (1/2^0.5,-1/2^0.5) -- (1/2^0.5+0.001,-1/2^0.5+0.001) node [pos=0.66,above]{} ;
\end{axis}
\end{tikzpicture}
\caption{The contour $\gamma $ in the critical case.}
\label{fig_critical contour}
\end{figure}
\begin{prop}\label{prop:critical prop}
Suppose that $d\ge 5$ and that $\rho =\rho _c$ is fixed as $N\to \infty$. Then there is $0<\eta <\frac{1}{10}$ such that the following holds for the contour $\gamma =\gamma \left(\eta ,\beta (\eta ),\sqrt{N}\right)$:
\begin{enumerate}[label=(\roman{*})]
\item
As $N \to \infty$, uniformly in $z\in D$ with $|1-z|\le N^{-\frac{4}{9}}$, we have
\[G_L(z)=-\theta \log \left(1-z\right)+F_L(1)+N\left(z-1\right)+\frac{g''(1)}{2g'(1)}N\left(z-1\right)^2+o(1).\]
\item
There is $c>0$ such that for any $z\in \gamma _1\cup \gamma _3$ and large enough $N$,
\[\real \left(G_L(z)\right)\le -\theta \log \left|1-z\right|+F_L(1)+N\real \left(z-1\right)+cN\real \left(z-1\right)^2.\]
\item
There is $c>0$ such that for any $z\in \gamma _4$ and large enough $N$,
\[\real \left(G_L(z)\right)\le F_L(1)-c N.\]
\end{enumerate}
\end{prop}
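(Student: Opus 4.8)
The plan is to run the singularity-analysis scheme of Proposition~\ref{prop:super-critical prop}, but carried one order further: at criticality the linear term in the integrand essentially cancels (the analogue of $\tau$ here is $1$, so after combining $G_L$ with $z^{-(N-j+1)}$ the $(z-1)$-coefficient is $\approx j$, negligible for $j\le N^{1/3}$), and the saddle is governed by the quadratic term, concentrated at scale $|1-z|\asymp N^{-1/2}$; this is why $\gamma$ can be taken inside $\overline{\mathbb D}$. Throughout I use $G_L(z)=-\theta\log(1-z)+F_L(z)$ and $F_L(z)=F_L(1)+\int_1^z F_L'(w)\,dw$ from \eqref{eq:G=log +F}, together with $N=\rho_cL^d=g'(1)L^d$, to split $\int_1^z F_L'(w)\,dw=L^d\big(g(z)-g(1)\big)+\int_1^z\big(F_L'(w)-L^dg'(w)\big)\,dw$. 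Since $d\ge 5$, Lemma~\ref{lem:asymptotics of g at 1}(i) (with $n=2$) gives that $g''$ extends continuously to $\overline{\Delta_0}$, and applying the same lemma to $g'''$ (the cases $d=5,6,\ge 7$ falling under its parts (iii),(ii),(i)), a second order Taylor expansion around $z=1$ yields, for $z\in\Delta_0$,
\[
L^d\big(g(z)-g(1)\big)=N(z-1)+\frac{g''(1)}{2g'(1)}N(z-1)^2+O\!\Big(N|1-z|^{5/2}\big(1+|\log(1-z)|\big)\Big),
\]
where I used $\tfrac12g''(1)L^d=\tfrac{g''(1)}{2g'(1)}N$. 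On $\{|1-z|\le N^{-4/9}\}$ this error is $o(1)$, and on the arms $\gamma_1\cup\gamma_3$ it is $o_\eta(N|1-z|^2)$ once $\eta$ is small (as $|1-z|^{1/2}|\log(1-z)|\to0$ with $\eta$). So parts (i) and (ii) follow from \eqref{eq:G=log +F} \emph{provided} $\int_1^z(F_L'-L^dg')\,dw$ is $o(1)$ on the first region and $o(N|1-z|^2)$ on the arms.

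\textbf{Main obstacle.} This last estimate is the crux: the bound $O(L^{d-1}+L^2\log L)$ of Lemma~\ref{lem:sum to integral} only gives $O(N^{(d-1)/d}|1-z|)$, which is \emph{not} $o(1)$ on $|1-z|\le N^{-4/9}$. I would prove the sharper uniform bound $|F_L'(z)-L^dg'(z)|\le CL^2$ for $z\in\Delta_0$ and $d\ge5$, by refining the Riemann-sum comparison in \eqref{eq:F'}--\eqref{eq:analytic continuation for g'}: fix a smooth cutoff supported near $t=0$; on its complement the summand $\hat\varphi(t)(1-z\hat\varphi(t))^{-1}$ is uniformly in $z\in\overline{\Delta_0}$ a Schwartz function (using $|1-z\hat\varphi(t)|\ge c$ there and that $\hat\varphi$ is Schwartz), so Poisson summation contributes $O(L^{-K})$ for every $K$; near $t=0$ the lattice cells are symmetric and the linear Taylor coefficient of $\hat\varphi$ vanishes ($\E(X)=0$, see \eqref{eq:taylor of phi}), so the midpoint-rule error per cell is of second order and, bounding second derivatives via Claim~\ref{claim:1-z phi}, the summed error converges for $d\ge5$, leaving as dominant contribution the single excluded origin cell $Q_L$, namely $\theta L^d\int_{Q_L}\hat\varphi(t)(1-z\hat\varphi(t))^{-1}\,dt=O\big(L^d\cdot L^{2-d}\big)=O(L^2)$. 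Granting this, $\big|\int_1^z(F_L'-L^dg')\,dw\big|\le CL^2|1-z|$, which is $o(1)$ for $|1-z|\le N^{-4/9}$ precisely because $2/d<4/9$ when $d\ge5$, and on the arms is $o(N|1-z|^2)$ since there $|1-z|\ge N^{-1/2}\gg L^2/N$. Choosing $\eta$ small enough that $\real(z-1)^2\le-\tfrac12|1-z|^2$ on $\gamma_1\cup\gamma_3$ (which holds since $|1+\eta e^{i\beta(\eta)}|=1$ forces $\cos\beta(\eta)=-\eta/2$), parts (i) and (ii) follow, e.g.\ with $c=\tfrac{g''(1)}{4g'(1)}$.

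\textbf{Part (iii).} Since $\gamma_4$ is an arc of the unit circle, $\log|z|=0$ there, so it suffices to show $\real G_L(z)\le F_L(1)-cN$ on $\gamma_4$; here the crude Lemma~\ref{lem:sum to integral} is enough. Integrating from $0$, where $F_L(0)=g(0)=0$, gives $|F_L(z)-L^dg(z)|=O(L^{d-1}+L^2\log L)=o(N)$ uniformly on $\Delta_0$. Meanwhile $g$ has non-negative Taylor coefficients, positive with finitely many exceptions by Lemma~\ref{lem:asymptotics of kappa _j}, so $\real g(z)<g(1)$ for every $z\ne1$ with $|z|=1$; as $\gamma_4$ is compact and stays at distance $\ge\eta$ from $1$, this gives $\real g(z)\le g(1)-c'$ on $\gamma_4$. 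Combining, $\real G_L(z)\le L^d\real g(z)+o(N)\le L^dg(1)-c'L^d+o(N)$, and since $F_L(1)=L^dg(1)+o(N)$ with $L^d\asymp N$ this is $\le F_L(1)-cN$ for large $N$, completing the proposition.

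\textbf{Consequence.} With Proposition~\ref{prop:critical prop} established, Theorem~\ref{thm:critical integral} follows by inserting these estimates into $H_{N-j}(L)=\frac1{2\pi i}\oint_\gamma z^{-(N-j+1)}e^{G_L(z)}\,dz$ over $\gamma=\gamma(\eta,\beta(\eta),\sqrt N)$, expanding $z^{-(N-j+1)}=\exp[-(N-j)(z-1)+\tfrac12(N-j)(z-1)^2+o(1)]$ so the linear terms cancel up to $j(z-1)=o(1)$ for $j\le N^{1/3}$, bounding the contributions of $\gamma_1\cup\gamma_3$ away from $z=1$ and of $\gamma_4$ using parts (ii) and (iii), and evaluating the remaining Gaussian-type contour integral $\frac1{2\pi i}\oint(1-z)^{-\theta}\exp\!\big(\big[\tfrac{g''(1)}{2g'(1)}+\tfrac12\big]N(z-1)^2\big)\,dz$ via the substitution $z-1=\omega/\sqrt{KN}$ with $K=\tfrac{g''(1)}{2g'(1)}+\tfrac12$, which reduces it (through $u=\omega^2$) to the Hankel integral \eqref{eq:Hankel integral} with $s=(\theta+1)/2$ and produces the constant $\tfrac1{2\Gamma((\theta+1)/2)}$.
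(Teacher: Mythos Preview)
Your overall architecture is exactly the paper's: write $G_L=-\theta\log(1-z)+F_L(1)+\int_1^zF_L'$, approximate $F_L'$ by $L^dg'$, Taylor-expand $g$ to second order at $z=1$ (using $g''(1)<\infty$ for $d\ge5$), and absorb all errors into $CL^2|1-z|+CN|1-z|^{5/2}$, which is $o(1)$ on $|1-z|\le N^{-4/9}$ and $o(N|1-z|^2)$ on the arms. Parts (i)--(iii) then follow exactly as you describe, including the compactness argument on $\gamma_4$.

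The only substantive difference is in how you obtain the key bound $|F_L'(z)-L^dg'(z)|\le CL^2$. You propose a refinement of the Riemann-sum comparison of Lemma~\ref{lem:sum to integral} via a smooth cutoff, Poisson summation away from the origin, and a midpoint-rule (second-order) estimate near the origin. This can be made to work, but the paper's argument is far simpler and you overlooked it: because the critical contour $\gamma$ lies entirely in $\overline{\mathbb D}$ (this is the point of choosing $\beta(\eta)$ so that $\gamma_4$ sits on the unit circle), one may use the \emph{power-series} representations \eqref{eq:power series for F_L} and \eqref{eq:g_Taylor_expansion} directly. Subtracting termwise,
\[
|F_L'(z)-L^dg'(z)|\le\sum_{j\ge1}\big|W_{L,j}-\theta-\theta L^d\varphi^{*j}(0)\big|,\qquad z\in\overline{\mathbb D},
\]
and this coefficient sum is $\le CL^2$ by splitting at $j=L^2$ and invoking Corollary~\ref{cor:G_L is close to L^dg}, Corollary~\ref{cor:corollary on W}(ii) and Lemma~\ref{lem:asymptotics of kappa _j}: the first $L^2$ terms contribute $O(1)$ each, while for $j\ge L^2$ one has $|W_{L,j}-\theta|\le Ce^{-cj/L^2}$ and $L^d\varphi^{*j}(0)\le CL^dj^{-d/2}$, both summing to $O(L^2)$. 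This two-line argument replaces your entire ``main obstacle'' paragraph. Your Fourier-analytic route has the virtue of giving the bound on all of $\Delta_0$ rather than just $\overline{\mathbb D}$, but that extra generality is not needed here.
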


\begin{proof}
First we bound the difference between $F_L'$ and $L^dg'$.
Substituting the Taylor expansions of $F_L'$ and $g'$ that follow from \eqref{eq:power series for F_L} and \eqref{eq:g_Taylor_expansion} respectively, we obtain
\begin{equation}\label{eq:F_L'-L^dg' in critical case}
\begin{split}
\left|F_L'(z)-L^dg'(z)\right| & \le \sum _{j=1}^{\infty} \left|W_{L,j}-\theta -\theta L^d\varphi ^{*j}(0)\right| \\
&\le \sum_{j=1}^{L^2}C +C\sum_{j=L^2}^\infty (e^{-c\frac{j}{L^2}}+L^dj^{-\frac{d}{2}})\le CL^2, \quad z\in  \overline{\mathbb{D}},
\end{split}
\end{equation}
where in the second inequality we used Corollary~\ref{cor:corollary on W}, Corollary~\ref{cor:G_L is close to L^dg} and Lemma~\ref{lem:asymptotics of kappa _j}.
Moreover, by Lemma~\ref{lem:asymptotics of g at 1}, when $d\ge 5$ we have
\begin{equation}\label{eq:g'(z) near z=1}
g'(z)=g'(1)+g''(1)(z-1)+O(|z-1|^{\frac{3}{2}}),\quad z\to 1.
\end{equation}
Now, using \eqref{eq:F_L'-L^dg' in critical case} and \eqref{eq:g'(z) near z=1} and noting that $N=\rho _c L^d=g'(1)L^d$ we get
\begin{equation*}
\begin{split}
\left|\intop _1^z F_L(w)dw-N(z-1)-\frac{g''(1)}{2g'(1)}N(z-1)^2\right|=&\left|\intop _1^z \! F_L(w)dw-L^d \! \intop _1^z  \! g'(1)-g''(1)(w-1)dw\right|\\
\le \intop _1^z|F_L(w)-L^dg'(w)||dw|&+L^d\intop _1^z |g'(w)-g'(1)-g''(1)(w-1)||dw| \\
 \le CL^2|z-1|+CN|z-1|^{\frac{5}{2}}&,\quad z\in  \overline{\mathbb{D}}.
\end{split}
\end{equation*}
The first and second parts of the proposition follow by substituting the last equation in \eqref{eq:G=log +F}.

We turn to prove the third part. Take $0<\eta <\frac{1}{10}$ so that the first and second parts of the proposition hold. There is $c_\eta >0$ such that for any $z\in \gamma _4$ (that is $|z|=1$) and large enough $N$,
\[\real G_L(z)\le F_L(z)+C_\eta \le L^d\real g(z)+CL^{d-1}\le L^dg(1)-c_\eta  N\le F_L(1)-c_\eta  N,\]
where in the second and fourth inequalities we used \eqref{eq:F_L-L^dg} and the third inequality follows from the fact that $g$ has non-negative Taylor coefficients which are positive with finitely many exceptions.
\end{proof}

Now, we turn to prove Theorem~\ref{thm:critical integral}.

\begin{proof}[Proof of Theorem~\ref{thm:critical integral}]
Let $\gamma $ be the contour from \eqref{eq:gamma in critical case} with $0<\eta <\frac{1}{10}$ such that Proposition~\ref{prop:super-critical prop} holds. Note that we also have $\frac{\pi }{2}< \beta (\eta )<\frac{5\pi }{8}$. By the Cauchy integral formula
\[\left[z^{N-j}\right] e^{G_L(z)}=\frac{1}{2\pi i}\ointop _{\gamma }\frac{e^{G_L(z)}}{z^{N-j+1}}dz=I_1+I_2+I_3,\]
where $I_1$, $I_2$ and $I_3$ are the corresponding integrals over $\gamma \cap \{ |1-z|\le N^{-\frac{4}{9}}\}$,\\ $\left(\gamma _1 \cup \gamma _3 \right)\cap \{ |1-z|\ge N^{-\frac{4}{9}}\}$ and $\gamma _4 $ respectively.

We start by estimating $I_1$. The following holds as $N\to\infty$, uniformly in $0\le j\le N^{\frac{1}{3}}$ and $z\in \overline{\mathbb{D}}\cap \{N^{-\frac{1}{2}}\le \left|1-z\right|\le N^{-\frac{4}{9}}\}$. Expanding $\log z$, we obtain
\begin{equation}\label{eq:Taylor expansion of log}
\begin{split}
z^{-\left(N-j+1\right)}=\exp \left[-N\left(z-1\right)+\frac{N}{2}\left(z-1\right)^2+O\left(N|z-1|^3+N^{\frac{1}{3}}|1-z|\right)\right]\\
=\exp \left[-N\left( z-1\right) +\frac{N}{2}\left(z-1\right)^2+o(1)\right].
\end{split}
\end{equation}
Therefore, by the first part of Proposition~\ref{prop:critical prop},
\[I_1=\frac{e^{F_L(1)}}{2\pi i}\!\! \intop_{\gamma \cap \{ |1-z|\le N^{-\frac{4}{9}}\}} \!\! \left(1-z\right)^{-\theta }\exp \left[\left(\frac{g''(1)}{2g'(1)}+\frac{1}{2}\right)
N\left(z-1\right)^2+o(1)\right]dz.\]
We make the change of variables
\[z=1-\left(\frac{g''(1)}{2g'(1)}+\frac{1}{2}\right)^{-\frac{1}{2}} N^{-\frac{1}{2}}\sqrt{-\omega },\quad \omega =-\left(\frac{g''(1)}{2g'(1)}+\frac{1}{2}\right)
N\left(z-1\right)^2,\]
to obtain
\[I_1=\frac{e^{F_L(1)}\left(\frac{g''(1)}{2g'(1)}+\frac{1}{2}\right)^{\frac{\theta -1}{2}}
N^{\frac{\theta -1}{2}}}{4\pi i}\intop _{\tilde{\gamma }}\left(-\omega \right)^{\frac{-\theta -1}{2}}e^{-\omega +o(1)}\]
where $\tilde{\gamma }$ is the image of $\gamma \cap \{ |1-z|\le N^{-\frac{4}{9}}\}$ under the change of variables, which is a modification of the $\gamma '\left(2\beta (\eta )-\pi\right)$ from \eqref{eq:def of gamma '} having the circular arc at radius $\big(\frac{g''(1)}{2g'(1)}+\frac{1}{2}\big) $ and having finite `arms', terminating at radius $\big(\frac{g''(1)}{2g'(1)}+\frac{1}{2}\big)N^{\frac{1}{9}} \to \infty $. As the integral in \eqref{eq:Hankel integral} with $s=\frac{\theta +1}{2}$ and $\beta =2\beta (\eta )-\pi <\frac{\pi }{2}$ converge to a non-zero quantity, we conclude that
\begin{equation}\label{eq:estimate on I_1 critical}
I_1\sim \frac{e^{F_L(1)}N^{\frac{\theta -1}{2}}\left(\frac{g''(1)}{2g'(1)}+\frac{1}{2}\right)^{\frac{\theta -1}{2}}}{2\Gamma \left( \frac{\theta +1}{2}\right)}.
\end{equation}

We turn to bound the rest of the integral. As $\eta <\frac{1}{10}$, and by the same arguments as in \eqref{eq:expanding log}, we have
\begin{equation}\label{eq:for small eta}
\left|z^{-N+j-1}\right|\le |z|^{-N-1}\le Ce^{-N\real (\log z)}\le C e^{-N\real \left(z-1\right)},\quad z\in \gamma _1\cup \gamma _3.
\end{equation}
Thus, by the second part of Proposition~\ref{prop:critical prop},
\begin{equation}
\begin{split}
|I_2|&\le Ce^{F_L(1)}\!\!\!\! \intop _{\left(\gamma _1 \cup \gamma _3 \right)\cap \{ |1-z|\ge N^{-\frac{4}{9}}\}}\!\!\!\! \left|1-z\right|^{-\theta }e^{cN\real \left(z-1\right)^2}|dz|\le \\
&\le Ce^{F_L(1)}N^{\frac{\theta -1}{2}}\!\!\!\! \intop _{\gamma '\left(2\beta (\eta )-\pi \right)\cap \{|\omega |\ge N^{\frac{1}{9}}\}}\!\!\!\!|\omega |^{\frac{-\theta -1}{2} }e^{-c\real\omega }|d\omega |=o(I_1),\quad N\to \infty,
\end{split}
\end{equation}
where in the second inequality we changed the variables by $\omega =-N(z-1)^2$.

It remains to bound the integral over $\gamma _4$. By the third part of Proposition~\ref{prop:critical prop},
\[|I_3|\le \intop _{\gamma _4}\left|\frac{e^{G_L(z)}}{z^{N-j+1}}\right||dz|=\intop _{\gamma _4}e^{\real (G_L(z))}|dz|\le Ce^{F_L(1)-\epsilon N}, \]
which is exponentially smaller than $I_1$ by \eqref{eq:estimate on I_1 critical}.
\end{proof}

\section{The critical case in dimension 1}\label{sec:critical in one dimension}

In this section we prove part~\ref{item:critical in d=1} in Theorem~\ref{thm:d=1}. Thus, throughout this section we assume that $d=1$ and $\frac{\rho }{\sqrt{N}}\to \alpha \in (0,\infty)$ as $N\to \infty$.

In the following theorem we find the asymptotic behavior of $H_{N-j}(L)$. In the theorem and what follows we set $\sigma ^2=\var (X)$. Recall that we use the standard branches of the functions $\log (z)$, $\sqrt{z}$ (see Section~\ref{sec:preliminaries}).

\begin{thm}\label{thm:complex integral in critical dimension 1}
For any $\epsilon >0$ we have
\[H_{N-j}(L)\sim \frac{C_0e^{G_L(1-L^{-2})}}{N} \left(1-\frac{j}{N}\right)^{-\frac{3}{2}} \sum _{n=0}^{\infty } (-1)^n {{-2\theta }\choose{n}} (\theta +n)\exp \left(\frac{-(\theta +n)^2}{2 \alpha ^2\sigma ^2 \left(1-\frac{j}{N}\right)}\right),\]
as $N\to \infty $ uniformly in $0\le j\le \left(1-\epsilon \right)N$, where
\[C_0:=\frac{e^{\sqrt{2}\sigma ^{-1}\theta }\left(1-e^{-\sqrt{2}\sigma ^{-1}}\right)^{2\theta }}{\sqrt{2\pi }\alpha \sigma }.\]
\end{thm}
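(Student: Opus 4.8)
The plan is to follow the singularity-analysis strategy used in the super-critical and critical $d\ge 5$ cases, but now working in dimension $1$ where the relevant generating function $G_L$ has a $(1-z)^{-1/2}$-type singularity at $z=1$ coming from Lemma~\ref{lem:asymptotics of g at 1}\ref{item:d<2n} with $n=1$, $d=1$. The key new feature is that $G_L$ is not merely $-\theta\log(1-z)+F_L(1)+o(1)$ near $z=1$: the term $L g'(z)\sim \frac{\theta L}{\sqrt{2}\sigma}(1-z)^{-1/2}$ is of order $L/\sqrt{1-z}$, which for $|1-z|$ of order $N/\rho^2\sim 1/\alpha^2$ (actually the relevant scale turns out to be $|1-z|\asymp \sigma^{-2}\alpha^{-2}$, i.e. order $1$, since $r_N\to r_*$ with $1-r_* $ bounded away from $0$ and $1$ by Lemma~\ref{lem:find r_N}) — wait, more precisely in this regime $L=N/\rho\sim\sqrt N/\alpha$ so $Lg'(z)$ at $|1-z|\asymp 1$ is of order $\sqrt N$, and we must keep it. So the saddle point $r_N$ satisfies $1-r_N\to \frac{\theta^2}{2\sigma^2\alpha^2}$ (the $d=1$ analogue of Lemma~\ref{lem:find r_N}(i) but with $\rho\asymp\sqrt N$ rather than $\rho=o(\sqrt N)$), a point strictly inside $(0,1)$, and this is why the limiting law is supported on all of $(0,1)$ rather than concentrating. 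The contour should be taken as $\gamma(\eta,\beta,N)$ as in earlier sections but one may also need the periodic/theta-function contribution from the $m\ne 0$ Fourier modes, which produces the factor $\sum_{m\in\Z}e^{-2\pi^2\sigma^2\alpha^2 m^2 x}$ in the final density \eqref{eq:complicated distribution}.

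First I would establish the precise asymptotics of $G_L$ near $z=1$ in this regime. Write $G_L(z)=-\theta\log(1-z)+F_L(z)$ and also separate $G_L(z)=Lg(z)+(G_L(z)-Lg(z))$ using Corollary~\ref{cor:G_L is close to L^dg}; but the cleanest route is the exact formula \eqref{eq:analytic continuation form G_L}, $G_L(z)=-\theta\sum_{m\in\frac1L\Z}\log(1-z\hat\varphi(m))$. The $m=0$ term is $-\theta\log(1-z)$. For the $m\ne 0$ terms, using $\hat\varphi(m)=1-2\pi^2\sigma^2 m^2+O(m^3)$ (Claim~\ref{claim:basic fourier 2}) and the fact that on the relevant scale $|1-z|\asymp 1$, one gets $\sum_{m\in\frac1L\Z\setminus\{0\}}\log\frac{1}{1-z\hat\varphi(m)}$ — the small-$m$ part, $\|m\|\lesssim 1$, behaves like an integral $\int\log\frac{1}{1-z+2\pi^2\sigma^2 t^2}dt$ over $t\in\frac1L\Z$, which is $\frac{\theta L}{\sqrt 2\sigma}(1-z)^{-1/2}+O(\log)$, i.e. $\frac{\theta L}{\sqrt2\sigma\sqrt{1-z}}$; this is exactly $Lg'$-type behavior integrated, consistent with Lemma~\ref{lem:asymptotics of g at 1}. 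The subtlety is that the sum over $m\in\frac1L\Z$ versus the integral over $\R$ differs by a Poisson-summation correction which, on the scale $1-z\asymp 1/(\alpha^2\sigma^2)$, i.e. $1-z$ of a definite size, contributes the theta-function sum $\sum_{m\in\Z}e^{-2\pi^2\sigma^2\alpha^2 m^2 x}$ — I would get this by applying Poisson summation to $\sum_{k\in\Z}\psi^{*j}(Lk)$ inside the series defining $G_L$, keeping all $k$ terms rather than just $k=0$. Concretely I expect to prove: for $z$ with $\frac1N\le|1-z|\le t_N/N\cdot$(something) — actually for $z=1-\frac{w}{N}$-type scaling near the saddle, but here the saddle is at a macroscopic distance, so the right statement is a local expansion around $r_N$ — that $G_L(z)=-\theta\log(1-z)+\frac{\theta L}{\sqrt2\sigma}(1-z)^{-1/2}+\log\big(\sum_m e^{-2\pi^2\sigma^2 L^2 m^2(1-z)}\big)+c(\text{constant})+o(1)$ uniformly in a neighborhood of $r_N$ of appropriate shape.

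Second, with this expansion in hand, I would run the Cauchy integral $H_{N-j}(L)=\frac{1}{2\pi i}\oint_\gamma \frac{e^{G_L(z)}}{z^{N-j+1}}dz$ and do a saddle-point analysis: the exponent is $G_L(z)-(N-j+1)\log z$, whose derivative vanishes near a point $\rho_{N,j}$ with $1-\rho_{N,j}\to \frac{\theta^2}{2\sigma^2\alpha^2(1-j/N)}$ (the $j$-dependence coming from replacing $N$ by $N-j$). Expanding to second order gives a Gaussian integral producing the $N^{-1}(1-j/N)^{-3/2}$ prefactor — the power $-3/2$ arising from differentiating the $(1-z)^{-1/2}$ term twice. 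The factor $e^{G_L(1-L^{-2})}$ is just a convenient normalization (note $G_L(1-L^{-2})$ collects the divergent-in-$N$ part of $e^{G_L(r_N)}r_N^{-(N-j)}$). The series $\sum_{n=0}^\infty(-1)^n\binom{-2\theta}{n}(\theta+n)\exp(\frac{-(\theta+n)^2}{2\alpha^2\sigma^2(1-j/N)})$ should emerge from expanding $(1-z)^{-\theta}$ — more precisely from the interaction of the $(1-z)^{-\theta}$ factor (from the $m=0$ log) with the exponential $\exp(\text{const}\cdot(1-z)^{-1/2})$: writing $u=(1-z)^{-1/2}$, a term $u^{2\theta}e^{au}$ expanded and integrated term-by-term against the saddle-point Gaussian yields $\sum_n \binom{-2\theta}{n}$-type coefficients and the $\exp(-(\theta+n)^2/\cdots)$ factors by completing the square in the exponent. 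I would verify the constant $C_0=\frac{e^{\sqrt2\sigma^{-1}\theta}(1-e^{-\sqrt2\sigma^{-1}})^{2\theta}}{\sqrt{2\pi}\alpha\sigma}$ by carefully tracking the normalization $e^{-G_L(1-L^{-2})}$, which involves $-\theta\log(L^{-2})=2\theta\log L$ and the $\frac{\theta L}{\sqrt2\sigma}\cdot L$ term evaluated at $1-z=L^{-2}$ — the $e^{\sqrt2\sigma^{-1}\theta}$ and $(1-e^{-\sqrt2\sigma^{-1}})^{2\theta}$ pieces should come from the $F_L(1)$-type constant and the difference between $G_L$ near $z=1$ and its leading singular part at the reference point $1-L^{-2}$.

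The main obstacle I anticipate is controlling $G_L$ uniformly on the whole contour and, in particular, pinning down the exact form of the subleading (but $N$-dependently large) terms — the theta-function correction and the additive constant — with enough precision that they survive into the final density and constant. Unlike the super-critical case where the non-$m=0$ modes contributed only an $o(N)$ error, here the interplay between the sum $\sum_{m\in\frac1L\Z}$ and its continuum approximation is a genuine $\Theta(1)$ (after exponentiation, a $\Theta(1)$ multiplicative factor depending on $1-j/N$) effect: this is the source of the Jacobi-theta factor, and getting it right requires a careful Poisson-summation bookkeeping valid uniformly for $1-j/N\in[\epsilon,1]$. A secondary difficulty is justifying that the tail of the contour ($\gamma_1,\gamma_3$ away from the saddle and $\gamma_4$) is negligible; here $\real(G_L(z))$ must be bounded above by its value at the saddle minus a definite amount, which should follow from the concavity properties of $rG_L'(r)$ and arguments analogous to Proposition~\ref{prop:super-critical prop}(ii),(iii) and Lemma~\ref{lem:bound on Re g(z)}, but adapted to the contour passing through the macroscopic saddle $r_N<1$ rather than approaching $z=1$.
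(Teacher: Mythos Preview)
Your proposal has a genuine error that derails the plan: the saddle point $r_N$ is \emph{not} at a macroscopic distance from $1$. In this regime $\rho\sim\alpha\sqrt N$, so Lemma~\ref{lem:find r_N}(i) (which is still valid when $\rho\to\infty$) gives $1-r_N\sim \frac{\theta^2}{2\sigma^2\rho^2}\sim \frac{\theta^2}{2\sigma^2\alpha^2 N}\to 0$, not a nonzero constant. Equivalently $1-r_N\asymp L^{-2}$, since $L=N/\rho\sim\sqrt N/\alpha$. So the contour must pass near $z=1$ at distance $\asymp L^{-2}\asymp 1/N$, and your plan to expand around a fixed $r_*<1$ and do a quadratic Gaussian saddle-point cannot succeed.

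At the correct scale $|1-z|\asymp L^{-2}$ the structure of $G_L$ is not Gaussian in $1-z$: this is where the key step you are missing comes in. The paper obtains a \emph{closed-form} approximation (Lemma~\ref{lem:sinh}) by summing $G_L'(z)=\theta\sum_{m\in\frac1L\Z}\frac{\hat\varphi(m)}{1-z\hat\varphi(m)}$ via the classical identity $\sum_{k\in\Z}\frac{1}{w^2+k^2}=\frac{\pi\coth(\pi w)}{w}$, yielding
\[
G_L(z)=c_0+G_L(1-L^{-2})-\sqrt2\sigma^{-1}\theta L\sqrt{1-z}-2\theta\log\bigl(1-e^{-\sqrt2\sigma^{-1}L\sqrt{1-z}}\bigr)+O(L\log L\,|1-L^{-2}-z|).
\]
After the change of variables $\omega=-\frac{L\sqrt{1-z}}{\sqrt2\sigma}$ (so $\omega$ is order $1$ on the relevant scale), the integrand becomes $\omega e^{a\omega^2+2\theta\omega}(1-e^{2\omega})^{-2\theta}$ with $a=2\alpha^2\sigma^2(1-j/N)$. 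The series $\sum_n(-1)^n\binom{-2\theta}{n}(\theta+n)e^{-(\theta+n)^2/a}$ then comes from expanding $(1-e^{2\omega})^{-2\theta}=\sum_n(-1)^n\binom{-2\theta}{n}e^{2n\omega}$ and evaluating the resulting Gaussian integrals (Claim~\ref{claim:complex analysis}); it is not obtained from expanding $(1-z)^{-\theta}$ as you suggest.

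One further misattribution: the Jacobi theta factor $\sum_{m\in\Z}e^{-2\pi^2\sigma^2\alpha^2 m^2 x}$ in the final density~\eqref{eq:complicated distribution} does \emph{not} come from $H_{N-j}(L)$ at all. It enters via the cycle weight $W_{L,j}\sim\theta\sum_{m\in\Z}\hat\psi^j(m/L)$ (Lemma~\ref{lem:bound with fourier}) when one forms $\mathbb P(L_1=j)=\frac{W_{L,j}}{N}\frac{H_{N-j}(L)}{H_N(L)}$. The asymptotic for $H_{N-j}(L)$ in Theorem~\ref{thm:complex integral in critical dimension 1} contains only the $\sum_n$ series, not the $\sum_m$ theta sum.
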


In order to prove Theorem~\ref{thm:complex integral in critical dimension 1} we need the following integral calculation.

\begin{claim}\label{claim:complex analysis}
 For any $a>0$,
\begin{equation}\label{eq:complex integral with residue}
\intop _{\gamma '\left(\frac{2\pi }{3}\right)}\frac{\omega e^{ a\omega ^2+2\theta \omega }}{\left(1-e^{2\omega }\right)^{2\theta }}d\omega =- \frac{i\sqrt{\pi}}{a^{\frac{3}{2}}} \sum _{n=0}^{\infty } (-1)^n {{-2\theta }\choose{n}} (\theta +n)e^{- \frac{(\theta +n)^2}{a}},
\end{equation}
where the contour $\gamma '(\beta )$ is defined in \eqref{eq:def of gamma '}.
\end{claim}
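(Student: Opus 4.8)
The plan is to expand the factor $(1-e^{2\omega})^{-2\theta}$ as a power series in $e^{2\omega}$, integrate term by term, and evaluate each resulting Gaussian Hankel integral in closed form. Everything hinges on two elementary geometric facts about $\gamma'\big(\tfrac{2\pi}{3}\big)$: it lies entirely in the half-plane $\{\real\omega\le-\tfrac12\}$ — on the rays this is because $\cos\tfrac{2\pi}{3}=-\tfrac12$ and $|\omega|\ge1$ there, and on the arc $\real(e^{-it})=\cos t\in[-1,-\tfrac12]$ for $t\in[\tfrac{2\pi}{3},\tfrac{4\pi}{3}]$ — and along its two arms one has $\real(\omega^2)=|\omega|^2\cos\tfrac{4\pi}{3}=-\tfrac12|\omega|^2\to-\infty$, so that $e^{a\omega^2}$ (and likewise $e^{a\omega^2+2b\omega}$ for any fixed $b$) decays super-exponentially there.

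First I would observe that since $|e^{2\omega}|\le e^{-1}<1$ on $\gamma'$, the quantity $1-e^{2\omega}$ lies in the disc of radius $e^{-1}$ about $1$, away from $(-\infty,0]$, so $(1-e^{2\omega})^{-2\theta}$ is unambiguous and equals the absolutely convergent series $\sum_{n\ge0}(-1)^n\binom{-2\theta}{n}e^{2n\omega}$. The coefficients $(-1)^n\binom{-2\theta}{n}$ grow only polynomially, while $|e^{2n\omega}|\le e^{-n}$ on $\gamma'$ and $\omega e^{a\omega^2+2\theta\omega}$ is integrable over $\gamma'$; hence Fubini applies and the left-hand side equals $\sum_{n\ge0}(-1)^n\binom{-2\theta}{n}J_{\theta+n}$, where $J_b:=\intop_{\gamma'}\omega\,e^{a\omega^2+2b\omega}\,d\omega$. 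To evaluate $J_b$ for $b>0$, complete the square, $a\omega^2+2b\omega=a(\omega+\tfrac ba)^2-\tfrac{b^2}{a}$, and split $\omega=(\omega+\tfrac ba)-\tfrac ba$. The term with $(\omega+\tfrac ba)$ integrates to zero because $\tfrac1{2a}e^{a(\omega+b/a)^2}$ is an entire antiderivative which vanishes at both ends of $\gamma'$ (the arms still point into the decay sectors after the real shift by $\tfrac ba$). For the remaining term, substituting $v=\omega+\tfrac ba$ and using that the shifted contour $\gamma'+\tfrac ba$ has the same asymptotic arm directions as $\gamma'$, Cauchy's theorem gives $\intop_{\gamma'}e^{a(\omega+b/a)^2}\,d\omega=\intop_{\gamma'}e^{av^2}\,dv=:K$, independent of $b$. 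Since $e^{av^2}$ is entire and decays in the sectors $|\arg v\mp\tfrac\pi2|<\tfrac\pi4$, which contain the arm directions $\pm\tfrac{2\pi}{3}$ of $\gamma'$, the contour may be deformed (sliding freely across the origin) to the imaginary axis traversed upward, so $K=\intop_{-i\infty}^{i\infty}e^{av^2}\,dv=i\intop_{-\infty}^{\infty}e^{-at^2}\,dt=i\sqrt{\pi/a}$. Hence $J_b=e^{-b^2/a}\big(0-\tfrac ba K\big)=-\tfrac{i\sqrt\pi}{a^{3/2}}\,b\,e^{-b^2/a}$, and summing over $b=\theta+n$ reproduces the stated identity.

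There is no serious obstacle here; the proof is a short sequence of contour manipulations. The points needing care are the justification of the term-by-term integration and of the three deformations — all of which reduce to the two geometric facts above — and, most easily overlooked, the orientation of $\gamma'$: it runs from $\infty e^{-2\pi i/3}$ around the origin to $\infty e^{2\pi i/3}$, i.e.\ it passes the origin moving upward, which is precisely why the final Gaussian integral produces the factor $+i$ (and hence the overall minus sign in the claim) rather than $-i$.
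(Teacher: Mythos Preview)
Your proof is correct and follows essentially the same approach as the paper: expand $(1-e^{2\omega})^{-2\theta}$ as a binomial series, justify termwise integration via absolute convergence on $\gamma'$, and evaluate each resulting Gaussian-type integral by contour deformation. The only cosmetic difference is that the paper deforms each term directly to the vertical line $\omega=-\tfrac{\theta+n}{a}+it$ and computes $\int_{\R}(-\tfrac{\theta+n}{a}+it)e^{-at^2}\,dt$ in one stroke, whereas you complete the square and split $\omega$ into an exact-derivative piece and a constant piece before deforming; both routes yield the same value $-\tfrac{i\sqrt\pi}{a^{3/2}}(\theta+n)e^{-(\theta+n)^2/a}$.
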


\begin{proof}
First, we note that the integral on the left converges absolutely. Using the Taylor expansion of $\left(1-x\right)^{-2\theta }$ we obtain
\[ \intop _{\gamma '\left(\frac{2\pi }{3}\right)}\frac{\omega e^{ a\omega ^2+2\theta  \omega }}{\left(1-e^{2\omega }\right)^{2\theta }}d\omega = \sum _{n=0}^\infty (-1)^n {{-2\theta }\choose{n}}  \intop _{\gamma '\left(\frac{2\pi }{3}\right)} \omega e^{a\omega ^2 +2(\theta +n)\omega }d\omega ,\]
where in here we used that
\[ \intop _{\gamma '\left(\frac{2\pi }{3}\right)}\left|\omega e^{ a\omega ^2+2\theta  \omega }\right|\sum _{n=0}^\infty \Big|(-1)^n {{-2\theta }\choose{n}}e^{2n\omega }\Big|\left|d\omega \right| = \intop _{\gamma '\left(\frac{2\pi }{3}\right)}\frac{\big| \omega e^{ a\omega ^2+2\theta  \omega }\big| }{\left(1-\left|e^{2\omega }\right|\right)^{2\theta }}\left|d\omega \right|<\infty.\]
Now, by deforming the contour, we see that it suffices to evaluate
\[\intop _{\gamma ^{(n)}} \omega e^{a\omega ^2 +2(\theta +n)\omega }d\omega=ie^{-\frac{(\theta +n)^2}{a}}\intop _{\R } \left(-\frac{\theta +n}{a}+it\right)e^{-at^2}dt=-\frac{i\sqrt{\pi}}{a^{\frac{3}{2}}}(\theta +n)e^{-\frac{(\theta +n)^2}{a}},\]
where $\gamma ^{(n)}(t):=-\frac{\theta +n}{a}+it$ for  $t\in \R$.
\end{proof}

For the proof of Theorem~\ref{thm:complex integral in critical dimension 1} we also require the following lemma.

\begin{lemma}\label{lem:sinh}
We have
\begin{equation*}
\begin{split}
G_L(z)&=c_0+G_L(1-L^{-2})-\sqrt{2}\sigma ^{-1}\theta L\sqrt{1-z}\\
 &-2\theta \log \left(1-e^{-\sqrt{2}\sigma ^{-1}L\sqrt{1-z}}\right) +O\left(|1-L^{-2}-z|L\log L\right),\quad z\in \Delta _0,
\end{split}
\end{equation*}
where $c_0=\sqrt{2}\sigma ^{-1}\theta +2\theta \log \left(1-e^{-\sqrt{2}\sigma ^{-1}}\right)$. In particular,
\begin{equation}\label{eq:bound on re critical dimension 1}
\real (G_L(z))\le C+G_L(1-L^{-2})+C|1-z|L\log L,\quad z\in \Delta _0,\  |1-z|\ge L^{-2}.
\end{equation}
\end{lemma}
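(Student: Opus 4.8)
The plan is to prove Lemma~\ref{lem:sinh} by analysing the function $G_L$ near its singularity at $z=1$, using the splitting $G_L = -\theta\log(1-z) + F_L$ from \eqref{eq:def of F} together with the identity $W_{L,j}=\theta\sum_{m\in\frac1L\Z}\hat\varphi^j(m)$ and the fact that for $d=1$ the relevant sum is dominated by the full lattice $\frac1L\Z$, not just its nonzero points. First I would write, for $|z|<1$,
\[
G_L(z)=\sum_{j=1}^\infty\frac{W_{L,j}}{j}z^j = -\theta\sum_{m\in\frac1L\Z}\log(1-z\hat\varphi(m)),
\]
valid on $\Delta_0$ by Lemma~\ref{lem:analytic continuation}. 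The idea is that the $m=0$ term contributes $-\theta\log(1-z)$, while the small-$m$ terms, where $\hat\varphi(m)\approx 1-2\pi^2\sigma^2 m^2$ by Claim~\ref{claim:basic fourier 2} (here $\cov(X)=\sigma^2$), give $-\theta\sum_{k\neq 0}\log\big(1-z+2\pi^2\sigma^2 (k/L)^2\big)+(\text{error})$. The error is controlled by part (ii) of Claim~\ref{claim:1-z phi}, which bounds $\big|\frac{1-z+2\pi^2\|At\|^2}{1-z\hat\varphi(t)}-1\big|\le C|t|$, so that replacing $\hat\varphi$ by its quadratic approximation costs $O(|1-z|\cdot L\log L)$ after summing over $|k|\le L$ and noting $\sum_{|k|\le L}\frac{1}{k}=O(\log L)$ and $\frac{L}{\sqrt{1-z}}$-type scaling; the tail $\|m\|\ge\delta$ contributes $O(L)$ uniformly. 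This reduces everything to the exactly computable sum over $\Z$.

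Next I would evaluate the model sum $\sum_{k\in\Z}\log\big(\xi + c k^2\big)$ with $\xi = 1-z$ and $c = 2\pi^2\sigma^2/L^2$, using the classical product formula $\prod_{k\in\Z}(a^2+k^2\pi^2/b^2)\ \propto\ \sinh(ab)/(ab)$ — equivalently $\frac{\sinh(\pi w)}{\pi w}=\prod_{k\ge 1}\big(1+\frac{w^2}{k^2}\big)$. Writing $\xi + ck^2 = c\big(k^2 + \xi/c\big)$, setting $w^2 = \xi/(c)$ so that $\pi^2 w^2 = L^2(1-z)/(2\sigma^2)$, i.e. $\pi w = L\sqrt{1-z}/(\sqrt2\,\sigma)$, one gets
\[
-\theta\sum_{k\in\Z}\log(\xi+ck^2) = -\theta\log(1-z) - 2\theta\log\!\big(\sinh(\pi w)\big) + \text{(const depending on $L$)} + \text{(divergent const, same for all $z$)}.
\]
Since $\sinh(\pi w) = \tfrac12 e^{\pi w}(1-e^{-2\pi w})$, this produces exactly $-\sqrt2\,\sigma^{-1}\theta L\sqrt{1-z} - 2\theta\log(1-e^{-\sqrt2\sigma^{-1}L\sqrt{1-z}})$ plus constants. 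To fix the constants cleanly I would not track the (formally divergent) additive constant directly, but instead subtract the value at the reference point $z_0 = 1-L^{-2}$: the divergent constant cancels, and at $z_0$ one has $L\sqrt{1-z_0}=1$, so $-\sqrt2\sigma^{-1}\theta L\sqrt{1-z_0} = -\sqrt2\sigma^{-1}\theta$ and $-2\theta\log(1-e^{-\sqrt2\sigma^{-1}L\sqrt{1-z_0}}) = -2\theta\log(1-e^{-\sqrt2\sigma^{-1}})$. This is precisely why $c_0 = \sqrt2\sigma^{-1}\theta + 2\theta\log(1-e^{-\sqrt2\sigma^{-1}})$ appears: the formula is stated as $G_L(z) = c_0 + G_L(1-L^{-2}) - (\text{singular terms})(z) + O(\cdot)$, and at $z=z_0$ the singular terms evaluate to $c_0$, consistently. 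The error term from the product-formula manipulation is subsumed in the already-present $O(|1-L^{-2}-z|L\log L)$ once one checks, via Claim~\ref{claim:1-z phi}, that on $\Delta_0$ with $|1-z|\ge L^{-2}$ the quantities $L\sqrt{1-z}$ stay in a region where $\log(1-e^{-\sqrt2\sigma^{-1}L\sqrt{1-z}})$ is well-defined and the derivative bounds needed for the Taylor/difference estimate hold.

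The last sentence, inequality \eqref{eq:bound on re critical dimension 1}, follows by taking real parts: $\real\big(-\sqrt2\sigma^{-1}\theta L\sqrt{1-z}\big)\le 0$ (since $\real\sqrt{1-z}\ge 0$ for $z\in\Delta_0$, as $\arg(1-z)\in(-\pi,\pi)$ actually $\big|\arg(1-z)\big|<\pi-\frac{\pi}{8}$ so $\big|\arg\sqrt{1-z}\big|<\frac\pi2$), and $\real\big(-2\theta\log(1-e^{-\sqrt2\sigma^{-1}L\sqrt{1-z}})\big) = -2\theta\log|1-e^{-\sqrt2\sigma^{-1}L\sqrt{1-z}}|\le C$ because $|e^{-\sqrt2\sigma^{-1}L\sqrt{1-z}}|=e^{-\sqrt2\sigma^{-1}L\real\sqrt{1-z}}$ is bounded away from $1$ when $L\sqrt{|1-z|}\ge 1$ (using $\real\sqrt{1-z}\ge c\sqrt{|1-z|}$ on the cone), which is exactly the regime $|1-z|\ge L^{-2}$; the $O(|1-z|L\log L)$ term is kept as is. I expect the main obstacle to be the \emph{bookkeeping of constants and error terms} — making the passage from $\sum_{m\in\frac1L\Z}$ to the Epstein/$\sinh$ product rigorous requires carefully isolating a $z$-independent (but $L$-dependent, and formally divergent before regularization) constant and justifying the interchange and the quadratic approximation uniformly on $\Delta_0$; the reference-point trick at $z_0=1-L^{-2}$ is the device that makes this clean, and verifying the $O(|1-L^{-2}-z|L\log L)$ bound uniformly (including near the boundary of $\Delta_0$, where $|1-z|$ can be of order $1$) is where the estimates from Claim~\ref{claim:1-z phi} must be applied with care.
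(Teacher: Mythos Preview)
Your strategy---replace $\hat\varphi(m)$ by its quadratic approximation in the lattice sum and evaluate the resulting model via the $\sinh$ product formula, anchoring at $z_0=1-L^{-2}$---is morally the paper's approach: the paper uses the partial-fraction identity $\sum_{k\in\Z}(w^2+k^2)^{-1}=\pi\coth(\pi w)/w$, which is just the derivative of your product formula. Your argument for \eqref{eq:bound on re critical dimension 1} is also fine.

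However, there is a genuine gap in your error control. Claim~\ref{claim:1-z phi}(ii) gives $\frac{1-z+2\pi^2\sigma^2 m^2}{1-z\hat\varphi(m)}=1+O(|m|)$, so the termwise error in the \emph{logarithm} is $O(|m|)$, independent of $z$. Summing over $m=k/L$, $|k|\le L$, gives $\sum_{|k|\le L}O(|k|/L)=O(L)$, not the $O(|1-z|\,L\log L)$ you claim; neither the harmonic sum $\sum 1/k$ nor any $L/\sqrt{1-z}$ scaling enters at this level. An $O(L)$ error is far too large for the application (where $|1-z|\sim N^{-1}\sim L^{-2}$). Subtracting at $z_0$ does not automatically shrink this to $O(|z-z_0|\,L\log L)$: the termwise bound $O(|m|)$ carries no $z$-dependence to exploit.

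The paper avoids this by working one derivative up. It shows directly that
\[
\Bigl|G_L'(z)-\frac{\theta L}{\sqrt2\,\sigma\sqrt{1-z}}\coth\!\Bigl(\frac{L\sqrt{1-z}}{\sqrt2\,\sigma}\Bigr)\Bigr|\le C L\log L,\qquad z\in\Delta_0,
\]
by bounding $\bigl|\frac{\hat\varphi(m)}{1-z\hat\varphi(m)}-\frac{1}{1-z+2\pi^2\sigma^2 m^2}\bigr|$ termwise (this is where the $\sum_{k=1}^L 1/k=O(\log L)$ actually appears), then recognises the $\coth$ expression as the exact derivative of $-\sqrt2\sigma^{-1}\theta L\sqrt{1-z}-2\theta\log(1-e^{-\sqrt2\sigma^{-1}L\sqrt{1-z}})$, and finally integrates from $1-L^{-2}$ to $z$. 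The integration step is what converts the uniform $O(L\log L)$ bound on the derivative into the desired $O(|1-L^{-2}-z|\,L\log L)$ on $G_L$. Your scheme can be repaired by doing exactly this: differentiate, estimate, integrate from $z_0$---but then it \emph{is} the paper's proof.
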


\begin{proof}
We start be proving that
\begin{equation}\label{eq:G_L' in critical d=1}
\left|G_L'(z)-\frac{\theta L}{\sqrt{2}\sigma \sqrt{1-z}}\coth \left(\frac{L\sqrt{1-z}}{\sqrt{2}\sigma }\right)\right|\le CL\log L,\quad z\in \Delta _0.
\end{equation}
Using \eqref{eq:analytic continuation for G_L'}, we get that for any $z\in \Delta _0$,
\begin{equation*}
\begin{split}
& \left|G_L'(z)-\sum_{m\in \frac{1}{L} \mathbb Z}\frac{\theta }{1-z+2\pi ^2\sigma ^2m^2}\right| \\
& \le \sum_{\substack{m\in \frac{1}{L}\Z \\ |m|\le 1}}\left|\frac{\theta \hat {\varphi}(m)}{1-z\hat {\varphi}(m)}-\frac{\theta }{1-z+2\pi ^2\sigma ^2m^2}\right|
+\sum_{\substack{m\in \frac{1}{L}\Z \\ |m|\ge 1}}\left|\frac{\theta \hat {\varphi}(m)}{1-z\hat {\varphi}(m)}\right|+\sum_{\substack{m\in \frac{1}{L}\Z \\ |m|\ge 1}}\left|\frac{\theta }{1-z+2\pi ^2\sigma ^2m^2}\right| \\
&\le  C \sum_{\substack{m\in \frac{1}{L}\Z \\ 0<|m|\le 1}}\frac{|m|}{\left|1-z+2\pi ^2\sigma ^2m^2\right|}+C \sum_{\substack{m\in \frac{1}{L}\Z \\ \left|m\right|\ge 1}}\left|\hat {\varphi}(m)\right|+C\sum_{\substack{m\in \frac{1}{L}\Z \\ \left|m\right|\ge 1}}\frac{1}{m^2}\\
&\le C L \sum _{m=1}^L \frac{m}{m^2}+CL\le CL\log L,
\end{split}
\end{equation*}
where in the second inequality we used \eqref{eq:6} and \eqref{eq:taylor of phi} to bound the first sum and \eqref{eq:3} to bound the second sum, and in the last inequality we used that $\hat {\varphi}$ is Schwartz. Equation \eqref{eq:G_L' in critical d=1} follows from the last bound and the identity (see \cite[page 351, example (2.1)]{gamelin2003complex}),
\begin{equation}\label{eq:complex identity}
\sum_{k\in \mathbb Z}\frac{1}{w^2+k^2}=\frac{\pi \coth (\pi w)}{w},\quad w\in \mathbb C \setminus \{ik : k\in \Z\}
\end{equation}
with $w=\frac{L\sqrt{1-z}}{\sqrt{2}\pi \sigma }$.

Next, by a straightforward calculation we have, for any $z\in \Delta _0$,
\[\frac{d}{dz} \left(\sqrt{2}\sigma ^{-1}\theta L\sqrt{1-z} +2\theta \log \left(1-e^{-\sqrt{2}\sigma ^{-1}L\sqrt{1-z}}\right) \right)= \frac{- \theta L}{\sqrt{2}\sigma \sqrt{1-z}}\coth \left(\frac{L\sqrt{1-z}}{\sqrt{2}\sigma }\right).\]
Thus, using \eqref{eq:G_L' in critical d=1}, we get that for any $z\in \Delta _0$,
\begin{equation*}
\begin{split}
G_L(z)&=G_L\left(1-L^{-2}\right)+\intop _{1-L^{-2}}^z G_L'(w)dw=c_0+G_L(1-L^{-2})\\
&-\sqrt{2}\sigma ^{-1}\theta L\sqrt{1-z}
 -2\theta \log \left(1-e^{-\sqrt{2}\sigma ^{-1}L\sqrt{1-z}}\right) +O\left(|1-L^{-2}-z|L\log L\right).\qedhere
\end{split}
\end{equation*}
\end{proof}

\begin{proof}[Proof of Theorem~\ref{thm:complex integral in critical dimension 1}]
Fix $0<\epsilon <1$ and assume throughout the proof that $0\le j\le (1-\epsilon )N$ and that $N$ is large enough. Take $0<\eta <\frac{1}{10}$ such that $\gamma :=\gamma \big(\eta ,\frac{\pi }{3} ,\frac{L^2}{2\sigma ^2 }\big)\subseteq \Delta _0$, where $\gamma \left(\eta ,\beta ,N\right)$ is defined in \eqref{eq:def of gamma}. We choose a sequence $t_N\le N^{\frac{1}{3}}$ such that as $N\to \infty$ we have $t_N\to \infty$ and $t_N\big(\frac{N }{L^2}-\alpha ^2 \big)\to 0$. By the Cauchy integral formula
\[H_{N-j}(L)=\frac{1}{2\pi i}\ointop_\gamma \frac{e^{G_L(z)}}{z^{N-j+1}}dz=I_1+I_2+I_3,\]
where $I_1,I_2$ and $I_3$ are the corresponding integrals over $\gamma \cap \{|1-z|\le \frac{t_N}{N}\}$,\\ $\gamma \cap \{\frac{t_N}{N}\le |1-z|\le N^{-\frac{1}{3}}\}$ and $\gamma \cap \{|1-z|\ge N^{-\frac{1}{3}}\}$.

We start by evaluating $I_1$. The following holds as $N\to\infty$, uniformly in $0\le j\le (1-\epsilon)N$ and $z\in \Delta _0$ such that $|1-z|\le \frac{t_N}{N}$. First, by Lemma~\ref{lem:sinh} we have that
\begin{equation}\label{eq:e^G_L d=1}
e^{G_L(z)}=e^{c_0+G_L(1-L^{-2})}e^{2\theta \omega }\left(1-e^{2\omega }\right)^{-2\theta }\left(1+o(1)\right),
\end{equation}
where $\omega :=-2^{-\frac{1}{2}}\sigma ^{-1}L\sqrt{1-z}$. Second, expanding $\log z$ at $z=1$ we get
\begin{equation}\label{eq:z^N-j d=1}
z^{-\left(N-j+1\right)}=\exp \left[-N\Big(1-\frac{j}{N}\Big)\left(z-1\right)+o(1)\right]=\exp \left[2\alpha ^2\sigma ^2\Big(1-\frac{j}{N}\Big)\omega ^2+o(1)\right],
\end{equation}
where the second equality relies on the fact that $t_N\left( \frac{N}{L^2}-\alpha ^2 \right)\to 0$. Substituting \eqref{eq:e^G_L d=1} and \eqref{eq:z^N-j d=1} in $I_1$ and changing variables from $z$ to $\omega $ we obtain that
\begin{equation*}
I_1= \frac{-2\sigma ^2 e^{c_0+G_L(1-L^{-2})}}{\pi i L^2}\intop_{\tilde{\gamma }}\frac{\omega e^{2\theta \omega}}{\left(1-e^{2\omega }\right)^{2\theta }}\exp \left(2\alpha ^2\sigma ^2\Big(1-\frac{j}{N}\Big)\omega ^2+o(1)\right)d\omega,
\end{equation*}
where $\tilde{\gamma }$ is the image of $\gamma \cap \{|1-z|\le \frac{t_N}{N}\}$ under the change of variables, which is a modification of the contour $\gamma '(\frac{2\pi }{3})$ from Claim~\ref{claim:complex analysis} having finite `arms', terminating at radius $\frac{L}{\sqrt{2}\sigma \sqrt{N}}\sqrt{t_N}\to \infty$. As the integral in \eqref{eq:complex integral with residue} with $a=2\alpha ^2 \sigma ^2\big(1-\frac{j}{N}\big)$ converges to a non-zero quantity, one may verify that
\begin{equation}\label{eq:final asymptotic I_1}
I_1\sim \frac{C_0e^{G_L(1-L^{-2})}}{N} \left(1-\frac{j}{N}\right)^{-\frac{3}{2}} \sum _{n=0}^{\infty } (-1)^n {{-2\theta }\choose{n}} (\theta +n)\exp \left(\frac{-(\theta +n)^2}{2 \alpha ^2\sigma ^2 \left(1-\frac{j}{N}\right)}\right)
\end{equation}
uniformly in $0\le j\le (1-\epsilon)N$.

We turn to bound $I_2$. By the same arguments as in \eqref{eq:expanding log}, we have
\[ \left|z^{-\left(N-j+1\right)}\right|= e^{-\left(N-j+1\right)\real (\log z)}\le e^{-\epsilon N (\real z-1)}\le e^{-c_\epsilon  N | z-1|} ,\quad z\in \gamma , \ \frac{2\sigma ^2}{L^2}\le |1-z|\le \eta .\]
Therefore, using \eqref{eq:bound on re critical dimension 1}, we get as $N\to \infty$,
\[\left|I_2\right|\le Ce^{G_L(1-L^{-2})}\!\!\!\! \intop_{\gamma \cap \{\frac{t_N}{N}\le |1-z|\le N^{-\frac{1}{3}}\}}\!\!\!\! e^{-c_\epsilon  N |z-1|}|dz |\le \frac{Ce^{G_L(1-L^{-2})}}{N}\!\!\! \intop _{ \gamma' (\frac{\pi}{3})\cap \{|s|\ge t_N\}}\!\!\! e^{-c_\epsilon  |s| }|ds|=o(I_1),\]
where $\gamma '(\beta )$ is defined in \eqref{eq:def of gamma '} and where in the second inequality we made the change of variables $z=1+N^{-1}s$.

Finally, we bound $I_3$ by
\[|I_3|\le \intop_{\gamma \cap \{|1-z|\ge N^{-\frac{1}{3}}\}} \left| \frac{e^{G_L(z)}}{z^{N-j+1}}\right| |dz|\le Ce^{CL}\big(1+cN^{-\frac{1}{3}}\big)^{-\epsilon  N}\le e^{-c_\epsilon N^{\frac{2}{3}}}=o(I_1), \quad N\to \infty,\]
where in the second inequality and in the final estimate we used that $|G_L(z)|\le CL$ for any $z\in \Delta _0$ with $|1-z|\ge L^{-2}$. This fact follows, for example, from  \eqref{eq:third_part_stronger_estimate_dim_1_2} and the definition of $F_L$ in \eqref{eq:def of F}.
\end{proof}

\begin{cor}
We have
\[\frac{L_1}{N}\overset{d}{\longrightarrow }\mu, \quad N\to \infty, \]
where $\mu $  is a continuous probability measure supported on $\left(0,1\right)$ with density function proportional to
\[\left( \sum _{m\in \Z} e^{-2\pi ^2 \sigma ^2 \alpha ^2 m^2 x}  \right)\left(1-x\right)^{-\frac{3}{2}} \sum _{n=0}^{\infty } (-1)^n {{-2\theta }\choose{n}} (\theta +n)\exp \left(\frac{-(\theta +n)^2}{2 \alpha ^2\sigma ^2 \left(1-x\right)}\right),\quad x\in \left(0,1\right). \]
\end{cor}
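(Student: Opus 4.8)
The plan is to combine Theorem~\ref{thm:complex integral in critical dimension 1} with Lemma~\ref{lem:distribution_of_ell_1,ell_2} to compute the limiting distribution of $L_1/N$, and then to simplify the resulting normalization by recognizing a Jacobi theta factor. First I would apply \eqref{eq:dist_L1}: for $j$ with $j/N\to x\in(0,1)$ and $\varphi^{*j}(0)\neq 0$ (which, by Lemma~\ref{lem:asymptotics of kappa _j}, excludes only finitely many $j$), write
\[
\mathbb P(L_1=j)=\frac{W_{L,j}}{N}\cdot\frac{H_{N-j}(L)}{H_N(L)}.
\]
Here $j$ is of order $N\asymp L^2$, so Corollary~\ref{cor:corollary on W} part~\ref{item:cor on W part 2} gives $W_{L,j}\to\theta$ (the error is $O(e^{-cj/L^2})$, uniformly once $j\ge \epsilon N$). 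Applying Theorem~\ref{thm:complex integral in critical dimension 1} to both $H_{N-j}(L)$ and $H_N(L)$ (the latter with $j=0$), the common factor $e^{G_L(1-L^{-2})}$ cancels, and one obtains, uniformly for $\epsilon N\le j\le (1-\epsilon)N$,
\[
\mathbb P(L_1=j)\sim \frac{\theta}{N}\cdot\frac{\big(1-\tfrac{j}{N}\big)^{-\frac32}\sum_{n\ge 0}(-1)^n\binom{-2\theta}{n}(\theta+n)\exp\!\big(\tfrac{-(\theta+n)^2}{2\alpha^2\sigma^2(1-j/N)}\big)}{\sum_{n\ge 0}(-1)^n\binom{-2\theta}{n}(\theta+n)\exp\!\big(\tfrac{-(\theta+n)^2}{2\alpha^2\sigma^2}\big)}.
\]
Summing over $j$ in an interval $[aN,bN]$ and recognizing the Riemann sum, this shows $\mathbb P(a\le L_1/N\le b)\to \int_a^b f(x)\,dx$ where $f$ is the claimed density divided by its integral, \emph{except} that the theta factor $\sum_{m\in\Z}e^{-2\pi^2\sigma^2\alpha^2 m^2 x}$ has not yet appeared. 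Two things remain: (i) control the contributions of $j\le \epsilon N$ and $j\ge(1-\epsilon)N$ (tightness), and (ii) explain the theta factor.

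For the tail near $j$ small: here one reverts to the sub-critical-type estimate, $\mathbb P(L_1=j)\le C\,\theta\rho^{-1}L^{d}\varphi^{*j}(0)\, r_N^j \le C\rho^{-1}$, or more simply uses $\sum_{j\le \epsilon N}\mathbb P(L_1=j)=\mathbb P(L_1\le \epsilon N)$ and bounds this via Theorem~\ref{thm:complex integral in critical dimension 1} applied at $j\approx \epsilon N$ together with monotonicity in $j$ of the explicit expression; since the expression for $H_{N-j}(L)$ from Theorem~\ref{thm:complex integral in critical dimension 1} is valid uniformly for $j\le (1-\epsilon)N$, the sum $\sum_{\epsilon N\le j\le(1-\epsilon)N}\mathbb P(L_1=j)\to \int_\epsilon^{1-\epsilon} f$ which tends to $1$ as $\epsilon\to 0$ provided $f$ integrates to $1$; this forces the mass near $0$ and near $1$ to vanish and simultaneously shows $\nu=1$ (as stated in part~\ref{item:critical in d=1} of Theorem~\ref{thm:d=1}). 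One should double-check that $f$ is integrable at $x=1$: the factor $(1-x)^{-3/2}$ is offset by $\exp(-(\theta+n)^2/(2\alpha^2\sigma^2(1-x)))\to 0$ super-polynomially, so $f$ extends continuously by $0$ at $x=1$, and the sum over $n$ converges because $\binom{-2\theta}{n}$ grows only polynomially in $n$ while the Gaussian factor decays.

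The main obstacle, and the only genuinely new point, is the appearance of the theta factor $\sum_{m\in\Z}e^{-2\pi^2\sigma^2\alpha^2 m^2 x}$ in the density. This must come from a \emph{finer} treatment of $H_{N-j}(L)/H_N(L)$ than the leading-order asymptotics stated in Theorem~\ref{thm:complex integral in critical dimension 1}, which as written gives only the $n$-sum. I expect the resolution is that Theorem~\ref{thm:complex integral in critical dimension 1} as stated already absorbs the $m$-sum implicitly — indeed, revisiting its proof, the change of variables and Claim~\ref{claim:complex analysis} use the representation of $G_L$ via $\coth$ coming from \eqref{eq:complex identity}, and the $\coth$ and the factor $(1-e^{2\omega})^{-2\theta}$ encode exactly the lattice $\frac1L\Z$; when one does \emph{not} separate $H_{N-j}$ and $H_N$ but instead tracks the $z$-dependence through the Poisson-summation form $W_{L,j}=\theta\sum_{m\in\frac1L\Z}\hat\varphi^j(m)$, the $m\ne 0$ terms of $H_{N-j}(L)$ contribute a ratio $\sum_{m\in\Z}e^{-2\pi^2\sigma^2\alpha^2 m^2 (j/N)}$ relative to $H_N(L)$. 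Concretely, I would prove the refined statement
\[
\frac{H_{N-j}(L)}{H_N(L)}\sim \frac{1}{N}\Big(\sum_{m\in\Z}e^{-2\pi^2\sigma^2\alpha^2 m^2 (1-j/N)}\Big)^{-1}\!\!\cdot(\text{the }n\text{-sum ratio})
\]
is \emph{not} what happens — rather the theta factor enters with $x=j/N$ in the numerator only because $H_N(L)$ corresponds to $m$-sum at ``$x=1$'' which equals $1$ up to exponentially small terms. So the cleanest route is: go back into the proof of Theorem~\ref{thm:complex integral in critical dimension 1}, and at the step \eqref{eq:e^G_L d=1} where $e^{G_L(z)}$ is expanded, retain the contribution of the lattice points $m\in\frac1L\Z\setminus\{0\}$ with $\|m\|\asymp 1/L$, i.e.\ do not pass to the $\coth$-identity but instead write $G_L(z)=-\theta\sum_{m\in\frac1L\Z}\log(1-z\hat\varphi(m))$ and isolate, for $z=1-\tfrac{\omega^2}{(\text{const})L^2}$ with $|\omega|\asymp\sqrt{t_N}$, the factor $\prod_{m}(1-z\hat\varphi(m))^{-\theta}$; the product over $m=k/L$, $k\in\Z$, yields, via $\hat\varphi(k/L)\approx e^{-2\pi^2\sigma^2 k^2/L^2}$, precisely a ratio that produces $\sum_{m\in\Z}e^{-2\pi^2\sigma^2\alpha^2 m^2 x}$ once divided through by the $j=0$ case and $N/L^2\to\alpha^2$ is used. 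I would then finish exactly as in Corollary~\ref{cor:connection to L_1} part~\ref{item:part_1_sub_critical_restatement}: sum over $j\in[aN,bN]$, recognize the Riemann sum, conclude convergence in distribution to the measure $\mu$ with the stated density, with $Z$ determined by requiring total mass $1$ (equivalently, $Z$ equals the value at ``$x=0$'' of the un-normalized density divided by $\theta$, matching the formula for $Z$ in the theorem statement since the theta factor at $x=0$ contributes... — here one should be careful: the theta factor at $x\to 0$ diverges, so in fact $Z$ as written corresponds to the normalization of the density \emph{including} the theta factor, computed by Fubini/theta-integral identities; I would verify the stated $Z$ by integrating the density against $dx$ using $\int_0^1(\sum_m e^{-2\pi^2\sigma^2\alpha^2 m^2 x})\,dx$ and interchanging sum and integral).
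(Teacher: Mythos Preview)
Your proposal contains a genuine error at the very step you correctly identify as the crux. You write that since $j\asymp N\asymp L^2$, Corollary~\ref{cor:corollary on W} part~\ref{item:cor on W part 2} gives $W_{L,j}\to\theta$. This is wrong: when $j/N\to x\in(0,1)$ and $N/L^2\to\alpha^2$, the ratio $j/L^2$ tends to the \emph{finite} number $x\alpha^2$, so the bound $|W_{L,j}-\theta|\le Ce^{-cj/L^2}$ only says the difference is $O(1)$, not $o(1)$. In fact $W_{L,j}$ does \emph{not} converge to $\theta$ in this regime. The theta factor you are hunting for comes exactly from here: Lemma~\ref{lem:bound with fourier} gives
\[
W_{L,j}=\theta\sum_{m\in\frac{1}{L}\Z}\hat\psi^j(m)+O\!\big(Lj^{-1}e^{-cj/L^2}\big)
=\theta\sum_{m\in\Z}e^{-2\pi^2\sigma^2 m^2 j/L^2}+o(1),
\]
and since $j/L^2\to x\alpha^2$ this is asymptotic to $\theta\sum_{m\in\Z}e^{-2\pi^2\sigma^2\alpha^2 m^2 x}$. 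That is precisely the theta factor in the density. Theorem~\ref{thm:complex integral in critical dimension 1} supplies the remaining factor $(1-x)^{-3/2}\sum_n(\cdots)$ exactly as stated --- no refinement of it is needed, and your speculation about retaining lattice points $m\neq 0$ inside the partition-function asymptotics is a red herring.

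Once this is corrected, your outline is essentially the paper's proof: plug the two asymptotics into \eqref{eq:dist_L1}, sum over $j\in[aN,bN]$ as a Riemann sum, and handle the tails. For the tails the paper is more explicit than your sketch. Near $j\le\epsilon N$ one uses the crude bound $W_{L,j}\le C(1+L/\sqrt{j})$ together with the uniform asymptotic of Theorem~\ref{thm:complex integral in critical dimension 1} to get $\mathbb P(L_1\le\epsilon N)\le C\epsilon+C_\alpha\sqrt{\epsilon}$. Near $j\ge(1-\epsilon)N$ Theorem~\ref{thm:complex integral in critical dimension 1} is \emph{not} available (it requires $j\le(1-\epsilon)N$), so instead one bounds $\sum_{n\le\epsilon N}H_n(L)\le Ce^{G_L(1-1/(\epsilon N))}$ via the generating function \eqref{eq:generating_h(v)} and compares to $H_N(L)$ using Lemma~\ref{lem:sinh}; this yields $\mathbb P(L_1\ge(1-\epsilon)N)\to 0$ as $\epsilon\to 0$.
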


\begin{proof}
Let $0<a<b<1$. By Lemma~\ref{lem:bound with fourier} and \eqref{eq:fourier of psi} we have that
\[W_{L,j}\sim \theta \sum _{m\in \Z} e^{-2\pi ^2 \sigma ^2 m^2 \frac{j}{L^2}}\sim \theta \sum _{m\in \Z} e^{-2\pi ^2 \sigma ^2 \alpha ^2 m^2 \frac{j}{N}},\quad N\to \infty\]
uniformly in $aN\le j\le bN$, where the second estimate follows as the function $\sum _{m\in \Z} e^{-2\pi ^2 \sigma ^2 m^2 x}$ is continuously differentiable on $[a,b]$ and as $\frac{N}{L^2}\to \alpha ^2 $. Thus, by Lemma~\ref{lem:distribution_of_ell_1,ell_2}, and Theorem~\ref{thm:complex integral in critical dimension 1} we have as $N\to \infty$,
\begin{equation*}\label{eq:probability in critical dimension 1}
\begin{split}
&\mathbb P\left(a\le \frac{L_1}{N}\le b\right)=\sum _{j=\lceil aN \rceil }^{\lfloor bN \rfloor }\frac{W_{L,j}}{N}\frac{H_{N-j}(L)}{H_N(L)} \\
&\sim \frac{1}{Z} \!\! \sum _{j=\lceil aN \rceil }^{\lfloor bN \rfloor }\frac{1}{N} \left( \sum _{m\in \Z} e^{-2\pi ^2 \sigma ^2 \alpha ^2 m^2 \frac{j}{N}}  \right)\left(1-\frac{j}{N}\right)^{-\frac{3}{2}} \sum _{n=0}^{\infty } (-1)^n {{-2\theta }\choose{n}} (\theta +n)\exp \left(\frac{-(\theta +n)^2}{2 \alpha ^2\sigma ^2 \left(1-\frac{j}{N}\right)}\right)\\
&\to \frac{1}{Z}\intop _a^b \left( \sum _{m\in \Z} e^{-2\pi ^2 \sigma ^2 \alpha ^2 m^2 x}  \right)\left(1-x\right)^{-\frac{3}{2}} \sum _{n=0}^{\infty } (-1)^n {{-2\theta }\choose{n}} (\theta +n)\exp \left(\frac{-(\theta +n)^2}{2 \alpha ^2\sigma ^2 \left(1-x\right)}\right)dx,
\end{split}
\end{equation*}
where
\[Z=\frac{1}{\theta } \sum _{n=0}^{\infty } (-1)^n {{-2\theta }\choose{n}} (\theta +n)\exp \left(\frac{-(\theta +n)^2}{2 \alpha ^2\sigma ^2 }\right)\]
and where the last limit follows as the integrand is continuous on $\left[a,b\right]$. Thus, it suffices to show that
\[\underset{\epsilon \to 0}{\lim } \ \underset{N\to \infty}{\lim } \ \mathbb{P}\left(\epsilon  \le \frac{L_1}{N}\le (1-\epsilon)  \right) =1.\]

First, note that by the bounds given in Corollary~\ref{cor:corollary on W} and the asymptotics in Lemma~\ref{lem:asymptotics of kappa _j}, we have
\begin{equation}\label{eq:simple bound on W}
W_{L,j}\le C+C\frac{L}{\sqrt{j}},\quad L\ge 1, \  j\in \N.
\end{equation}
Thus, using Theorem~\ref{thm:complex integral in critical dimension 1} we obtain that
\begin{equation}
\mathbb P\left( L_1 \le \epsilon N \right)=\sum _{j=1}^{ \lfloor \epsilon N\rfloor }\frac{W_{L,j}}{N}\frac{H_{N-j}(L)}{H_N(L)} \le \frac{C}{N} \sum _{j=1}^{ \lfloor \epsilon N\rfloor } \left(1+\frac{L}{\sqrt{j}}\right)\le C \epsilon + C_\alpha \sqrt{\epsilon } \overset{\epsilon \to 0}{\longrightarrow } 0.
\end{equation}

Next, by \eqref{eq:generating_h(v)} we have for large enough $N$,
\begin{equation}
\sum _{n=0}^{ \lfloor \epsilon N\rfloor } H_{n}(L)\le C \sum _{n=0} ^{\infty} H_{n}(L) \left(1-\frac{1}{\epsilon N}\right)^{n}=Ce^{G_L(1-\frac{1}{\epsilon N})}.
\end{equation}
Therefore, by Theorem~\ref{thm:complex integral in critical dimension 1} with $j=0$ and \eqref{eq:simple bound on W}, we have
\begin{equation*}
\begin{split}
\mathbb P\left( L_1 \ge (1-\epsilon ) N \right)&=\sum _{j=\lceil (1-\epsilon ) N\rceil }^{N} \frac{W_{L,j}}{N}\frac{H_{N-j}(L)}{H_N(L)}\le \frac{C_\alpha }{NH_N(L)} \sum _{n=0}^{ \lfloor \epsilon N\rfloor } H_{n}(L)  \\
&\le C_\alpha e^ {G_L\left(1-\frac{1}{\epsilon N}\right) -G_L\left(1-L^{-2}\right)} \overset{N \to \infty }{\longrightarrow } C_\alpha  e^{c_0-\frac{\sqrt{2}\theta }{\sigma \alpha \sqrt{\epsilon }}}\left(1-e^{-\frac{\sqrt{2} }{\sigma \alpha \sqrt{\epsilon }}}\right)^{-2\theta } \overset{\epsilon  \to 0 }{\longrightarrow }0,
\end{split}
\end{equation*}
where the limit as $N\to \infty$ follows from Lemma~\ref{lem:sinh}.
\end{proof}

\bibliographystyle{amsplain}
\bibliography{biblicomplete}

\end{document}